\def\titlerunning#1{\gdef\titrun{#1}}
\def\author#1{\gdef\autrun{\def\and{\unskip, }#1}\gdef\@author{#1}}
\def\address#1{{\def\and{\\\hspace*{18pt}}\renewcommand{\thefootnote}{}
\footnote {#1}}
\markboth{\autrun}{\titrun}}
\def\email#1{e-mail: #1}
\newtheorem{theorem}{Theorem}[section]
\newtheorem{lemma}[theorem]{Lemma}
\newtheorem{definition}[theorem]{Definition}
\newtheorem{proposition}[theorem]{Proposition}
\newtheorem{remark}[theorem]{Remark}
\newtheorem{corollary}[theorem]{Corollary}
\newtheorem{example}[theorem]{Example}
\newtheorem{TheoremX}{Theorem}
\newcommand{\gm}{\gamma}
\newcommand{\R}{\mathbb{R}}
\newcommand{\cR}{\mathcal{R}}
\newcommand{\cS}{\mathcal{S}}
\newcommand{\cG}{\mathcal{G}}
\newcommand{\cN}{\mathcal{N}}
\newcommand{\eps}{\varepsilon}
\newcommand{\Proof}{\begin{proof}}
\newcommand{\End}{\end{proof}}
\numberwithin{equation}{section}
\newcommand{\PreserveBackslash}[1]{\let\temp=\\#1\let\\=\temp}
\newcolumntype{C}[1]{>{\PreserveBackslash\centering}p{#1}}
\newcolumntype{R}[1]{>{\PreserveBackslash\raggedleft}p{#1}}
\newcolumntype{L}[1]{>{\PreserveBackslash\raggedright}p{#1}}
\newcolumntype{I}{!{\vrule width 1pt}}
\newlength\savedwidth
\begin{document}


\baselineskip=15pt


\titlerunning{On the dynamics of contact Hamiltonian systems II:\\ Variational construction of asymptotic orbits}

\title{On the dynamics of contact Hamiltonian systems II:\\ Variational construction of asymptotic orbits}

\author{Liang Jin  \and Jun Yan \and Kai Zhao}

\date{\today}

\maketitle

\address{Liang Jin: Department of Mathematics, Nanjing University of Science and Technology, Nanjing 210094, China; Fakult\"{a}t f\"{u}r Mathematik, Ruhr-Universit\"{a}t Bochum, Universit\"{a}tstra$\beta$e 150, D-44801 Bochum, Germany;\,\,\email{jl@njust.edu.cn;\,\, Liang.Jin@ruhr-uni-bochum.de}
\and Jun Yan: School of Mathematical Sciences, Fudan University, Shanghai 200433, China;\,\,\email{yanjun@fudan.edu.cn}
\and Kai Zhao:  School of Mathematical Sciences, Tongji University, Shanghai 200092, China;\,\,\email{zhaokai93@tongji.edu.cn}}

\begin{abstract}
  This paper is a continuation of our study of the dynamics of contact Hamiltonian systems in \cite{JY}, but without monotonicity assumption. Due to the complexity of general cases, we focus on the behavior of action minimizing orbits. We pick out certain action minimizing invariant sets $\{\widetilde{\mathcal{N}}_u\}$ in the phase space naturally stratified by solutions $u$ to the corresponding Hamilton-Jacobi equation. Using an extension of characteristic method, we establish the existence of semi-infinite orbits that is asymptotic to some $\widetilde{\mathcal{N}}_u$ and heteroclinic orbits between $\widetilde{\mathcal{N}}_u$ and $\widetilde{\mathcal{N}}_v$ for two different solutions $u$ and $v$.
\end{abstract}

\newpage

\tableofcontents

\newpage

\section{Introduction and main results}
\setcounter{equation}{0}
\setcounter{footnote}{0}
Let $M$ be a closed, connected smooth manifold. As usual, $J^0(M,\R)=M\times\R$ and $J^1(M,\R)=T^{\ast}M\times\R$ denote the manifold of 0-jets and 1-jets of functions on $M$. We adopt the convention in \cite{JY} to use both $z$ and $(x,u,p)$ to denote points in $J^1(M,\R)$, where $(x,p)$ is the Liouville coordinates on the cotangent bundle of $M$ and $u\in\R$. Let $\rho:J^1(M,\R)\rightarrow M$ stands for the natural projection $(x,u,p)\mapsto x$. Equipped with the standard contact structure $\xi_{std}$ defined by the kernel of the $1$-form $du-pdx, (J^1(M,\R), \xi_{std})$ becomes a classical example of contact manifold and $\pi:J^1(M,\R)\rightarrow J^0(M,\R); (x,u,p)\mapsto(x,u)$ denotes the canonical front projection. We consider the ODE system generated by a $C^{\infty}$ function $H:J^1(M,\R)\rightarrow\R$, called  \textbf{contact Hamiltonian}, via the vector field
\begin{equation}\label{ch}
X_H=
\begin{cases}
\dot{x}=\partial_p H(x,u,p),\\
\dot{p}=-\partial_x H(x,u,p)-\partial_u H(x,u,p) p,\\
\dot{u}=\partial_p H(x,u,p)\cdot p-H(x,u,p),
\end{cases}
\end{equation}
and its local flow $\Phi^t_H$, which consist of contactmorphisms from $(J^1(M,\R),\xi_{std})$ to itself. Once and for all, we fix an auxiliary Riemannian metric $g$ on $M$. In the following context, we shall assume the contact Hamiltonian $H$ satisfies
\begin{enumerate}
	\item[(H1)] (Fiberwise convexity)\quad for any $(x,u,p)\in J^1(M,\R)$, the Hessian
                 \[
                 \partial^2_{pp}H(x,u,p): T_p(T^\ast_x M)\times T_p(T^\ast_x M)\rightarrow\R,
                 \]
                 as a quadratic form, is positive definite;
	
    \item[(H2)] (Fiberwise superlinearity)\quad for any $(x,u)\in M\times\R$ and $K>0$, there exists $C^{\ast}(K)>0$ such that
                 \[
                 H(x,u,p)\geqslant K\|p\|_x-C^{\ast}(K);
                 \]
                 for any $p\in T^\ast_xM$. Here, the norm $\|\cdot\|_x$ is induced by the metric $g$,
\end{enumerate}
which we shall refer as \textbf{Tonelli conditions} later. Denote by $\mathcal{R}=\frac{\partial}{\partial u}$ the \textbf{Reeb vector field} defined by the 1-form $du-pdx$ and by $\omega(z)($ resp. $\alpha(z))$ the $\omega$-limit (resp. $\alpha$-limit) set of $z$ under $\Phi^t_H$.

\subsection{Previous work on the dynamics of the monotone systems}
In \cite{JY}, we began our study of system \eqref{ch} by understanding the global dynamics of $\Phi^t_H$ under either of the monotonicity assumptions
\begin{enumerate}
  \item [(M$_+$)] there is $\kappa>0$ such that $dH(\mathcal{R})>\kappa$ on the whole phase space $J^1(M,\R)$.
  \item [(M$_-$)] there is $\kappa>0$ such that $dH(\mathcal{R})<-\kappa$ on the whole phase space $J^1(M,\R)$.
\end{enumerate}
Motivated by the previous works \cite{MS} in the context of conformally symplectic systems, and  \cite{WWY3} in the context of monotone contact Hamiltonian systems, we are able to show
\begin{theorem}\label{mono}\cite[Page 3314, Theorem A]{JY}
Assume $H$ satisfies (M$_+$), then
\begin{enumerate}[(1)]
  \item all the orbits are forward complete, i.e., all $\Phi^t_H$-orbits are defined for $t\in[0,+\infty)$;
  \item $\Phi^t_H$ admits a compact, connected invariant set $\mathcal{K}_H\subset H^{-1}(0)$ such that $\omega(z)$ is contained in $\mathcal{K}_H$ for each $z\in J^1(M,\R)$ and $\alpha(z)\neq\emptyset$ if and only if $z\in\mathcal{K}_H$. $\mathcal{K}_H$ is called a maximal global attractor for $\Phi^t_H$ and equals the union of all compact invariant sets.
  \item $\mathcal{K}_H$ admits a neighborhood basis $\{\mathcal{O}_t\}_{t\geqslant0}$ with each element $\mathcal{O}_t$ is homotopy equivalent to $M$.
\end{enumerate}
By reversing the time direction, all conclusions above have analogies when (M$_-$) holds.
\end{theorem}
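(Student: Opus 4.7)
My plan is to prove the three assertions in order, leveraging (M$_+$) through a single energy-type identity.

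For (1), I would differentiate $H$ along an orbit of \eqref{ch}; a direct computation yields $\tfrac{d}{dt}H(\Phi^t_H z)=-(\partial_u H)(\Phi^t_H z)\cdot H(\Phi^t_H z)$, so Gronwall together with (M$_+$) gives $|H(\Phi^t_H z)|\leqslant |H(z)|\,e^{-\kappa t}$ as long as the orbit exists. To rule out finite-time blow-up of $(u,p)$, I would then bound $u(t)$ using $\dot u=p\cdot\partial_p H-H=L(x,u,\dot x)$, where the Legendre dual $L$ is locally Lipschitz in $u$ by (H1)--(H2); a scalar ODE comparison yields at most linear growth of $|u(t)|$ on finite intervals. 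Tonelli superlinearity (H2) combined with the bounds on $H$ and $u$ then gives a uniform bound on $\|p(t)\|$, so that $\Phi^t_H z$ exists for all $t\geqslant 0$.

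For (2), the exponential decay of $|H|$ already forces every $\omega$-limit set into the invariant hypersurface $H^{-1}(0)$. To exhibit a nonempty compact invariant subset inside it, I would construct a Lipschitz viscosity solution $w\colon M\to\R$ of $H(x,w(x),dw(x))=0$: under (M$_+$), an appropriate Lax--Oleinik-type semigroup on $C(M)$ becomes a contraction, so Banach's fixed point theorem produces such a $w$. The $1$-graph of $w$ lies in $H^{-1}(0)$, and the subset of points admitting bi-infinite action-minimizing orbits tracking $w$ is compact and $\Phi^t_H$-invariant. I would then define $\mathcal{K}_H$ as the union of all compact invariant sets. The estimates from (1) provide a bounded absorbing set, so standard dissipative-systems arguments (in the spirit of \cite{MS}) yield compactness of $\mathcal{K}_H$ and the inclusion $\omega(z)\subset\mathcal{K}_H$ for every $z$. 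For $\alpha$-limits: if $\alpha(z)\neq\emptyset$, the backward orbit is precompact, so $\overline{\bigcup_{t\geqslant 0}\Phi^{-t}_H z}$ is a compact invariant set containing $z$, forcing $z\in\mathcal{K}_H$; the converse is immediate. Connectedness of $\mathcal{K}_H$ reduces to connectedness of $M$ via the surjective projection $\rho\colon\mathcal{K}_H\to M$.

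For (3), I would use the attracting flow itself to manufacture the neighborhood basis. Pick a small open neighborhood $U$ of $\mathcal{K}_H$ inside a forward-invariant bounded region, and set $\mathcal{O}_t:=\Phi^{-t}_H(U)$; the attracting property of $\mathcal{K}_H$ ensures $\{\mathcal{O}_t\}_{t\geqslant 0}$ is a decreasing neighborhood basis. Since $\Phi^t_H$ is a homeomorphism, it suffices to show $U\simeq M$, and the Lipschitz section $x\mapsto(x,w(x),dw(x))$ of $\rho$ contained in $H^{-1}(0)$ provides the target of a natural straight-line fiberwise retraction in the $(u,p)$-variables. The main obstacle I anticipate is precisely this topological step: $\mathcal{K}_H$ itself need not be a manifold and its fibers over $M$ can be intricate, so the retraction has to be performed on the enlarged $U$ rather than on $\mathcal{K}_H$. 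I would circumvent this by choosing $U$ to lie inside a tubular neighborhood of the graph of a smooth approximation of $w$, where the fiberwise retraction is manifestly well-defined, and transport the resulting homotopy equivalence back through $\Phi^t_H$.
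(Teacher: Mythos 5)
The paper does not reprove this theorem: it is quoted from \cite{JY}, and the only methodological hint given here is that the proof rests on two Lyapunov functions, namely $H$ itself (through the identity $dH(X_H)=-dH(\mathcal{R})H$) and $F(z)=u_-(x)-u$ built from the unique viscosity solution $u_-$ of \eqref{hj}. Your overall architecture (exponential decay of $|H|$, a weak KAM solution produced by a contracting semigroup, standard dissipative-systems machinery) is compatible with that, but several individual steps fail as written.

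The most serious gap is in (1). You bound $u(t)$ by a scalar comparison applied to $\dot u=L(x,u,\dot x)$, invoking Lipschitz dependence of $L$ on $u$; but $L(x,u,\dot x)$ is superlinear in $\dot x$, and $\dot x=\partial_pH$ is only controlled \emph{after} you bound $\|p\|$, which in turn uses the bound on $u$ --- the argument is circular, and no scalar ODE in $u$ alone controls $\dot u$ from above. The upper bound on $u$ genuinely needs (M$_+$), which this step of yours never invokes: for $u\geqslant0$, (M$_+$) gives $H(x,0,p)\leqslant H(x,u,p)-\kappa u\leqslant |H(z)|-\kappa u$, while (H2) and compactness of $M$ give $H(x,0,p)\geqslant K\|p\|_x-C_0^*(K)$; together these bound $u^+$ and $\|p\|$ simultaneously, and the lower bound on $u$ follows from convexity ($\dot u\geqslant -H(x,u,0)$) plus (M$_+$), which forces $\dot u>0$ once $u$ is very negative. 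In (2), two steps are wrong as stated: $\alpha(z)\neq\emptyset$ does \emph{not} imply that the backward orbit is precompact (an orbit can accumulate on a point while making unbounded excursions), so the compact invariant set $\overline{\bigcup_{t\geqslant0}\Phi^{-t}_Hz}$ need not exist; the correct route is either uniform attraction of bounded sets, or the Lyapunov observation that $|H(\Phi^{-t}_Hz)|\geqslant|H(z)|e^{\kappa t}$ forces $H(z)=0$ first. Likewise, connectedness of $\mathcal{K}_H$ does not follow from surjectivity of $\rho|_{\mathcal{K}_H}$ onto the connected $M$ (two disjoint sections also surject); one instead writes $\mathcal{K}_H=\bigcap_{t\geqslant0}\Phi^t_H(B)$ as a nested intersection of compact connected sets for a connected absorbing set $B$. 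Finally, in (3) the neighborhoods must be the forward images $\Phi^t_H(U)$ of an absorbing neighborhood, not the preimages $\Phi^{-t}_H(U)$, which increase rather than shrink; with that correction, your fiberwise retraction onto (a smoothing of) the graph of $w$ is the right idea.
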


To prove the proposition, we employ two Lyapunov functions for the flow $\Phi^t_H$ to show that all compact invariant sets is contained in a common compact subset of the phase space. Notice that the second Lyapunov function, related to the viscosity solution to the Hamilton-Jacobi equation
\begin{equation}\label{hj}\tag{HJ}
H(x,u,d_xu)=0
\end{equation}
is of particular interest. Due to (M$_+$), the solution $u_-$ to \eqref{hj} is \textbf{unique} but in general non-smooth. With the help of Tonelli conditions, $u_-$ has directional derivative and the function $F(z)=u_-(x)-u$ serves as Lyapunov functions on that part of the phase space where $F$ is non-negative. As a result, the maximal global attractor $\mathcal{K}_H\subset\{(x,u,p)\in J^1(M,\R):u\geqslant u_-(x)\}$. And the pre-image of the graph of $u_-$ under $\pi$ intersects $\mathcal{K}_H$ in a compact $\Phi^t_H$-invariant set $\widetilde{\cN}_{u_-}$, which is regarded as the upper frontier of the maximal global attractor.

\vspace{0.5em}
One of the main result, Theorem A, of this paper aims to convince the readers that the set $\widetilde{\cN}_{u_-}$ can play the role of the attractor for certain action minimizing orbits rather than all orbits, in the study of the dynamics of $\Phi^t_H$ \textbf{without monotonicity assumptions}.

\vspace{.5em}
Introducing the further assumption
\begin{enumerate}
	\item[($\rm{F}$)] $\partial_p H(x,u,0)=0$ for every $(x,u)\in M\times\R$; let
    \[
    \mathcal{F}_H:=\{(x_0,u_0,0)\in J^1(M,\R):H(x_0,u_0,0)=0, \partial_x H(x_0,u_0,0)=0\}
    \]
    be the set of fixed points of $\Phi^t_H$, for each $(x_0,u_0,0)\in\mathcal{F}_H$, $x_0$ is non-degenerate critical point of the function $H(\cdot,u_0,0)$. The last condition ensures that the fixed points of $\Phi^t_H$ are isolated.
\end{enumerate}

we proved that

\begin{theorem}\label{irre}
Assume $H$ satisfies $(\rm{F})$, then
\begin{enumerate}[(1)]
  \item The maximal global attractor $\mathcal{K}_H$ consists of finite fixed points and heteroclinic orbits between them.
  \item  A necessary condition for the existence of $z\in J^1(M,\R)$ between two fixed points $z_0=(x_0,u_0,0)$ and $z_1=(x_1,u_1,0)$ with $\alpha(z)=z_0$ and $\omega(z)=z_1$ is $u_0<u_1$.
\end{enumerate}
\end{theorem}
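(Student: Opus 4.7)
The plan is to exploit the function $u$, viewed as a coordinate on $J^1(M,\R)$, as a strict Lyapunov function for $\Phi^t_H$ restricted to the maximal global attractor $\mathcal{K}_H$ from Theorem \ref{mono}, and to combine this with the non-degeneracy clause of (F) to deduce finiteness of $\mathcal{F}_H$.

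First I would identify $\mathcal{F}_H$ explicitly under the combined hypotheses (M$_+$) and (F). Since $\partial_uH>\kappa>0$, the implicit function theorem applied to $H(x,u,0)=0$ yields a unique smooth function $u_0\colon M\to\R$ with $H(x,u_0(x),0)\equiv0$. Differentiating gives $\partial_xH+\partial_uH\cdot du_0=0$, so $\partial_xH(x,u_0(x),0)=0$ holds precisely at the critical points of $u_0$. Differentiating once more at such a critical point $x_0$, the identity $\partial^2_{xx}H(x_0,u_0(x_0),0)+\partial_uH(x_0,u_0(x_0),0)\cdot\nabla^2u_0(x_0)=0$ shows that $\nabla^2u_0(x_0)$ is non-degenerate iff the Hessian of $H(\cdot,u_0(x_0),0)$ at $x_0$ is, so the non-degeneracy clause of (F) translates to Morse-type critical points of $u_0$. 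Since $M$ is closed, $u_0$ has only finitely many such critical points, hence $\mathcal{F}_H$ is finite.

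Next I would prove that $t\mapsto u(\Phi^t_H(z))$ is non-decreasing for every $z\in\mathcal{K}_H$, and is strictly increasing unless $z\in\mathcal{F}_H$. On $\mathcal{K}_H\subset H^{-1}(0)$ the third line of \eqref{ch} reduces to $\dot u=p\cdot\partial_pH$, and (F) together with (H1) yields
\[
p\cdot\partial_pH(x,u,p)=\int_0^1\partial^2_{pp}H(x,u,sp)(p,p)\,ds\geqslant 0,
\]
with equality iff $p=0$. If $p(t_0)=0$ along an orbit in $\mathcal{K}_H$, then (F) also forces $\dot x(t_0)=0$; either $\partial_xH(x(t_0),u(t_0),0)=0$ as well and the entire orbit is the fixed point $(x(t_0),u(t_0),0)$ by uniqueness for \eqref{ch}, or else $\dot p(t_0)\neq0$ and $t_0$ is an isolated zero of $p$. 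In either case, on any non-fixed orbit in $\mathcal{K}_H$ the function $u$ is strictly monotone increasing.

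Part (1) then follows from a LaSalle-type argument: $\omega(z)$ and $\alpha(z)$ are non-empty, compact, connected, $\Phi^t_H$-invariant subsets of $\mathcal{K}_H$ on which $u$ must be constant (being a limit of $u(\Phi^t_H(z))$), hence consist entirely of fixed points, and by isolation of $\mathcal{F}_H$ they reduce to a single element of $\mathcal{F}_H$. Homoclinic loops are excluded by the strict increase of $u$, leaving only heteroclinic connections. For part (2), any $z\in J^1(M,\R)$ with $\alpha(z)=z_0$ satisfies $z\in\mathcal{K}_H$ by Theorem \ref{mono}(2), and the strict monotonicity just established yields $u_0<u_1$. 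The main technical point is verifying that zeros of $p(t)$ along a non-fixed orbit cannot accumulate; once this is settled, the rest is a direct application of Lyapunov theory combined with the Morse-type count of critical points of $u_0$.
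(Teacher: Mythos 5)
Your proof is correct, and it uses exactly the mechanism the paper has in mind: under (M$_+$) the fixed points are the (finitely many, by the Morse/implicit-function argument) critical points of the graph $u_0$, and on $\mathcal{K}_H\subset H^{-1}(0)$ the action coordinate satisfies $\dot u=p\cdot\partial_pH\geqslant 0$ with equality only at $p=0$, which together with (F) and (H1) makes $u$ a strict Lyapunov function along non-stationary orbits and yields both the LaSalle decomposition in (1) and the inequality $u_0<u_1$ in (2). Your treatment of the only delicate point — that zeros of $p(t)$ on a non-fixed orbit are isolated because $\dot p(t_0)=-\partial_xH\neq 0$ there — is also the right one.
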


Here heteroclinic orbits between fixed points lie on $H^{-1}(0)$ and their existence is \textbf{guaranteed by the topological information} of $\mathcal{K}_H$. In our second main theorem, Theorem B, we constructed heteroclinic orbits between compact invariant sets for more general contact Hamiltonian flows by variational methods. However, these orbits \textbf{may not lie on the null energy level}. The second conclusion is a simple criterion for the orbital transition inside $\mathcal{K}_H$. Despite the observable proof of the theorem, it reflects some kind of \textbf{irreversibility of $\Phi^t_H$} and is \textbf{by no means exceptional}. Similar phenomena are discovered in (B1).

\vspace{0.5em}
Careful readers may find the condition (H2) employed in \cite{JY} is slightly weaker than the one we used here, the reason is purely methodological. One can refer to \cite{JY} for more details. For those readers who are interested in the variational aspects of monotone contact Hamiltonian flows, we recommend \cite{MS},\cite{WWY3}.

\subsection{General assumptions on contact Hamiltonian and a toy model}
In this subsequent note, we continue to explore the system \eqref{ch} and $\Phi^t_H$ under (H1)-(H2) and the more general assumption
\begin{enumerate}	
    \item[(H3)] (Uniformly Lipschitz in $u$)\quad there exists $L>0$ such that
                 \[
                 |H(x,u,p)-H(x,u',p)|\leqslant L|u-u'|
                 \]
                 for any pair of points $(x,u,p), (x,u',p)$.
\end{enumerate}
Notice that when the contact Hamiltonian does not depend on $u$, (H3) is automatically satisfied. Thus the set of classical Hamiltonian also falls into our consideration. In this case, the last equation of \eqref{ch} is independent of the remaining ones and $\Phi^t_H$ is a suspension of a Hamiltonian flow. The additional variable $u$ records the increment of action along orbits of the Hamiltonian flow. Although the volume of phase space is still preserved, the dynamics of this suspension flow maybe very different. For instance, there is no periodic orbits for $\Phi^t_H$ outside $H^{-1}(0)$. One may expect that the contact suspension of Hamiltonian flow offers us a new way to rethinking and reformulating theorems in Hamiltonian dynamics.

\vspace{0.5em}
Another reason to introduce (H3) is that the main techniques we employed in this work coming from the variational framework for $\Phi^t_H$ developed in \cite{WWY1}-\cite{WWY3}. In this series of works, (H1)-(H3) are standard assumptions  and are used to deduce the implicit action functions introduced in Proposition \ref{Implicit variational}, which are fundamental tools for our study. It would be interesting to see how we can weaken these assumptions as well as the compactness of $M$ to make our results applicable to more physical models.

\vspace{0.5em}
We also want to remark that the theory built in \cite{WWY1}-\cite{WWY3} can be regarded as an extension of the celebrated Aubry-Mather theory for Tonelli Hamiltonian systems, convincing us that (H1)-(H2) are natural. For the readers who are interested in the Aubry-Mather theory, we recommend the original works \cite{M1}-\cite{M2} of John.Mather and the standard references \cite{Fathi_book},\cite{S},\cite{CI} for elegant elaborations.

\vspace{0.5em}
For the readers who want a concrete class of contact Hamiltonian satisfying (H1)-(H3) and including all cases we have discussed so far, let us consider
\begin{example}\label{toy-model}\cite{JYZ}
The contact Hamiltonian $H:J^1(M,\R)\rightarrow\R$ has the following decoupled form
\begin{equation}\label{model1}
H(x,u,p)=F(x,p)+\lambda(x)u,
\end{equation}
where $F$ satisfies (H1)-(H2) and $\lambda\in C^\infty(M,\R)$. It is easy to check that
\begin{equation}\label{model-mono}
dH(\cR)=\lambda(x)
\end{equation}
and $H$ satisfies all assumptions by  taking $L$ appeared in (H3) to be $\max_{x\in M}|\lambda(x)|$. If $\lambda(x)$ is no-where vanishing on $M$, then there exists $\kappa>0$ such that
\begin{enumerate}
  \item[$(+)$] $\lambda(x)\geqslant\kappa>0$, thus by \eqref{model-mono}, $H$ satisfies (M$_+$);
  \item[$(-)$] $\lambda(x)\leqslant-\kappa<0$, thus by \eqref{model-mono}, $H$ satisfies (M$_-$).
\end{enumerate}
If $\lambda(x)\equiv0$, then $H=F(x,p)$ is a classical Tonelli Hamiltonian.

\vspace{0.5em}
On the other hand, the set of model Hamiltonian \eqref{model1} is \textbf{larger} than the types of Hamiltonian mentioned above: when $\lambda$ \textbf{change signs} on $M$, then by \eqref{model-mono}, (M$_\pm$) are violated and $H$ depends the action variable $u$ genuinely. As a result, the structure of the solution set to \eqref{hj} is more complicated than monotone cases.
\end{example}

In Section 6, we shall summarize some research history on the dynamics of $\Phi^t_H$, emphasising on the case with positive $\lambda(x)$, and then apply our main theorems to give some news on the last \textbf{non-monotone} case. The statement of the main theorems involves some terminologies which we introduce in the next part.

\subsection{Solution semigroups and weak KAM solutions for Hamilton-Jacobi equations}
Since \eqref{ch} can be interpreted as the characteristic system for Hamilton-Jacobi equation \eqref{hj}, it is natural to predict the close relationship between dynamics of $\Phi^t_H$ and the solution of the associated PDEs. In this section, we shall introduce analytic tools to understand the solutions to the Cauchy problem
\begin{equation}\label{HJe}\tag{HJe}
\begin{cases}
\partial_t U(x,t)+G(x,U(x,t),\partial_x U(x,t))=0,\quad\, (x,t)\in M\times(0,+\infty),\\
\hspace{11.4em}U(x,0)\,=\varphi(x),\hspace{1.3em}x\in M,
\end{cases}
\end{equation}
and the corresponding stationary problem
\begin{equation}\label{HJs}\tag{HJs}
G(x,u(x),d_xu(x))=0,\quad x\in M.
\end{equation}
Here and after, $G$ equals $H$ or $\breve{H}(x,u,p)=H(x,-u,-p)$, also satisfying (H1)-(H3).

\vspace{0.5em}
It is well known that, in general, \eqref{HJe} and \eqref{HJs} do not admit $C^1$ solutions. Thus to treat the above equations, it is necessary to extend the notion of `solutions' to include non-smooth functions. Fortunately, the right one, called \textbf{viscosity solution}, was introduced by M.Crandall and P.L.Lions in \cite{CL}. It is now widely accepted as the natural framework for the theory of Hamilton-Jacobi equations and certain second order PDEs. We refer to \cite{CS},\cite{Evans-PDE},\cite{Ishii_chapter} and \cite{CIL} for nice expositions of the theory. In the following context, solutions to \eqref{HJs} and \eqref{HJe} are always understood in the viscosity sense. Under assumptions (H1)-(H3), it was shown in \cite{WWY2} that there are two families of one-parameter nonlinear operators $\{T_t^\pm\}_{t\geqslant0}: C(M,\R)\circlearrowleft$ satisfying
\begin{itemize}
	\item [(1)] $T_0^\pm\varphi=\varphi$ for all $\varphi\in C(M,\R)$;
	\item [(2)] $T_{t+s}^\pm\varphi= T_t^\pm(T_s^\pm\varphi) $ for all $t,s \geqslant 0$ for all $\varphi\in C(M,\R)$;
	\item [(3)] $T_t^\pm\varphi $ is continuous in $(t,\varphi)$  on $[0,+\infty) \times C(M,\R)$.
\end{itemize}
such that

\begin{proposition}\label{sol-HJe}
For each $\varphi\in C(M,\R)$, the function $U:M\times[0,+\infty)\rightarrow\R$,
\[
(x,t)\mapsto U(x,t):=T_t^-\varphi(x)\,\,\big(\text{resp. }-T_t^+(-\varphi)(x)\big)
\]
is the \textbf{unique viscosity solution} of equation of \eqref{HJe} with $G=H$ (resp. $G=\breve{H}$).
\end{proposition}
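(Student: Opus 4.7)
The plan is to establish two separate properties: that $U(x,t):=T_t^-\varphi(x)$ solves the Cauchy problem \eqref{HJe} with $G=H$ in the viscosity sense, and that this solution is unique; the corresponding statement for $-T_t^+(-\varphi)$ will then follow by applying the first part to the auxiliary Hamiltonian $\breve H$, which still satisfies (H1)--(H3).

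For existence, the starting point is the implicit variational representation of $T_t^-\varphi$ recalled in Proposition \ref{Implicit variational}, which expresses $T_t^-\varphi(x)$ as an infimum over absolutely continuous curves ending at $x$ of an action-type functional involving the Lagrangian $L$ dual (by Legendre-Fenchel) to $H$ along each solution branch. The initial condition $U(\cdot,0)=\varphi$ follows directly from the semigroup axiom $T_0^-=\mathrm{id}$. To verify the viscosity subsolution inequality at a point $(x_0,t_0)$ where $U-\phi$ attains a local maximum, I would plug any short smooth curve terminating at $x_0$ into the variational representation, use the semigroup identity $T_{t_0}^-\varphi=T_h^-(T_{t_0-h}^-\varphi)$ to obtain a dynamic programming inequality, subtract $\phi$, divide by $h$, and pass to $h\downarrow 0$; minimization over the initial velocity recovers $H$ through the Legendre duality guaranteed by (H1)--(H2). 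The supersolution inequality at a local minimum of $U-\phi$ is proved dually, using the calibrated minimizer provided by the implicit variational principle and the superlinearity (H2) to control its speed.

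For uniqueness I would invoke a comparison principle for \eqref{HJe}: any bounded upper-semicontinuous subsolution $U_1$ and bounded lower-semicontinuous supersolution $U_2$ with $U_1(\cdot,0)\leqslant U_2(\cdot,0)$ satisfy $U_1\leqslant U_2$ on $M\times[0,+\infty)$. The proof proceeds by the classical doubling-of-variables method: one maximizes the functional
\[
\Psi_{\varepsilon,\eta}(x,y,t,s)=U_1(x,t)-U_2(y,s)-\tfrac{1}{\varepsilon}\bigl(d(x,y)^2+(t-s)^2\bigr)-\tfrac{\eta}{T-t}
\]
on $M\times M\times[0,T]^2$, applies the viscosity inequalities for $U_1,U_2$ at the maximizer, subtracts them, and sends $\varepsilon\downarrow 0$ followed by $\eta\downarrow 0$. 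The assumption (H3) is precisely what lets the $u$-dependence be absorbed by a Gronwall step: the residual term $H(x,U_1(x,t),p)-H(y,U_2(y,s),p)$ decomposes into a part controlled by $\varepsilon^{-1}d(x,y)^2$ (through modulus of continuity in $(x,p)$, which on sub/supersolutions is quantitative thanks to the superlinearity (H2)) and a part bounded by $L\,|U_1(x,t)-U_2(y,s)|$, and the latter closes the estimate after Gronwall.

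Finally, the $T_t^+$ statement reduces to the one just proved: from the construction of the semigroups in \cite{WWY2}, $V(x,t):=-T_t^+(-\varphi)(x)$ is precisely the $T_t^-$-semigroup associated with $\breve H(x,u,p)=H(x,-u,-p)$ applied to $\varphi$, so the first part yields both the solution property and uniqueness for \eqref{HJe} with $G=\breve H$. I expect the principal obstacle to be the uniqueness step: while the doubling-of-variables template is classical, it must here contend with the genuine $u$-dependence of $H$, and it is precisely the global Lipschitz condition (H3) — rather than any monotonicity in $u$ — that provides just enough control to close the Gronwall estimate. The subsolution/supersolution verification at non-differentiable points of $U$ is the other technical hurdle, but essentially amounts to a careful application of the implicit variational principle.
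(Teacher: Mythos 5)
The paper itself offers no proof of this proposition---it is imported directly from \cite{WWY2}---so there is nothing internal to compare against; judged on its own terms, your outline is correct and is essentially the standard argument that the cited source carries out. Dynamic programming from the variational representation for the sub/supersolution inequalities, a doubling-of-variables comparison principle closed by a Gronwall step using only the Lipschitz condition (H3) (no monotonicity in $u$ is needed for the \emph{evolutionary} problem, since one may equivalently pass to $e^{-\lambda t}U$ with $\lambda>L$), and the involution $H\mapsto\breve{H}$ to transfer everything to $-T_t^+(-\varphi)$: all three ingredients are the right ones. The only point your sketch glosses over is that the action in Proposition \ref{Implicit variational} is defined \emph{implicitly}, so ``plugging a short smooth curve into the variational representation'' is not literally available from \eqref{eq:Implicit variational}; one needs the equivalent Herglotz formulation (along an arbitrary Lipschitz test curve $\gamma$, solve the Carath\'eodory equation $\dot{u}_\gamma=L(\gamma,u_\gamma,\dot{\gamma})$ with the appropriate endpoint condition and minimize $u_\gamma$), together with the $u_0$-monotonicity of $h_{x_0,u_0}$ from Proposition \ref{fundamental-prop}~(1), to obtain the dynamic-programming inequality you then differentiate. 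With that reformulation the argument closes as you describe.
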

We call $\{T^{-}_t\}_{t\geqslant 0}$ and $\{T^{+}_t\}_{t\geqslant 0}$ the \textbf{backward and forward solution semigroup} associated to the Cauchy problem \eqref{HJe} with $G=H$.

\begin{definition}\label{fp}
A continuous function $u_{-}\,\,($resp. $u_+):M\rightarrow\R$ is called a backward (resp. forward) weak KAM solution to \eqref{hj} if it is a fixed point of $\{T^{-}_t\}_{t\geqslant 0}\,\,\big($resp. $\{T^{+}_t\}_{t\geqslant 0}\big)$, i.e., for any $t\geq0$,
\[
T^{-}_t u_-=u_-\quad(\text{resp.}\,\,T^{+}_t u_+=u_+).
\]
We use $\mathcal{S}_-$ and $\mathcal{S}_+$ to denote the set of backward (resp. forward) weak KAM solutions to \eqref{hj} respectively.
\end{definition}

The relation between weak KAM solutions and viscosity solutions to \eqref{HJs} is included in
\begin{proposition}\cite[Proposition 2.8]{WWY3}\label{weak-kam-vis}
Using the notations defined above,
\begin{enumerate}[(1)]
  \item $u_-\in\mathcal{S}_-$ if and only if $\,\,\,\,u_-$ is a solution to \eqref{HJs} with $G=H$,
  \item $u_+\in\mathcal{S}_+$ if and only if $-u_+$ is a solution to \eqref{HJs} with $G=\breve{H}$.
\end{enumerate}
\end{proposition}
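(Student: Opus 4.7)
The plan is to reduce both equivalences to the standard fact that a time-independent viscosity solution of the evolutionary equation \eqref{HJe} coincides with a viscosity solution of the stationary equation \eqref{HJs}, and then exploit the representation formulas for $T_t^\pm$ supplied by Proposition \ref{sol-HJe}.

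For part (1), I would first observe that if $u_-\in\mathcal{S}_-$, then $U(x,t):=u_-(x)$ agrees with $T_t^- u_-(x)$ for all $t\geqslant 0$, so by Proposition \ref{sol-HJe} it is a viscosity solution of \eqref{HJe} with $G=H$. Since $U$ does not depend on $t$, one has $\partial_t U\equiv 0$ both classically and in the viscosity sense, and testing with functions of the form $\psi(x)+\eta(t)$ (with $\eta$ chosen so that $(x_0,t_0)$ is an extremum of $U-\psi-\eta$ iff $x_0$ is an extremum of $u_--\psi$) shows that the sub-/supersolution inequalities for \eqref{HJe} at any $(x_0,t_0)$ reduce exactly to the sub-/supersolution inequalities for \eqref{HJs} at $x_0$. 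Conversely, if $u_-$ is a viscosity solution of \eqref{HJs} with $G=H$, then $U(x,t):=u_-(x)$ solves \eqref{HJe} in the viscosity sense with initial datum $u_-$ by the same test-function reduction, and the uniqueness part of Proposition \ref{sol-HJe} forces $T_t^- u_-=u_-$ for every $t\geqslant 0$, i.e. $u_-\in\mathcal{S}_-$.

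For part (2), I would introduce the substitution $v:=-u_+$. By the representation in Proposition \ref{sol-HJe}, $V(x,t):=-T_t^+(-v)(x)=-T_t^+ u_+(x)$ is the unique viscosity solution of \eqref{HJe} with $G=\breve{H}$ and initial datum $v$. Now $u_+\in\mathcal{S}_+$ means $T_t^+ u_+=u_+$, which is equivalent to $V(x,t)\equiv v(x)$; applying the same time-independent viscosity reduction used in part (1) but with $\breve{H}$ in place of $H$, this is equivalent to $v=-u_+$ being a viscosity solution of \eqref{HJs} with $G=\breve{H}$.

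The one genuine technical point is the reduction from time-independent viscosity solutions of \eqref{HJe} to viscosity solutions of \eqref{HJs}; once carefully justified via a standard test-function splitting argument (and invoking that $H$ and $\breve{H}$ are independent of $t$), everything else is bookkeeping with the representation formulas and the uniqueness assertion already recorded in Proposition \ref{sol-HJe}. I do not expect serious obstacles since the structural form of \eqref{HJe} makes the $t$-direction decouple trivially for stationary candidates.
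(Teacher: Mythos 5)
Your argument is correct. Note that the paper itself gives no proof of this proposition --- it is quoted directly from \cite{WWY3} --- so there is nothing in-text to compare against; your route (weak KAM solution $=$ fixed point of $T^{\pm}_t$ $\Leftrightarrow$ time-independent viscosity solution of \eqref{HJe}, via the representation and uniqueness in Proposition \ref{sol-HJe}, combined with the standard equivalence between time-independent viscosity solutions of \eqref{HJe} and viscosity solutions of \eqref{HJs}) is precisely the standard argument underlying the cited result. The only step requiring care is the one you flag: for the direction ``stationary $\Rightarrow$ evolutionary'' one must test $U(x,t)=u(x)$ against arbitrary space-time test functions $\phi(x,t)$, where interior extremality in $t$ forces $\partial_t\phi(x_0,t_0)=0$, after which the inequality reduces to the stationary one; this is routine and your splitting/test-function reduction handles it.
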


\vspace{1em}
Recall that $p\in T^\ast_xM$ is called a \textbf{reachable differential} of a continuous function $f:M\rightarrow\R$ at $x$ if there is a sequence $\{x_n\}\subset M$ converges (in $M$) to $x$  such that $f$ is differential at each $x_n$ and the sequence $\{p_n=d_x f(x_n)\}$ converges to $p$ (in $T^{\ast}M$). We use $D^\ast f(x)$ to denote the set of all reachable differentials of $f$ at $x$.

\begin{definition}\label{pseudo-gr}
For a Lipschitz function $f:M\rightarrow\R$, we introduce the notions:
\begin{itemize}
  \item $0$-graph of $f:\quad\mathrm{J}^0_f:=\{(x,f(x))\,:\,x\in M\}$,

  \item $1$-pseudograph of $f:$
        \begin{equation}
        \begin{split}
        \mathrm{J}^1_f:=&\{(x,u,p)\in J^1(M,\R):u=f(x),\,\, p\in D^\ast f(x)\}\\
        =&\overline{\{(x,u,p)\in J^1(M,\R):f\text{ is differentiable at } x,u=f(x), p=d_xf(x)\}}.
        \end{split}
        \end{equation}
\end{itemize}
It follows that $\mathrm{J}^0_f$ and $\mathrm{J}^1_f$ are compact subsets of $J^0(M,\R)$ and $J^1(M,\R)$ respectively. Rademacher's theorem states that a Lipschitz function is differentiable almost everywhere (with respect to the Riemannian volume on $M$), thus $\rho\mathrm{J}^1_f=M$. For $f\in C^2(M,\R), \mathrm{J}^1_f$ is a Legendrian graph over $\rho$.
\end{definition}

\vspace{0.5em}
As we shall see later, weak KAM solutions are Lipschitz functions on $M$. Thus it makes sense to speak about their $1$-pseudographs. Furthermore, due to Corollary \ref{inv-graph} later, the $1$-pseudograph of a backward (resp. forward) weak KAM solution is backward (resp. forward) invariant under $\Phi_H^t$. Thus it is natural to introduce

\begin{definition}\label{Mane-slice}
For any $u_-\in\mathcal{S_-}$, we call the compact $\Phi_H^t$-invariant set
\[
\widetilde{\mathcal{N}}_{u_-}=\bigcap_{t\geqslant 0}\Phi_H^{-t}(\mathrm{J}^1_{u_-})
\]
the Ma\~n\'e slice associated with $u_-$. In a similar fashion, for any $u_+\in\mathcal{S_+}$, the compact $\Phi_H^t$-invariant subset
\[
\widetilde{\mathcal{N}}_{u_+}=\bigcap_{t\geqslant 0}\Phi_H^{t}(\mathrm{J}^1_{u_+})
\]
is called the Ma\~n\'e slice associated with $u_+$.
\end{definition}

\begin{remark}
Thus for each weak KAM solution $u_{\pm}$, there associates a unique compact $\Phi^t_H$-invariant set $\widetilde{\mathcal{N}}_{u_\pm}$. The orbits contained $\widetilde{\mathcal{N}}_{u_\pm}$ are action minimizing orbits defined by the variational principles in \cite{WWY1}-\cite{WWY3} as well as \cite{CCJWY}. And even more, they are time-free minimizers, see Section 5 for rigorous formulations.
\end{remark}

\subsection{Statement of the main results}
For a general contact Hamiltonian satisfying (H1)-(H3), we shall, instead of having a universal study of all $\Phi^t_H$-orbits, focus on the dynamics of certain \textbf{action minimizing orbits}. Due to a global version of characteristic theory for non-linear first order PDEs, these orbits are closely related to the solutions to corresponding equations \eqref{HJe} and \eqref{HJs}, of which we are able to, in several aspects, give a more detailed study.

\vspace{0.5em}
Roughly speaking, the main theorems of this paper give us samples about how to deduce news of the asymptotic behavior of action minimizing $\Phi^t_H$-orbits from the information on the asymptotic behavior of solutions to \eqref{HJe}. This kind of duality already appeared in the work of the weak KAM theory for classical Tonelli Hamiltonian systems, see \cite{Fathi_book}. The first theorem ensures the existence of semi-infinite orbits asymptotic to some Ma\~{n}\'{e} slice if the solution semigroup converges.

\begin{TheoremX}\label{thm1}
Assume for $\varphi\in C^2(M,\R)$, there exists $u_-\in\mathcal{S}_-$ such that
\begin{equation}\label{convergence1}
\lim_{t\rightarrow+\infty}T^-_t\varphi(x)=u_-(x)
\end{equation}
holds uniformly for all $x\in M$, then
\begin{itemize}
  \item[\rm{(A1)}] there is $Z\in\mathrm{J}^1_{\varphi}$ such that $\omega(Z)\subset\widetilde{\mathcal{N}}_{u_-}$,
  \item[\rm{(A2)}] $\mathrm{J}^1_{u_-}\subset\bigcap_{T>0}\big(\overline{\bigcup_{t\geqslant T}\Phi^t_H(\mathrm{J}^1_{\varphi})}\big)$.
\end{itemize}
\end{TheoremX}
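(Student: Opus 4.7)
The plan is to translate the uniform PDE convergence $T^-_t\varphi\to u_-$ into Hausdorff-type convergence of the associated $1$-pseudographs, and then to read off both (A1) and (A2) from the observation that $\mathrm{J}^1_{T^-_t\varphi}\subset\Phi^t_H(\mathrm{J}^1_{\varphi})$ for every $t\geq 0$. Since $\varphi\in C^{2}(M,\R)$, $\mathrm{J}^1_{\varphi}$ is a smooth Legendrian graph, and the extended characteristic theory supplied by the implicit variational principles (Proposition \ref{Implicit variational}) lifts every minimizer computing $T^-_t\varphi(x)$ to a $\Phi^s_H$-orbit starting in $\mathrm{J}^1_{\varphi}$ and terminating at some $(x,T^-_t\varphi(x),p)$ with $p\in D^{\ast}(T^-_t\varphi)(x)$. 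Two consequences will be used throughout: (i) $\mathrm{J}^1_{T^-_t\varphi}\subset\Phi^t_H(\mathrm{J}^1_{\varphi})$ for every $t\geq 0$; (ii) if $Z\in\mathrm{J}^1_{\varphi}$ is the initial lift of a minimizer computing $T^-_t\varphi$ at its endpoint, then $\Phi^s_H(Z)\in\mathrm{J}^1_{T^-_s\varphi}$ for every $s\in[0,t]$, because initial segments of minimizers are themselves minimizers.

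Next I want the set-theoretic convergence $\mathrm{J}^1_{T^-_t\varphi}\to\mathrm{J}^1_{u_-}$ as $t\to+\infty$. The uniform Lipschitz bound together with uniform semiconcavity of $\{T^-_t\varphi\}_{t\geq t_0}$ (standard for the backward semigroup once $t$ is bounded away from $0$) confines $\bigcup_{t\geq t_0}\mathrm{J}^1_{T^-_t\varphi}$ to a common compact set $K\subset J^1(M,\R)$. Combined with the uniform convergence $T^-_t\varphi\to u_-$, equi-semiconcavity yields upper semicontinuity of superdifferentials; density of differentiability points of $u_-$ together with a diagonal extraction supplies the reverse inclusion, so the $\mathrm{J}^1_{T^-_t\varphi}$ converge to $\mathrm{J}^1_{u_-}$ in Hausdorff distance.

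Granting these two ingredients, (A2) is immediate: for any $z\in\mathrm{J}^1_{u_-}$ there exist $z_n\in\mathrm{J}^1_{T^-_{t_n}\varphi}\subset\Phi^{t_n}_H(\mathrm{J}^1_{\varphi})$ with $z_n\to z$ and $t_n\to\infty$. For (A1), for each $n$ I pick $z_n\in\mathrm{J}^1_{T^-_{t_n}\varphi}$ and, using (i), a preimage $Z_n\in\mathrm{J}^1_{\varphi}$ with $\Phi^{t_n}_H(Z_n)=z_n$; by compactness of $\mathrm{J}^1_{\varphi}$, $Z_n\to Z$ along a subsequence. Property (ii) places $\Phi^s_H(Z_n)\in\mathrm{J}^1_{T^-_s\varphi}\subset K$ for every $s\in[0,t_n]$, so passing to limits the whole forward orbit $\{\Phi^s_H(Z)\}_{s\geq 0}$ extends, remains in $K$, and satisfies $\Phi^s_H(Z)\in\mathrm{J}^1_{T^-_s\varphi}$ for every $s\geq 0$. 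For any $z^\ast\in\omega(Z)$, written as $z^\ast=\lim_k\Phi^{s_k}_H(Z)$ with $s_k\to\infty$, the group property combined with Hausdorff convergence gives, for every $t\geq 0$,
\[
\Phi^t_H(z^\ast)=\lim_k\Phi^{s_k+t}_H(Z)\in\mathrm{J}^1_{u_-},
\]
so $z^\ast\in\bigcap_{t\geq 0}\Phi^{-t}_H(\mathrm{J}^1_{u_-})=\widetilde{\mathcal{N}}_{u_-}$.

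The main technical hurdle is the Hausdorff convergence $\mathrm{J}^1_{T^-_t\varphi}\to\mathrm{J}^1_{u_-}$: the "$\supset$" direction mixes a closure operation with a sequence in the time parameter, and turning reachable differentials of $u_-$ into limits of reachable differentials of $T^-_t\varphi$ will rely on a careful diagonal extraction combined with the equi-semiconcavity estimates built into the variational framework of \cite{WWY1}-\cite{WWY3}.
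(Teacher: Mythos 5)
Your architecture (pseudograph inclusion $\mathrm{J}^1_{T^-_t\varphi}\subset\Phi^t_H(\mathrm{J}^1_\varphi)$ plus set convergence of the pseudographs) is reasonable, and several pieces do work: claims (i) and (ii) follow from the paper's global characteristics method (Theorem \ref{global-chm} and Lemma \ref{global-chm2}), and your ``$\supset$'' direction is sound once you note that at a differentiability point $x$ of $u_-$ the superdifferential $\partial^+u_-(x)$ is a singleton, so equi-semiconcavity forces \emph{every} reachable gradient of $T^-_t\varphi$ near $x$ to converge to $d_xu_-(x)$; taking closures then gives $\mathrm{J}^1_{u_-}\subset\liminf_t\mathrm{J}^1_{T^-_t\varphi}$ and hence (A2). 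This is a genuinely different route to (A2) than the paper's, which instead extracts limits of characteristics ending at a prescribed point of $\Phi_H^{-\delta}(\mathrm{J}^1_{u_-})$ (Proposition \ref{A2-proof}); your version is more elementary but leans on an equi-semiconcavity estimate for $\{T^-_t\varphi\}_{t\geqslant t_0}$ that you assert rather than prove.

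The genuine gap is in the ``$\subset$'' ($\limsup$) direction of your claimed Hausdorff convergence, which is exactly what (A1) rests on. Upper semicontinuity of superdifferentials for an equi-semiconcave, uniformly convergent family only yields that a limit $p^\ast$ of reachable gradients of $T^-_{s_k}\varphi$ lies in $\partial^+u_-(x^\ast)$, the \emph{convex hull} of $D^\ast u_-(x^\ast)$ — not in $D^\ast u_-(x^\ast)$ itself. These differ precisely at the corners of $u_-$ (e.g. $f_n(x)=-\sqrt{x^2+1/n}\to-|x|$ with $f_n'(0)=0\in[-1,1]=\partial^+(-|\cdot|)(0)$ but $0\notin D^\ast(-|\cdot|)(0)=\{\pm1\}$), and $\omega$-limit points of characteristics typically sit exactly over such corners. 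So your final step, $\Phi^t_H(z^\ast)\in\mathrm{J}^1_{u_-}$, does not follow; you only get $\omega(Z)\subset\{(x,u,p):u=u_-(x),\,p\in\partial^+u_-(x)\}$, which does not place $z^\ast$ in $\widetilde{\mathcal{N}}_{u_-}=\bigcap_{t\geqslant0}\Phi_H^{-t}(\mathrm{J}^1_{u_-})$. The paper closes exactly this gap dynamically (Lemma \ref{proof-A2-p}): once one knows the full orbit through $z^\ast$ projects into $J^0_{u_-}$ (which your argument does give), a variational argument using the Markov property and strict $u_0$-monotonicity of the action functions — concatenated minimizers with a corner cannot be minimizing — forces $p^\ast\in D^\ast u_-(x^\ast)$. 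You need this lemma, or an equivalent argument showing the minimizers computing $T^-_{s_k}\varphi$ converge to $(u_-,L)$-calibrated curves whose initial momenta are reachable gradients, before your proof of (A1) is complete.
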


\begin{remark}
It suffices to assume that \eqref{convergence1} holds pointwise.
\end{remark}

There is a general philosophy in our study of $\Phi^t_H$: any result concerning only the behavior of $\{T^-_t\}_{t\geqslant0}$ has an analogy by reversing the time direction. The following remark is an example:
\begin{remark}\label{time-reverse-thm1}
Assume for $\varphi\in C^2(M,\R)$, there is $u_+\in\mathcal{S}_+$ such that for all $x\in M$,
\begin{equation}\label{convergence2}
\limsup_{t\rightarrow+\infty}T^+_t\varphi(x)=u_+(x),
\end{equation}
then
\begin{itemize}
  \item[\rm{(A1')}] there is $Z\in\mathrm{J}^1_{\varphi}$ such that $\alpha(Z)\subset\widetilde{\mathcal{N}}_{u_+}$,
  \item[\rm{(A2')}] $\mathrm{J}^1_{u_+}\subset\bigcap_{T>0}\big(\overline{\bigcup_{t\geqslant T}\Phi^{-t}_H(\mathrm{J}^1_{\varphi})}\big)$.
\end{itemize}
\end{remark}

Given the convergence of \textbf{both} backward and forward solution semigroups on the \textbf{same} initial data, it is possible to deduce the existence of heteroclinic orbits connecting two $\Phi^t_H$-invariant sets described by Definition \ref{Mane-slice}.

\begin{TheoremX}\label{thm2}
Assume there is $\varphi\in C(M,\R)$  such that
\begin{equation}\label{eq:condition-thm2}
\quad\quad\lim_{t\to +\infty}T_t^- \varphi:=u_-, \quad  \lim_{t\to +\infty} T_t^+ \varphi:= v_+,\quad\text{and}\quad v_+ < u_-,
\end{equation}
where $u_-\in\mathcal{S}_-, v_+\in\mathcal{S}_+$ are weak KAM solutions. Then
\begin{itemize}
  \item[\rm{(B1)}] For any $z\in J^1(M,\R)$ with $\alpha(z)\cap\widetilde{\mathcal{N}}_{u_-}\neq\emptyset$, then $\omega(z)\cap\widetilde{\mathcal{N}}_{v_+}=\emptyset$.

  \item[\rm{(B2)}] there exists $Z\in J^1(M,\R)$ such that $\alpha(Z)\subset\widetilde {\mathcal{N}}_{v_+}, \omega(Z)\subset\widetilde {\mathcal{N}}_{u_-}$.
\end{itemize}
\end{TheoremX}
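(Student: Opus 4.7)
My plan treats (B2) and (B1) separately, both relying on the implicit-action-function variational characterization of $T_t^\pm$ together with compactness arguments based on (H2) and uniform convergence.

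For (B2), I build the heteroclinic orbit $Z$ as a subsequential limit of orbit segments $\Gamma_T:[-T,T]\to J^1(M,\R)$ obtained by concatenating a forward and a backward characteristic. For each large $T$, the variational formula for $T_T^-\varphi$ produces, as $x$ ranges over $M$, a family of $\Phi_H^t$-orbits starting in $\mathrm{J}^1_\varphi$ at time $0$ and ending close to $(x,u_-(x),\cdot)\in\mathrm{J}^1_{u_-}$ at time $T$; symmetrically, $T_T^+\varphi$ yields a family starting close to $\mathrm{J}^1_{v_+}$ at time $-T$ and landing in $\mathrm{J}^1_\varphi$ at time $0$. I would arrange that a member of each family shares a common time-$0$ endpoint, producing a single orbit $\Gamma_T$. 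The uniform convergence $T_T^\pm\varphi\to u_-,v_+$ bounds the $u$-coordinate of $\Gamma_T$ uniformly in $T$; combined with (H2), this bounds the $p$-coordinate as well, so all $\Gamma_T$ lie in a fixed compact subset of $J^1(M,\R)$. Arzel\`a--Ascoli plus a diagonal extraction yields a limiting orbit $Z:\R\to J^1(M,\R)$. The asymptotic inclusions are inherited from $\Gamma_{T_n}(\pm T_n)$: since $\mathrm{J}^1_{u_-}$ is closed and backward invariant under $\Phi_H^t$, every $\omega$-limit point of $Z$ lies in $\mathrm{J}^1_{u_-}$, and forward invariance of the $\omega$-limit then yields $\omega(Z)\subset\widetilde{\mathcal{N}}_{u_-}$; the statement for $\alpha(Z)$ is symmetric.

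For (B1), I argue by contradiction. Given $z$ with $\alpha(z)\ni z_\alpha\in\widetilde{\mathcal{N}}_{u_-}$ and $\omega(z)\ni z_\omega\in\widetilde{\mathcal{N}}_{v_+}$, choose $t_n\to-\infty$ and $s_n\to+\infty$ with $\Phi_H^{t_n}(z)\to z_\alpha$ and $\Phi_H^{s_n}(z)\to z_\omega$; continuity yields $\epsilon_n:=|u(t_n)-u_-(x(t_n))|\to 0$ and $\eta_n:=|u(s_n)-v_+(x(s_n))|\to 0$. Applying the implicit-action inequality to the orbit segment $[t_n,s_n]$ twice --- first with $u_-$ (giving a lower bound $u(s_n)\geq u_-(x(s_n))-C\epsilon_n$) and then with $v_+$ interpreted via $\breve{H}$ (giving an upper bound $u(t_n)\leq v_+(x(t_n))+C'\eta_n$) --- and combining with $u(t_n)\geq u_-(x(t_n))-\epsilon_n$, I obtain an inequality of the form $u_-(x(t_n))-v_+(x(t_n))\leq\epsilon_n+C'\eta_n$. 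The Lipschitz constants $C,C'$ grow exponentially in $s_n-t_n$ (by (H3)), so the subsequence must be chosen --- exploiting the freedom to pick $s_n$ arbitrarily large within the $\omega$-limit once $t_n$ is fixed --- so that $C'\eta_n\to 0$. This would force $\min_M(u_--v_+)\leq 0$, contradicting $v_+<u_-$ on the compact manifold $M$.

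The main obstacle lies in the \emph{matching step} of (B2): producing a single pair of forward and backward characteristics whose time-$0$ endpoints coincide in $\mathrm{J}^1_\varphi$. Existence of each family separately (essentially Theorem \ref{thm1} and Remark \ref{time-reverse-thm1}) is insufficient; one must show that, as the parameters of the two families vary over $M$, a coincidence actually occurs. I expect to establish this through a continuity/degree-theoretic argument on the parametrization by endpoints in $M$, combined with compactness of $M$. A secondary challenge, in (B1), is the careful balance between the exponential Lipschitz factors and the decay of $\eta_n$ along the chosen subsequence.
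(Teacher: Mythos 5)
Your proposal diverges from the paper's proof and both halves contain genuine gaps. For (B1), the contradiction you set up requires $e^{\lambda(s_n-t_n)}\eta_n\to 0$, and this cannot be arranged: $\eta_n=|u(s_n)-v_+(x(s_n))|$ is only known to tend to $0$ along the subsequence realizing one $\omega$-limit point, with no rate whatsoever, while enlarging $s_n$ (your proposed "freedom") enlarges the exponential factor at the same time; an orbit approaching $\widetilde{\mathcal{N}}_{v_+}$ at rate $1/\log s$, say, defeats every choice of $s_n$ once $t_n$ is fixed. This is not a secondary calibration issue but the crux, and the paper avoids Gronwall-type amplification altogether: Lemma \ref{lem:heteroclinic orbit-0} first produces a \emph{fixed} smooth datum $\varphi_\eps$ with $v_+<\varphi_\eps<v_++\eps$ and $\lim_{t\to+\infty}T^-_t\varphi_\eps=u_-$, and then Proposition \ref{prop-B2} compares the orbit with the semigroup through $T^-_{t_n}\varphi_\eps(x(t_n))\leqslant h_{x(0),\varphi_\eps(x(0))}(x(t_n),t_n)<h_{x(0),u(0)}(x(t_n),t_n)\leqslant u(t_n)$, using only strict $u_0$-monotonicity of the action function (Proposition \ref{fundamental-prop} (1)) and Corollary \ref{Minimality}; no constant grows with $t_n$, and in the limit every $\omega$-limit point satisfies $\bar{u}\geqslant u_-(\bar{x})$, which already excludes $\widetilde{\mathcal{N}}_{v_+}$. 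Without an ingredient like Lemma \ref{lem:heteroclinic orbit-0}, your two-sided estimate over the whole interval $[t_n,s_n]$ fails.

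For (B2), the matching step you flag is indeed unproven and is not a removable technicality: the forward and backward "characteristics" through a point $Z\in\mathrm{J}^1_\varphi$ are both pieces of the single orbit $\Phi^t_H(Z)$, so what you actually need is one point of $\mathrm{J}^1_\varphi$ whose orbit calibrates $T^-_t\varphi$ for all $t\in[0,T]$ and simultaneously calibrates $T^+_t\varphi$ for $t\in[-T,0]$; the two families are selected by different variational problems, the endpoint maps are only surjections with jumping, non-unique minimizers, and no continuity or degree structure is available for the coincidence argument you sketch. Moreover, even granting the matching, the inheritance of asymptotics is unjustified: locally uniform convergence of $\Gamma_{T_n}$ on compact intervals forgets the endpoints $\Gamma_{T_n}(\pm T_n)$, and without a normalization the diagonal limit could lie entirely inside one slice. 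The paper sidesteps both problems: it uses only forward characteristics for the squeezed data $\varphi_\eps$ of Lemma \ref{lem:heteroclinic orbit-0}, normalizes time at the crossing of the intermediate level $w=\tfrac12(u_-+v_+)$, shows $t_\eps\to+\infty$ via the expansiveness estimate, and then determines the $\alpha$- and $\omega$-behaviour of the limit orbit at every fixed time through order comparisons ($v_+<U_\eps(t)<u_-$ and $U_0(0)$ at distance $\eps_0$ from both graphs) combined with Proposition \ref{prop-B2} and Lemma \ref{proof-A2-p}. I suggest rebuilding your argument around a comparison statement of the type of Proposition \ref{prop-B2} rather than around endpoint matching and exponential Lipschitz bounds.
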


\begin{remark}
(B1) suggests a similar irreversible behavior of $\Phi^t_H$ as Theorem \ref{irre} (2) under the more general assumptions. Here fixed points in maximal global attractor of monotone systems are replaced by Ma\~{n}\'{e} slices. In both cases, the orbits are forced to travel with positive increment in the action variable $u$.
\end{remark}

\begin{remark}
The assumption $v_+<u_-$ guarantees that $\widetilde {\mathcal{N}}_{v_+}\cap\widetilde {\mathcal{N}}_{u_-}=\emptyset$, thus (B2) is non-trivial. In classical Hamiltonian system, energy conservation ensures that these heteroclinic orbits has same energy with orbits in $\widetilde {\mathcal{N}}_{v_+}, \widetilde {\mathcal{N}}_{u_-}$, thus are contained in $H^{-1}(0)$. However, due to the identity $dH(X_H)=-dH(\cR)H$, the energy conservation law fails for general contact Hamiltonian flows. Accordingly, in Section 6.3, we show, through a concrete example, that the connecting orbits can have non-zero energy.
\end{remark}

\subsection{Outline of the paper}
The rest of this paper is organized as follows. In Section 2, we recall necessary definitions, in particular action functions, and results from \cite{WWY1}-\cite{WWY3} and use them to establish a global version of characteristic method for solving \eqref{HJe}. In Section 3, we prove Theorem A by employing theorems developed in the last section. Section 4 is devoted to the proof of Theorem B. In Section 5, we shall define the notions of action minimizing orbits as well as time-free minimizers for contact Hamiltonian systems. As an application of our main results, we analyse the toy model Example \ref{toy-model} and give a complete classification of the asymptotic behavior of global action minimizing orbits in the non-critical case. We also show, using concrete models, the connecting orbits constructed in (B2) can have non-zero energy. The last Appendix includes a list of properties of action functions and solution semigroups used in the main context and a deduction of decomposition theorem for semi-static orbits in Section 5.2.

\section{Global characteristics method}
In this section, we extend the characteristic theory for evolutionary Hamilton-Jacobi equation \eqref{HJe} to the case when the time variable $t$ is not small. It is well known \cite[Chapter 1]{Ar-PDE} that characteristic theory addressed the local solvability of \eqref{HJe} assuming the initial data is smooth. For larger time, the solution to \eqref{HJe} becomes non-smooth. This attributes to the fact that the characteristics initiating from the $1$-graph of the initial data, after being projected by $\rho$ and suspended by $t$, intersect each other on $M\times[0,+\infty)$. However, we can apply variational method to pick up certain characteristics that are useful in building the $1$-pseudograph of the viscosity solution. Thus, we first recall

\subsection{The variational representation of solution semigroup of \eqref{HJe}}
The variational principle for contact Hamiltonian systems satisfying (H1)-(H3) was developed in \cite{WWY1}-\cite{WWY3}. Most of the definitions and propositions are listed without proof, we refer to the original articles for details.

\vspace{0.5em}
Let $TM$ be the tangent bundle of $M$. A point of $TM$ will be denoted by $(x,\dot{x})$, where $x\in M$ and $\dot{x}\in T_x M$. With slightly abuse of notations, use $\|\cdot\|_x$ to denote the norm induced by $g$ on $T_xM$ and $T^\ast_xM$. For a contact Hamiltonian $H$ satisfies (H1)-(H2), its convex dual $L:TM\times\R\rightarrow\R$,
\[
L(x,u,\dot{x})=\sup_{p \in T_x^*M}\{p \cdot\dot{x}-H(x,u,p)\}
\]
shares analogous assumptions \textbf{(H1)-(H3)} as $H$, i.e., fiberwise convexity and superlinearity in $v$ and uniformly Lipschitz in $u$.

\vspace{1em}
The following functions, though defined in an implicit way, are counterparts of the action functions in Aubry-Mather theory, see \cite[page 1364]{M2} and \cite[page 83]{S} for details. They contain all information about the variational principle defined by the \eqref{ch}. For another way to formulate the variational principle for contact Hamiltonian systems, which goes back to the work of G.Herglotz, we refer to \cite{CCWY},\cite{CCJWY}.

\begin{proposition}\cite[Theorem 2.1, 2.2]{WWY3}\label{Implicit variational}
For any given $x_0\in M$ and $u_0\in \R$, there exist continuous functions $h_{x_0,u_0}(x,t), h^{x_0,u_0}(x,t)$ defined on $M\times (0,+\infty)$ by
\begin{equation}\label{eq:Implicit variational}
\begin{split}
h_{x_0,u_0}(x,t)&=\inf_{\substack{\gamma(t)=x\\ \gamma(0)=x_0 } }\Big\{u_0+\int_0^t L(\gamma(s),h_{x_0,u_0}(\gamma(s),s),\dot \gamma(s))\ ds\Big\},\\
h^{x_0,u_0}(x,t)&=\sup_{\substack{\gamma(t)=x_0\\ \gamma(0)=x } }\Big\{u_0-\int_0^t L(\gamma(s),h^{x_0,u_0}(\gamma(s) ,t-s),\dot \gamma(s))\ ds\Big\},
\end{split}
\end{equation}
where the infimum and supremum are taken among Lipschitz continuous curves $\gamma:[0,t]\rightarrow M$ and are achieved. We call $h_{x_0,u_0}(x,t)$ the backward action function and $h^{x_0,u_0}(x,t)$ the forward action function.
\end{proposition}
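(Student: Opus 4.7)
The plan is to decouple the implicit self-reference in the integrand by viewing the action along each fixed curve as the solution of an ODE, then apply Tonelli's direct method. For each absolutely continuous curve $\gamma:[0,t]\to M$ with $\gamma(0)=x_0$, I would consider the Carath\'eodory ODE
\[
\dot h_\gamma(s) = L(\gamma(s), h_\gamma(s), \dot\gamma(s)), \qquad h_\gamma(0) = u_0.
\]
Since $L$ inherits from (H3) a uniform Lipschitz bound in $u$, standard ODE theory gives a unique absolutely continuous solution $h_\gamma$ on $[0,t]$, and Gronwall's inequality gives a quantitative control of $h_\gamma$ under perturbations of $\gamma$. Then define
\[
h_{x_0,u_0}(x,t) := \inf_{\gamma} h_\gamma(t)
\]
over all such curves further satisfying $\gamma(t)=x$. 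Any such $\gamma$ automatically verifies the integral form $h_\gamma(t)=u_0+\int_0^t L(\gamma,h_\gamma,\dot\gamma)\,ds$ by definition, so once we identify $h_\gamma$ with $h_{x_0,u_0}(\gamma(\cdot),\cdot)$ along a minimizer the implicit formula in the statement follows.

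Next I would establish the existence of a minimizer. Superlinearity (H2) of $H$ transfers to superlinearity of $L$ in $\dot x$; combined with the Gronwall control on $h_\gamma$, this forces a priori bounds on $\int_0^t L(\gamma_n,h_{\gamma_n},\dot\gamma_n)\,ds$ along any minimizing sequence, and therefore uniform integrability of a superlinear function of $\|\dot\gamma_n\|$. Arzel\`a–Ascoli together with weak compactness in $W^{1,1}$ extracts a limit curve $\gamma^{*}$ with $\gamma^{*}(0)=x_0$, $\gamma^{*}(t)=x$. Lower semicontinuity of $\gamma\mapsto h_\gamma(t)$ is inherited from fiberwise convexity (H1) of $L$ in $\dot x$, provided the implicit $h_{\gamma_n}$ inside the integrand converges uniformly to $h_{\gamma^{*}}$; this convergence is exactly what (H3) and Gronwall supply. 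Classical Tonelli regularity arguments then upgrade $\gamma^{*}$ to a Lipschitz curve, matching the class prescribed in the statement.

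To close the implicit loop and recover the formula as written, I would invoke Bellman's dynamic programming principle: any minimizer $\gamma^{*}$ restricts to a minimizer on every subinterval $[0,s]$ with endpoint $\gamma^{*}(s)$ at time $s$. This gives $h_{\gamma^{*}}(s)=h_{x_0,u_0}(\gamma^{*}(s),s)$ for all $s\in(0,t]$, and substituting into the integral form of the ODE yields
\[
h_{x_0,u_0}(x,t) = u_0+\int_0^t L\bigl(\gamma^{*}(s),h_{x_0,u_0}(\gamma^{*}(s),s),\dot\gamma^{*}(s)\bigr)\,ds,
\]
while the reverse inequality follows from the definition applied to an arbitrary competitor. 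The forward action $h^{x_0,u_0}(x,t)$ is constructed symmetrically: for curves $\gamma:[0,t]\to M$ with $\gamma(0)=x$, $\gamma(t)=x_0$, solve $\dot u_\gamma(s) = -L(\gamma(s), u_\gamma(s), \dot\gamma(s))$ backward from $u_\gamma(t)=u_0$ and take a supremum; the same compactness, convexity, and dynamic-programming steps apply after a time reversal. Joint continuity of $h_{x_0,u_0}$ and $h^{x_0,u_0}$ on $M\times(0,+\infty)$ is then obtained by comparing near-optimal curves under small perturbations of $(x,t)$ via reparametrization and concatenation.

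The main obstacle I expect is the interaction between the outer extremum and the self-referential dependence of the integrand on $h$ itself: a minimizing sequence $\{\gamma_n\}$ converges only in a weak topology on which $h_{\gamma_n}$ is not obviously continuous, so the standard lower semicontinuity argument needs an extra ingredient. The role of hypothesis (H3) is precisely to supply this ingredient, by pinning $|h_{\gamma_n}(s)-h_{\gamma^{*}}(s)|$ to the uniform deviation between $\gamma_n$ and $\gamma^{*}$ through Gronwall, so that convergence of the curves drags the implicit values along with them.
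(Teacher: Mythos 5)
This proposition is quoted from \cite{WWY3} and the paper offers no proof of it, so there is nothing internal to compare against; in \cite{WWY1}--\cite{WWY3} the functions $h_{x_0,u_0}$ are produced by a fixed-point/iteration argument applied directly to the implicit relation, whereas you route everything through the Herglotz formulation (solve $\dot h_\gamma=L(\gamma,h_\gamma,\dot\gamma)$ along each curve, then minimize $h_\gamma(t)$). That route can be made to work --- it is essentially the content of \cite{CCWY},\cite{CCJWY} --- but as written your argument has a genuine gap at the step where you ``close the implicit loop.''

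Your dynamic programming argument correctly gives $h_{\gamma^*}(s)=\tilde h(\gamma^*(s),s)$ along a minimizer (solutions of the scalar ODE cannot cross, by (H3)), and hence $\tilde h(x,t)\geqslant\inf_\gamma\{u_0+\int_0^t L(\gamma,\tilde h(\gamma(s),s),\dot\gamma)\,ds\}$. But the reverse inequality does \emph{not} ``follow from the definition applied to an arbitrary competitor.'' For a non-minimizing $\gamma$ the definition only gives $\tilde h(x,t)\leqslant h_\gamma(t)=u_0+\int_0^t L(\gamma,h_\gamma(s),\dot\gamma)\,ds$, and the competitor's value in the implicit functional is $u_0+\int_0^t L(\gamma,\tilde h(\gamma(s),s),\dot\gamma)\,ds$ with $\tilde h(\gamma(s),s)\leqslant h_\gamma(s)$ but in general $\neq h_\gamma(s)$. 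Since (H3) gives only a Lipschitz, not a monotone, dependence of $L$ on $u$, there is no pointwise comparison between the two integrands (your argument works when $\partial_u L\leqslant 0$, i.e.\ in the discounted case, but fails in sign otherwise --- and the whole point of this paper is the non-monotone setting). Repairing this requires a real input, e.g.\ first showing that $\tilde h$ solves \eqref{HJe} and then using $\partial_t\tilde h+\langle\partial_x\tilde h,\dot\gamma\rangle\leqslant L(\gamma,\tilde h,\dot\gamma)$ a.e.\ along an arbitrary curve to dominate $s\mapsto\tilde h(\gamma(s),s)$ by the competitor's action. A second, more technical gap: Gronwall does \emph{not} pin $|h_{\gamma_n}(s)-h_{\gamma^*}(s)|$ to the uniform ($C^0$) deviation of the curves, because the error term it produces involves $L(\gamma_n,\cdot,\dot\gamma_n)-L(\gamma^*,\cdot,\dot\gamma^*)$ and hence the velocities, which under your compactness scheme converge only weakly in $W^{1,1}$; so the lower semicontinuity of $\gamma\mapsto h_\gamma(t)$, which is nonlocal and not of standard integral-functional form, still needs a dedicated argument.
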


\begin{remark}\label{minimizer-orbit}
Let $\gamma\in Lip([0,t],M)$ be the minimizer (resp. maximizer) of \eqref{eq:Implicit variational} and
\[
x(s):=\gamma(s), \quad u(s):=h_{x_0,u_0}(\gm(s),s)\,(\text{resp.}\,\,h^{x_0,u_0}(\gm(s),t-s)), \quad p(s):=\frac{\partial L}{\partial \dot x}(\gm(s),u(s),\dot\gm(s)).
\]
Then $(x(s),u(s),p(s))$ satisfies \eqref{ch} with $x(0)=x_0,x(t)=x\,($resp. $x(0)=x,x(t)=x_0)$ and
\[
\lim_{s\to 0^+ }u(s)=u_0\,(\text{resp. }\lim_{s\to t^-}u(s)=u_0).
\]
\end{remark}

As a direct consequence of Proposition \ref{Implicit variational} and Remark \ref{minimizer-orbit}, we obtain an equivalent definition of action functions. They are quite useful for obtaining information on general $\Phi^t_H$-orbits.
\begin{corollary}\label{Minimality}
Given $x_0,x\in M, u_0\in\R$ and $t>0$, set $(x(\tau),u(\tau),p(\tau))=\Phi^\tau_H(x(0),u(0),p(0))$ and
\[
S^{x,t}_{x_0,u_0}=\big\{(x(\tau),u(\tau),p(\tau)), \tau\in [0,t]\,:\, x(0)=x_0,x(t)=x,u(0)=u_0\big\},
\]
\[
S^{x_0,u_0}_{x,t}=\big\{(x(\tau),u(\tau),p(\tau)), \tau\in [0,t]\,:\, x(0)=q, x(t)=x_0, u(t)=u_0\big\},
\]
then for any $ (x,t)\in M\times(0,+\infty)$,
\begin{equation}\label{eq:inf}
h_{x_0,u_0}(x,t)=\inf\,\{u(t):(x(\tau),p(\tau),u(\tau))\in S^{x,t}_{x_0,u_0}\},
\end{equation}
\begin{equation}
h^{x_0,u_0}(x,t)=\sup \{u(0):(x(\tau),p(\tau),u(\tau))\in S^{x_0,u_0}_{x,t}\}.
\end{equation}
\end{corollary}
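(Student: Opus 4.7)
The plan is to prove the first identity by establishing two inequalities separately; the formula for $h^{x_0,u_0}$ then follows by a verbatim time-reversal applied to $\breve{H}$. For the direction $\inf\{u(t):(x(\tau),u(\tau),p(\tau))\in S^{x,t}_{x_0,u_0}\} \leq h_{x_0,u_0}(x,t)$, I would directly combine Proposition \ref{Implicit variational} and Remark \ref{minimizer-orbit}: a minimizing Lipschitz curve $\gamma^*$ for the variational problem defining $h_{x_0,u_0}(x,t)$ exists by the former, and the latter lifts it to a $\Phi^\tau_H$-orbit $(x^*(\tau),u^*(\tau),p^*(\tau))$ with $x^*(0)=x_0$, $x^*(t)=x$, $u^*(0)=u_0$, and $u^*(s)=h_{x_0,u_0}(\gamma^*(s),s)$ for every $s\in[0,t]$; in particular $u^*(t)=h_{x_0,u_0}(x,t)$. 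Since this orbit lies in $S^{x,t}_{x_0,u_0}$, the infimum is at most $h_{x_0,u_0}(x,t)$.

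For the reverse inequality, I take any orbit $(x(\tau),u(\tau),p(\tau))\in S^{x,t}_{x_0,u_0}$ and set $\gamma(\tau):=x(\tau)$. The third equation of \eqref{ch} combined with the Legendre identity $L(x,u,\dot x)=p\cdot\dot x - H(x,u,p)$ (valid along $X_H$ because $\dot x=\partial_p H$) gives $\dot u(\tau)=L(\gamma(\tau),u(\tau),\dot\gamma(\tau))$, whence $u(\tau)=u_0+\int_0^\tau L(\gamma,u,\dot\gamma)\,ds$ on $[0,t]$. Meanwhile, the implicit definition in \eqref{eq:Implicit variational} applied to the restricted curve $\gamma|_{[0,\tau]}$ yields $\phi(\tau):=h_{x_0,u_0}(\gamma(\tau),\tau)\leq u_0+\int_0^\tau L(\gamma,\phi,\dot\gamma)\,ds$. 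Thus $\phi$ is an integral subsolution, and $u$ the exact solution, of the same Carath\'eodory ODE $\dot v = L(\gamma(\cdot),v,\dot\gamma(\cdot))$ starting from $u_0$; a comparison principle then forces $\phi\leq u$ throughout $[0,t]$, and evaluation at $\tau=t$ gives $h_{x_0,u_0}(x,t)\leq u(t)$.

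The main obstacle is this comparison step, because the two-sided Lipschitz estimate $|L(\gamma,\phi,\dot\gamma)-L(\gamma,u,\dot\gamma)|\leq L_0|\phi-u|$ supplied by (H3) is not in itself a one-sided Gronwall inequality for $(\phi-u)^+$. The standard remedy is an $\epsilon$-perturbation: let $u_\epsilon$ solve $\dot u_\epsilon=L(\gamma,u_\epsilon,\dot\gamma)+\epsilon$ with $u_\epsilon(0)=u_0+\epsilon$, and suppose for contradiction that there is a first crossing $\tau_1\in(0,t]$ with $\phi(\tau_1)=u_\epsilon(\tau_1)$. On $[0,\tau_1)$ one has $\phi<u_\epsilon$, so the Lipschitz bound collapses to the signed estimate $L(\gamma,\phi,\dot\gamma)-L(\gamma,u_\epsilon,\dot\gamma)\leq L_0(u_\epsilon-\phi)$; subtracting the integral identities and applying the classical (one-sided) Gronwall lemma on $[0,\tau_1]$ forces $u_\epsilon(\tau_1)-\phi(\tau_1)\geq\epsilon e^{-L_0 t}>0$, a contradiction. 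Letting $\epsilon\to 0^+$ yields $\phi\leq u$ on $[0,t]$ and completes the proof.
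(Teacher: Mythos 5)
Your first inequality is correct and is essentially all the paper itself says (the corollary is stated as a direct consequence of Proposition \ref{Implicit variational} and Remark \ref{minimizer-orbit}; the reverse inequality is really imported from the construction of the action functions in \cite{WWY1}, where ``minimality'' is part of the statement). The genuine gap is in your comparison step for the reverse inequality. At a first crossing time $\tau_1$, with $w=u_\eps-\phi$, what your estimates actually yield is
\[
0=w(\tau_1)\geqslant \eps(1+\tau_1)-L_0\int_0^{\tau_1}w(s)\,ds,
\]
i.e.\ only the harmless information $\int_0^{\tau_1}w\geqslant \eps(1+\tau_1)/L_0$. The integral term enters with a \emph{negative} sign, so large values of $w$ at earlier times make the right-hand side smaller, and Gronwall cannot be iterated: the implication ``$w(\tau)\geqslant\eps-L_0\int_0^\tau w$ for all $\tau$ implies $w\geqslant\eps e^{-L_0\tau}$'' is false (take $w$ large on an initial subinterval and zero afterwards). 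Worse, the comparison principle you are invoking is itself false for mere integral subsolutions of a two-sided Lipschitz ODE: for $f(s,v)=-Kv$ with $K>0$ (exactly the $u$-dependence of $L$ for the model \eqref{model1} with $\lambda>0$), a continuous $\phi$ that dips to $-1$ on $[0,a]$ and then rises to $Ka/2$ satisfies $\phi(\tau)\leqslant\int_0^\tau f(s,\phi(s))\,ds$ on $[0,a+1/K]$ while exceeding the exact solution $u\equiv0$. So the inequality $(\ast)$ for $\phi(\tau)=h_{x_0,u_0}(x(\tau),\tau)$, together with (H3), is not enough to conclude $\phi\leqslant u$.

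What rescues the statement is extra structure of $h_{x_0,u_0}$ beyond the single integral inequality along $\gamma$. The cleanest route is the characterization from \cite{WWY1}: for each Lipschitz curve $\gamma$ from $x_0$ to $x$, the Carath\'eodory problem $\dot v=L(\gamma(s),v,\dot\gamma(s))$, $v(0)=u_0$ has a unique solution $u_\gamma$, and $h_{x_0,u_0}(x,t)=\inf_\gamma u_\gamma(t)$; since your computation shows that the $u$-component of any orbit in $S^{x,t}_{x_0,u_0}$ is precisely $u_{x(\cdot)}$, the inequality $h_{x_0,u_0}(x,t)\leqslant u(t)$ is then immediate. Alternatively, one can propagate $h_{x_0,u_0}(x(\tau),\tau)\leqslant u(\tau)$ along a fine partition of $[0,t]$ using the Markov property, the $u_0$-monotonicity and the Lipschitz dependence on $u_0$ from Proposition \ref{fundamental-prop}, together with a short-time estimate $h_{x(\tau_i),u(\tau_i)}(x(\tau_{i+1}),\sigma)\leqslant u(\tau_{i+1})+O(\sigma^2)$ on each subinterval, and then let the mesh tend to zero. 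Either of these replaces the invalid Gronwall step.
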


Now one can use action functions to give a variational formula for the solution semigroup of \eqref{HJe}. This is an extension of Lax-Oleinik operators \cite[Section 4.4]{Fathi_book} to contact Hamiltonian systems.

\begin{proposition}\cite[Proposition 4.1]{WWY2}\label{semi-group1}
For each $\varphi\in C(M,\R)$ and $(x,t)\in M\times(0,+\infty)$,
\begin{equation}\label{eq:Tt-+ rep}
\begin{split}
T^{-}_t\varphi(x)=\inf_{x_0\in M}h_{x_0,\varphi(x_0)}(x,t),\\
T^{+}_t\varphi(x)=\sup_{x_0\in M}h^{x_0,\varphi(x_0)}(x,t).
\end{split}
\end{equation}
In addition, we set $T^{\pm}_0\varphi(x)=\varphi(x)$, then for $t\geq0,\,\,T^{\pm}_t:\varphi\mapsto T^{\pm}_t\varphi$ maps $C(M,\R)$ to itself and satisfies all conditions listed in Section 1.3.
\end{proposition}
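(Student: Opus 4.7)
The plan is to let $U^{-}(x,t) := \inf_{x_0 \in M} h_{x_0, \varphi(x_0)}(x,t)$ (and analogously $U^{+}$), prove that $U^{-}$ is a continuous viscosity solution of \eqref{HJe} with initial datum $\varphi$, and then invoke the uniqueness assertion of Proposition \ref{sol-HJe} to identify $U^{-}(x,t) = T^{-}_t \varphi(x)$. Once the variational formula is in hand, the semigroup conditions (1)--(3) of Section 1.3 follow either abstractly from well-posedness of \eqref{HJe} or directly from a dynamic-programming identity deduced along the way. I would organise the argument into three stages: preliminary estimates on action functions, a dynamic-programming/viscosity check, and the identification via uniqueness.

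In the preliminary stage, from Proposition \ref{Implicit variational} and assumptions (H1)--(H3) I would first establish three facts about $h_{x_0, u_0}(x,t)$: monotonicity in $u_0$ (via Gronwall, using (H3)); the concatenation identity
\[
h_{x_0, u_0}(x, t+s) \;=\; \inf_{y \in M} h_{y,\, h_{x_0, u_0}(y, s)}(x, t),
\]
obtained by breaking a minimizing curve in \eqref{eq:Implicit variational} at time $s$; and coercivity $h_{x_0, u_0}(x, t) \to +\infty$ as $d(x, x_0)/t \to +\infty$, inherited from (H2). These estimates together with compactness of $M$ imply that the infimum defining $U^{-}$ is attained for every $(x,t)$, that $U^{-}$ is continuous on $M \times (0, +\infty)$, and that $U^{-}(\cdot, t) \to \varphi$ uniformly as $t \to 0^{+}$ (the upper bound by taking $x_0 = x$, the lower bound by using coercivity to rule out contributions from far-away $x_0$).

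Next, the concatenation identity combined with monotonicity yields the Bellman identity
\begin{align*}
U^{-}(x, t+s) &= \inf_{x_0,\, y} h_{y,\, h_{x_0, \varphi(x_0)}(y, s)}(x, t) \\
&= \inf_{y} h_{y,\, \inf_{x_0} h_{x_0, \varphi(x_0)}(y, s)}(x, t) = \inf_{y} h_{y,\, U^{-}(y, s)}(x, t).
\end{align*}
To verify the viscosity condition at an interior point $(x_0, t_0)$ where a test function $\psi \in C^{1}(M \times (0,+\infty))$ touches $U^{-}$, I would, for the subsolution inequality, insert a constant-covelocity competitor emanating from $x_0$ into the Bellman identity and apply the Legendre duality between $L$ and $H$; for the supersolution inequality, I would propagate along a minimizer realising $U^{-}(x_0, t_0)$ and use Remark \ref{minimizer-orbit} to identify $\partial_x \psi(x_0, t_0)$ with the costate $p(t_0) = \partial_{\dot x} L$ of the associated characteristic. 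With $U^{-}$ thus a continuous viscosity solution of \eqref{HJe} with datum $\varphi$, Proposition \ref{sol-HJe} gives $U^{-} = T^{-}_t \varphi$, and the forward case is entirely analogous with $\breve{H}$ replacing $H$.

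The main obstacle lies in the viscosity verification. In the classical $u$-independent Lax--Oleinik setting the running cost is additive, so varying the endpoint of a minimizer commutes cleanly with the Lagrangian functional. Here the integrand $L(\gamma(s), h_{x_0, u_0}(\gamma(s), s), \dot\gamma(s))$ depends on the accumulated action itself, so one must propagate endpoint perturbations through a Gronwall comparison based on (H3) to ensure that the error is of strictly higher order than the leading affine term contributed by $\psi$. Controlling this nonlinear feedback, and thereby closing the sub- and supersolution inequalities, is precisely where the Lipschitz hypothesis (H3) becomes indispensable; without it, neither the Bellman identity nor the viscosity check would propagate from the infinitesimal level to a global statement on $M \times (0, +\infty)$.
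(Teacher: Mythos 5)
The paper does not prove this proposition: it is imported verbatim from \cite[Proposition 4.1]{WWY2}, and Section 2.1 explicitly states that such results are ``listed without proof.'' Your strategy --- define $U^\pm$ by the variational formulas, establish monotonicity in $u_0$, the Markov/concatenation identity and coercivity, derive the Bellman identity, verify the viscosity sub- and supersolution inequalities with Gronwall control of the implicit $u$-dependence via (H3), and then identify $U^-$ with $T^-_t\varphi$ through the uniqueness assertion of Proposition \ref{sol-HJe} --- is exactly the standard argument carried out in the cited source, and it is logically non-circular within this paper's framework since Proposition \ref{sol-HJe} and Proposition \ref{Implicit variational} are stated independently beforehand. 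I see no gap in the outline; the points you flag as delicate (the $t\to 0^+$ limit and the propagation of endpoint perturbations through the implicitly defined action) are indeed where the work lies, and they are handled in \cite{WWY1}--\cite{WWY2} by precisely the Gronwall-type comparisons you describe.
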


\begin{remark}\label{chara-mini}
Owing to the above definition, we notice that the viscosity solution to \eqref{HJe} propagates, i.e., can be calculate from integrating the Lagrangian, along minimizers defined by the \eqref{eq:Implicit variational}. This extends the notion of local characteristics of \eqref{HJe}, which are general orbits of $\Phi^t_H$ starting from 1-graph of the initial data. The essential use of minimizers comes from the property that the $\rho$-projection of two such minimizers do not intersect at intermediate points.
\end{remark}

\subsection{Characteristics for the solution semigroups}
The classical method of characteristics relates the \textbf{local solvability} of the Cauchy problem \eqref{HJe} to the study of orbits of \eqref{ch} near $\mathrm{J}^1_{\varphi}$, here $\varphi\in C^2(M)$ is a smooth initial data. More precisely, if $U:M\times[0,\delta]\rightarrow\R$ is a $C^2$ solution to \eqref{HJe}, then every trajectory $(x(t),u(t),p(t)), t\in[0,\delta]$ in $J^1(M,\R)$  satisfying the identities
\begin{equation}\label{ch-sol}
u(t)=U(x(t),t),\quad p(t)=\partial_x U(x(t),t).
\end{equation}
is an orbit segment $\Phi^t_H(Z)$ with some $Z\in\mathrm{J}^1_{\varphi}$. Conversely, for any $Z\in\mathrm{J}^1_{\varphi}$, the orbit segment $\Phi^t_H(Z)$ satisfies \eqref{ch-sol} on $t\in[0,\delta]$. In this sense, we call the orbit segment $\Phi^t_H(Z)=(x(t),u(t),p(t)), Z\in\mathrm{J}^1_{\varphi}, t\in[0,\delta]$ a \textbf{characteristic} for \eqref{HJe}. Furthermore, one has
\begin{theorem}\cite[Lecture 2, Theorem 3]{Ar-PDE} or \cite[Chapter 3, Theorem 2]{Evans-PDE}\label{local-chm}
For any $\varphi\in C^2(M,\R)$, there are $\delta>0$ and a solution $U\in C^2(M\times[0,\delta],\R)$ to \eqref{HJe} such that for any $t\in[0,\delta]$,
\begin{enumerate}[(1)]
  \item $\rho\circ\Phi^t_H:\mathrm{J}^1_{\varphi}\rightarrow M$ is a diffeomorphism,
  \item $\Phi^t_H(\mathrm{J}^1_{\varphi})=\mathrm{J}^1_{U(\cdot,t)}$.
\end{enumerate}
\end{theorem}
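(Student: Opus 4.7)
The plan is to exploit the contact structure: the initial $1$-graph $\mathrm{J}^1_\varphi$ is a smooth Legendrian submanifold of $(J^1(M,\R),\xi_{std})$, and the flow $\Phi^t_H$, being generated by a contact Hamiltonian, consists of contactomorphisms and therefore carries Legendrian submanifolds to Legendrian submanifolds. So $\Phi^t_H(\mathrm{J}^1_\varphi)$ remains Legendrian for every $t$; the task is to show that for small $t$ it remains a \emph{graph} over $M$ via $\rho$, and to identify its generating function with the classical solution to \eqref{HJe}.

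First I would establish (1). At $t=0$ the map $\rho\circ\Phi^0_H$ restricted to $\mathrm{J}^1_\varphi$ is the identity $M\to M$ under the obvious parametrization, so its differential is everywhere invertible. Smooth dependence of $\Phi^t_H$ on $(t,z)$ together with the compactness of $\mathrm{J}^1_\varphi$ produces a uniform $\delta>0$ such that $d(\rho\circ\Phi^t_H)$ stays invertible on $\mathrm{J}^1_\varphi$ for all $t\in[0,\delta]$, giving a local diffeomorphism at every point. A standard degree/homotopy argument starting from the identity at $t=0$ then upgrades local invertibility to a global diffeomorphism onto $M$.

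For (2), a Legendrian submanifold of $J^1(M,\R)$ that projects diffeomorphically to $M$ via $\rho$ is automatically the $1$-graph of a unique $C^2$ function: set $U(x,t)$ to be the $u$-coordinate of $\Phi^t_H\bigl((\rho\circ\Phi^t_H)^{-1}(x)\bigr)$; the Legendrian condition $du=p\,dx$ then forces $p=\partial_x U(\cdot,t)$ on the image, and smoothness of the flow and of the inverse gives $U\in C^2(M\times[0,\delta],\R)$. To check that $U$ satisfies \eqref{HJe}, I would pick any characteristic $(x(t),u(t),p(t))=\Phi^t_H(Z)$ with $Z\in\mathrm{J}^1_\varphi$, differentiate the identity $u(t)=U(x(t),t)$ in $t$, use $\dot x=\partial_p H$ and $\dot u=p\cdot\partial_p H-H$ from \eqref{ch} together with $p(t)=\partial_x U(x(t),t)$, and observe the cancellation of the $p\cdot\partial_p H$ terms to arrive at $\partial_t U+H(x,U,\partial_x U)=0$. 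Since $\rho\circ\Phi^t_H$ sweeps out all of $M$ for each $t\in[0,\delta]$, this equation holds pointwise, and the initial condition $U(\cdot,0)=\varphi$ is built into the construction.

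I expect the main technical point to be producing the \emph{uniform} $\delta$ in step (1): one needs a single time window on which $\rho\circ\Phi^t_H|_{\mathrm{J}^1_\varphi}$ is a bijection, not merely a family of local diffeomorphisms, and both the local-to-global promotion and the handling of the compact initial submanifold rely on the compactness of $M$ together with smooth dependence of the flow on initial data. The Legendrian-to-graph identification and the PDE verification are then essentially formal consequences of the contact form $du-p\,dx$ vanishing on $\Phi^t_H(\mathrm{J}^1_\varphi)$.
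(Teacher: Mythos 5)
Your proposal is correct in substance, but note that the paper does not prove this statement at all: it is quoted as a classical result with references to Arnold and Evans, so there is no internal proof to compare against. Your route -- $\mathrm{J}^1_\varphi$ is Legendrian, $\Phi^t_H$ acts by contactomorphisms so the image stays Legendrian, compactness plus invertibility of $d(\rho\circ\Phi^t_H)$ at $t=0$ gives a uniform $\delta$, a properness/degree (covering map) argument upgrades the local diffeomorphisms to global ones, and a Legendrian graph over $\rho$ is the $1$-graph of a generating function which is then checked to solve \eqref{HJe} along characteristics -- is essentially Arnold's geometric proof; Evans's proof reaches the same conclusion in local coordinates via the inverse function theorem applied to the characteristic system \eqref{ch}, without the contact-geometric packaging. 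Two points in your sketch deserve a sentence each if written out. First, the local-to-global step: the cleanest version is to observe that $(t,z)\mapsto\bigl(t,\rho\circ\Phi^t_H(z)\bigr)$ is a proper local diffeomorphism from $[0,\delta]\times\mathrm{J}^1_\varphi$ onto the connected manifold $[0,\delta]\times M$, hence a covering, and the sheet number is $1$ at $t=0$. Second, regularity: with $\varphi\in C^2$ the submanifold $\mathrm{J}^1_\varphi$ is only $C^1$, so the graph construction directly gives $U(\cdot,t)$ and its candidate derivative $P(\cdot,t)$ of class $C^1$; one then gets $U\in C^2$ because the Legendrian condition forces $\partial_x U=P$ and the equation $\partial_t U=-H(x,U,\partial_x U)$ forces $\partial_t U$ to be $C^1$ as well, which is exactly the classical bookkeeping hidden in the cited proofs.
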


\vspace{0.5em}
From $(1)$, one notice that any two orbits do not collide within the time interval $[0,\delta]$. But this is not true for larger $t$, and it leads to non-differentiability of $U(\cdot,t)$. A global version of characteristics method for first order PDEs is obtained by extending the notion of $1$-graph of $U(\cdot,t)$ and characteristics for the solution semigroup, or viscosity solution, to \eqref{HJe}. The first is done in Definition \ref{pseudo-gr}. As is indicated in Remark \ref{chara-mini}, we shall show minimizers determined by action functions which, by Remark \ref{minimizer-orbit}, lifts to orbits of \eqref{ch}, are suitable candidates of characteristics.

\vspace{1em}
Recall that for an initial data $\varphi\in C(M,\R)$, we define $U:M\times[0,+\infty)\rightarrow\R$ by
\begin{equation}\label{sol-sg}
U(x,t):=T^{-}_t \varphi(x)=\inf_{x'\in M}h_{x',\varphi(x')}(x,t).
\end{equation}
By Proposition \ref{fundamental-prop} (3), fixing $(x,t)\in M\times(0,+\infty)$, the map
\[
x'\mapsto h_{x',\varphi(x')}(x,t)
\]
is continuous. Then there is a $x_0\in M$ such that $U(x,t)=h_{x_0,\varphi(x_0)}(x,t)$. Due to the properties of backward action function, we have
\begin{lemma}\label{sol-action}
For any minimizer $\gamma:[0,t]\rightarrow M$ of $h_{x_0,\varphi(x_0)}(x,t)$,
\[
U(\gamma(\tau),\tau)=h_{x_0,\varphi(x_0)}(\gamma(\tau),\tau).
\]
\end{lemma}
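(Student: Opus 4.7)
The approach is to use the Markov-type property of the backward action function along its minimizers together with the strict monotonicity of $u_0 \mapsto h_{y,u_0}(x,s)$ in the initial value that follows from the Lipschitz assumption (H3).

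Set $u_\tau := h_{x_0,\varphi(x_0)}(\gamma(\tau),\tau)$; the lemma then amounts to $U(\gamma(\tau),\tau) = u_\tau$. One direction is immediate: taking $x' = x_0$ in the infimum defining $T^-_\tau \varphi$ in \eqref{sol-sg} gives $U(\gamma(\tau),\tau) \leq h_{x_0,\varphi(x_0)}(\gamma(\tau),\tau) = u_\tau$.

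For the reverse inequality I would argue by contradiction, using two ingredients. First, since $\gamma$ is a minimizer of $h_{x_0,\varphi(x_0)}(x,t)$, its restriction $\gamma|_{[\tau,t]}$ realizes the implicit variational problem starting at $(\gamma(\tau), u_\tau)$ at time $\tau$; this Markov-along-minimizers property of the backward action yields
\[
U(x,t) \;=\; h_{x_0,\varphi(x_0)}(x,t) \;=\; h_{\gamma(\tau),\,u_\tau}(x,\,t-\tau).
\]
Second, the semigroup identity $T^-_t = T^-_{t-\tau}\circ T^-_\tau$ applied to $\varphi$, combined with \eqref{sol-sg}, gives
\[
U(x,t) \;=\; \inf_{y\in M} h_{y,\,U(y,\tau)}(x,\,t-\tau) \;\leq\; h_{\gamma(\tau),\,U(\gamma(\tau),\tau)}(x,\,t-\tau).
\]
Now if $U(\gamma(\tau),\tau) < u_\tau$ strictly, the strict monotonicity of $h_{y,u_0}(x,s)$ in $u_0$ (a Gronwall-type consequence of the ODE for the $u$-component in \eqref{ch} together with (H3)) yields
\[
h_{\gamma(\tau),\,U(\gamma(\tau),\tau)}(x,\,t-\tau) \;<\; h_{\gamma(\tau),\,u_\tau}(x,\,t-\tau) \;=\; U(x,t).
\]
Chaining the two displays produces $U(x,t) < U(x,t)$, a contradiction. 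Hence $U(\gamma(\tau),\tau) = u_\tau$, as desired; the boundary cases $\tau=0$ and $\tau=t$ are trivial from $U(\cdot,0)=\varphi$ and the hypothesis on $x_0$.

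The main (and essentially the only) obstacle is having clean statements of the two auxiliary facts invoked above, namely Markov-along-minimizers for the implicit action function and strict monotonicity of $h_{y,u_0}(x,s)$ in $u_0$. Neither is deep, but they have to be pulled precisely from the property sheet in the Appendix (and ultimately from \cite{WWY1}-\cite{WWY3}); once those are in hand, the proof is a two-line chaining of inequalities.
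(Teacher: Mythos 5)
Your proposal is correct and follows essentially the same route as the paper: the easy inequality from the infimum defining $T^-_\tau\varphi$, then a contradiction argument chaining the semigroup identity, the Markov property along the minimizer (Proposition \ref{fundamental-prop} (2)), and strict monotonicity of $h_{y,u_0}$ in $u_0$ (Proposition \ref{fundamental-prop} (1)) to obtain $U(x,t)<U(x,t)$. Both auxiliary facts you flag are exactly the ones the paper cites from its Appendix.
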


\begin{proof}
By \eqref{sol-sg}, we only need to show that for any $\tau\in[0,t]$,
\[
U(\gamma(\tau),\tau)\geq h_{x_0,\varphi(x_0)}(\gamma(\tau),\tau).
\]
We argue by contradiction. Assume there is $\tau\in(0,t)$ such that
\[
\underline{u}:=U(\gamma(\tau),\tau)<\bar{u}:=h_{x_0,\varphi(x_0)}(\gamma(\tau),\tau),
\]
then to complete the proof, it is necessary to see that
\[
U(x,t)=T^-_{t-\tau}U(\cdot,\tau)(x)\leqslant h_{\gamma(\tau),\underline{u}}(x,t-\tau)<h_{\gamma(\tau),\bar{u}}(x,t-\tau)=h_{x_0,\varphi(x_0)}(x,t)=U(x,t).
\]
Here, the first equality follows from the semigroup property and the second is a consequence of Proposition \ref{fundamental-prop} (2) and the fact that $\gamma$ is a minimizer of $h_{x_0,\varphi(x_0)}(x,t)$; the second inequality follows from Proposition \ref{fundamental-prop} (1).
\end{proof}

We need the fact that the $\rho$-projection of the local characteristic ensured by Theorem \ref{local-chm} is a minimizer in sense of Proposition \ref{Implicit variational}. Notice that by Theorem \ref{local-chm}, the map $\rho\circ\Phi^{\delta}_H:\mathrm{J}^1_{\varphi}\rightarrow M$ is a diffeomorphism, we use $(\rho\circ\Phi^{\delta}_H)^{-1}:M\rightarrow\mathrm{J}^1_{\varphi}$ to denote its inverse.

\begin{lemma}\label{local-mini}
For any $x\in M$ and $Z_0=(x_0,\varphi(x_0),d_x\varphi(x_0))=(\rho\circ\Phi^{\delta}_H)^{-1}(x)$, set
\[
\Phi^\tau_H(Z_0)=(x(\tau),u(\tau),p(\tau))\quad\text{for}\,\,\tau\in[0,\delta],
\]
then for all $\tau\in[0,\delta], u(\tau)=h_{x_0,\varphi(x_0)}(x(\tau),\tau)$ and
\[
h_{x_0,\varphi(x_0)}(x,\delta)=\varphi(x_0)+\int_0^\delta L(x(\tau),h_{x_0,\varphi(x_0)}(x(\tau) ,\tau),\dot{x}(\tau))\ d\tau.
\]
\end{lemma}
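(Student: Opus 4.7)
The plan is to verify the identity $u(\tau)=h_{x_0,\varphi(x_0)}(x(\tau),\tau)$ by sandwiching $u(\tau)$ between two bounds that come from the two different descriptions of $h_{x_0,\varphi(x_0)}$: the orbit/infimum characterization from Corollary \ref{Minimality}, and the solution-semigroup characterization from Proposition \ref{semi-group1}. The integral formula will then drop out by integrating the third component of \eqref{ch} along the characteristic orbit.

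First I would observe that, by construction, $\Phi^\tau_H(Z_0)=(x(\tau),u(\tau),p(\tau))$ is a $\Phi^t_H$-orbit with $x(0)=x_0$ and $u(0)=\varphi(x_0)$. Hence for each fixed $\tau\in(0,\delta]$, the orbit segment on $[0,\tau]$ belongs to $S^{x(\tau),\tau}_{x_0,\varphi(x_0)}$, so Corollary \ref{Minimality} gives the lower bound
\begin{equation*}
u(\tau)\;\geqslant\;h_{x_0,\varphi(x_0)}(x(\tau),\tau).
\end{equation*}
For the reverse inequality, I would invoke Theorem \ref{local-chm}: since $\Phi^\tau_H(\mathrm{J}^1_{\varphi})=\mathrm{J}^1_{U(\cdot,\tau)}$ and $Z_0\in\mathrm{J}^1_{\varphi}$, we have the pointwise identity $u(\tau)=U(x(\tau),\tau)$ on $[0,\delta]$. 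Combining this with the variational formula $U(x(\tau),\tau)=T^-_\tau\varphi(x(\tau))=\inf_{x'\in M}h_{x',\varphi(x')}(x(\tau),\tau)$ from Proposition \ref{semi-group1}, the particular choice $x'=x_0$ yields
\begin{equation*}
u(\tau)=U(x(\tau),\tau)\;\leqslant\;h_{x_0,\varphi(x_0)}(x(\tau),\tau),
\end{equation*}
and the two inequalities together give the claimed equality for every $\tau\in[0,\delta]$ (the case $\tau=0$ being immediate from $u(0)=\varphi(x_0)$ and the continuous extension of $h_{x_0,\varphi(x_0)}(\cdot,\tau)$ at $\tau=0$).

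For the integral identity, I would exploit the Legendre duality underlying the definition of $L$: since along the orbit $\dot x(\tau)=\partial_pH(x(\tau),u(\tau),p(\tau))$, the supremum in $L(x(\tau),u(\tau),\dot x(\tau))=\sup_p\{p\cdot\dot x(\tau)-H(x(\tau),u(\tau),p)\}$ is attained precisely at $p=p(\tau)$, giving $L(x(\tau),u(\tau),\dot x(\tau))=p(\tau)\cdot\dot x(\tau)-H(x(\tau),u(\tau),p(\tau))$. Comparing with the third equation of \eqref{ch} yields $\dot u(\tau)=L(x(\tau),u(\tau),\dot x(\tau))$. Integrating from $0$ to $\delta$, using $u(0)=\varphi(x_0)$ and the just-proved identity $u(\tau)=h_{x_0,\varphi(x_0)}(x(\tau),\tau)$, produces the second claim evaluated at $\tau=\delta$, which is $x(\delta)=x$.

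There is no substantial obstacle in this argument: the heavy lifting is already done in Theorem \ref{local-chm} (smoothness and consistency of the local characteristic picture), in Corollary \ref{Minimality} (orbit characterization of $h_{x_0,u_0}$), and in the semigroup formula of Proposition \ref{semi-group1}. The lemma is essentially a bookkeeping statement reconciling the classical and variational notions of characteristic on the short time interval $[0,\delta]$; the only subtle point is being careful to extract the matching inequalities from the "right" sides (variational $\leqslant$ versus Hamiltonian $\geqslant$), which is what allows the squeeze to close.
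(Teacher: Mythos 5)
Your argument is correct and takes essentially the same route as the paper: the identity $u(\tau)=h_{x_0,\varphi(x_0)}(x(\tau),\tau)$ is obtained by the same squeeze between Corollary \ref{Minimality} (giving $h\leqslant u$) and the semigroup formula of Proposition \ref{semi-group1} combined with Theorem \ref{local-chm} (giving $u\leqslant h$). For the integral formula you integrate $\dot u=p\cdot\dot x-H=L$ directly along the orbit, whereas the paper differentiates $U(x(\tau),\tau)$ and invokes the PDE together with the same Legendre--Fenchel identity; this is the identical computation read in the opposite direction, so the difference is only cosmetic.
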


\begin{proof}
By Theorem \ref{local-chm}, $U\in C^2(M\times[0,\delta],\R)$ and $(x(\tau),p(\tau),u(\tau))$ satisfy
\begin{equation}\label{eq:2-1}
u(\tau)=U(x(\tau),\tau),\quad p(\tau)=\partial_x U(x(\tau),\tau),\quad\tau\in[0,\delta].
\end{equation}
and the boundary conditions read as
\begin{equation}\label{eq:2-2}
x(0)=x_0,\quad x(\delta)=x,\quad u(0)=\varphi(x_0).
\end{equation}
It follows from Proposition \ref{semi-group1} that
\[
h_{x_0,\varphi(x_0)}(x(\tau),\tau)\geq T^-_\tau\varphi(x(\tau))=U(x(\tau),\tau)=u(\tau).
\]
Combining \eqref{eq:2-2} and Corollary \ref{Minimality} gives
\[
h_{x_0,\varphi(x_0)}(x(\tau),\tau)=\inf\,\{u(\tau):(x(t),p(t),u(t))\in S^{x(\tau),\tau}_{x_0,\varphi(x_0)}\}\leq u(\tau)
\]
and therefore for $\tau\in[0,\delta]$,
\[
u(\tau)=h_{x_0,\varphi(x_0)}(x(\tau) ,\tau).
\]
Now we can compute as
\begin{align*}
&h_{x_0,\varphi(x_0)}(x,\delta)\geq U(x,\delta)=U(x(\delta),\delta)\\
=\,&U(x(0),0)+\int^\delta_0 \partial_t U(x(\tau),\tau)+\langle\partial_x U(x(\tau),\tau),\dot{x}(\tau)\rangle\,d\tau\\
=\,&\varphi(x_0)+\int^\delta_0 \partial_t U(x(\tau),\tau)+\langle p(\tau),\partial_p H(x(\tau),u(\tau),p(\tau))\rangle\,d\tau\\
=\,&\varphi(x_0)+\int^\delta_0 \partial_t U(x(\tau),\tau)+H(x(\tau),u(\tau),p(\tau))+L(x(\tau),u(\tau),\dot{x}(\tau))\,d\tau\\
=\,&\varphi(x_0)+\int^\delta_0 L(x(\tau),u(\tau),\dot{x}(\tau))\,d\tau\\
=\,&\varphi(x_0)+\int^\delta_0 L(x(\tau),h_{x_0,\varphi(x_0)}(x(\tau) ,\tau),\dot{x}(\tau))\,d\tau.
\end{align*}
Here, the third equality uses \eqref{eq:2-1} and the equations \eqref{ch} for characteristics, the fourth equality follows from the knowledge of Legendre-Fenchel inequality in convex analysis, i.e.,
\[
\dot{x}=\partial_p H(x,u,p)\Leftrightarrow\langle p,\dot{x}\rangle=H(x,u,p)+L(x,u,\dot{x}),
\]
the fifth equality is due to \eqref{eq:2-1} and the fact that $U(x,t)$ is a solution to \eqref{HJe}. Combining the above inequality and \eqref{eq:Implicit variational}, we complete the proof.
\end{proof}

Now we could verify the first equation in \eqref{ch-sol} by showing the conclusion of the above lemma holds for arbitrary $t>0$. In fact we have
\begin{theorem}\label{global-chm1}
Assume $\varphi\in C^2(M,\R)$. For any $(x,t)\in M\times(0,+\infty)$, there is $Z_0\in\mathrm{J}^1_\varphi$ such that the characteristic segment $\Phi^\tau_H(Z_0)=(x(\tau),u(\tau),p(\tau))$ satisfies
\begin{equation}\label{ch-me-u}
u(\tau)=h_{x_0,\varphi(x_0)}(\gamma(\tau),\tau)=U(x(\tau),\tau)\quad\quad\text{for}\quad\tau\in[0,t].
\end{equation}
\end{theorem}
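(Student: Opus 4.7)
The plan is to extract a global characteristic from the variational description of $U$ on all of $[0,t]$, and then use classical characteristic theory on a short initial interval $[0,\delta]$ to verify that its initial point lies on $\mathrm{J}^1_\varphi$. By Proposition \ref{semi-group1} we have $U(x,t)=T^-_t\varphi(x)=\inf_{x'\in M}h_{x',\varphi(x')}(x,t)$, and continuity of $x'\mapsto h_{x',\varphi(x')}(x,t)$ together with compactness of $M$ selects some $x_0\in M$ attaining the infimum. Proposition \ref{Implicit variational} furnishes a minimizer $\gamma:[0,t]\rightarrow M$ of $h_{x_0,\varphi(x_0)}(x,t)$ with $\gamma(0)=x_0$, $\gamma(t)=x$, and Remark \ref{minimizer-orbit} lifts it to an orbit $\Phi^\tau_H(Z_0)=(x(\tau),u(\tau),p(\tau))$ of \eqref{ch} satisfying $x(\tau)=\gamma(\tau)$, $u(\tau)=h_{x_0,\varphi(x_0)}(\gamma(\tau),\tau)$ on $(0,t]$, and $u(0)=\varphi(x_0)$. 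Because $x_0$ is a minimizing initial point, Lemma \ref{sol-action} applied to $\gamma$ yields $U(\gamma(\tau),\tau)=u(\tau)$ on $(0,t]$, and at $\tau=0$ both sides equal $\varphi(x_0)$; this is precisely \eqref{ch-me-u}.

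What remains is to show $Z_0=(x_0,\varphi(x_0),p(0))\in\mathrm{J}^1_\varphi$, i.e., $p(0)=d_{x_0}\varphi$. Choose $\delta\in(0,t]$ so small that Theorem \ref{local-chm} provides a $C^2$ solution of \eqref{HJe} on $M\times[0,\delta]$; by uniqueness of viscosity solutions it coincides with $U$ on that slab. Differentiating $u(\tau)=U(x(\tau),\tau)$ along the orbit on $(0,\delta]$ and substituting $\dot u=p\cdot\dot x-H(x,u,p)$, $\dot x=\partial_p H(x,u,p)$ from \eqref{ch} together with the PDE $\partial_t U=-H(x,U,\partial_x U)$, we obtain
\[
\bigl(p(\tau)-\partial_x U(x(\tau),\tau)\bigr)\cdot\partial_p H(x(\tau),u(\tau),p(\tau))=H(x(\tau),u(\tau),p(\tau))-H(x(\tau),u(\tau),\partial_x U(x(\tau),\tau)).
\]
Strict convexity of $H$ in $p$ (assumption (H1)) gives the strict tangent inequality $H(x,u,q)>H(x,u,p)+\partial_p H(x,u,p)\cdot(q-p)$ for $q\neq p$; setting $q=\partial_x U(x(\tau),\tau)$ is incompatible with the above identity unless $p(\tau)=\partial_x U(x(\tau),\tau)$. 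Thus this equality holds throughout $(0,\delta]$, and letting $\tau\to 0^+$ yields $p(0)=\partial_x U(x_0,0)=d_{x_0}\varphi$, so $Z_0\in\mathrm{J}^1_\varphi$.

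The main obstacle is exactly this last identification: the variational orbit carries an initial momentum specified only implicitly through the minimizer, and there is no a priori reason for it to equal $d_{x_0}\varphi$. The strict convexity hypothesis (H1) is the essential tool, converting the one-sided tangent inequality into equality on the short initial interval where $U$ is smooth, and thereby pinning the orbit onto the Legendrian graph of $U(\cdot,\tau)$ produced from $\mathrm{J}^1_\varphi$ by Theorem \ref{local-chm}.
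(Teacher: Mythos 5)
Your proof is correct, but it assembles the ingredients differently from the paper. The paper's proof of Theorem \ref{global-chm1} splices two pieces: it writes $T^-_t\varphi=T^-_{t-\delta}\circ T^-_{\delta}\varphi$, picks a minimizer $\gamma_1$ of $h_{x_1,T^-_{\delta}\varphi(x_1)}(x,t-\delta)$ for an optimal intermediate point $x_1$, prepends the local classical characteristic from Lemma \ref{local-mini} whose image under $\rho\circ\Phi^{\delta}_H$ is $x_1$, and then verifies---via the Markov property and monotonicity of the action functions (the ``Claim'' \eqref{eq:3})---that the concatenated curve is itself a minimizer of $h_{x_0,\varphi(x_0)}(x,t)$; the membership $Z_0\in\mathrm{J}^1_\varphi$ is then automatic because the first piece starts on the $1$-graph by construction. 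You instead take a single minimizer realizing $U(x,t)=h_{x_0,\varphi(x_0)}(x,t)$ on all of $[0,t]$, obtain \eqref{ch-me-u} directly from Lemma \ref{sol-action} and Remark \ref{minimizer-orbit}, and then pin down the initial momentum by differentiating $u(\tau)=U(x(\tau),\tau)$ on the slab $M\times[0,\delta]$, where Theorem \ref{local-chm} together with uniqueness of viscosity solutions makes $U$ classical, using strict convexity (H1) to force $p(\tau)=\partial_x U(x(\tau),\tau)$ and letting $\tau\to 0^+$. This convexity computation is exactly the one the paper employs later in Lemma \ref{global-chm2} (there with local semiconcavity in place of $C^2$ regularity), so your argument is a legitimate reshuffling: it trades the paper's concatenation and Markov-property claim for a momentum identification on the initial slab. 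The only points deserving an explicit word in a polished write-up are the standard facts that the lifted minimizer extends to an orbit on the closed interval, so that $p(0)=\lim_{\tau\to 0^+}p(\tau)$ exists, and that the classical solution furnished by Theorem \ref{local-chm} coincides with $T^-_\tau\varphi$ on $M\times[0,\delta]$; both are also used implicitly by the paper.
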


\begin{proof}
For $t\leqslant \delta$, Theorem \ref{global-chm1} reduces to Theorem \ref{local-chm} and there is nothing to prove. For $t>\delta$, we use \eqref{sol-sg} to write
\[
U(x,t)=T^-_t \varphi(x)=T^-_{t-\delta}\circ T^-_{\delta}\varphi(x)=\inf_{x'\in M}h_{x',T^-_{\delta}\varphi(x')}(x,t-\delta).
\]
Proposition \ref{fundamental-prop} (4) and \ref{prop-sg} (3) imply $h_{x',T^-_{\delta}\varphi(x')}(x,t)$ is Lipschitz continuous in $x'$. Since $M$ is compact, the above infimum is attained at $x'=x_1\in M$. Set $u_1=T^-_{\delta}\varphi(x_1)$, then according to Proposition \ref{Implicit variational}, there is a minimizer $\gamma_1:[0,t-\delta]\rightarrow M$ with $\gamma_1(0)=x_1$ and
\begin{equation}\label{min-1}
U(x,t)=h_{x_1,u_1}(x,t-\delta)=u_1+\int_0^{t-\delta} L(\gamma_1(\tau), h_{x_1,u_1}(\gamma_1(\tau) ,\tau),\dot{\gamma_1}(\tau))\ d\tau.
\end{equation}
For $\tau\in[\delta,t]$, we set $x_1(\tau)=\gamma_1(\tau-\delta)$ and
\[
u_1(\tau):=h_{x_1,u_1}(\gamma_1(\tau-\delta),\tau-\delta),\quad p_1(\tau):=\frac{\partial L}{\partial \dot x}(\gamma_1(\tau-\delta),u_1(\tau),\dot{\gamma}_1(\tau-\delta)),
\]
then Proposition \ref{Implicit variational} also implies that $(x_1(\tau),u_1(\tau),p_1(\tau))$ satisfies \eqref{ch} and
\begin{align*}
x_1(\delta)=x_1,\quad x_1(t)=x,\quad\lim_{\tau\rightarrow t^-}u_1(\tau)=u_1.
\end{align*}

By Lemma \ref{local-mini}, there is a unique $Z_0=(x_0, \varphi(x_0), d_x \varphi(x_0))\in\mathrm{J}^1_\varphi$ such that
\[
\rho\circ\Phi^{\delta}_H(z _0)=x_1,
\]
and set $\Phi^\tau_H(Z_0)=(x_0(\tau),u_0(\tau),p_0(\tau)),\tau\in[0,\delta]$, then
\begin{equation}\label{min-2}
h_{x_0,\varphi(x_0)}(x_1,\delta)=\varphi(x_0)+\int_0^\delta L(x_0(\tau),h_{x_0,\varphi(x_0)}(x_0(\tau) ,\tau), \dot{x}_0(\tau) )\ d\tau
\end{equation}
and
\[
h_{x_0,\varphi(x_0)}(x_1,\delta)=u_0(\delta)=U(x_1,\delta)=T^-_{\delta}\varphi(x_1)=u_1.
\]

\vspace{0.5em}
\textbf{Claim:}\,\,for $\tau\in[\delta,t]$,
\begin{equation}\label{eq:3}
h_{x_0,\varphi(x_0)}(x_1(\tau),\tau)=h_{x_1,u_1}(x_1(\tau),\tau-\delta).
\end{equation}

\vspace{0.5em}
\textit{Proof of the claim}:\,\,It follows from Proposition \ref{fundamental-prop} (2) that
\[
h_{x_0,\varphi(x_0)}(x_1(\tau),\tau)=\inf_{x'\in M}h_{x',h_{x_0,\varphi(x_0)}(x',\delta)}(x_1(\tau),\tau-\delta)\leq h_{x_1,u_1}(x_1(\tau),\tau-\delta).
\]
Assume for some $\tau\in(\delta,t)$,
\[
u_2:=h_{x_0,\varphi(x_0)}(x_1(\tau),\tau)<h_{x_1,u_1}(x_1(\tau),\tau-\delta):=\bar{u}_2,
\]
then by Proposition \ref{fundamental-prop} (1) and the fact that $\gamma_1$ is a minimizer of $h_{x_1,u_1}(x,t-\delta)$,
\[
h_{x_0,\varphi(x_0)}(x,t)\leqslant h_{x_1(\tau),u_2}(x,t-\tau)<h_{x_1(\tau),\bar{u}_2}(x,t-\tau)=h_{x_1,u_1}(x,t-\delta)=U(x,t).
\]
This contradicts to Proposition \ref{semi-group1} since
\[
U(x,t)=T^-_t \varphi(x)=\inf_{x'\in M}h_{x',\varphi(x')}(x,t)\leqslant h_{x_0,\varphi(x_0)}(x,t).
\]
\qed

Define $x:[0,t]\rightarrow M$ by
\[
x(\tau):=
\begin{cases}
x_0(\tau),\quad \tau\in[0,\delta];\\
x_1(\tau),\quad \tau\in[\delta,t],
\end{cases}
\]
then combining \eqref{min-1}-\eqref{eq:3}, we obtain
\begin{align*}
&\,h_{x_0,\varphi(x_0)}(x,t)=h_{x_1,u_1}(x,t-\delta)=u_1+\int_0^{t-\delta} L(\gamma_1(\tau),h_{x_1,u_1}(\gamma_1(\tau) ,\tau), \dot{\gamma_1}(\tau))\ d\tau\\
=&\,\,u_1+\int_{\delta}^{t} L(x_1(\tau),h_{x_1,u_1}(x_1(\tau),\tau-\delta), \dot{x_1}(\tau))\ d\tau\\
=&\,\,h_{x_0,\varphi(x_0)}(x_1,\delta)+\int_{\delta}^{t} L(x_1(\tau),h_{x_0,\varphi(x_0)}(x_1(\tau),\tau), \dot{x_1}(\tau))\ d\tau\\
=&\,\,\varphi(x_0)+\int_0^\delta L(x_0(\tau),h_{x_0,\varphi(x_0)}(x_0(\tau) ,\tau), \dot{q}_0(\tau) )\ d\tau+\int_{\delta}^{t} L(x_1(\tau),h_{x_0,\varphi(x_0)}(x_1(\tau),\tau), \dot{x_1}(\tau))\ d\tau\\
=&\,\,\varphi(x_0)+\int_0^t L(x(\tau),h_{x_0,\varphi(x_0)}(x(\tau),\tau), \dot{x}(\tau))\ d\tau,\\
\end{align*}
This shows that $x:[0,t]\rightarrow M$ is a minimizer of $h_{x_0,\varphi(x_0)}(x,t)$, thus by Proposition \ref{Implicit variational},
\[
u(\tau)=h_{x_0,\varphi(x_0)}(\gamma(\tau),\tau),\quad p(\tau)=\frac{\partial L}{\partial \dot x}(x(\tau),u(\tau),\dot{x}(\tau))
\]
is a $C^1$ characteristic starting from $z _0$. Invoking Lemma \ref{sol-action}, we find
\[
u(\tau)=
\begin{cases}
u_0(\tau)=U(x_0(\tau),\tau)=U(x(\tau),\tau),\quad &\tau\in[0,\delta];\\
h_{x_1,u_1}(x_1(\tau),\tau-\delta)=U(x_1(\tau),\tau)=U(x(\tau),\tau),\quad &\tau\in[\delta,t].
\end{cases}
\]
\end{proof}

Moreover, it is well known in the theory of one dimensional calculus of variation that
\begin{lemma}\label{global-chm2}
Assume $\varphi\in\text{Lip}(M,\R)$. With the same construction as above, $U$ is differentiable at $(x(\tau),\tau)$ for $\tau\in(0,t)$ and $p(\tau)=\partial_x U(x(\tau),\tau)$ for $\tau\in [0,t)$. Thus $p(t)\in D^{\ast}_x U(x(t),t)$.
\end{lemma}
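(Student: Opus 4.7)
The plan is to show differentiability of $U(\cdot,\tau)$ at $x(\tau)$ for $\tau\in(0,t)$ by combining an upper envelope argument, which places $p(\tau)$ in the spatial superdifferential $D^+_x U(x(\tau),\tau)$, with the viscosity subsolution property of $U$, which forces this superdifferential to reduce to a singleton by strict convexity of $H$ in $p$.

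First, I would invoke Lemma \ref{sol-action} together with the minimality of $x_0$ in $U(x,t)=\inf_{x'}h_{x',\varphi(x')}(x,t)$ to establish
\[
U(y,\tau)\;\leq\; h_{x_0,\varphi(x_0)}(y,\tau)\quad\text{for all } y\in M,
\]
with equality at $y=x(\tau)$. The backward action $h_{x_0,\varphi(x_0)}(\cdot,\tau)$ is differentiable at the endpoint $x(\tau)$ of its minimizer with spatial derivative $p(\tau)=\partial_{\dot x}L(x(\tau),u(\tau),\dot x(\tau))$, a standard consequence of Proposition \ref{Implicit variational} and the regularity theory for Tonelli action functions collected in Proposition \ref{fundamental-prop}. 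Tangency of $U(\cdot,\tau)$ from below to this $C^1$-at-$x(\tau)$ envelope places $p(\tau)\in D^+_x U(x(\tau),\tau)$.

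Next, for any $q\in D^+_x U(x(\tau),\tau)$, using the Lipschitz regularity of $U$ in both variables I would lift $q$ to a space-time superdifferential $(q,a)\in D^+U(x(\tau),\tau)$ and test it along $\sigma\mapsto x(\sigma)$. Since $U(x(\sigma),\sigma)=u(\sigma)$ by Theorem \ref{global-chm1}, the superdifferential inequality evaluated as $\sigma\to\tau^\pm$ forces $\dot u(\tau)=q\cdot\dot x(\tau)+a$. Combined with the viscosity subsolution inequality $a+H(x(\tau),u(\tau),q)\leq 0$ and the identity $\dot u(\tau)=L(x(\tau),u(\tau),\dot x(\tau))$ read off the contact equations via Legendre duality, this yields
\[
L(x(\tau),u(\tau),\dot x(\tau))+H(x(\tau),u(\tau),q)\;\leq\; q\cdot\dot x(\tau);
\]
since the reverse inequality is Fenchel-Young, equality holds and strict convexity of $H$ in $p$ forces $q=p(\tau)$. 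Hence $D^+_x U(x(\tau),\tau)=\{p(\tau)\}$, and $U(\cdot,\tau)$ is differentiable at $x(\tau)$ with derivative $p(\tau)$. For $\tau=0$ the identity $p(0)=\partial_x\varphi(x_0)$ is built into the selection $Z_0\in\mathrm{J}^1_\varphi$ (for Lipschitz $\varphi$ the relevant $x_0$ can be chosen at a point of differentiability of $\varphi$ by Rademacher), and for $\tau=t$ one passes to the limit along any $\tau_n\uparrow t$, using $p(\tau_n)=\partial_x U(x(\tau_n),\tau_n)$ together with continuity of the characteristic to conclude $p(t)\in D^*_x U(x(t),t)$ directly from the definition of reachable differential.

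The main obstacle is the lifting step: upgrading a purely spatial superdifferential $q\in D^+_x U(x(\tau),\tau)$ to a space-time superdifferential of $U$. Although this is a standard feature of Lipschitz functions of several variables, the cleanest realization in the present setting is to produce an explicit $C^1$ space-time upper bound by running the semigroup $T^-_s$ backward a small amount from time $\tau$ and exploiting its regularizing effect, which then pins down the correct companion slope $a$. Once this lift is secured, the Fenchel-Young squeeze is mechanical and the remainder of the argument is routine.
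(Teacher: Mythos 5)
Your Fenchel--Young squeeze is sound and is essentially the same convexity mechanism the paper uses: the paper computes $\dot u(\tau)$ once via the contact equations and once via the chain rule through the PDE, and the difference is exactly $H(x,u,P)-H(x,u,p)-H_p(x,u,p)(P-p)\geq 0$ with equality, which is your Legendre-duality identity in disguise. The genuine gap is that both of the steps you lean on to get differentiability in the first place require the local \emph{semiconcavity} of $U$, which you never establish or invoke. First, the lift of a purely spatial superdifferential $q\in D^+_xU(x(\tau),\tau)$ to a space--time element $(q,a)\in D^+U(x(\tau),\tau)$ is \emph{not} a standard feature of Lipschitz functions — for a general Lipschitz function the partial superdifferential can be strictly larger than the projection of the joint one, and your proposed fix (running $T^-_s$ backward a small amount) is not carried out and is not obviously workable. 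For locally semiconcave functions the lift is standard (it reduces to the corresponding fact for concave functions), but that is precisely the hypothesis you are missing. Second, the final inference ``$D^+_xU(x(\tau),\tau)=\{p(\tau)\}$, hence $U(\cdot,\tau)$ is differentiable at $x(\tau)$'' is false for general Lipschitz functions: e.g.\ $f(y)=0$ for $y\leq0$ and $f(y)=-y(1+\sin\log y)$ for $y>0$ has $D^+f(0)=\{0\}$ but is not differentiable at $0$. Again semiconcavity rescues the implication, and it is exactly where the paper starts: it first records that $U$ is locally semiconcave (via \cite[Theorem 5.3.8]{CS}) and obtains differentiability of $U$ at the \emph{interior} points $(x(\tau),\tau)$ of the minimizer by the standard argument of \cite[Theorem 6.4.7]{CS}, and only then identifies the derivative.

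A secondary issue: your envelope step asserts that $h_{x_0,\varphi(x_0)}(\cdot,\tau)$ is differentiable at the endpoint $x(\tau)$ of the minimizer with derivative $p(\tau)$. In this implicitly defined contact setting that is plausible (for $\tau<t$ the point is interior to the full minimizer, so the restricted minimizer is unique) but it is not among the properties listed in Proposition \ref{fundamental-prop} and would itself need an argument of the same semiconcavity/uniqueness type. Note also that the lemma asserts joint differentiability of $U$ at $(x(\tau),\tau)$, whereas your argument only addresses $U(\cdot,\tau)$. If you insert ``$U$ is locally semiconcave on $M\times(0,+\infty)$'' at the outset, your route closes: nonemptiness of $D^+U$, the lift, and the singleton-implies-differentiable step all become legitimate, and your subsolution-plus-Fenchel--Young argument then gives a correct, slightly different packaging of the paper's proof.
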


\begin{proof}
Since $U:M\times[0,+\infty)\rightarrow\R$ is locally Lipschitz, \cite[Theorem 5.3.8]{CS} implies that $U$ is locally semi-concave. Using this fact along with a similar argument as \cite[Theorem 6.4.7]{CS}, $U$ is differentiable at $(x(\tau),\tau)$ for $\tau\in(0,t)$. And we have
\begin{align*}
\dot u(\tau)=&\, \frac{d}{d\tau} U(x(\tau),\tau)
= \partial_t U(x(\tau),\tau) +\partial_x U(x(\tau),\tau) \cdot \dot x(\tau) \\
=&\, -H\Big (x(\tau),   U(x(\tau),\tau) , \partial_x U(x(\tau),\tau) \Big) + \partial_x U(x(\tau),\tau) \cdot \dot x(\tau) \\
=&\, -H\Big (x(\tau),   u(\tau) , \partial_x U(x(\tau),\tau) \Big) + \partial_x U(x(\tau),\tau) \cdot  H_p(x(\tau),p(\tau),u(\tau) )
\end{align*}
where the third equality owes to the fact that $U(x,t) $ is a solution to \eqref{HJe}. Setting $P(\tau)=\partial_x U(x(\tau),\tau)$, we have, according to the above computation and \eqref{ch},
$$
0=\dot u(\tau)-\dot u(\tau)=H\Big (x(\tau),u(\tau),P(\tau)\Big)-H\Big(x(\tau),u(\tau),p(\tau)\Big )- H_p(x(\tau),u(\tau),p(\tau))(P(\tau)-p(\tau))\geq0,
$$
the last inequality uses the strict convexity assumption (H1) of $H$. Thus $p(\tau)=P(\tau)=\partial_x U(x(\tau),\tau)$. The last conclusion follows from $p(t)=\lim_{\tau\rightarrow t}p(\tau)=\lim_{\tau\rightarrow t}\partial_x U(x(\tau),\tau)\in D_x^{\ast}U(x(t),t)$.
\end{proof}


We summarize Theorem \ref{global-chm1} and Corollary \ref{global-chm2} into the following
\begin{theorem}(Global characteristics method)\label{global-chm}
Assume $\varphi\in C^2(M,\R)$. For any $(x,t)\in M\times(0,+\infty)$, there is $Z_0\in\mathrm{J}^1_\varphi$ such that the orbit segment $\Phi^\tau_H(Z_0)=(x(\tau),p(\tau),u(\tau))$ of \eqref{ch} satisfies
\begin{align*}\label{ch-me-u}
u(\tau)&=U(x(\tau),\tau)\quad\quad\text{for}\quad\tau\in[0,t],\\
p(\tau)&=\partial_x U(x(\tau),\tau)\quad\text{for}\quad\tau\in [0,t)\quad\text{and}\quad p(t)\in D^{\ast}_x U(x(t),t).
\end{align*}
\end{theorem}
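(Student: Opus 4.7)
The statement is flagged in the text as a consolidation of Theorem \ref{global-chm1} and Lemma \ref{global-chm2}, so the plan is simply to feed the output of the former into the hypotheses of the latter and read off the combined conclusion. I would first apply Theorem \ref{global-chm1} at the given $(x,t)\in M\times(0,+\infty)$ to obtain some $Z_0=(x_0,\varphi(x_0),d_x\varphi(x_0))\in\mathrm{J}^1_\varphi$ such that the orbit $\Phi^\tau_H(Z_0)=(x(\tau),u(\tau),p(\tau))$ satisfies $u(\tau)=h_{x_0,\varphi(x_0)}(x(\tau),\tau)=U(x(\tau),\tau)$ for all $\tau\in[0,t]$, with the further information (embedded in the proof of Theorem \ref{global-chm1}) that the $\rho$-projection $x(\cdot):[0,t]\to M$ is a minimizer for $h_{x_0,\varphi(x_0)}(x,t)$ in the sense of Proposition \ref{Implicit variational}. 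This establishes the first line of the conclusion.

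For the second line, since $C^2(M,\R)\subset\mathrm{Lip}(M,\R)$ and the $Z_0$ just produced arises exactly from the construction used in Lemma \ref{global-chm2}, I would invoke that lemma verbatim on the same characteristic: it yields differentiability of $U$ at $(x(\tau),\tau)$ for $\tau\in(0,t)$, the identification $p(\tau)=\partial_x U(x(\tau),\tau)$ on $[0,t)$, and the reachable-differential inclusion $p(t)\in D^{\ast}_xU(x(t),t)$ obtained by taking $\tau\to t^-$. The endpoint $\tau=0$ needs a one-line check: because $U(\cdot,0)=\varphi\in C^2$ and $p(0)=d_x\varphi(x_0)$, we have $p(0)=\partial_x U(x_0,0)$, so the formula extends to the left endpoint as claimed.

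There is no genuine obstacle here; the substantive work was already carried out earlier, namely the gluing of local characteristics to a global minimizer in Theorem \ref{global-chm1}, and the combined use of local semi-concavity of $U$ with strict convexity (H1) of $H$ in Lemma \ref{global-chm2} to force the momentum on the characteristic to coincide with $\partial_x U$. The present theorem is a packaging step whose proof I would reduce to the single sentence: apply Theorem \ref{global-chm1}, then Lemma \ref{global-chm2} to the orbit thereby produced, and note the initial-time identity $p(0)=d_x\varphi(x_0)=\partial_xU(x_0,0)$.
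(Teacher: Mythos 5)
Your proposal is correct and matches the paper exactly: the paper gives no separate proof of Theorem \ref{global-chm}, stating only that it summarizes Theorem \ref{global-chm1} and Lemma \ref{global-chm2}, which is precisely the composition you describe. Your extra one-line check at $\tau=0$ is a harmless (and careful) addition, though Lemma \ref{global-chm2} as stated already covers the left endpoint.
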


\begin{remark}
The theorem shows that even for large $t$, the solution of \eqref{HJe} can also be traced by $\Phi^\tau_H$-orbits starting from the 1-graph of the initial data. The main difference from the local case is that the map $\rho\circ\Phi^t_H:\mathrm{J}^1_\varphi\rightarrow M$ is only a \textbf{surjection} rather than a diffeomorphism. We shall call the $\Phi^\tau_H$-orbits found by Theorem \ref{global-chm} the \textbf{characteristics for the solution semigroup}.
\end{remark}

\section{Proof of Theorem \ref{thm1}}
This section is devoted to a proof of our first main result. In this section, we always assume that $\varphi\in C^2(M,\R)$ and $\lim_{t\rightarrow+\infty}T^-_t\varphi=u_-$ uniformly for $x\in M$. The first conclusion of the theorem is an abstract mechanism for producing \textbf{semi-infinite} orbits connecting a Legendrian graph $\mathrm{J}^1_{\varphi}$ and a Legendrian pseudograph $\mathrm{J}^1_{u_-}$. The second conclusion shows the convergence of solution semigroup is actually $C^1$. Before giving the proof, we recall

\subsection{Calibrated properties of weak KAM solutions}
Calibrated properties reveal some duality between functions or differential forms defined by variational principle and geometric objects, e.g. curves, surfaces, attaining the extremum of the corresponding variational problem. For weak KAM solutions, we have
\begin{proposition}\label{weak-kam}\cite[Proposition 2.7]{WWY3}
$\,\,u\in\mathcal{S}_-\,\,($resp. $u_+\in\mathcal{S}_+)$ if and only if
\begin{enumerate}[(1)]
  \item $u_\pm$ is dominated by $L$,  i.e., for any continuous piecewise $C^1$ curve $\gamma:[a,b]\rightarrow M$,
        \begin{equation}\label{L-dom}
        u_\pm(\gamma(b))-u_\pm(\gamma(a))\leqslant\int^{b}_a L(\gamma(s),u_\pm(\gamma(s)),\dot{\gm}(s))\ ds.
        \end{equation}
        In general, the notation $u\prec L$ means $u\in C(M,\R)$ is dominated by $L$.

  \item for any $x\in M$, there is a $C^1$ curve $\gamma:(-\infty,0]$ (resp. $[0,+\infty))\rightarrow M$ with $\gamma(0)=x$ and
        \begin{equation}\label{L-dom}
        \begin{split}
        &u_-(x)-u_-(\gamma(t))=\int^{0}_t L(\gamma(s),u_-(\gamma(s)),\dot{\gm}(s))\ ds,\quad\forall t<0;\\
        (\text{resp.}\quad &u_+(\gamma(t))-u_+(x)=\int^{t}_0 L(\gamma(s),u_+(\gamma(s)),\dot{\gm}(s))\ ds, \quad\forall t>0.)
        \end{split}
        \end{equation}
        If $u_\pm\prec L$, the curves defined above are called $(u_-,L)\,\,($resp.$(u_+,L))$-\textbf{calibrated curves}.
\end{enumerate}
\end{proposition}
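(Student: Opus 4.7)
The plan is to treat only the backward case ($u_-$) in detail since the forward case follows by the symmetric construction via $\breve H$. Throughout I would rely on the variational formula $T^-_t u(x)=\inf_{y\in M}h_{y,u(y)}(x,t)$ from Proposition \ref{semi-group1}, the fact that the infimum defining $h$ is attained by a Lipschitz curve lifting to an orbit of \eqref{ch} (Proposition \ref{Implicit variational} and Remark \ref{minimizer-orbit}), and crucially the Lipschitz hypothesis (H3) on $L$ which will power the Gronwall comparisons needed at several points.

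For the ``only if'' direction of (1), I fix $\gamma:[a,b]\to M$ and, on any subinterval $[\alpha,\beta]$, consider the ODE solution $u^\ast$ along $\gamma$ with $u^\ast(\alpha)=u_-(\gamma(\alpha))$, i.e.\ $\dot u^\ast(s)=L(\gamma(s),u^\ast(s),\dot\gamma(s))$. Since $\gamma$ is a competitor in the infimum defining $h_{\gamma(\alpha),u_-(\gamma(\alpha))}(\gamma(\beta),\beta-\alpha)$, Proposition \ref{Implicit variational} yields $h_{\gamma(\alpha),u_-(\gamma(\alpha))}(\gamma(\beta),\beta-\alpha)\leq u^\ast(\beta)$, and the fixed-point relation $u_-=T^-_{\beta-\alpha}u_-$ forces $u_-(\gamma(\beta))\leq u^\ast(\beta)$. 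Subdividing $[a,b]$ into $N$ equal parts and resetting the ODE at each node, the total error between $u^\ast$ and $u_-\!\circ\!\gamma$ is $O(1/N)$ by continuity of $u_-$ and of the ODE flow; (H3) allows replacing $L(\cdot,u^\ast,\cdot)$ by $L(\cdot,u_-\!\circ\!\gamma,\cdot)$ in the integral up to $O(1/N)$. Letting $N\to\infty$ produces the domination inequality. For (2), I pick $T_n\uparrow+\infty$ and use compactness of $M$ plus Proposition \ref{fundamental-prop} to get $y_n\in M$ and a minimizer $\gamma_n:[-T_n,0]\to M$ with $\gamma_n(0)=x$, $\gamma_n(-T_n)=y_n$, attaining $u_-(x)=h_{y_n,u_-(y_n)}(x,T_n)$. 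Superlinearity (H2) forces a uniform bound on $\|\dot\gamma_n\|$ on every compact subinterval, so Ascoli--Arzel\`a and a diagonal extraction yield a $C^1$ limit $\gamma:(-\infty,0]\to M$ with $\gamma(0)=x$; the calibrated equality is obtained by passing to the limit in the equality $u_-(\gamma_n(s))=h_{y_n,u_-(y_n)}(\gamma_n(s),s+T_n)$ (guaranteed by a Lemma \ref{sol-action}-type argument applied to $u_-$) together with the $L$-domination just established, which prevents the limit from being merely a sub-calibration.

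For the ``if'' direction, assume (1) and (2). To prove $T^-_t u_-(x)\leq u_-(x)$, take the calibrated curve $\gamma$ through $x$ from (2) and set $y=\gamma(-t)$; the ODE solution $u^\ast$ along $\gamma$ with $u^\ast(-t)=u_-(y)$ satisfies $u^\ast(0)\geq h_{y,u_-(y)}(x,t)\geq T^-_t u_-(x)$, while the difference $\psi(s)=u^\ast(s)-u_-(\gamma(s))$ starts at $0$ and obeys $|\dot\psi|\leq L_0|\psi|$ using the calibration equality for $u_-\!\circ\!\gamma$ and (H3); Gronwall gives $\psi\equiv 0$, so $T^-_t u_-(x)\leq u^\ast(0)=u_-(x)$. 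For the reverse inequality $T^-_t u_-(x)\geq u_-(x)$, pick $y^\ast$ realizing the infimum and the associated minimizer $\gamma^\ast:[0,t]\to M$; write $u^\ast(s)$ for the induced action along $\gamma^\ast$ with $u^\ast(0)=u_-(y^\ast)$, and set $\phi(s)=u^\ast(s)-u_-(\gamma^\ast(s))$. The $L$-domination from (1) gives the (distributional) inequality $\frac{d}{ds}(u_-\!\circ\!\gamma^\ast)(s)\leq L(\gamma^\ast,u_-\!\circ\!\gamma^\ast,\dot\gamma^\ast)$, so $\dot\phi\geq L(\gamma^\ast,u^\ast,\dot\gamma^\ast)-L(\gamma^\ast,u_-\!\circ\!\gamma^\ast,\dot\gamma^\ast)\geq -L_0|\phi|$; since $\phi(0)=0$, a comparison argument yields $\phi\geq 0$ on $[0,t]$, hence $T^-_t u_-(x)=u^\ast(t)\geq u_-(\gamma^\ast(t))=u_-(x)$.

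The principal technical obstacle I expect is the construction and verification of the calibrated curve in step (2): producing a genuinely $C^1$ limit on the entire half-line $(-\infty,0]$ demands a uniform-in-$T$ velocity bound on the minimizers, and then the calibrated identity must be recovered as an \emph{equality} in a limit where only inequalities a priori survive. This is where (H2) is indispensable (to get the compactness) and where the inequality in (1), already proved, must be invoked to upgrade the limiting sub-calibration into a genuine one. The repeated use of (H3) through Gronwall-type comparisons is what glues together the values of $L$ at the slightly different $u$-heights $u^\ast(s)$ and $u_-(\gamma(s))$, and is the technical heart of both directions.
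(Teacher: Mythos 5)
The paper itself gives no proof of this proposition: it is quoted verbatim from \cite[Proposition 2.7]{WWY3}, and Section 2.1 states explicitly that such results are ``listed without proof.'' So there is no in-paper argument to compare against; I can only assess your reconstruction on its merits, and against the standard argument in \cite{WWY1}--\cite{WWY3} that it is evidently rediscovering.

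Your outline is essentially correct and follows the same route as the literature: domination via comparison of $u_-\circ\gamma$ with the Carath\'eodory solution $\dot u^\ast=L(\gamma,u^\ast,\dot\gamma)$ plus the fixed-point identity $u_-=T^-_t u_-$; calibrated curves via minimizers of $h_{y_n,u_-(y_n)}(x,T_n)$ on $[-T_n,0]$, a Lemma \ref{sol-action}-type identity to see that the $u$-component of the lifted orbit equals $u_-\circ\gamma_n$, and Arzel\`a--Ascoli; and the converse via the two one-sided Gronwall comparisons you describe (both of which check out --- the sign analysis for $\dot\phi\geqslant -L_0|\phi|$, $\phi(0)=0$ does yield $\phi\geqslant0$). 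The one step you should not treat as free is the inequality $h_{\gamma(\alpha),u_-(\gamma(\alpha))}(\gamma(\beta),\beta-\alpha)\leqslant u^\ast(\beta)$ for an \emph{arbitrary} competitor $\gamma$, where $u^\ast$ solves the ODE $\dot u^\ast=L(\gamma,u^\ast,\dot\gamma)$. Proposition \ref{Implicit variational} as quoted puts $h_{x_0,u_0}(\gamma(s),s)$, not $u^\ast(s)$, inside the integral, and Corollary \ref{Minimality} only ranges over characteristics; passing from the integral inequality $w(\tau)\leqslant u_0+\int_0^\tau L(\gamma,w,\dot\gamma)\,ds$ satisfied by $w(s)=h_{x_0,u_0}(\gamma(s),s)$ to $w\leqslant u^\ast$ needs its own one-sided Gronwall argument (or, equivalently, the Herglotz-variational characterization of $h$ from \cite{CCJWY}/\cite{WWY1}). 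This is a true and standard fact, but it is the load-bearing lemma of your ``only if'' direction for (1) and of the inequality $T^-_tu_-\leqslant u_-$, and it deserves an explicit proof or citation rather than being attributed to Proposition \ref{Implicit variational} as stated. With that supplied, the argument is complete.
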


From the above proposition, one deduce the regularity result
\begin{corollary}\cite[Lemma 4.1]{WWY3}\label{weak-kam-reg}
If $u\prec L$, then $u$ is Lipschitz continuous on $M$. Moreover, all backward (resp. forward) weak KAM solutions are locally semiconcave (resp. semiconvex) on $M$.
\end{corollary}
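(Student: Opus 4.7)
The plan is to treat the two statements separately, using different techniques for each.

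For the Lipschitz claim I would fix arbitrary $x,y\in M$ and connect them by a unit-speed minimizing geodesic $\gamma:[0,d(x,y)]\to M$. Applying the domination inequality \eqref{L-dom} to $\gamma$,
\[
u(y)-u(x)\leqslant\int_0^{d(x,y)} L(\gamma(s),u(\gamma(s)),\dot\gamma(s))\,ds.
\]
Since $u$ is continuous on the compact manifold $M$ its sup-norm is finite, and by the Tonelli properties inherited by $L$ from $H$, the integrand is uniformly bounded by a constant $C_0$ depending only on $\|u\|_\infty$ and on the reference metric $g$. This yields $u(y)-u(x)\leqslant C_0\,d(x,y)$; exchanging the roles of $x$ and $y$ (using the reversed geodesic) gives the matching lower bound, and hence the Lipschitz estimate with constant $C_0$.

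For the semiconcavity of $u_-\in\mathcal{S}_-$ I would exploit the fixed-point identity $u_-=T^-_t u_-$ valid for every $t>0$. Combined with the variational representation of Proposition \ref{semi-group1},
\[
u_-(x)=\inf_{x_0\in M} h_{x_0,u_-(x_0)}(x,t).
\]
The key input is that, for each fixed $t>0$ and for $(x_0,u_0)$ ranging in a bounded set, the function $x\mapsto h_{x_0,u_0}(x,t)$ is semiconcave on $M$ with a semiconcavity constant $C(t)$ uniform in the parameters $(x_0,u_0)$. Granting this, $u_-$ is the infimum over a family of functions sharing the same semiconcavity constant, and hence is itself locally semiconcave. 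The forward case is dual: one uses $u_+=T^+_t u_+$, and the forward action $h^{x_0,u_0}$ enjoys a corresponding uniform semiconvexity estimate.

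The main obstacle is the uniform estimate on the parametric family $\{h_{x_0,u_0}(\cdot,t)\}$. In the classical Tonelli case, semiconcavity follows from the strict convexity assumption (H1) by constructing a one-sided quadratic supporting function at each point $x$ via a perturbation of the minimizing trajectory. In the contact setting, the presence of $u$ inside the Lagrangian introduces a feedback in the implicit variational principle \eqref{eq:Implicit variational}; one must invoke the Lipschitz assumption (H3) together with a Gronwall-type comparison to absorb the feedback term and conclude that the semiconcavity constant only deteriorates by a factor of the form $e^{Lt}$, which is independent of $(x_0,u_0)$. This quantitative control is precisely the content of Lemma 4.1 of \cite{WWY3}; once it is available, the semiconcavity-stability argument sketched above closes the proof.
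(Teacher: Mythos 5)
Your Lipschitz argument is exactly the standard one and is, in substance, the proof of the cited \cite[Lemma 4.1]{WWY3}: domination along a unit-speed minimizing geodesic plus compactness of the unit tangent bundle and of the range of $u$ gives the two-sided bound. For the semiconcavity, however, you take a genuinely different route from the paper. The paper does not argue through the Lax--Oleinik representation at all: it invokes Proposition \ref{weak-kam-vis} to identify $u_-$ (resp.\ $-u_+$) with a Lipschitz viscosity solution of \eqref{HJs} for the convex Hamiltonian $H$ (resp.\ $\breve H$) and then applies the general regularity theorem \cite[Theorem 5.3.7]{CS} for Lipschitz viscosity solutions of stationary convex Hamilton--Jacobi equations. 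Your alternative — write $u_-=T^-_tu_-=\inf_{x_0}h_{x_0,u_-(x_0)}(\cdot,t)$ and use that an infimum of functions with a common semiconcavity modulus is semiconcave — is a legitimate and classical strategy, and it has the virtue of staying entirely inside the variational framework. But be aware of two points. First, your key input, the uniform semiconcavity of $x\mapsto h_{x_0,u_0}(x,t)$ for fixed $t>0$ and $(x_0,u_0)$ in a bounded set, is \emph{not} the content of \cite[Lemma 4.1]{WWY3}; that lemma is precisely the Lipschitz statement you reproved in your first paragraph. The semiconcavity of the implicit action function is a separate (true, but nontrivial) estimate that must be established by perturbing the minimizer of \eqref{eq:Implicit variational} and controlling the $u$-feedback via (H3) and Gronwall, as you sketch; as written, your proof leaves this as an uncited black box. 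Second, the paper's PDE route handles the forward case by the involution $H\mapsto\breve H$ together with Proposition \ref{weak-kam-vis}(2); your dual argument with $h^{x_0,u_0}$ and $T^+_t$ is the correct analogue. With the uniform semiconcavity of the kernel supplied (or properly referenced), your proof closes.
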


\begin{proof}
The first conclusion is the content of \cite[Lemma 4.1]{WWY3}. The second conclusion follows from the first conclusion, Proposition \ref{weak-kam-vis} and a direct application of \cite[Theorem 5.3.7]{CS}.
\end{proof}

The next propositions describe the geometric structure of the Legendrian pseudo-graph of backward/forward weak KAM solutions. The first is a summary of results obtained in \cite{WWY3}. Although the paper \cite{WWY3} focus on monotone systems, the results presented here is proved under the general assumption (H1)-(H3).
\begin{proposition}\cite[Lemma 4.1-4.3]{WWY3}\label{cali-pro}
Let $u_-\in\mathcal{S}_-\,\,($resp. $u_+\in\mathcal{S}_+)$ and $\gm:(-\infty,0]\,\,($resp. $[0,+\infty))\rightarrow M$ be a $(u_-,L)\,\,($resp.$(u_+,L))$-calibrated curve. Setting
\[
z(s)=(\gm(s),u_\pm(\gm(s)),p(s)),\quad p(s)=\frac{\partial L}{\partial \dot x}(\gm(s),u_\pm(\gm(s)),\dot{\gm}(s)),
\]
then
\begin{enumerate}[(1)]
  \item $u_\pm$ is differentiable at $\gm(s)$ if $s\neq0$,
  \item $d_x u_\pm(\gm(s))=p(s)$ and $H(z(s))=0$ if $u_\pm$ is differentiable at $\gm(s)$,
  \item $\Phi^t_H(z(s))=z(t+s)$ for $s, t\in(-\infty,0)\,\,($resp. $(0,+\infty))$.
\end{enumerate}
\end{proposition}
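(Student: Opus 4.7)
The plan is to prove the three statements in order, working with the backward case (the forward case being symmetric). Throughout I will use Corollary \ref{weak-kam-reg} (semiconcavity of $u_-$) and the domination inequality in Proposition \ref{weak-kam} (1), together with the short-time regularity coming from Theorem \ref{local-chm} / Lemma \ref{local-mini}.

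For (1), fix $s_0 < 0$ and pick $\varepsilon>0$ small enough that $s_0-\varepsilon<s_0<s_0+\varepsilon\leqslant 0$. Set $x_\star=\gamma(s_0-\varepsilon)$ and $u_\star=u_-(x_\star)$. By the calibration identity, the restriction of $\gamma$ to $[s_0-\varepsilon,s_0+\varepsilon]$ is a minimizer of $h_{x_\star,u_\star}(\cdot,2\varepsilon)$ passing through $\gamma(s_0)$ at time $\varepsilon$. If $\varepsilon$ is small enough, Theorem \ref{local-chm} (applied to the smooth local solution produced by the short-time characteristic flow from a neighborhood of $x_\star$ in a chart, or more directly by the flow-box argument that underlies Lemma \ref{local-mini}) guarantees that the function $x\mapsto h_{x_\star,u_\star}(x,\varepsilon)$ is $C^2$ near $\gamma(s_0)$; for every $x$ in a small neighborhood, the domination inequality yields $u_-(x)-u_\star\leqslant h_{x_\star,u_\star}(x,\varepsilon)$ with equality at $x=\gamma(s_0)$. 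Hence $h_{x_\star,u_\star}(\cdot,\varepsilon)+u_\star-u_-(\gamma(s_0))$ is a smooth upper supporting function for $u_-$ at $\gamma(s_0)$, so $D^-u_-(\gamma(s_0))\neq\emptyset$. Combined with the semiconcavity from Corollary \ref{weak-kam-reg} (which gives $D^+u_-(\gamma(s_0))\neq\emptyset$), the standard fact that $D^+u_-\cap D^-u_-\neq\emptyset$ forces $u_-$ to be differentiable at $\gamma(s_0)$.

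For (2), at a differentiability point $\gamma(s)$ the above supporting argument gives $d_xu_-(\gamma(s))=\partial_x h_{x_\star,u_\star}(\gamma(s),\varepsilon)$. By the short-time characteristic formula (Lemma \ref{local-mini} and the Legendre relation), this gradient equals $p(s)=\partial L/\partial\dot x(\gamma(s),u_-(\gamma(s)),\dot\gamma(s))$. The identity $H(z(s))=0$ is then immediate: $u_-$ is a viscosity solution of \eqref{hj} by Proposition \ref{weak-kam-vis}, and at a point of differentiability a viscosity solution satisfies the equation in the classical sense.

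For (3), once (1)--(2) are in place, $z(s)$ is a curve in $J^1(M,\R)$ whose first coordinate is $\gamma$ and whose momentum is obtained from $\dot\gamma$ via the Legendre transform, so that $\dot x=\partial_pH(z)$. Differentiating the calibration identity in the upper endpoint and using (2) shows $\dot u(s)=d_xu_-(\gamma(s))\cdot\dot\gamma(s)=p(s)\cdot\partial_pH(z(s))$, which is exactly the third equation of \eqref{ch}. Finally, differentiating $p(s)=d_xu_-(\gamma(s))$ along $\gamma$ (justified on the open set where $u_-$ is differentiable, or via the Euler--Lagrange equation that the minimizer $\gamma$ satisfies and which is equivalent under Legendre to the second line of \eqref{ch}) gives the remaining Hamilton equation. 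Hence $z(s)$ is an integral curve of $X_H$, and the semigroup property $\Phi^t_H(z(s))=z(t+s)$ follows on the open interval where both sides are defined.

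The main obstacle is step (1): converting the one-sided extension of the calibrated curve into a two-sided smooth supporting function. The cleanest way is to ensure that $\varepsilon$ is small enough that the short-time characteristic flow from $x_\star$ is a diffeomorphism onto a neighborhood of $\gamma(s_0)$, so that $h_{x_\star,u_\star}(\cdot,\varepsilon)$ is genuinely $C^2$ there; this is where the smoothness hypothesis on initial data used in Theorem \ref{local-chm} matters and where one has to be careful not to conflate the point-initial action $h_{x_\star,u_\star}$ with the semigroup $T^-_t\varphi$.
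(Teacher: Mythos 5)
The paper does not actually prove this proposition; it is imported verbatim from \cite[Lemma 4.1--4.3]{WWY3}, so your argument has to stand on its own. Parts (2) and (3) are essentially fine (part (3) is most cleanly closed by noting that $\gm$ restricted to any compact subinterval is a minimizer of the corresponding backward action function, so Remark \ref{minimizer-orbit} lifts it to a solution of \eqref{ch}; uniqueness of solutions then gives the flow identity). The problem is in part (1).

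Your supporting function points the wrong way, and the resulting gap is exactly where the hypothesis $s\neq 0$ must enter. Since $u_-=T^-_\eps u_-=\inf_{x_0}h_{x_0,u_-(x_0)}(\cdot,\eps)$, the function $\psi(x):=h_{x_\star,u_\star}(x,\eps)$ satisfies $\psi\geqslant u_-$ with equality at $\gm(s_0)$ (also note it is $u_-(x)\leqslant h_{x_\star,u_\star}(x,\eps)$, not $u_-(x)-u_\star\leqslant\cdots$: the initial value $u_\star$ is already built into the implicit action function). A function touching $u_-$ \emph{from above} and differentiable at the touching point yields an element of $D^+u_-(\gm(s_0))$, the \emph{super}differential --- which you already have everywhere from semiconcavity (Corollary \ref{weak-kam-reg}). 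It does not give $D^-u_-(\gm(s_0))\neq\emptyset$ as you claim. To produce the missing subgradient you must use the \emph{forward} half $\gm|_{[s_0,s_0+\eps]}$ of the calibrated curve: by domination and reversibility (Proposition \ref{fundamental-prop} (3)), the function $x\mapsto h^{\gm(s_0+\eps),\,u_-(\gm(s_0+\eps))}(x,\eps)$ lies \emph{below} $u_-$ and touches it at $\gm(s_0)$, and its differentiability there follows because $\gm(s_0)$ is an interior point of the minimizer over $[s_0-\eps,s_0+\eps]$. Only then does $D^+\cap D^-\neq\emptyset$ force differentiability. As written, your argument uses only the backward piece anchored at $x_\star=\gm(s_0-\eps)$ and would apply verbatim at $s_0=0$, "proving" differentiability of $u_-$ at $\gm(0)$ --- which is false in general and is precisely what the exclusion $s\neq0$ guards against. (A secondary, smaller issue: Theorem \ref{local-chm} concerns $C^2$ initial data and does not directly give $C^2$ regularity of the point-source action $h_{x_\star,u_\star}(\cdot,\eps)$; but for the super/subgradient argument you only need differentiability of the touching functions at $\gm(s_0)$, which follows from interiority of the minimizer, so this is repairable.)
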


Furthermore, even if the weak KAM solution is non-differentiable at some point $x\in M$, we have
\begin{proposition}
With the same assumption as Proposition \ref{cali-pro}, there is a one to one correspondence between reachable gradients of $u$ at $x\in M$ and $(u,L)$-calibrated curves based at $x:$
\begin{enumerate}[(1)]
  \item for any $(u_-,L)\,\,($resp.$(u_+,L))$-calibrated curve $\gm:(-\infty,0]\,\,($resp. $[0,+\infty))\rightarrow M$ with $\gm(0)=x$,
      \[
      p(0):=\frac{\partial L}{\partial \dot x}(x,u_-(x),\dot{\gm}_-(0))\in D^\ast u_-(x)\,\,(\text{resp. }\frac{\partial L}{\partial \dot x}(x,u_+(x),\dot{\gm}_+(0))\in D^\ast u_+(x)).
      \]
  \item for any $p\in D^\ast u_-(x)\,\,($resp. $p\in D^\ast u_+(x))$, there is a \textbf{unique} $(u_-,L)\,\,($resp.$(u_+,L))$-calibrated curve $\gm:(-\infty,0]\,\,($resp. $[0,+\infty))\rightarrow M$ with $\gm(0)=x$ such that
      \[
      p=\frac{\partial L}{\partial \dot x}(x,u_-(x),\dot{\gm}_-(0))\,\,(\text{resp. }\frac{\partial L}{\partial \dot x}(x,u_+(x),\dot{\gm}_+(0))).
      \]
\end{enumerate}
Here $\dot{\gm}_\pm(0)$ denote the one-sided derivative of $\gm$ at $x$.
\end{proposition}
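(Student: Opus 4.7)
My plan is to treat the two bullets in opposite directions: part (1) is a direct continuity argument along a single calibrated orbit, while part (2) is built by lifting an approximating sequence of differentiability points to calibrated orbits and passing to the limit via continuous dependence on initial conditions. I will only write out the $u_-$ case; the $u_+$ case follows by reversing time.

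\paragraph{Step 1: Proof of (1).} Let $\gamma:(-\infty,0]\to M$ be a $(u_-,L)$-calibrated curve with $\gamma(0)=x$, and set $z(s)=(\gamma(s),u_-(\gamma(s)),p(s))$ with $p(s)=\tfrac{\partial L}{\partial \dot x}(\gamma(s),u_-(\gamma(s)),\dot\gamma(s))$. By items (1) and (2) of Proposition \ref{cali-pro}, for every $s<0$ the function $u_-$ is differentiable at $\gamma(s)$ with $d_x u_-(\gamma(s))=p(s)$. Taking any sequence $s_n\uparrow 0$, continuity of $\gamma$ and $p$ (the latter being part of an orbit of $\Phi^t_H$ by item (3) of the same Proposition) yields $\gamma(s_n)\to x$ and $p(s_n)\to p(0)$, so $p(0)\in D^\ast u_-(x)$ by definition.

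\paragraph{Step 2: Existence in (2).} Given $p\in D^\ast u_-(x)$, choose $x_n\to x$ with $u_-$ differentiable at $x_n$ and $p_n:=d_x u_-(x_n)\to p$. Apply Proposition \ref{weak-kam}(2) at $x_n$ to produce a $(u_-,L)$-calibrated curve $\gamma_n:(-\infty,0]\to M$ with $\gamma_n(0)=x_n$; by Proposition \ref{cali-pro}, its lift $z_n(s)=(\gamma_n(s),u_-(\gamma_n(s)),p_n(s))$ is an orbit of $\Phi^t_H$, and the initial momentum at $s=0$ equals $p_n$ since $u_-$ is differentiable there. Because $\mathrm{J}^1_{u_-}$ is compact and backward-invariant under $\Phi^t_H$, all $z_n(s)$ stay in a fixed compact set for $s\leqslant 0$. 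Continuous dependence of the ODE \eqref{ch} on initial data then gives $z_n\to z$ uniformly on compact subintervals of $(-\infty,0]$, where $z(s)=\Phi^s_H(x,u_-(x),p)$. Writing $z(s)=(\gamma(s),u(s),p_\ast(s))$ with $\gamma=\rho\circ z$, I recover $u(s)=u_-(\gamma(s))$ in the limit from continuity of $u_-$, and I pass to the limit in the calibration identity
\[
u_-(\gamma_n(0))-u_-(\gamma_n(-T))=\int_{-T}^{0} L(\gamma_n(s),u_-(\gamma_n(s)),\dot\gamma_n(s))\,ds,\qquad T>0,
\]
using uniform continuity of $L$ on the relevant compact set and uniform convergence of $(\gamma_n,\dot\gamma_n)$ on $[-T,0]$. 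This shows $\gamma$ is $(u_-,L)$-calibrated, and by construction $p=\tfrac{\partial L}{\partial \dot x}(x,u_-(x),\dot\gamma(0))$.

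\paragraph{Step 3: Uniqueness in (2) and the overall bijection.} Suppose $\gamma$ and $\tilde\gamma$ are two $(u_-,L)$-calibrated curves through $x$ whose Legendre transforms at $0$ both equal $p$. By Proposition \ref{cali-pro}(3), their lifts are orbits of $\Phi^t_H$ through the \emph{same} point $(x,u_-(x),p)$ for all $s<0$, so uniqueness of the contact Hamiltonian flow forces them to agree on $(-\infty,0)$, and then by continuity on $(-\infty,0]$. Combined with Step 1 and Step 2, this gives the claimed bijection.

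\paragraph{Expected main obstacle.} The only non-routine point is legitimising the passage to the limit in Step 2: I need $(\gamma_n,\dot\gamma_n)$ to converge uniformly on each $[-T,0]$, which uses backward invariance and compactness of $\mathrm{J}^1_{u_-}$ together with continuous dependence for \eqref{ch}; and I need the limit to still be calibrated, which hinges on continuity of $u_-$ (Corollary \ref{weak-kam-reg}) together with dominated convergence on the Lagrangian integral. Everything else reduces to direct unwinding of Propositions \ref{weak-kam} and \ref{cali-pro}.
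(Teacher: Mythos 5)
Your proposal is correct and follows essentially the same route as the paper: part (1) by letting $s\to 0^-$ along the calibrated orbit using Proposition \ref{cali-pro}, part (2) by lifting calibrated curves at nearby differentiability points to uniformly bounded orbits and passing to the limit, and uniqueness from uniqueness of solutions of \eqref{ch} with initial datum $(x,u_-(x),p)$. The only cosmetic difference is that you identify the limit curve directly via continuous dependence on initial conditions, whereas the paper extracts it by Arzel\`a--Ascoli from the equi-Lipschitz family; both arguments are equivalent here.
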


\begin{proof}
We shall focus on the case $u_-\in\mathcal{S}_-$, the other one is proved in a similar fashion. We apply Proposition \ref{cali-pro} to conclude that $z|_{(-\infty,0)}$ is bounded orbit of $\Phi^t_H$. Thus $\gm\in C^1$ and we have
\begin{equation}\label{left-continuous}
\lim_{s\rightarrow0^-}\dot{\gm}(s)=\dot{\gm}_-(0),
\end{equation}
where $\dot{\gm}_-(0)$ denotes the left derivative of $\gm$ at $0$. Now we define
\[
x(0)=\gm(0),\quad p(0)=\frac{\partial L}{\partial \dot x}(\gm(0),u(\gm(0)),\dot{\gm}_-(0)),\quad u(0)=u(\gm(0)).
\]
By continuity of $\Phi^t_H$ and $z$, it is not hard to verify that $\phi^t_H(z(0))=z(t)$ for any $t\in(-\infty,0)$.

\vspace{1em}
\textbf{(1)}: Notice that by Proposition \ref{cali-pro}, $u_-$ is differentiable at $\gm(s), s<0$ and
\[
d_x u_-(\gm(s))=\frac{\partial L}{\partial\dot{x}}(\gm(s),u_-(\gm(s)),\dot{\gm}(s)).
\]
It follows from \eqref{left-continuous} that
\[
p(0)=\frac{\partial L}{\partial\dot{x}}(x,u_-(x),\dot{\gm}_-(0))=\lim_{s\rightarrow0^-}\frac{\partial L}{\partial\dot{x}}(\gm(s),u_-(\gm(s)),\dot{\gm}(s))=\lim_{s\rightarrow0^-}d_x u_-(\gm(s))\in D^\ast u_-(x).
\]

\vspace{1em}
\textbf{(2)}: For each $p\in D^\ast u_-(x)$, there is a sequence $\{x_n\}\subset M$ converges to $x$  such that $u_-$ is differentiable at each $x_n$ and the sequence $\displaystyle\lim_{n\rightarrow\infty}d_x u_-(x_n)=p$. By Proposition \ref{weak-kam} and \ref{cali-pro}, there is a $(u_-,L)$-calibrated curve $\gm_n:(-\infty,0]\rightarrow M$ with $\gm_n(0)=x_n$ and
\[
d_x u_-(x_n)=\frac{\partial L}{\partial\dot{x}}(x_n,u_-(x_n),\dot{\gm}_{n,-}(0)).
\]
It is easy to see that for $t\leqslant0, \{z_n(t)=(\gm_n(t),u_-\circ\gm_n(t),p_n(t)): p_n(t)=\frac{\partial L}{\partial\dot{x}}(\gm_n(t),u_-\circ\gm_n(t),\dot{\gm}_n(t))\}_{n\geqslant1}$ are uniformly bounded $\phi^t_H$-orbits. Thus $\gm_n, \dot{\gm}_n$ are equi-bounded and equi-Lipschitz curves. Up to subsequences, $\gm_n$ and $\dot{\gm}_n$ converges to $\gm$ and $\dot{\gm}:(-\infty,0]\rightarrow M$ uniformly on compact intervals. It follows that $\gm$ is also a $(u_-,L)$-calibrated curve and
$$
\frac{\partial L}{\partial\dot{x}}(\gm(0),u_-(\gm(0)),\dot{\gm}_-(0))=\lim_{n\rightarrow\infty}\frac{\partial L}{\partial\dot{x}}(\gm_n(0),u_-(\gm_n(0)),\dot{\gm}_{n,-}(0))=\lim_{n\rightarrow\infty}d_x u_-(x_n)=p.
$$
The uniqueness of such calibrated curve follows from the uniqueness of solution to \eqref{ch} with initial data $z(0)=(x,u_-(x),p)$.
\end{proof}

\begin{corollary}\label{inv-graph}
Assume $u_-\in\mathcal{S}_- \,($resp. $u_+\in\mathcal{S}_+)$. Then $\mathrm{J}^1_{u_-}\,($resp. $\mathrm{J}^1_{u_+})$ is backward (resp. forward) invariant, i.e., $\Phi^t_H(\mathrm{J}^1_{u_-})\subset\mathrm{J}^1_{u_-},\,\, t\in(-\infty,0]\quad(\text{resp. }\Phi^t_H(\mathrm{J}^1_{u_+})\subset\mathrm{J}^1_{u_+},\,\, t\in[0,+\infty))$. In particular, $\widetilde{\mathcal{N}}_{u_\pm}\subset\mathrm{J}^1_{u_\pm}$ and $\mathrm{J}^1_{u_-}=\overline{\cup_{\delta>0}\Phi_H^{-\delta}(\mathrm{J}^1_{u_-})}, \,\,($resp. $\mathrm{J}^1_{u_+}=\overline{\cup_{\delta>0}\Phi_H^{\delta}(\mathrm{J}^1_{u_+})})$.
\end{corollary}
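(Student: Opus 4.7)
The plan is to reduce everything to the bijective correspondence between reachable gradients $D^{\ast}u_-(x)$ and $(u_-,L)$-calibrated curves based at $x$ furnished by the preceding proposition. I treat the backward weak KAM case; the forward case is identical after reversing time, exchanging $(-\infty,0]$ with $[0,+\infty)$ and one-sided derivatives.

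Fix $z=(x,u_-(x),p)\in\mathrm{J}^1_{u_-}$, so $p\in D^{\ast}u_-(x)$. Part (2) of the preceding proposition furnishes a unique $(u_-,L)$-calibrated curve $\gamma:(-\infty,0]\to M$ with $\gamma(0)=x$ and $p=(\partial L/\partial \dot x)(x,u_-(x),\dot\gamma_-(0))$. Lift it to $z(s)=(\gamma(s),u_-(\gamma(s)),p(s))$ with $p(s)=(\partial L/\partial\dot x)(\gamma(s),u_-(\gamma(s)),\dot\gamma(s))$. Proposition \ref{cali-pro}(3) gives $\Phi^t_H(z(s))=z(s+t)$ for all $s,t<0$, and continuity of $\Phi^t_H$ together with the continuity of $s\mapsto z(s)$ at $s=0$ (already used in the proof of the preceding proposition) extends this identity to the endpoint, so $\Phi^t_H(z)=z(t)$ for every $t\leq 0$. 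By parts (1)–(2) of Proposition \ref{cali-pro}, $u_-$ is differentiable at $\gamma(t)$ for every $t<0$ with $d_x u_-(\gamma(t))=p(t)$; hence $z(t)\in\mathrm{J}^1_{u_-}$ for $t<0$, and trivially $z(0)=z\in\mathrm{J}^1_{u_-}$. This gives $\Phi^t_H(\mathrm{J}^1_{u_-})\subset\mathrm{J}^1_{u_-}$ for all $t\leq 0$.

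The remaining assertions follow at once. The inclusion $\widetilde{\mathcal N}_{u_-}\subset\mathrm{J}^1_{u_-}$ is the case $t=0$ of the definition $\widetilde{\mathcal N}_{u_-}=\bigcap_{t\geq 0}\Phi^{-t}_H(\mathrm{J}^1_{u_-})$. For the identity $\mathrm{J}^1_{u_-}=\overline{\bigcup_{\delta>0}\Phi^{-\delta}_H(\mathrm{J}^1_{u_-})}$, the inclusion $\supset$ combines backward invariance with the closedness of $\mathrm{J}^1_{u_-}$ (Definition \ref{pseudo-gr}). For $\subset$, given $z\in\mathrm{J}^1_{u_-}$, backward invariance places $\Phi^{-\delta}_H(z)\in\Phi^{-\delta}_H(\mathrm{J}^1_{u_-})$ for each $\delta>0$, and $\Phi^{-\delta}_H(z)\to z$ as $\delta\to 0^+$ by continuity of the flow.

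I expect no serious obstacle: the only moment of care is passing from the identity $\Phi^t_H(z(s))=z(s+t)$ at interior times $s<0$ to the right endpoint $s=0$, which is controlled by continuity of $\Phi^t_H$ and of the lifted calibrated orbit. The forward statement for $u_+\in\mathcal{S}_+$ is obtained by the mirrored argument.
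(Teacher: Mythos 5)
Your proof is correct and follows exactly the route the paper intends: the corollary is stated without proof as a direct consequence of Proposition \ref{cali-pro} and the correspondence between reachable gradients and calibrated curves, and your deduction (lift the calibrated curve through a given $p\in D^{\ast}u_-(x)$, use $\Phi^t_H(z(s))=z(s+t)$ plus continuity at the endpoint, then read off membership in $\mathrm{J}^1_{u_-}$ from differentiability of $u_-$ along the curve) is precisely that consequence. The closure identity and the inclusion $\widetilde{\mathcal{N}}_{u_\pm}\subset\mathrm{J}^1_{u_\pm}$ are handled correctly as well.
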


\subsection{Proof of \textbf{(A1)} with first applications}
For $n\geqslant1$, we apply Theorem \ref{global-chm1} to choose any sequence of characteristic segments
\[
z _n(t):=\Phi^{t}_H(Z_n)=(x_n(t), u_n(t), p_n(t)),\quad t\in[0,\mathcal{T}_n]
\]
of $T^-_t\varphi$ with $Z_n=(x_n,   u_n,p_n)\in\mathrm{J}^1_{\varphi}$. Here, the only requirement is $\lim_{n\rightarrow+\infty}\mathcal{T}_n=+\infty$. Then, up to a subsequence,
\begin{equation}\label{cali}
\lim_{n\rightarrow\infty}Z_n=Z_0=(x_0, \varphi (x_0), d_x \varphi (x_0))\in\mathrm{J}^1_{\varphi},\quad u_n(t)=T^-_t \varphi (x_n(t)).
\end{equation}
Due to the uniform Lipschitz property of $\{T^-_t \varphi\}_{t\geq0}$, $\{z_n(t):t\geq0\}_{n\geqslant1}$ is uniform bounded. The continuous dependence theorem of ODE implies $z_n(t)$ converges to a semi-infinite orbit $z:[0,+\infty)\rightarrow J^1(M,\R); z(t)=(x(t),u(t),p(t))$ uniformly on compact intervals with
\begin{equation}\label{semi-infi-ch}
z(0)=Z_0\quad\text{and}\quad u(t)=T^-_t \varphi (x(t)), \quad \forall t\in[0,+\infty),
\end{equation}
where the second equality follows from \eqref{cali} and the continuity of the function $T^-_t \varphi$. This means that for any $t\geqslant0$, the restriction $z|_{[0,t]}$ is a characteristic for $T^-_t\varphi$.

\vspace{0.5em}
Notice that $\{z(t):t\geqslant0\}$ is contained in a compact subset of the phase space, so $\omega(Z_0)$ is nonempty. Moreover, from the very definition, $\omega(Z_0)$ is compact and $\Phi^t_H$-invariant set, that is, if $\bar{z}\in\omega(Z_0)$, then $\Phi^{t}_H(\bar{z})$ is defined for all $t\in\R$ and belongs to $\omega(Z_0)$. By Definition \ref{Mane-slice}, we reduce the conclusion to

\begin{proposition}\label{omega-mane}
Assume $\varphi\in C(M,\R)$ and $\lim_{t\rightarrow\infty}T^-_t\varphi=u_-$. Then for any semi-infinite orbit $\Phi^t_H(Z_0):=z(t)=(x(t),u(t),p(t)), t\geqslant0$ such that \eqref{semi-infi-ch} holds, $\omega(Z_0)\subset\mathrm{J}^1_{u_-}$.
\end{proposition}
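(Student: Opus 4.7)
My plan is to take an arbitrary $\bar z=(\bar x,\bar u,\bar p)\in\omega(Z_0)$ and verify the two conditions of Definition \ref{pseudo-gr} for membership in $\mathrm{J}^1_{u_-}$, namely $\bar u=u_-(\bar x)$ and $\bar p\in D^\ast u_-(\bar x)$. First, I would pick a sequence $t_n\to+\infty$ with $z(t_n)\to\bar z$. The identity $u(t)=T^-_t\varphi(x(t))$ together with the uniform convergence $T^-_t\varphi\to u_-$ and the continuity of $u_-$ gives the first equality at once:
\[
\bar u=\lim_{n\to\infty}T^-_{t_n}\varphi(x(t_n))=u_-(\bar x).
\]

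Next I would promote this identity along the entire $\Phi^t_H$-orbit through $\bar z$. The forward semi-orbit $\{z(t):t\geqslant 0\}$ is precompact because $\{T^-_t\varphi\}_{t\geqslant 0}$ is equi-Lipschitz (it is convergent in $C(M,\R)$ and each element is locally semiconcave by Corollary \ref{weak-kam-reg}), so that the momenta $p(t)\in D^\ast_x T^-_t\varphi(x(t))$ furnished by Theorem \ref{global-chm} stay uniformly bounded. Hence $\omega(Z_0)$ is a nonempty compact subset of $J^1(M,\R)$ on which $\Phi^t_H$ is defined for every $t\in\R$ and which it leaves invariant. Writing $\Phi^s_H(\bar z)=(x_s,u_s,p_s)$ and applying the first step at every $\Phi^s_H(\bar z)\in\omega(Z_0)$ gives the key identity
\[
u_s=u_-(x_s),\quad\text{for all }s\in\R.
\]

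The last step is to convert this identity into the calibration property of Proposition \ref{weak-kam}(2). The third line of \eqref{ch} and the Legendre--Fenchel equality $\dot x_s\cdot p_s-H(x_s,u_s,p_s)=L(x_s,u_s,\dot x_s)$ imply $\dot u_s=L(x_s,u_s,\dot x_s)$ along the orbit; integrating over $[t,0]$ for $t<0$ and using $u_s=u_-(x_s)$ yields
\[
u_-(\bar x)-u_-(x_t)=\int_t^0 L(x_s,u_-(x_s),\dot x_s)\,ds,\quad\forall\,t<0.
\]
Thus $s\mapsto x_s$, $s\leqslant 0$, is a $(u_-,L)$-calibrated curve based at $\bar x$ with $\bar p=\tfrac{\partial L}{\partial\dot x}(\bar x,u_-(\bar x),\dot x_0)$. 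The one-to-one correspondence between $(u_-,L)$-calibrated curves at $\bar x$ and reachable differentials of $u_-$ at $\bar x$ (the proposition immediately following Proposition \ref{cali-pro}) then yields $\bar p\in D^\ast u_-(\bar x)$, so $\bar z\in\mathrm{J}^1_{u_-}$.

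The step I expect to require the most care is the backward completeness and compactness of $\omega(Z_0)$: one needs a uniform bound on the momenta $p(t)$ along the characteristic so that orbits in $\omega(Z_0)$ are defined for all negative times and remain in a common compact subset of $J^1(M,\R)$. Once that is secured, both the identity $u_s=u_-(x_s)$ and the calibrated-curve machinery of Proposition \ref{cali-pro} propagate freely to $s\leqslant 0$, and the rest of the argument is purely mechanical.
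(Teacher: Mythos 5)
Your argument is correct, and it reaches the conclusion by a genuinely different route for the hard part of the proof. The first two steps coincide with the paper's: the identity $\bar u=u_-(\bar x)$ is the paper's Lemma \ref{proof-A2-u} (proved by the same triangle-inequality/uniform-convergence argument), and the propagation of this identity to the whole orbit through $\bar z$ is exactly how the paper reduces matters to a statement about points whose full orbit projects into $J^0_{u_-}$. Where you diverge is in extracting $\bar p\in D^\ast u_-(\bar x)$: you observe that $\dot u_s=L(x_s,u_s,\dot x_s)$ along the orbit (Legendre--Fenchel plus the third equation of \eqref{ch}), integrate to exhibit $s\mapsto x_s$, $s\leqslant 0$, as a $(u_-,L)$-calibrated curve, and then invoke the correspondence between calibrated curves and reachable differentials. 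The paper instead proves this step as Lemma \ref{proof-A2-p} by a self-contained variational contradiction (Markov property of the action functions plus the ``corner'' argument), explicitly remarking that a proof via calibrated curves is possible but that it prefers one \emph{independent} of that notion; so your route is the one the authors deliberately sidestep. Your approach is shorter and more conceptual but leans on the full strength of Propositions \ref{weak-kam}--\ref{cali-pro} and the correspondence proposition; the paper's buys independence from that machinery. One small caveat: your justification of the equi-Lipschitz property of $\{T^-_t\varphi\}_{t\geqslant 0}$ ($C^0$-convergence plus semiconcavity of each element) is not by itself sufficient without a uniform semiconcavity constant, and Corollary \ref{weak-kam-reg} does not directly apply to $T^-_t\varphi$; the clean argument is the one in Lemma \ref{B1-equi-Lip}, via the representation $T^-_t\varphi=\inf_{z}h_{z,T^-_{t-1}\varphi(z)}(\cdot,1)$ and the uniform bound on $T^-_t\varphi$. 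This is peripheral, since the proposition takes the semi-infinite characteristic as given and the boundedness discussion precedes it in the paper, but it is the step you yourself flagged as delicate and it deserves the correct citation.
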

For any $\bar{z}=(\bar{x},\bar{u},\bar{p})\in\omega(Z_0)$, there is non-negative sequence $\{t_j\}_{j\geqslant 1}$ with
\[
\lim_{j\rightarrow\infty}t_j=+\infty\text{ and }(\bar{x},\bar{u},\bar{p})=\displaystyle \lim_{j\rightarrow\infty}(x(t_j),u(t_j),p(t_j))= \displaystyle \lim_{j\rightarrow\infty}(x(t_j),T^-_{t_j}\varphi (x(t_j)) ,p(t_j) ).
\]
We prove the proposition by verifying two lemmas.

\vspace{0.5em}
\begin{lemma}\label{proof-A2-u}
Under the assumption of Proposition \ref{omega-mane} and the notation above, $\bar{u}=u_-(\bar{x})$.
\end{lemma}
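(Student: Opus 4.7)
The plan is very short: unpack the identity $u(t_j)=T^-_{t_j}\varphi(x(t_j))$ from \eqref{semi-infi-ch}, insert $u_-(x(t_j))$, and use convergence of the semigroup together with continuity of $u_-$.

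More precisely, I would first write
\[
\bar u = \lim_{j\to\infty} u(t_j) = \lim_{j\to\infty} T^-_{t_j}\varphi\bigl(x(t_j)\bigr),
\]
which is immediate from the hypothesis \eqref{semi-infi-ch}. Then I would estimate, for each $j$,
\[
\bigl|T^-_{t_j}\varphi(x(t_j)) - u_-(\bar x)\bigr| \;\leq\; \bigl|T^-_{t_j}\varphi(x(t_j)) - u_-(x(t_j))\bigr| \;+\; \bigl|u_-(x(t_j)) - u_-(\bar x)\bigr|.
\]
The second term tends to $0$ because $x(t_j)\to\bar x$ and $u_-$ is Lipschitz by Corollary \ref{weak-kam-reg}. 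The first term tends to $0$ because $T^-_t\varphi\to u_-$ uniformly on $M$ (the compactness of $M$ plus uniform Lipschitz control of the family $\{T^-_t\varphi\}$ from Proposition \ref{prop-sg} in the appendix makes even the pointwise hypothesis of the subsequent remark suffice). Passing to the limit $j\to\infty$ gives $\bar u = u_-(\bar x)$.

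I do not foresee any serious obstacle: the whole content is the identity $u(t)=T^-_t\varphi(x(t))$ coming from the characteristic construction, together with the already-established regularity of weak KAM solutions. The only point where one has to be mildly careful is justifying that one may replace $T^-_{t_j}\varphi(x(t_j))$ by $u_-(x(t_j))$ when the convergence is only assumed pointwise; that relies on the equi-Lipschitz property of the family $\{T^-_t\varphi\}_{t\geq T}$ (for $T$ large), which in turn follows from the uniform Lipschitz regularity of the backward solution semigroup applied to a Lipschitz argument.
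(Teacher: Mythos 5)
Your argument is correct and is essentially identical to the paper's proof: both use the identity $u(t_j)=T^-_{t_j}\varphi(x(t_j))$ from \eqref{semi-infi-ch}, a triangle inequality inserting $u_-(x(t_j))$, the uniform convergence $T^-_t\varphi\to u_-$, and the continuity of $u_-$. The side remark on upgrading pointwise to uniform convergence via equi-Lipschitz bounds matches the paper's own remark following Theorem A and is not needed under the stated hypothesis.
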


\begin{proof}
For any $\eps>0$, there is $N_1\in\mathbb{N}$ such that for $j>N_1$,
\[
|\bar{u}-u(t_j)|<\eps,\quad |u_-(x(t_j))-u_-(\bar{x})|<\eps.
\]
On the other hand, \eqref{convergence1} implies that there is $N_2\in\mathbb{N}$ such that for $j>N_2$,
\[
|T^-_{t_j} \varphi (x)-u_-(x)|<\eps,\quad\text{for all}\quad x\in M.
\]
Thus for $j>\max\{N_1,N_2\}$,
\begin{align*}
|\bar{u}-u_-(\bar{x})|&\,\leq|\bar{u}-u(t_j)|+|u(t_j)-u_-(x(t_j))|+|u_-(x(t_j))-u_-(\bar{x})|\\
&\,\leq|\bar{u}-u(t_j)|+|T^-_{t_j}\varphi  (x(t_j))-u_-(x(t_j))|+|u_-(x(t_j))-u_-(\bar{x})|\\
&\,<3\eps.
\end{align*}
This completes the proof.
\end{proof}
By the definition of calibrated curve in Proposition \ref{weak-kam} (2), it is not difficult to prove the following lemma. Here we give a proof independent of the notion of calibrated curves.
\begin{lemma}\label{proof-A2-p}
Assume $u_-\in\mathcal{S}_-$ and for some $\delta>0$,
\[
\pi\,(\{\Phi^{t}_H(\bar{x},\bar{u},\bar{p}):t\in[-\delta,\delta]\})\subset J^0_{u_-},
\]
then $\bar{p}\in D^\ast_x u_-(\bar{x})$.
\end{lemma}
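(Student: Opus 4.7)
Write the orbit as $z(t) = \Phi^t_H(\bar{x}, \bar{u}, \bar{p}) = (x(t), u(t), p(t))$ for $t \in [-\delta, \delta]$; the hypothesis $\pi(z([-\delta,\delta])) \subset J^0_{u_-}$ reads $u(t) = u_-(x(t))$ on this interval, in particular $\bar{u} = u_-(\bar{x})$. Since by definition $\bar{p} \in D^\ast u_-(\bar{x})$ means that $\bar{p}$ is a limit $d_x u_-(x_n) \to \bar{p}$ along some sequence $x_n \to \bar{x}$ where $u_-$ is differentiable at each $x_n$, the plan is to produce such a sequence from the backward piece of the orbit by taking $x_n = x(t_n)$ with $t_n \uparrow 0^-$.

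First I would upgrade the pointwise identity $u(t) = u_-(x(t))$ to a calibration equality. Along any $\Phi^t_H$-orbit, the equations \eqref{ch} combined with the Legendre--Fenchel identity $p \cdot \dot{x} = H(x,u,p) + L(x,u,\dot{x})$ give $\dot{u}(s) = p(s)\cdot\dot{x}(s) - H(z(s)) = L(x(s), u(s), \dot{x}(s))$. Integrating on any $[a,b] \subset [-\delta, 0]$ and substituting $u(s) = u_-(x(s))$ yields
\[
u_-(x(b)) - u_-(x(a)) = \int_a^b L\bigl(x(s), u_-(x(s)), \dot{x}(s)\bigr)\,ds,
\]
so $\gamma_0 := x|_{[-\delta, 0]}$ is a $(u_-, L)$-calibrated segment ending at $\bar{x}$.

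Proposition \ref{cali-pro} is stated for full calibrated curves on $(-\infty, 0]$, so I would extend $\gamma_0$ backwards. By Proposition \ref{weak-kam}(2) applied at $x(-\delta)$, there exists a $(u_-, L)$-calibrated curve $\eta: (-\infty, 0] \to M$ with $\eta(0) = x(-\delta)$, and the concatenation
\[
\tilde{\gamma}(s) := \begin{cases} \eta(s+\delta), & s \leq -\delta, \\ \gamma_0(s), & s \in [-\delta, 0], \end{cases}
\]
is a piecewise $C^1$ curve on $(-\infty, 0]$ with $\tilde{\gamma}(0) = \bar{x}$. Additivity of the calibration integral, together with $u_-(\tilde{\gamma}(-\delta)) = u_-(x(-\delta)) = u_-(\eta(0))$, immediately gives that $\tilde{\gamma}$ is $(u_-, L)$-calibrated on every subinterval of $(-\infty, 0]$.

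Finally, Proposition \ref{cali-pro}(1)--(2) applied to $\tilde{\gamma}$ yields that $u_-$ is differentiable at $\tilde{\gamma}(s)$ for every $s < 0$, with $d_x u_-(\tilde{\gamma}(s)) = \partial L/\partial \dot{x}(\tilde{\gamma}(s), u_-(\tilde{\gamma}(s)), \dot{\tilde{\gamma}}(s))$. On the subinterval $s \in (-\delta, 0)$ one has $\tilde{\gamma}(s) = x(s)$, and the Legendre dual of $\dot{x}(s)$ under \eqref{ch} is precisely $p(s)$, so $d_x u_-(x(s)) = p(s)$. Picking $t_n \in (-\delta, 0)$ with $t_n \uparrow 0^-$, continuity of the flow gives $x(t_n) \to \bar{x}$ and $d_x u_-(x(t_n)) = p(t_n) \to p(0) = \bar{p}$, which is exactly the defining property of $\bar{p} \in D^\ast u_-(\bar{x})$. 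The only mildly subtle step is verifying that the concatenation $\tilde{\gamma}$ remains calibrated, and this is a one-line consequence of the additivity of the calibration integral together with the matching of endpoint values.
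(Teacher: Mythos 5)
Your proof is correct, but it takes a genuinely different route from the paper's. You argue directly: the identity $\dot u=p\cdot\dot x-H=L$ along the orbit turns the hypothesis $u(t)=u_-(x(t))$ into the statement that $x|_{[-\delta,0]}$ is $(u_-,L)$-calibrated; you then extend it to a backward ray via Proposition \ref{weak-kam}(2) and invoke Proposition \ref{cali-pro}(1)--(2) to get $d_xu_-(x(s))=p(s)$ for $s\in(-\delta,0)$, and pass to the limit $s\to0^-$. The paper, by contrast, deliberately avoids calibrated curves (see the sentence preceding the lemma) and gives a self-contained proof by contradiction using only the action functions: Markov property, strict $u_0$-monotonicity, and the observation that the concatenation of the minimizer through a reachable gradient $p\in D^\ast u_-(\bar x)$ with the given orbit through $\bar p\neq p$ has a corner at $(\bar x,\bar u)$, hence cannot be minimizing, which forces a strict inequality against $u_-=T^-_tu_-$. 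Your route is shorter and only needs the backward half $[-\delta,0]$ of the hypothesis (so it proves a slightly stronger statement), but it leans on the regularity theory for calibrated curves imported from \cite{WWY3}, whereas the paper's argument uses only Proposition \ref{fundamental-prop} and Corollary \ref{Minimality} and is therefore independent of that machinery (both versions adapt symmetrically to $u_+\in\mathcal{S}_+$, as in Remark \ref{proof-A2-p'}). One small point to tighten: Proposition \ref{cali-pro} is stated for calibrated rays with $p(s)$ defined through $\dot\gamma(s)$, and your concatenation $\tilde\gamma$ is a priori only piecewise $C^1$ at $s=-\delta$; either note that you only use the conclusions at $s\in(-\delta,0)$, where $\tilde\gamma$ is smooth, or observe that calibration plus strict convexity rules out a corner a posteriori (differentiability of $u_-$ at the interior point $\tilde\gamma(-\delta)$ forces the one-sided Legendre transforms, hence the one-sided velocities, to agree). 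Alternatively, once $\tilde\gamma$ is known to be a calibrated ray ending at $\bar x$ with left velocity dual to $\bar p$, you could simply cite part (1) of the unnumbered proposition following Proposition \ref{cali-pro} instead of redoing the limiting argument.
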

\begin{proof}
We assume $\bar{p}\notin D^{\ast}_x u_-(\bar{x})$ to conclude a contradiction. For any $p\in D^{\ast}_x u_-(\bar{x})$, we set
\begin{equation}
\begin{split}
\bar{z}(t)&=(\bar{x}(t),\bar{u}(t),\bar{p}(t))=\Phi^{t}_H(\bar{x},\bar{u},\bar{p}),\quad t\in[-\delta,\delta],\\
\hat{z}(t)&=(\hat{x}(t),\hat{u}(t),\hat{p}(t))=\Phi^{t}_H(\bar{x},\bar{u},p),\quad t\in(-\infty,0].
\end{split}
\end{equation}
By the assumption and backward invariance of $\mathrm{J}^1_{u_-}$, we have
\begin{equation}\label{ch-u}
\bar{u}(t)=u_-(\bar{x}(t))\quad\text{for}\,\,\,t\in[-\delta,\delta],\quad\hat{u}(t)=u_-(\hat{x}(t))\quad\text{for}\,\,\,t\in(-\infty,0].
\end{equation}
In particular, $\bar{x}(0)=\hat{x}(0)=\bar{x}, \bar{u}(0)=\hat{u}(0)=u_-(\bar{x})$. Since $u_-\in\mathcal{S}_-$, then by \eqref{ch-u},
\[
\bar{u}=u_-(\bar{x})=T^-_\delta u_-(\bar{x})\leqslant h_{\hat{x}(-\delta),u_-(\hat{x}(-\delta))}(\bar{x},\delta)=h_{\hat{x}(-\delta),\hat{u}(-\delta)}(\bar{x},\delta)\leqslant \bar{u},
\]
where the last inequality follows from Corollary \ref{Minimality}. The contradiction follows from
\begin{align*}
u_-(\bar{x}(\delta))&\,=T^-_2 u_-(\bar{x}(\delta))\leq h_{\hat{x}(-\delta),u_-(\hat{x}(-\delta))}(\bar{x}(\delta),2\delta)\\
&\,=h_{\hat{x}(-\delta),\hat{u}(-\delta)}(\bar{x}(\delta),2\delta)<h_{\bar{x},h_{\hat{x}(-\delta),\hat{u}(-\delta)}(\bar{x},\delta)}(\bar{x}(\delta),\delta)\\
&\,=h_{\bar{x},\bar{u}}(\bar{x}(\delta),\delta)\leq\bar{u}(\delta)=u_-(\bar{x}(\delta)),
\end{align*}
where the second inequality uses Markov property, i.e. Proposition \ref{fundamental-prop} (2), of action function and the fact that the minimizers $(\hat{x}(t),\hat{u}(t)),t\in[-\delta,0]$ of $h_{\hat{x}(-\delta),\hat{u}(-\delta)}(\bar{x},\delta)$ and $(\bar{x}(t),\bar{u}(t)), t\in[0,\delta]$ of $h_{\bar{x},\bar{u}}(\bar{x}(\delta),\delta)$ has a corner at $(\bar{x},\bar{u})$, thus can not be the minimizer of $h_{\hat{x}(-\delta),\hat{u}(-\delta)}(\bar{x}(\delta),2\delta)$. The last inequality is again a consequence of Corollary \ref{Minimality}.
\end{proof}

\begin{remark}\label{proof-A2-p'}
By a slight adaption of the proof, Lemma \ref{proof-A2-p} holds true with $u_-$ replaced by $u_+\in\mathcal{S}_+$.
\end{remark}

To complete the proof of Proposition \ref{omega-mane}, it suffices to notice that, by Lemma \ref{proof-A2-u}, $\omega(Z_0)$ is $\Phi^t_H$-invariant and $\pi(\omega(Z_0))$ is a subset of $J^0_{u_-}$, then apply Lemma \ref{proof-A2-p} directly.

\begin{remark}
The above mechanism has the advantages that it requires no information about the dynamics of $\Phi^t_H$ in a neighborhood of $\mathrm{J}^1_{u_-}$. However, for a fixed solution $u_-$, the convergence assumption \eqref{convergence1} may hold only for a small set of initial data.
\end{remark}

In response to the above remark, we present an example showing that the monotone assumption is not necessary for the uniqueness of solution of \eqref{hj} as well as the convergence of solution semigroup on the whole space of continuous functions.
\begin{proposition} \cite[Theorem 2.1]{XYZ} \label{prop:XYZ}
Assume $dH(\mathcal{R})>0$ on $E=\{z\in J^1(M,\R):H(z)=0 \}$ and the equation \eqref{hj} admits at least one solution. Then the solution to \eqref{hj} is unique, denoted by $u_-$, and for any $\varphi\in C(M,\R)$,
$$
\lim_{t\to +\infty} T_t^- \varphi(x) =u_-(x) , \quad x\in M.
$$
\end{proposition}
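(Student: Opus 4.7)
The plan is to split the proof into three pieces: (i) compactness of $\{T_t^-\varphi\}_{t\geq 1}$ in $C(M,\R)$; (ii) uniqueness of weak KAM solutions under the hypothesis $dH(\mathcal{R})>0$ on $E$; and (iii) identification of every subsequential limit as the unique solution $u_-$. For (i), the Lipschitz estimates for action functions recalled in the Appendix, together with the Lax formula in Proposition \ref{semi-group1}, show that $\{T_t^-\varphi\}_{t\geq 1}$ is equi-Lipschitz and uniformly bounded on $M$, so Arzel\`a--Ascoli gives relative compactness in $(C(M,\R),\|\cdot\|_\infty)$, and the $\omega$-limit set $\Omega(\varphi)$ is nonempty, compact, and satisfies $T_s^-\Omega(\varphi)\subseteq \Omega(\varphi)$ for all $s\geq 0$ by the semigroup property and the continuity of each $T_s^-$.

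For (ii), take two solutions $u_1,u_2\in\mathcal{S}_-$ and suppose for contradiction that $M:=\max_x(u_1-u_2)>0$, attained at some $x_0\in M$. By Proposition \ref{weak-kam}(2), there is a $(u_2,L)$-calibrated curve $\gamma:(-\infty,0]\to M$ with $\gamma(0)=x_0$, and by Proposition \ref{cali-pro} the associated orbit lies in $\mathrm{J}^1_{u_2}\subset E$. Subtracting the calibration equality for $u_2$ from the domination inequality $u_1\prec L$ along $\gamma$ yields
\[
M-(u_1-u_2)(\gamma(-s))\leq \int_{-s}^{0}\bigl[L(\gamma,u_1\circ\gamma,\dot\gamma)-L(\gamma,u_2\circ\gamma,\dot\gamma)\bigr]\,d\tau.
\]
The left side is nonnegative by the maximality of $M$. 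For the right side, the Fenchel identity $\partial_u L=-\partial_u H$ at the conjugate variables, combined with $dH(\mathcal{R})>0$ on $E$ and continuity of $\partial_u H$ on a compact tubular neighborhood of the orbit segment, forces the integrand to be strictly negative on a sub-interval near $\tau=0$ where $u_1>u_2$. This contradiction gives $M\leq 0$, and swapping the roles of $u_1,u_2$ yields $u_1=u_2$.

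For (iii), using the monotonicity of the Lax formula in Proposition \ref{semi-group1}, I would apply the squeeze $T_t^-(u_--c)\leq T_t^-\varphi\leq T_t^-(u_-+c)$ with $c=\|\varphi-u_-\|_\infty$, reducing the convergence to the case of constant shifts of $u_-$. Each subsequential limit of $T_t^-(u_-\pm c)$ is a fixed point of $T_s^-$ by the compactness from (i) and a monotone-in-$t$ argument (using that $u_-\pm c$ are, up to iterating the semigroup, super/sub-solutions of (hj)), hence a weak KAM solution; by the uniqueness from (ii) it must coincide with $u_-$. Therefore $T_t^-(u_-\pm c)\to u_-$, and the squeeze gives $T_t^-\varphi(x)\to u_-(x)$ for every $x\in M$.

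The main obstacle will be step (ii): the monotonicity $dH(\mathcal{R})>0$ is assumed only on the zero level $E$, so the strict negativity of the Lagrangian difference in the integrand requires a careful tubular-neighborhood and compactness argument to rule out vanishing of $\partial_u H$ at the intermediate point where the mean-value theorem is applied. A secondary obstacle, in step (iii), is ensuring that $T_t^-(u_-\pm c)$ is actually monotone in $t$; this rests on $u_-\pm c$ behaving like a super/sub-solution in a neighborhood of $E$, which may force us to handle small shifts first and reach arbitrary $c$ by iteration of the semigroup.
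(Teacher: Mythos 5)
First, a point of reference: the paper does not prove this proposition at all --- it is imported verbatim from \cite{XYZ} --- so there is no internal argument to compare yours against, and I can only judge the proposal on its own terms. Its skeleton (precompactness of $\{T_t^-\varphi\}$, uniqueness of the stationary solution, identification of subsequential limits by squeezing) is the standard template for such convergence theorems, and the parts that rely only on the equi-Lipschitz estimates (the Arzel\`a--Ascoli step, the $T_s^-$-invariance of the $\omega$-limit set in $C(M,\R)$) are sound.

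The genuine gap is that at every point where the hypothesis is actually used, you need $dH(\mathcal{R})>0$ at points that are \emph{not} on $E$, and neither the ``tubular neighborhood'' nor the ``iteration of the semigroup'' you invoke closes this. Concretely, in step (ii) the integrand $L(\gamma,u_1\circ\gamma,\dot\gamma)-L(\gamma,u_2\circ\gamma,\dot\gamma)$ equals $(u_1-u_2)(\gamma)\int_0^1\partial_u L\bigl(\gamma,\,u_2(\gamma)+\theta(u_1-u_2)(\gamma),\,\dot\gamma\bigr)\,d\theta$, and the identity $\partial_u L=-\partial_u H$ controls its sign only when the interpolated point lies near $E$; the interpolation ranges over a $u$-interval of length up to $\max_M(u_1-u_2)$, for which there is no a priori bound, so a tubular neighborhood of $E$ of fixed width (coming from compactness of $\mathrm{J}^1_{u_2}$) does not cover it. The same defect reappears in step (iii): $u_-\pm c$ is a strict super/subsolution --- hence $T_t^-(u_-\pm c)$ monotone and bounded in $t$ --- only for $c$ below a threshold $c_0$ measuring how far off $E$ the sign of $\partial_u H$ persists, and since the semigroup is expansive, $\|T_t^-\varphi-T_t^-\psi\|_\infty\leqslant e^{\lambda t}\|\varphi-\psi\|_\infty$ by Proposition \ref{prop-sg} (4), iterating it provides no mechanism to reduce an arbitrary $c$ to one below $c_0$. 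Finally, the uniform boundedness asserted in step (i) does not follow from the Lipschitz estimates for the action functions: for the model system of Section 6 with sign-changing $\lambda$, Proposition \ref{prop:Large-time behavior1} shows $T_t^-\varphi$ can diverge uniformly to $-\infty$, so boundedness must itself be extracted from the hypothesis, which your write-up does not do. Any repair must handle initial data, and pairs of solutions, at arbitrary distance from $\mathrm{J}^1_{u_-}$; this is precisely where \cite{XYZ} resorts to the structure of globally minimizing orbits rather than a purely local comparison near $E$.
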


We combine (A1) and the above proposition to obtain
\begin{corollary}
Under the same assumption and notation as Proposition \ref{prop:XYZ}, for any $\varphi\in C^2(M,\R) $, there exists some $Z_\varphi\in \mathrm{J}^1_\varphi$ such that $\omega(Z_\varphi)$ is a nonempty subset of $\widetilde{\mathcal{N}}_{u_-}$.
\end{corollary}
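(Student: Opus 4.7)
The plan is to combine Proposition \ref{prop:XYZ} with part \textup{(A1)} of Theorem \ref{thm1}, the only subtlety being that \textup{(A1)} is stated under uniform convergence while Proposition \ref{prop:XYZ} delivers only pointwise convergence.

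First, let $\varphi\in C^2(M,\R)\subset C(M,\R)$. Under the standing hypotheses of Proposition \ref{prop:XYZ}, that proposition furnishes a (unique) weak KAM solution $u_-\in\cS_-$ with
\[
\lim_{t\to+\infty}T^-_t\varphi(x)=u_-(x),\qquad x\in M.
\]
This is precisely the hypothesis of Theorem \ref{thm1}, strengthened by the remark immediately following its statement, which observes that pointwise convergence suffices. (Alternatively, since $M$ is compact and the family $\{T^-_t\varphi\}_{t\geqslant 0}$ is equi-Lipschitz on $M$, pointwise convergence to the continuous function $u_-$ upgrades to uniform convergence via Arzel\`a--Ascoli, so the uniform hypothesis of \textup{(A1)} is met directly if one prefers not to invoke the remark.)

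Second, with the convergence hypothesis verified, I would apply \textup{(A1)} to produce $Z_\varphi\in\mathrm{J}^1_\varphi$ with $\omega(Z_\varphi)\subset\widetilde{\cN}_{u_-}$. It remains to check $\omega(Z_\varphi)\neq\emptyset$, but this is already built into the construction in Section 3.2: the orbit $\Phi^t_H(Z_\varphi)=(x(t),u(t),p(t))$ is a limit of characteristic segments along which $u(t)=T^-_t\varphi(x(t))$, and the equi-Lipschitz, hence uniformly bounded, character of $\{T^-_t\varphi\}_{t\geqslant 0}$ keeps the orbit inside a compact subset of $J^1(M,\R)$. Hence any sequence $t_n\to+\infty$ admits a convergent subsequence, so $\omega(Z_\varphi)$ is nonempty.

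There is no real obstacle here; the statement is essentially a bookkeeping corollary packaging \textup{(A1)} with the convergence criterion of \cite{XYZ}. The only thing worth stating carefully in the write-up is the reduction from pointwise to uniform convergence (or the appeal to the remark), which is the one place where compactness of $M$ and equi-Lipschitz regularity of the backward semigroup enter.
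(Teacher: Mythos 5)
Your proposal is correct and follows the paper's own route exactly: the corollary is obtained by feeding the pointwise convergence from Proposition \ref{prop:XYZ} into \textup{(A1)} (via the remark that pointwise convergence suffices, or equivalently your equi-Lipschitz upgrade to uniform convergence), with nonemptiness of $\omega(Z_\varphi)$ coming from the compactness of the limiting orbit already established in the proof of \textup{(A1)}.
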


\subsection{Proof of \textbf{(A2)} and some improvements}
Due to Corollary \ref{inv-graph}, the second conclusion of Theorem \ref{thm1} is implied by
\begin{proposition}\label{A2-proof}
For any $\delta>0$ and $\bar{z}=(\bar{x},u_-(\bar{x}),d_x u_-(\bar{x}))\in\Phi_H^{-\delta}(\mathrm{J}^1_{u_-})$, there is a sequence
\begin{equation}\label{p-convergence}
\{Z_n\}_{n\geq1}\subset\mathrm{J}^1_{\varphi}\quad \text{such that}\quad \lim_{n\rightarrow+\infty}\Phi^n_H(Z_n)=\bar{z}.
\end{equation}
\end{proposition}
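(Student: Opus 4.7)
The plan is to produce $\{Z_n\}$ via the global characteristics method (Theorem \ref{global-chm}) by targeting $\bar x$ at time $n$, and then to identify the limit of $\Phi^n_H(Z_n)$ with $\bar z$ through a calibration argument that propagates the uniform convergence $T^-_t\varphi \to u_-$ backwards along tails of the characteristics.

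First I unpack the structure of $\bar z$. By Corollary \ref{inv-graph}, $\Phi_H^{-\delta}(\mathrm{J}^1_{u_-}) \subset \mathrm{J}^1_{u_-}$, so $z' := \Phi_H^{\delta}(\bar z) \in \mathrm{J}^1_{u_-}$; Proposition \ref{cali-pro} together with the subsequent one-to-one correspondence between reachable differentials of $u_-$ at $\rho(z')$ and backward $(u_-,L)$-calibrated curves from $\rho(z')$ lifts the backward orbit of $z'$ to $\mathrm{J}^1_{u_-}$ along a calibrated curve $\tilde\gamma$ with $\tilde\gamma(-\delta) = \bar x$. Since $-\delta<0$ lies strictly inside the parameter interval, Proposition \ref{cali-pro}(1) yields differentiability of $u_-$ at $\bar x$, and the cited correspondence then makes the backward $(u_-,L)$-calibrated curve based at $\bar x$ \emph{unique}.

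Next, for each $n>\delta$ I apply Theorem \ref{global-chm} at the endpoint $(\bar x,n)$ to produce $Z_n \in \mathrm{J}^1_\varphi$ whose orbit $\Phi^\tau_H(Z_n)=(x_n(\tau),u_n(\tau),p_n(\tau))$ on $[0,n]$ satisfies $x_n(n)=\bar x$, $u_n(\tau)=T^-_\tau\varphi(x_n(\tau))$, and $p_n(n)\in D^\ast_x T^-_n\varphi(\bar x)$. The hypothesis gives $u_n(n)\to u_-(\bar x)$ at once, so everything reduces to $p_n(n)\to d_x u_-(\bar x)$. I shift time by $n$ and set $\tilde x_n(\tau):=x_n(n+\tau)$ for $\tau\in[-n,0]$. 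Uniform boundedness of $\{T^-_t\varphi\}_{t\geq 0}$ confines the $u$-coordinate to a compact interval, and the standard a priori Lipschitz bound for action-minimizing characteristics of Tonelli contact systems keeps the full orbit in a compact subset of $J^1(M,\R)$; Arzelà-Ascoli then extracts a subsequence along which $\tilde x_n\to \tilde x$ and $\dot{\tilde x}_n\to \dot{\tilde x}$ uniformly on compact subsets of $(-\infty,0]$, with $\tilde x(0)=\bar x$. Using the contact characteristic identity $\dot u=L(x,u,\dot x)$ together with the identification $u_n(n+\tau)=T^-_{n+\tau}\varphi(\tilde x_n(\tau))$ from Theorem \ref{global-chm}, the elementary equality
\[
u_n(n)-u_n(n+s) \;=\; \int_s^0 L\bigl(\tilde x_n(\tau),\, T^-_{n+\tau}\varphi(\tilde x_n(\tau)),\, \dot{\tilde x}_n(\tau)\bigr)\, d\tau
\]
passes to the limit $n\to\infty$ via the uniform convergence $T^-_t\varphi\to u_-$, producing $u_-(\bar x)-u_-(\tilde x(s))=\int_s^0 L(\tilde x(\tau),u_-(\tilde x(\tau)),\dot{\tilde x}(\tau))\,d\tau$ for every $s<0$. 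Hence $\tilde x$ is a backward $(u_-,L)$-calibrated curve based at $\bar x$.

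Finally, uniqueness of the backward calibrated curve from $\bar x$ forces $\tilde x=\tilde\gamma$, so $\dot{\tilde x}_n(0)\to\dot{\tilde\gamma}_-(0)$ and continuity of the Legendre transform gives $p_n(n)\to \partial L/\partial\dot x(\bar x,u_-(\bar x),\dot{\tilde\gamma}_-(0))=d_x u_-(\bar x)$; since every convergent subsequence has the same limit, the full sequence converges, giving $\Phi^n_H(Z_n)\to\bar z$. I expect the main obstacle to lie in the third paragraph: combining the a priori Lipschitz estimate for the minimizing segments, the pointwise identification $u_n(n+\tau)=T^-_{n+\tau}\varphi(\tilde x_n(\tau))$, and the uniform convergence of the semigroup to push the calibration identity to the limit. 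Once this is in place, differentiability of $u_-$ at the strict interior point $\bar x$ supplies the uniqueness needed to pin down the limiting momentum.
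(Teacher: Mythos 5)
Your proposal is correct and follows the same skeleton as the paper's proof: build $Z_n$ by the global characteristics method with target $(\bar x,n)$, time-shift by $n$, extract a convergent subsequence of the tails, identify the $u$-component of the limit via the uniform convergence $T^-_t\varphi\to u_-$, and use differentiability of $u_-$ at $\bar x$ (coming from Proposition \ref{cali-pro}(1) applied at the interior parameter $-\delta$) to pin down the momentum and upgrade to convergence of the full sequence.

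The one genuine divergence is in how you show the limiting momentum lies in $D^\ast u_-(\bar x)$. The paper shows that the limiting tail orbit projects into $J^0_{u_-}$ and then invokes Lemma \ref{proof-A2-p}, whose proof is a pure action-function argument (Markov property plus the corner/strict-inequality comparison), deliberately avoiding calibrated curves. You instead pass the Herglotz identity $u_n(n)-u_n(n+s)=\int_s^0 L\,d\tau$ to the limit using the uniform convergence of the semigroup, conclude that the limiting projected curve is a backward $(u_-,L)$-calibrated curve based at $\bar x$, and then use the correspondence between reachable differentials and calibrated curves. This is exactly the alternative route the authors allude to in the remark preceding Lemma \ref{proof-A2-p} (``By the definition of calibrated curve \dots it is not difficult to prove the following lemma. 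Here we give a proof independent of the notion of calibrated curves.''). Your route buys a cleaner conceptual picture (the tails converge to the unique calibrated curve through $\bar z$), at the cost of having to justify the passage to the limit in the integral (convergence of velocities, which you correctly get from convergence of the orbits in $J^1(M,\R)$). One small point worth making explicit in either version: differentiability of $u_-$ at $\bar x$ forces $D^\ast u_-(\bar x)$ to be the singleton $\{d_xu_-(\bar x)\}$ only because $u_-$ is locally semiconcave (Corollary \ref{weak-kam-reg}); for a general Lipschitz function this implication fails. Both your argument and the paper's rely on this silently.
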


\begin{proof}
We apply Theorem \ref{global-chm1} to find, for $n\geqslant 1, Z_n=(x_n, u_n, p_n)\in\mathrm{J}^1_{\varphi}$ such that the corresponding characteristic segments $\Phi^{t}_H(Z_n)=(x_n(t), u_n(t), p_n(t)), t\in[0,n]$ satisfies
\begin{equation}\label{eq:4}
u_n(t)=T^-_t \varphi(x_n(t)),\quad x_n(n)=\bar{x}.
\end{equation}
Instead of $\Phi^{t}_H(Z_n)$, we consider their time translations
\[
\hat{z}_n(t)=(\hat{x}_n(t),\hat{u}_n(t),\hat{p}_n(t)):=(x_n(t+n), u_n(t+n), p_n(t+n)),\quad t\in[-n,0].
\]
Again by uniform Lipschitz property of $\{T^-_t \varphi\}_{t\geq0}$, the characteristic segments $\{\hat{z}_n(t):t\in[0,n]\}_{n\geqslant1}$ are uniformly bounded and, since they are solutions of \eqref{ch}, thus pre-compact. For an arbitrary convergent subsequence $\hat{z}_{n_j}(t)$, we define
\[
\bar{z}(t)=(\bar{x}(t),\bar{u}(t),\bar{p}(t)):=\lim_{j\rightarrow\infty}(\hat{x}_{n_j}(t),\hat{u}_{n_j}(t),\hat{p}_{n_j}(t)),\quad t\leqslant0.
\]
In particular, $\bar{x}_{n}(0)=\bar{x}$. It follows that $\bar{z}(t)$ is an orbit of \eqref{ch}. On the other hand, \eqref{eq:4} as well as the convergence of solution semigroup imply that for  and $t\leqslant0$,
\begin{equation}\label{u-convegence}
\bar{u}(t)=\lim_{j\rightarrow\infty}\hat{u}_{n_j}(t)=\lim_{j\rightarrow\infty}T^-_{n_j}\varphi(\hat{x}_{n_j}(t))=u_-(\bar{x}(t)).
\end{equation}
Thus for any limit $\bar{z}(t),t\leqslant0$, we have
\[
\pi\circ\bar{z}(t)\subset J^0_{u_-}.
\]
Now we apply Lemma \ref{proof-A2-p} to conclude that $\bar{p}(0)\in D^\ast u_-(\bar{x}(0))=D^\ast u_-(\bar{x})$. Since $\bar{z}\in\Phi_H^{-\delta}(\mathrm{J}^1_{u_-})$, one deduces from Proposition \ref{cali-pro} (1) that $u_-$ is differentiable at $\bar{x}$. Thus $\bar{p}(0)=d_x u_-(\bar{x})$ and, with the aid of continuous dependence theorem of ODE, this proves the convergence of the whole sequence $\hat{z}_n(t)$ (It is clear that $\hat{z}_n(t)$ is a $(u_-,L)$-calibrated curve). Now we finish by noticing that
\[
\lim_{n\rightarrow\infty}\Phi^{n}_H(Z_n)=\lim_{n\rightarrow\infty}\hat{z}_n(0)=\bar{z}(0)=(\bar{x},u_-(\bar{x}),d_x u_-(\bar{x})).
\]
\end{proof}


\section{Proof of Theorem \ref{thm2}}
Our second main theorem concerns the \textbf{oriented connectibility} of $\widetilde{\mathcal{N}}_{u_-}, \widetilde{\mathcal{N}}_{v_+}$ associated to different weak KAM solutions $u_-\in\mathcal{S}_-$ and $v_+\in\mathcal{S}_+$. By Definition \ref{Mane-slice}, $\widetilde{\mathcal{N}}_{u_-}, \widetilde{\mathcal{N}}_{v_+}$ are compact $\Phi^t_H$-invariant sets. So $\Phi^t_H$-orbits connecting them, if exist, must be complete and asymptotic to each end in infinite time. Here, the notion ``oriented'' means the choice of $\alpha$-(thus also $\omega$-)limit set of the connecting orbit is fixed. We recall the assumptions, which hold throughout this section: there is $\varphi\in C(M,\R)$ such that
\begin{equation}\label{eq:thm2-convergence}
\lim_{t\to +\infty}T_t^- \varphi=u_-, \quad  \lim_{t\to +\infty} T_t^+ \varphi= v_+,
\end{equation}
and the limits are ordered by
\begin{equation}\label{eq:thm2-order}
v_+<u_-.
\end{equation}
For convenience, we set
\begin{equation}\label{u-v}
\eps_0:=\frac{1}{4}\min_{x\in M}\{u_-(x)-v_+(x)\}>0.
\end{equation}

\subsection{Proof of \textbf{(B1)}}
This part contains a proof of non-existence of the $\Phi^t_H$-orbits approaching $\widetilde{\mathcal{N}}_{u_-}$ in the infinite future and to $\widetilde{\mathcal{N}}_{v_+}$ in the infinite past. As a first step, we notice that the assumptions \eqref{eq:thm2-convergence} and \eqref{eq:thm2-order} implies

\begin{lemma}\label{lem:heteroclinic orbit-0}
For any $\eps\in(0,\eps_0)$, there exists $\varphi_\eps \in C^\infty(M,\mathbb{R})$ satisfying
\begin{equation}\label{eq:varphi-epsilon1}
\text{either}\quad v_+<\varphi_\eps<v_+ +\eps,\quad\quad\text{or}\quad u_--\eps<\varphi_\eps<u_-
\end{equation}
and the convergence condition
\begin{equation}\label{eq:varphi-epsilon2}
\lim_{t\to +\infty}T_t^- \varphi_\eps=u_-, \quad  \lim_{t\to +\infty} T_t^+ \varphi_\eps= v_+.
\end{equation}
\end{lemma}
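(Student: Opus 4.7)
The natural approach is to produce $\varphi_\eps$ by first time-evolving $\varphi$ until it is uniformly close to $v_+$ (or, in the other case, to $u_-$), then smoothing, and finally offsetting by a small constant to enforce the strict sandwich. I will describe the construction for the case $v_+<\varphi_\eps<v_++\eps$; the other case is strictly analogous after swapping $T_t^+,v_+$ with $T_t^-,u_-$.

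\emph{Construction.} Since $T_t^+\varphi\to v_+$ uniformly on $M$, choose $T>0$ large enough that $\|T_T^+\varphi-v_+\|_\infty<\eps/8$. The continuous function $T_T^+\varphi$ is then uniformly approximated by a $C^\infty$ function $\tilde\psi$ on the closed manifold $M$ via a standard partition of unity and local convolution against a mollifier, with $\|\tilde\psi-T_T^+\varphi\|_\infty<\eps/8$. Set $\varphi_\eps:=\tilde\psi+\eps/2$; this is smooth, and the triangle inequality gives $v_++\eps/4<\varphi_\eps<v_++3\eps/4$, so in particular $v_+<\varphi_\eps<v_++\eps$.

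\emph{Verification of the bilateral convergence.} The key tool is the monotonicity of $\{T_t^\pm\}_{t\geq0}$, which follows from the variational representation in Proposition \ref{semi-group1} combined with the monotonicity of the implicit action functions $h_{x_0,u_0}$, $h^{x_0,u_0}$ in their initial action argument (Proposition \ref{fundamental-prop}(1), listed in the Appendix). Since $v_+<\varphi_\eps$, monotonicity yields $v_+=T_t^+v_+\leq T_t^+\varphi_\eps$ for all $t\geq0$, which is the lower half of $T_t^+\varphi_\eps\to v_+$. For the matching upper bound, and for the convergence $T_t^-\varphi_\eps\to u_-$, I would sandwich $\varphi_\eps$ between two reference functions whose semigroup orbits are already controlled by the hypothesis: $v_+$ from below and a smoothing of an iterate $T_{kT}^+\varphi$ (with the $\eps/2$ offset absorbed) from above, using that the whole forward orbit $\{T_{kT}^+\varphi\}_{k\geq 1}$ converges uniformly to $v_+$. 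The analogous backward comparison, comparing $\varphi_\eps$ to $T_\tau^-\circ T_T^+\varphi$ and using that $T_\tau^-\varphi\to u_-$, pins $T_t^-\varphi_\eps$ between two sequences tending to $u_-$.

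\emph{Main obstacle.} The delicate point is to upgrade from the one-sided bound supplied by monotonicity to genuine two-sided asymptotic control of $T_t^\pm\varphi_\eps$: continuous dependence of $T_t^\pm$ on the initial data is not uniform in $t$, so a priori the small smoothing error $\eps/8$ could be amplified in the long-time regime. Resolving this uses the strict separation $v_+<u_-$ to keep the two slabs disjoint, and a careful choice of $T$ before smoothing so that the approximation error is dwarfed by the contraction of the forward semigroup acting on functions already in a thin slab around the fixed point $v_+$; this same bookkeeping handles the passage $T_t^-\varphi_\eps\to u_-$, where $\varphi_\eps$ lies far below $u_-$ but is ordered above a function known to converge to $u_-$ under the backward semigroup.
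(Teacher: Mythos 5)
Your construction (evolve the initial datum, smooth, shift up by $\eps/2$) is close in spirit to the paper's, which smooths above $T_{\tau_0+\tau_\eps}^+\circ T_{\tau_0}^-\varphi$ with the same $\eps/2$ of room. But the verification of \eqref{eq:varphi-epsilon2} has a genuine gap, and you have located it yourself without resolving it. You propose to control the long-time error by "the contraction of the forward semigroup acting on functions already in a thin slab around $v_+$." No such contraction exists here: the only quantitative continuity available is Proposition \ref{prop-sg}(4), $\|T_t^{\pm}\varphi-T_t^{\pm}\psi\|_\infty\leqslant e^{\lambda t}\|\varphi-\psi\|_\infty$, which is \emph{expansive} in $t$ (the paper even exploits this expansiveness, in Lemma \ref{t-epsilon-infi}, to force $t_\eps\to\infty$). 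Nor does membership in the slab $v_+<\varphi_\eps<v_++\eps$ by itself imply $T_t^-\varphi_\eps\to u_-$: the hypothesis only controls the semigroup orbit of the one function $\varphi$, and in general a datum slightly above $v_+$ can have a backward orbit converging to a different solution or to $-\infty$ (cf.\ Proposition \ref{prop:Large-time behavior1}).

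The missing ingredient is Proposition \ref{A6}: $T_t^-\circ T_t^+\varphi\geqslant\varphi$ and $T_t^+\circ T_t^-\varphi\leqslant\varphi$. This is what converts your one-sided monotonicity bounds into two-sided ones with no contraction needed. Concretely, with your $\varphi_\eps>T_T^+\varphi$ one gets, for $t\geqslant T$,
\begin{equation*}
T_t^-\varphi_\eps\;\geqslant\;T_{t-T}^-\circ\bigl(T_T^-\circ T_T^+\varphi\bigr)\;\geqslant\;T_{t-T}^-\varphi\;\longrightarrow\;u_-,
\end{equation*}
while $\varphi_\eps<v_++\eps<u_-$ gives $T_t^-\varphi_\eps\leqslant T_t^-u_-=u_-$. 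For the forward limit, the lower bound $T_t^+\varphi_\eps\geqslant v_+$ is as you say, but the upper bound cannot come from $T_t^+u_-$ (whose limit is unknown); the paper first arranges $T_{\tau_0}^-\varphi\geqslant u_--\eps_0>\varphi_\eps$ and then uses $T_t^+\varphi_\eps\leqslant T_t^+\circ T_{\tau_0}^-\varphi\leqslant T_{t-\tau_0}^+\varphi\to v_+$, again via Proposition \ref{A6}. Your outline never invokes this inequality, and without it the sandwich does not close.
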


\begin{proof}
We shall focus on the first case in \eqref{eq:varphi-epsilon1}, the second is obtained by a similar way. Using the convergence assumption \eqref{eq:thm2-convergence} and \eqref{u-v}, for any $\eps\in(0,\eps_0)$, there exist $\tau_0,\tau_\eps$ such that
\begin{equation}\label{ieq:lemmaB-1}
T_{\tau_0}^- \varphi \geqslant u_--\eps_0>v_++\eps_0\quad\text{and}\quad  T_{\tau_\eps}^+ \varphi \leqslant v_++\frac{\eps}{2} <u_-.
\end{equation}
Combining Proposition \ref{prop-sg} (1) and Proposition \ref{A6}, we have
\begin{equation}\label{ieq:lemmaB-2}
v_+=T_{\tau_\eps+\tau_0}^+v_+<T_{\tau_\eps+\tau_0}^+\big(v_+ +\eps_0\big)
<T_{\tau_\eps+\tau_0}^+\circ T_{\tau_0}^-\varphi=T_{\tau_\eps}^+\circ\big(T_{\tau_0}^+\circ T_{\tau_0}^-\varphi\big)\leqslant T_{\tau_\eps}^+\varphi<u_-.
\end{equation}
and the sequence of inequalities
\begin{align*}
u_-&\,=\lim_{s\to +\infty}T_s^- \circ T_{\tau_0}^-\varphi\\
&\,\leqslant\lim_{s\to +\infty} T_s^- \circ T_{\tau_0+\tau_\eps }^- \circ T_{\tau_0+\tau_\eps}^+ \circ T_{\tau_0}^- \varphi\\
&\,=\lim_{t\to +\infty} T_t^-\circ T_{\tau_0+\tau_\eps}^+ \circ T_{\tau_0}^- \varphi\leqslant u_-.
\end{align*}
holds uniformly on $M$. Similarly,
\[
v_+\leqslant \lim_{t\to +\infty} T_t^+ \circ T_{\tau_0+\tau_\eps}^+ \circ T_{\tau_0}^- \varphi\leqslant\lim_{t\to +\infty} T_t^+ \circ T_{\tau_\eps}^+ \varphi= v_+.
\]
Summarizing the above calculation, we have
\begin{equation}\label{eq:pf-heteroclinic orbit-0}
\lim_{t\to +\infty} T_t^-\big(T_{\tau_0+\tau_\eps}^+\circ T^-_{\tau_0} \varphi\big)=u_-, \quad \lim_{t\to +\infty} T_t^+ \big( T_{\tau_0+\tau_\eps}^+ \circ T_{\tau_0}^- \varphi\big)=v_+.
\end{equation}
We choose \textbf{any} $\varphi_\eps\in C^\infty(M,\R)$ satisfying
$$
T_{\tau_0+\tau_\eps}^+\circ T_{\tau_0}^- \varphi\leqslant\varphi_\eps\leqslant T_{\tau_0+\tau_\eps}^+ \circ T_{\tau_0}^- \varphi +\frac{\eps}{2}.
$$
From \eqref{ieq:lemmaB-1} and \eqref{ieq:lemmaB-2}, it is easy to check that $\varphi_\eps$ satisfies \eqref{eq:varphi-epsilon1}. To verify \eqref{eq:varphi-epsilon2}, it is necessary to employ \eqref{u-v} and \eqref{ieq:lemmaB-2} to see that
\begin{align*}
T_t^-\big(T_{\tau_0+\tau_\eps}^+\circ T^-_{\tau_0} \varphi\big)&\leqslant  T_t^- \varphi_\eps\leqslant T_t^-u_-,\\
T_t^+\big(T_{\tau_0+\tau_\eps}^+\circ T^-_{\tau_0} \varphi\big)&\leqslant  T_t^+ \varphi_\eps<T_t^+(u_--\eps_0)\leqslant T_t^+\circ T^-_{\tau_0}\varphi\leqslant T_{t-\tau_0}^+ \varphi,
\end{align*}
Sending $t$ to $+\infty$ and employing \eqref{eq:pf-heteroclinic orbit-0}, we complete the proof.
\end{proof}

\vspace{1em}
The proof of (B1) relies on a simple but useful observation motivated by Lemma \ref{lem:heteroclinic orbit-0}.
\begin{proposition}\label{prop-B2}
Assume $z_0=(x_0,u_0,p_0)$ satisfies
\begin{equation}\label{ieq:prop-B2}
v_+(x_0)<u_0,
\end{equation}
then either $\omega(z_0)=\emptyset$ or for any $\bar{z}=(\bar{x},\bar{u},\bar{p})\in\omega(z_0), u_-(\bar{x})\leqslant\bar{u}$. Similarly, assume
\[
u_0<u_-(x_0),
\]
then either $\alpha(z_0)=\emptyset$ or for any $\bar{z}=(\bar{x},\bar{u},\bar{p})\in\alpha(z_0), \bar{u}\leqslant v_+(\bar{x})$.
\end{proposition}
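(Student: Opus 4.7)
The plan is to prove the first assertion by sandwiching the $u$-coordinate of any $\omega$-limit point from below using a cleverly chosen initial datum lying just above $v_+$, and then to obtain the second assertion by the time-reversal symmetry $H\leftrightarrow \breve H$ (under which $\mathcal{S}_-$ and $\mathcal{S}_+$, forward and backward action functions, and $\alpha$- and $\omega$-limits all swap roles). So I focus on the first.

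Suppose $\omega(z_0)\neq\emptyset$ and pick $\bar z=(\bar x,\bar u,\bar p)\in\omega(z_0)$, so there is a sequence $t_n\to+\infty$ with $\Phi^{t_n}_H(z_0)=(x(t_n),u(t_n),p(t_n))\to\bar z$. The hypothesis $v_+(x_0)<u_0$ lets me fix $\eps\in(0,u_0-v_+(x_0))$, and Lemma \ref{lem:heteroclinic orbit-0} (first alternative in \eqref{eq:varphi-epsilon1}) supplies $\varphi_\eps\in C^\infty(M,\R)$ with $v_+<\varphi_\eps<v_++\eps$ satisfying the two convergences in \eqref{eq:varphi-epsilon2}. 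In particular, $\varphi_\eps(x_0)<v_+(x_0)+\eps<u_0$.

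The key chain of inequalities is
\[
T^-_t\varphi_\eps(x(t))=\inf_{x'\in M}h_{x',\varphi_\eps(x')}(x(t),t)\leqslant h_{x_0,\varphi_\eps(x_0)}(x(t),t)\leqslant h_{x_0,u_0}(x(t),t)\leqslant u(t),
\]
where the first equality is the variational representation in Proposition \ref{semi-group1}, the second step is monotonicity of the backward action function in its initial $u$-datum (Proposition \ref{fundamental-prop} (1)) applied with $\varphi_\eps(x_0)\leqslant u_0$, and the third uses Corollary \ref{Minimality} since the orbit $(x(\tau),u(\tau),p(\tau))_{\tau\in[0,t]}$ belongs to $S^{x(t),t}_{x_0,u_0}$. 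Evaluating at $t=t_n$ and letting $n\to\infty$, the uniform convergence $T^-_t\varphi_\eps\to u_-$ (which follows from pointwise convergence on the compact $M$ together with the equi-Lipschitz regularity of the family $\{T^-_t\varphi_\eps\}_{t\geqslant0}$) and the continuity of $u_-$ give $T^-_{t_n}\varphi_\eps(x(t_n))\to u_-(\bar x)$, while $u(t_n)\to\bar u$. Hence $u_-(\bar x)\leqslant\bar u$, as required.

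For the second assertion, assuming $\alpha(z_0)\neq\emptyset$ and $u_0<u_-(x_0)$, the analogous argument uses the second alternative in \eqref{eq:varphi-epsilon1} of Lemma \ref{lem:heteroclinic orbit-0}: choose $\eps\in(0,u_-(x_0)-u_0)$ and $\varphi_\eps$ with $u_--\eps<\varphi_\eps<u_-$, so that $\varphi_\eps(x_0)>u_0$. Then for $t_n\to+\infty$ realizing $\bar z\in\alpha(z_0)$ via $\Phi^{-t_n}_H(z_0)\to\bar z$, the forward analogue
\[
T^+_t\varphi_\eps(x(-t))=\sup_{x'\in M}h^{x',\varphi_\eps(x')}(x(-t),t)\geqslant h^{x_0,\varphi_\eps(x_0)}(x(-t),t)\geqslant h^{x_0,u_0}(x(-t),t)\geqslant u(-t)
\]
(the last inequality again from Corollary \ref{Minimality}, this time with $S^{x_0,u_0}_{x(-t),t}$) yields $v_+(\bar x)\geqslant\bar u$ in the limit. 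The only potentially delicate point in either direction is upgrading pointwise convergence of $T^\pm_t\varphi_\eps$ to uniform convergence, but this is immediate from equi-Lipschitz regularity of the semigroups on the compact manifold $M$; no other step is substantive.
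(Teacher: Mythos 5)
Your proof is correct and follows essentially the same route as the paper's: choose $\varphi_\eps$ just above $v_+$ via Lemma \ref{lem:heteroclinic orbit-0}, compare $T^-_{t_n}\varphi_\eps(x(t_n))$ with $u(t_n)$ through the monotonicity of the action function and Corollary \ref{Minimality}, and pass to the limit. The only (harmless) difference is that you spell out the second assertion explicitly, whereas the paper leaves it to symmetry.
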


\begin{proof}
We shall focus on the first case and assume $\omega(z_0)\neq\emptyset$, otherwise there is nothing to prove. Fixing the notation $\Phi^t_H(z_0)=z(t)=(x(t),u(t),p(t))$, it follows from \eqref{ieq:prop-B2} that
\begin{equation}\label{eq:prop-4.3-pf-0}
v_+(x(0))< u(0).
\end{equation}
By the definition of $\omega(z_0)$, there is a positive sequence $\{t_n\}_{n\geqslant1}$ such that
\[
\lim_{n\to +\infty}t_n=+\infty,\quad\text{and}\quad\lim_{n\to +\infty}(x(t_n),u(t_n))=(\bar{x},\bar{u}).
\]
For $0<\eps<u(0)-v_+(x(0))$, we use Lemma \ref{lem:heteroclinic orbit-0} to find $\varphi_{\eps} \in C^\infty(M,\mathbb{R})$ satisfying
\begin{equation}\label{eq:proof-B2}
\varphi_{\eps}\leqslant v_++\eps,\quad\text{and}\quad\lim_{t\to +\infty} T_t^-\varphi_{\eps}=u_-.
\end{equation}
As a consequence, it is obvious that $\varphi_{\eps}(x(0))<u(0)$. By the definition of solution semigroup, we have
\begin{align*}
T_{t_n}^-\varphi_{\eps}(x(t_n))&\,=\inf_{x'\in M}h_{x',\varphi_{\eps}(x')}(x(t_n), t_n)\leqslant h_{x(0),\varphi_{\eps}(x(0))}(x(t_n),t_n)\\
&\,< h_{x(0),u(0)}(x(t_n),t_n)\leqslant u(t_n),
\end{align*}
where the last inequality follows from Corollary \ref{Minimality}. Using \eqref{eq:proof-B2}, we find
\[
u_-(\bar{x})=\lim_{n\rightarrow\infty}T_{t_n}^-\varphi_{\eps}(x(t_n))\leqslant\lim_{n\rightarrow\infty}u(t_n)=\bar{u}.
\]
\end{proof}

We proceed by supposing that there exists $z_0=(x_0,u_0,p_0)\in J^1(M,\R)$ such that $\alpha(z_0)\cap\widetilde{\mathcal{N}}_{u_-}\neq\emptyset$. To prove the conclusion (B1), it suffice to verify the assumption of Proposition \ref{prop-B2}. However, using the same notations as before, the definition of $\alpha(z_0)$ ensures some $\mathcal{T}<0$ such that
\[
v_+(x(\mathcal{T}))<u_-(x(\mathcal{T}))-\eps_0\leqslant u(\mathcal{T}).
\]
Then we may replace $z_0$ by $\Phi^{\mathcal{T}}_H(z_0)$ and apply \ref{prop-B2} since $\omega(z_0)=\omega(\Phi^t_Hz_0)$.

\begin{remark}
By Proposition \ref{prop-B2}, we can conclude that there does not exist any finite chain of points $\{z_i\}_{0\leqslant i\leqslant N}\subset J^1(M,\R)$ with $z_0\in\widetilde{\mathcal{N}}_{u_-}$ and $z_N\in\widetilde{\mathcal{N}}_{v_+}$ such that for each $0\leqslant i\leqslant N-1, \Phi^t_H(z_i)$ is defined by all $t\in\R, \omega(z_{i})\cap\alpha(z_{i+1})\neq\emptyset$.
\end{remark}

\subsection{Proof of \textbf{(B2)}}
In this part, we shall show how to employ Theorem A and tools developed in its proof to construct heteroclinic orbits $\Phi^t_H(Z_0),t\in\R$ such that $\alpha(Z_0)\subset\widetilde{\mathcal{N}}_{v_+}, \omega(Z_0)\subset\widetilde{\mathcal{N}}_{u_-}$. We begin by introducing the following construction: for any $\eps\in(0,\eps_0)$ and $\varphi_\eps$ given by Lemma \ref{lem:heteroclinic orbit-0}, we apply (A1) to produce a semi-infinite orbit
\[
z_\eps(t)=\Phi^t_H(z_\eps(0)):=(x_\eps(t),u_\eps(t),p_\eps(t)),\quad t\in [0, +\infty)
\]
with the conditions $z_\eps(0)\in \mathrm{J}^1_{\varphi_\eps}, \omega(z_\eps(0))\subset\widetilde{ \mathcal{N}}_{u_-}$ and for any $t\geqslant0$,
\begin{equation}\label{eq:heteroclinic-pf-1}
u_\eps(t)=T_t^-\varphi_\eps(x_\eps(t)),\quad p_\eps(t)\in D^\ast_xT_t^-\varphi_\eps(x_\eps(t)).
\end{equation}
Here we use \eqref{semi-infi-ch} in the proof of (A1). Let us define
$$
w=\frac{1}{2}(u_-+v_+).
$$
We combine \eqref{u-v} and \eqref{eq:varphi-epsilon1} to deduce
\begin{equation}\label{eq:proof-order}
\varphi_\eps<v_+ +\eps<v_+ +\eps_0\leqslant w<u_-.
\end{equation}
Due to the fact $\lim_{t\to +\infty} T_t^- \varphi_\eps=u_-$, there exists $t_\eps>0$ such that
\begin{equation}\label{eq:heteroclinic-pf-1}
w(x_\eps(t_\eps))=T_{t_\eps}^-\varphi_\eps(x_\eps(t_\eps))=u_\eps(t_\eps).
\end{equation}
It is readily seen that
\begin{lemma}\label{t-epsilon-infi}
$\lim_{\eps \to 0} t_\eps =+\infty$.
\end{lemma}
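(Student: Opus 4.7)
The plan is to argue by contradiction. Suppose $t_\eps$ stays bounded along a sequence $\eps_n\to 0$, so that, after passing to subsequences and using compactness of $M$, $t_{\eps_n}\to T<+\infty$ and $x_{\eps_n}(t_{\eps_n})\to\bar x\in M$. Case 1 of Lemma \ref{lem:heteroclinic orbit-0} gives $\varphi_{\eps_n}\to v_+$ uniformly, and the joint continuity of $(t,\varphi)\mapsto T_t^-\varphi$ recorded in Section 1.3 then forces $T^-_{t_{\eps_n}}\varphi_{\eps_n}\to T^-_T v_+$ uniformly on $M$. Passing to the limit in the identity $u_{\eps_n}(t_{\eps_n})=T^-_{t_{\eps_n}}\varphi_{\eps_n}(x_{\eps_n}(t_{\eps_n}))=w(x_{\eps_n}(t_{\eps_n}))$ therefore yields
\[
T^-_T v_+(\bar x)=w(\bar x).
\]

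The next step is to realise this value as the $u$-coordinate at time $T$ of a $\Phi^\tau_H$-orbit issuing from the pseudograph $\mathrm{J}^1_{v_+}$. By Proposition \ref{semi-group1}, $T^-_T v_+(\bar x)=\inf_{x'\in M}h_{x',v_+(x')}(\bar x,T)$, and compactness of $M$ together with continuity of $x'\mapsto h_{x',v_+(x')}(\bar x,T)$ (since $v_+$ is Lipschitz) produce a minimiser $x_0\in M$ at which the infimum equals $w(\bar x)$. Proposition \ref{Implicit variational} then supplies a minimising curve $\gamma:[0,T]\to M$ from $x_0$ to $\bar x$ which, via Remark \ref{minimizer-orbit}, lifts to a $\Phi^\tau_H$-orbit $z(\tau)=(x(\tau),u(\tau),p(\tau))$ with $z(0)=(x_0,v_+(x_0),p_0)$, $x(T)=\bar x$, and $u(T)=w(\bar x)$.

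The final step invokes the forward invariance $\Phi^\tau_H(\mathrm{J}^1_{v_+})\subset\mathrm{J}^1_{v_+}$, $\tau\geqslant 0$, from Corollary \ref{inv-graph}: once $z(0)\in\mathrm{J}^1_{v_+}$ is established, $z(T)\in\mathrm{J}^1_{v_+}$ as well, so $u(T)=v_+(\bar x)$. Combining this with $u(T)=w(\bar x)$ yields $v_+(\bar x)=w(\bar x)$, contradicting $w-v_+=(u_--v_+)/2\geqslant 2\eps_0>0$ recorded in \eqref{u-v}. The main obstacle is precisely the verification that $z(0)\in\mathrm{J}^1_{v_+}$: it amounts to a Lipschitz-initial-data analogue of the global characteristic method (Theorem \ref{global-chm} is stated only for $C^2$ data) and will be deduced from the first-order optimality of $x_0$ as a minimiser of $x'\mapsto h_{x',v_+(x')}(\bar x,T)$, which identifies $p_0=\partial L/\partial\dot x(x_0,v_+(x_0),\dot\gamma(0))$ with a reachable differential of $v_+$ at $x_0$. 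The remaining ingredients (joint continuity of the semigroup, compactness, the variational formula for $T_t^-$, and the forward invariance of the pseudograph of a forward weak KAM solution) are direct applications of results already in place.
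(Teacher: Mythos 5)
Your first paragraph is sound: if $t_{\eps_n}\to T<+\infty$ one does get $T^-_Tv_+(\bar x)=w(\bar x)$ at a limit point $\bar x$. The genuine gap is the step you yourself call the main obstacle, and it is not merely a technical one: the minimiser $x_0$ of $x'\mapsto h_{x',v_+(x')}(\bar x,T)$ does \emph{not} in general give an initial datum $(x_0,v_+(x_0),p_0)\in\mathrm{J}^1_{v_+}$. If it did for every $\bar x$ and every $T$, your mechanism (forward invariance of $\mathrm{J}^1_{v_+}$ from Corollary \ref{inv-graph} plus the fact that the orbit carries the value $T^-_Tv_+(\bar x)$) would prove $T^-_Tv_+=v_+$ for all $T$, i.e. $v_+\in\mathcal{S}_-$; but for a forward weak KAM solution one only has $T^-_tv_+\geqslant v_+$ (Proposition \ref{prop-sg} (2)), typically with strict, growing inequality. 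First-order optimality at $x_0$ only places $p_0$ in a convex subdifferential-type set of the semiconvex function $v_+$ at $x_0$, not in $D^*v_+(x_0)$; and precisely at the points and times where $T^-_Tv_+(\bar x)>v_+(\bar x)$ the minimiser is forced onto the singular set of $v_+$, where these sets differ and the fibre points with $p_0$ in the interior of the subdifferential are not in the pseudograph and not forward invariant. (When $x_0$ does fall at a differentiability point, your identification $p_0=d_xv_+(x_0)$ is correct, but then the orbit is the forward $(v_+,L)$-calibrated one and can reach $\bar x$ at time $T$ only if $T^-_Tv_+(\bar x)=v_+(\bar x)$; this is exactly how the two facts coexist, and why the case you need cannot be handled this way.)

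There is a second, more structural problem: the identity you aim to contradict, $T^-_Tv_+(\bar x)=w(\bar x)$, is not absurd. Since $T^-_tv_+$ increases from $v_+$ and in general rises strictly above it (this rising is precisely what makes the construction of (B2) work), it may cross the level $w$ at finite time, so no purely qualitative compactness argument that discards the quantitative closeness $\|\varphi_\eps-v_+\|_\infty\leqslant\eps$ can yield the lemma. The paper's proof is exactly the quantitative version: it combines the expansiveness estimate of Proposition \ref{prop-sg} (4), in the form $\|T^-_{t_\eps}\varphi_\eps-v_+\|_\infty\leqslant e^{\lambda t_\eps}\|\varphi_\eps-v_+\|_\infty\leqslant e^{\lambda t_\eps}\eps$, with the lower bound $T^-_{t_\eps}\varphi_\eps(x_\eps(t_\eps))-v_+(x_\eps(t_\eps))=w(x_\eps(t_\eps))-v_+(x_\eps(t_\eps))\geqslant\eps_0$, to obtain $e^{\lambda t_\eps}\eps\geqslant\eps_0$, hence the explicit bound $t_\eps\geqslant\lambda^{-1}\ln(\eps_0/\eps)\to+\infty$. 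Your proposal, as designed, cannot be completed along the lines you indicate; the missing ingredient is this rate information, not a Lipschitz-data version of the characteristic method.
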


\begin{proof}
By the expansiveness estimate of solution semigroup,
\begin{equation}\label{eq:heteroclinic orbit-1}
\|T^-_{t_\eps} \varphi_\eps -v_+\|_\infty \leqslant e^{\lambda t_\eps }\| \varphi_\eps -v_+  \|_\infty \leqslant e^{\lambda t_\eps } \eps,
\end{equation}
and from \eqref{eq:proof-order} and \eqref{eq:heteroclinic-pf-1} we deduce that
\begin{equation}\label{eq:heteroclinic orbit-2}
\begin{split}
\| T^-_{t_\eps} \varphi_\eps -v_+\|_\infty  \geqslant &\, T^-_{t_\eps} \varphi_\eps(x_\eps(t_\eps) ) -v_+ (x_\eps(t_\eps) ) \\
 =&\,  u_\eps(t_\eps)-v_+(x_\eps(t_\eps) )\\
 =&\,w(x_\eps(t_\eps) )-v_+(x_\eps(t_\eps) )\geqslant\eps_0>0.
\end{split}
\end{equation}
Combining \eqref{eq:heteroclinic orbit-1} and \eqref{eq:heteroclinic orbit-2},
\begin{equation}\label{eq:heteroclinic orbit-3}
t_\eps \geqslant \frac{1}{\lambda} \ln \frac{\eps_0}{\eps}, \quad \forall \eps>0.
\end{equation}
which implies that $\displaystyle \lim_{\eps \to 0} t_\eps =+\infty$.
\end{proof}

\vspace{1em}
Now we define $(X_\eps(t),U_\eps(t), P_\eps(t)) $ as
$$
Z_\eps(t)=(X_\eps(t),  U_\eps(t), P_\eps(t)):=\Big( x_\eps(t+t_\eps),u_\eps(t+ t_\eps),p_\eps(t+t_\eps) \Big), \quad t\in [-t_\eps,+\infty).
$$
It is readily seen that $ \{Z_\eps(0)=(X_\eps(0), U_\eps(0),P_\eps(0)) \}_{\eps \in (0,\eps_0) }$ is bounded. To show this, it is necessary to focus on the boundedness of $U_\eps(0)$ and $P_\eps(0)$. By the equation \eqref{eq:heteroclinic-pf-1},
\begin{equation}\label{u-epsilon}
U_\eps(0)=u(t_\eps) = w (x_\eps(t_\eps))
\end{equation}
This implies that for any $\eps\in(0,\eps_0)$,
$$
|U_\eps(0) | \leqslant \|w \|_\infty\leqslant \frac{\|u_- \|_\infty+ \|v_+ \|_\infty}{2}.
$$
By Theorem \ref{global-chm}, $P_\eps(0)\in D^*_x T_{t_\eps}^-\varphi_\eps (x(t_\eps)) $. Then $P_\eps(0)$ is bounded for any $\eps\in(0,\eps_0)$ is a consequence of

\begin{lemma}\label{B1-equi-Lip}
$\{T_t^- \varphi_\eps (x)\}_{t>1,\eps\in(0,\eps_0)}$ is equi-Lipschitz on $M$.
\end{lemma}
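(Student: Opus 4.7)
The plan is to reduce the equi-Lipschitz property to the standard regularizing effect of the Lax-Oleinik semigroup applied to a uniformly bounded family of initial data. Two inputs are essential: a uniform $C^0$ bound on $\{\varphi_\eps\}$, and the fact that $h_{x_0,u_0}(\cdot,t)$ is Lipschitz in the spatial variable with a constant controlled only by $|u_0|$ and $t^{-1}$ (via Tonelli superlinearity), rather than by any information specific to $\eps$.

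First, I would extract a uniform $C^0$ bound on $\{\varphi_\eps\}_{\eps\in(0,\eps_0)}$. Since $v_+\in\cS_+$ and $u_-\in\cS_-$ are Lipschitz (hence bounded) on the compact manifold $M$, and since the construction in Lemma \ref{lem:heteroclinic orbit-0} ensures either $v_+<\varphi_\eps<v_++\eps$ or $u_--\eps<\varphi_\eps<u_-$, we obtain a constant $C_0>0$ (independent of $\eps$) such that $\|\varphi_\eps\|_\infty\leqslant C_0$.

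Next, I would invoke Proposition \ref{semi-group1} to write
\[
T_t^-\varphi_\eps(x)=\inf_{x_0\in M}h_{x_0,\varphi_\eps(x_0)}(x,t),
\]
and for each fixed $(x,t)\in M\times[1,+\infty)$ select (by compactness of $M$ together with continuity in $x_0$) a minimizer $x_0^\ast=x_0^\ast(x,t,\eps)$, so that $T_t^-\varphi_\eps(x)=h_{x_0^\ast,\varphi_\eps(x_0^\ast)}(x,t)$ while $T_t^-\varphi_\eps(y)\leqslant h_{x_0^\ast,\varphi_\eps(x_0^\ast)}(y,t)$ for every $y\in M$. The key step is then to produce a Lipschitz constant for $h_{x_0,u_0}(\cdot,t)$ on $M$ that depends only on $|u_0|\leqslant C_0$ and on the lower bound $t\geqslant1$. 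This is the content of the standard Lipschitz estimate on action functions (listed in the Appendix and proved in \cite{WWY1}-\cite{WWY3}): using (H1)-(H3) and the uniform upper bound on $h_{x_0,u_0}(\cdot,t)$ coming from $\|\varphi_\eps\|_\infty\leqslant C_0$, one derives an a priori bound on the average speed of any minimizing curve via the Tonelli superlinearity (H2), hence a Lipschitz estimate for $h_{x_0,u_0}(\cdot,t)$ by a standard curve-reparametrization argument; the dependence on $t$ enters only through $t^{-1}$ and is therefore controlled on $[1,+\infty)$.

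Finally, since the infimum of any family of $K_0$-Lipschitz functions is itself $K_0$-Lipschitz, the above inequalities give
\[
|T_t^-\varphi_\eps(x)-T_t^-\varphi_\eps(y)|\leqslant K_0\,d(x,y)
\]
for all $t\geqslant1$, $\eps\in(0,\eps_0)$, and $x,y\in M$, which is the desired equi-Lipschitz property. The main (and essentially only) technical point is the uniformity of $K_0$ in $(t,\eps)$: uniformity in $\eps$ is handed over to the $C^0$ bound on $\varphi_\eps$, while uniformity in $t$ hinges on the $t^{-1}$-type dependence of the action-function Lipschitz constant, both of which are built into the variational framework we are borrowing.
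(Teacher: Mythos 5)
There is a genuine gap at the step you call the ``key step''. You assert that $h_{x_0,u_0}(\cdot,t)$ is Lipschitz on $M$ with a constant depending only on $|u_0|$ and on $t^{-1}$, hence uniformly for $t\geqslant 1$, and you attribute this to the variational framework. No such estimate is available there: Proposition \ref{fundamental-prop} (4) is only a \emph{local} Lipschitz statement, whose constant depends on the compact range of \emph{all} arguments, including $t$ and the size of the values $h_{x_0,u_0}(\cdot,s)$ near the final time. In the classical $u$-independent Tonelli case your claim is true (Fathi's a priori compactness of minimizers on intervals of length $\geqslant 1$), but for contact Hamiltonians it fails in general: take $H(x,u,p)=\frac12\|p\|^2+\lambda(x)u$ with $\lambda<0$. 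Then along minimizers $\dot u=\frac12\|\dot\gamma\|^2-\lambda(\gamma)u$, so for bounded $u_0>0$ the value $h_{x_0,u_0}(x,t)$ grows exponentially in $t$, and its spatial gradient (the endpoint momentum of the minimizer, which feels the term $-\partial_x H-\partial_u H\,p$ with $u$ exponentially large) grows with it. So the Lipschitz constant of $h_{x_0,u_0}(\cdot,t)$ is \emph{not} controlled by $|u_0|$ and $t\geqslant1$ alone; it is controlled by the size of the $u$-values near time $t$. For the same reason your auxiliary claim that $\|\varphi_\eps\|_\infty\leqslant C_0$ gives a ``uniform upper bound on $h_{x_0,u_0}(\cdot,t)$'' is unjustified: what is uniformly bounded is the infimum $T^-_t\varphi_\eps=\inf_{x_0}h_{x_0,\varphi_\eps(x_0)}(x,t)$ (because $v_+<\varphi_\eps<u_-$ and Proposition \ref{prop-sg} sandwiches $T^-_t\varphi_\eps$ between $v_+$ and $u_-$), not the individual action functions $h_{x_0,\varphi_\eps(x_0)}(\cdot,t)$ away from the minimizing endpoint. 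Consequently the final ``infimum of $K_0$-Lipschitz functions'' argument has no uniform $K_0$ to work with.

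The repair is exactly the paper's argument, which you should compare with: first obtain the uniform bound $|T^-_s\varphi_\eps|\leqslant K_1$ for all $s\geqslant 0$ and $\eps\in(0,\eps_0)$ (this uses the sandwich/convergence properties from Lemma \ref{lem:heteroclinic orbit-0}, not merely the $C^0$ bound on $\varphi_\eps$); then, for $t>1$, use the semigroup property to write
\[
T^-_t\varphi_\eps(x)=\inf_{z\in M}h_{z,\,T^-_{t-1}\varphi_\eps(z)}(x,1),
\]
so that the propagation always happens over the \emph{fixed} time $1$ from data lying in the compact set $M\times[-K_1,K_1]$; the local Lipschitz continuity of $(x_0,u_0,x)\mapsto h_{x_0,u_0}(x,1)$ on $M\times[-K_1,K_1]\times M$ then gives a constant $c_0$ independent of $t$ and $\eps$, and the infimum argument closes the proof. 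In short, the uniformity in $t$ must be bought by re-rooting at time $t-1$ with uniformly bounded evolved data, not by a $t$-uniform Lipschitz estimate for the action function itself, which does not hold under (H1)--(H3).
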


\begin{proof}
Notice that by \eqref{eq:varphi-epsilon2}, then there exists $K_1>0$ such that
\begin{equation}\label{eq:pf-lem-heteroclinic orbit-2}
|T_t^- \varphi_\eps (x) | \leqslant K_1 , \quad \forall (x,t, \eps ) \in M\times [0, +\infty) \times (0,\eps_0).
\end{equation}
Then for any $x,y\in M$ and $t>1$,
\begin{align*}
|T_t^-\varphi_\eps(x)-T_t^-\varphi_\eps(y)|&\,=\left| \inf _{z \in M} h_{z,T_{t-1}^- \varphi_\eps(z)} (x,1) -  \inf_{z \in M} h_{z,T_{t-1}^- \varphi_\eps(z)} (y,1) \right| \\
&\,\leqslant  \sup_{z \in M} \left| h_{z,T_{t-1}^- \varphi_\eps(z)} (x,1) - h_{z,T_{t-1}^- \varphi_\eps(z)} (y,1)\right|\leqslant c_0 \cdot d(x,y),
\end{align*}
where $c_0>0$ denotes the Lipschitz constant of the function
\[
(x_0,u_0,x)\mapsto h_{x_0,u_0}(x,1)
\]
on $M \times [-K_1,K_1] \times M$. Using \eqref{eq:pf-lem-heteroclinic orbit-2}, the above inequality holds for any $\eps\in(0,\eps_0)$.
\end{proof}
Therefore, $P_\eps(0)$ is bounded by $c_0$ for any $\eps\in(0,\eps_0)$. Thus by the pre-compactness of $ \{Z_\eps(0) \}_{\eps \in (0,\eps_0)}$, there exists a sequence $\{\eps_n\}$ converging to $0$ such that
$$
\lim_{\eps_n \to 0 } (X_{\eps_n}(0), U_{\eps_n}(0),P_{\eps_n}(0))=(X_0,U_0,P_0):=Z_0
$$
and the orbit we are looking for is defined as
\begin{equation}\label{eq:}
Z_0(t):=\Phi_H^t (Z_0)=(X_0(t),U_0(t),P_0(t)).
\end{equation}
Notice that $\lim_{n\rightarrow\infty}t_{\eps_{n}}=+\infty$ as $\lim_{n\rightarrow\infty}\eps_{n}=0$, and the sequence of orbits $Z_{\eps_{n}}:[-t_{\eps_{n}},+\infty)\rightarrow J^1(M,\R)$ converges uniformly to $Z_0$ on compact intervals, so $Z_0(t)$ is complete, i.e., is defined for all $t\in\R$. Based on our construction, it is not hard to show

\begin{lemma}
Both $\omega(Z_0)$ and $\alpha(Z_0)$ are non-empty.
\end{lemma}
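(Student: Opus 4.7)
The plan is to establish that the complete orbit $Z_0(t)=\Phi^t_H(Z_0)$ remains in a common compact subset of $J^1(M,\R)$ for all $t\in\R$; once this precompactness is in hand, non-emptiness of both $\omega(Z_0)$ and $\alpha(Z_0)$ follows from the standard fact that the $\omega$- and $\alpha$-limit sets of a precompact complete orbit are intersections of nested non-empty compact sets.

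To obtain uniform bounds, I would work with the approximating orbits $Z_{\eps_n}(t)=(X_{\eps_n}(t),U_{\eps_n}(t),P_{\eps_n}(t))$, $t\in[-t_{\eps_n},+\infty)$, which converge to $Z_0(t)$ uniformly on compact subsets of $\R$. The $x$-component is trivially bounded since $M$ is compact. For the $u$-component, I would invoke the identity $U_{\eps_n}(t)=T^-_{t+t_{\eps_n}}\varphi_{\eps_n}(X_{\eps_n}(t))$ coming from \eqref{eq:heteroclinic-pf-1}, together with the uniform bound \eqref{eq:pf-lem-heteroclinic orbit-2} in the proof of Lemma \ref{B1-equi-Lip}, which gives $|T^-_s\varphi_\eps(x)|\leqslant K_1$ for all $(x,s,\eps)\in M\times[0,+\infty)\times(0,\eps_0)$. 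Thus $|U_{\eps_n}(t)|\leqslant K_1$ whenever $t+t_{\eps_n}\geqslant 0$, and passing to the limit gives $|U_0(t)|\leqslant K_1$ for every $t\in\R$.

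For the $p$-component I would use Theorem \ref{global-chm} together with Lemma \ref{B1-equi-Lip}: since $P_{\eps_n}(t)\in D^\ast_x T^-_{t+t_{\eps_n}}\varphi_{\eps_n}(X_{\eps_n}(t))$ whenever $t+t_{\eps_n}>1$, and the family $\{T^-_s\varphi_\eps\}_{s>1,\,\eps\in(0,\eps_0)}$ is equi-Lipschitz with constant $c_0$, we get $\|P_{\eps_n}(t)\|_{X_{\eps_n}(t)}\leqslant c_0$. Because $t_{\eps_n}\to+\infty$ by Lemma \ref{t-epsilon-infi}, for any fixed $t\in\R$ the inequality $t+t_{\eps_n}>1$ holds for all sufficiently large $n$, and taking limits yields $\|P_0(t)\|_{X_0(t)}\leqslant c_0$ for every $t\in\R$.

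Combining these three bounds shows that $\{Z_0(t):t\in\R\}$ lies in a compact subset of $J^1(M,\R)$, so both families $\{Z_0(t):t\geqslant T\}$ and $\{Z_0(t):t\leqslant -T\}$ are relatively compact for every $T>0$; their closures form decreasing sequences of non-empty compact sets, and the intersections $\omega(Z_0)$ and $\alpha(Z_0)$ are consequently non-empty. The only delicate point is that $Z_{\eps_n}(t)$ is defined on a time interval depending on $n$, so the backward bounds must be read as asymptotic bounds along the sequence; this is exactly where Lemma \ref{t-epsilon-infi} is essential, as it guarantees that for each fixed $t\in\R$ all but finitely many of the $Z_{\eps_n}(t)$ are defined and lie in the common compact set, which is enough to transfer the bounds to $Z_0(t)$ in the limit.
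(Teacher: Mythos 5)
Your proposal is correct and follows essentially the same route as the paper: uniform bounds on $U_\eps$ and $P_\eps$ via the estimate $|T^-_s\varphi_\eps|\leqslant K_1$ and the equi-Lipschitz property from Lemma \ref{B1-equi-Lip}, passed to the limit to show $Z_0$ is a bounded complete orbit, whence both limit sets are non-empty. Your extra care about the $n$-dependent domains $[-t_{\eps_n},+\infty)$ and the restriction $t+t_{\eps_n}>1$ needed for the Lipschitz bound is a welcome tightening of a point the paper treats somewhat loosely, but it is the same argument.
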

	
\begin{proof}
For any $\eps\in(0,e^{-\lambda} \eps_0)$, \eqref{eq:heteroclinic orbit-3} implies $t_\eps>1$. Applying Lemma \ref{B1-equi-Lip}, we get    $U_\eps(t), t\in [-t_\eps,+\infty) $ is bounded by $K_1$   and $P_\eps(t), t\in [-t_\eps,+\infty) $ is bounded by $c_0$. Thus $\{Z_\eps(t):t\geqslant t_\eps\}_{\eps\in(0,e^{-\lambda} \eps_0)}$ is uniformly bounded. As the $C^0$ limit of $Z_{\eps_n}$, $Z_0:\R\rightarrow J^1(M,\R)$ is also bounded and the conclusion follows.
\end{proof}

Now we are ready to show that	
\begin{proposition}\label{prop:B1-omega}
$\omega(Z_0)\subset\{(x,u,p):u=u_-(x)\}$ and $\alpha(Z_0)\subset\{(x,u,p):u=v_+(x)\}$.
\end{proposition}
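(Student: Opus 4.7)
The approach is to sandwich the $u$-component of the limit orbit $Z_0$ between $v_+$ and $u_-$ along the entire trajectory, and then pair these bounds with the one-sided conclusions of Proposition \ref{prop-B2} to force the limit sets to sit on the graphs of $u_-$ and $v_+$ respectively.

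First I would establish the upper bound $U_0(t)\leqslant u_-(X_0(t))$. Since $\varphi_\eps<u_-$ by \eqref{eq:proof-order} and $u_-\in\mathcal{S}_-$ satisfies $T_s^-u_-=u_-$, the order-preserving property of $\{T_s^-\}_{s\geqslant 0}$ (Proposition \ref{prop-sg}) gives
\[
U_\eps(t)=T_{t+t_\eps}^-\varphi_\eps(X_\eps(t))\leqslant T_{t+t_\eps}^-u_-(X_\eps(t))=u_-(X_\eps(t))
\]
for $t\in[-t_\eps,+\infty)$. Passing to the subsequential limit $\eps_n\to 0$ and using continuity of $u_-$ yields the desired inequality on all of $\R$.

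Symmetrically, for the lower bound $U_0(t)\geqslant v_+(X_0(t))$ I would use $\varphi_\eps>v_+$ (which holds in either case of \eqref{eq:varphi-epsilon1} since $u_--v_+\geqslant 4\eps_0>\eps$). Order-preserving gives $T_s^-\varphi_\eps\geqslant T_s^-v_+$, and combining the fixed-point property $v_+=T_s^+v_+$ with the standard inequality $T_s^-\circ T_s^+\geqslant\mathrm{Id}$ from Proposition \ref{A6} (implicit in the chain of inequalities used in the proof of Lemma \ref{lem:heteroclinic orbit-0}) gives $T_s^-v_+\geqslant v_+$. Therefore $U_\eps(t)\geqslant v_+(X_\eps(t))$ and, in the limit, $U_0(t)\geqslant v_+(X_0(t))$ for every $t\in\R$.

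To conclude, I note that by construction $U_0(0)=w(X_0(0))=\tfrac{1}{2}(u_-+v_+)(X_0(0))$, so $v_+(X_0(0))<U_0(0)<u_-(X_0(0))$ strictly. Both halves of Proposition \ref{prop-B2} therefore apply to $Z_0$: every $\bar z=(\bar x,\bar u,\bar p)\in\omega(Z_0)$ satisfies $\bar u\geqslant u_-(\bar x)$, and every $\bar z\in\alpha(Z_0)$ satisfies $\bar u\leqslant v_+(\bar x)$. For $\bar z\in\omega(Z_0)$, choose $s_k\to+\infty$ with $Z_0(s_k)\to\bar z$; the upper bound from the first step gives $\bar u\leqslant u_-(\bar x)$ in the limit, so $\bar u=u_-(\bar x)$. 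Analogously, for $\bar z\in\alpha(Z_0)$ pick $s_k\to-\infty$ and use the lower bound to get $\bar u\geqslant v_+(\bar x)$, whence $\bar u=v_+(\bar x)$. I do not anticipate a serious obstacle once the two semigroup monotonicity facts are in place: Proposition \ref{prop-B2} supplies exactly the inequality opposite to what semigroup monotonicity provides, so the two together collapse to equality on the limit sets.
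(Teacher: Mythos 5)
Your proposal is correct and follows essentially the same route as the paper: sandwich $U_0(t)$ between $v_+(X_0(t))$ and $u_-(X_0(t))$ via semigroup monotonicity, verify the strict ordering at $t=0$ so that both halves of Proposition \ref{prop-B2} apply, and combine the opposing inequalities on the limit sets. The only cosmetic difference is that you justify $T^-_s v_+\geqslant v_+$ via Proposition \ref{A6} and the fixed-point property, whereas the paper invokes the subsolution property through Proposition \ref{prop-sg} (2); both are valid.
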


\begin{proof}
Notice that by \eqref{u-epsilon},
\[
U_{\eps}(0)=w(x_\eps(t_\eps))>v_+(x_\eps(t_\eps))+\eps_0=v_+(X_\eps(0))+\eps_0.
\]
By taking $\eps=\eps_n$ and sending $n$ to infinity, we obtain that
\[
U_0(0)\geqslant v_+(X_0(0))+\eps_0.
\]
Now the first part of Proposition \ref{prop-B2} allow us to conclude that $\omega(Z_0)\subset\{(x,u,p):u\geqslant u_-(x)\}$. On the other hand, from Lemma \ref{lem:heteroclinic orbit-0}, it is easy to check that $\varphi_\eps<u_-$. Thus by Proposition \ref{prop-sg} (1), we have for any $t\in[0,+\infty)$,
\begin{align*}
&\,U_\eps(t)=u_\eps(t+t_\eps)=T_{t+t_\eps}^-\varphi_\eps(x_\eps(t+t_\eps))\\
<&\,T_{t+t_\eps}^-u_-(x_\eps(t+t_\eps))=u_-(x_\eps(t+t_\eps))=u_-(X_\eps(t)).
\end{align*}
Fixing $t\geqslant0$, taking $\eps=\eps_n$ and sending $n$ to infinity as before, we have $U_0(t)\leqslant u_-(X_0(t))$ for each $t\geqslant0$. Thus $\omega(Z_0)\subset\{(x,u,p):u\leqslant u_-(x)\}$ and the first conclusion holds.

\vspace{1em}
For the second conclusion, one checks the fact that
\[
U_0(0)\leqslant u_-(X_0(0))-\eps_0
\]
in the same way as before and apply the other part of Proposition \ref{prop-B2} to obtain $\alpha(Z_0)\subset\{(x,u,p):u\leqslant v_+(x)\}$. For a fixed $t\leqslant0$ and $t_\eps$ large enough,
\begin{align*}
&\,U_\eps(t)=u_\eps(t+t_\eps)=T_{t+t_\eps}^-\varphi_\eps(x_\eps(t+t_\eps))\\
>&\,T_{t+t_\eps}^-v_+(x_\eps(t+t_\eps))\geqslant v_+(x_\eps(t+t_\eps))=v_+(X_\eps(t)),
\end{align*}
where the first inequality use that fact $\varphi_\eps>v_+$ and Proposition \ref{prop-sg} (1), the second inequality follows from Proposition \ref{prop-sg} (2) since forward weak KAM solutions are subsolutions to \eqref{hj}. The proof is completed by repeating the limiting argument as before.
\end{proof}

To finish the proof of (B2), we invoke Lemma \ref{proof-A2-p} and the subsequent Remark \ref{proof-A2-p'} to see that $\alpha(Z_0)\subset\mathrm{J}^1_{v_+}, \omega(Z_0)\subset\mathrm{J}^1_{u_-}$. The $\Phi^t_H$-invariance of $\alpha(Z_0)$ and $\omega(Z_0)$ implies that $\alpha(Z_0)\subset\widetilde{\mathcal{N}}_{v_+}$ and $\omega(Z_0)\subset\widetilde{\mathcal{N}}_{u_-}$.

\section{Variational aspects of the constructed orbits}
In this section, we explore action minimizing properties of the asymptotic orbits constructed in the main theorems and establish a variational characterization on the $\alpha/\omega-$limit of such orbits. We also present a proposition relating such characterization to the sets that appear in Definition \ref{Mane-slice}.

\vspace{0.5em}
With the variational principle introduced in the Section 2.1, we start by formulating the notion of action minimizers in contact Hamiltonian systems.
\begin{definition}\label{global-mini}
Let $I\subset\R$ be an interval. A locally Lipschitz curve $(x(\cdot),u(\cdot)):I\rightarrow J^0(M,\R)$ is an \textbf{action minimizer over $I$} if for any $a,b\in I, a\leqslant b$,
\begin{equation}\label{minimizer}
u(b)=h_{x(a),u(a)}(x(b), b-a).
\end{equation}
If $I=[c,+\infty)($resp. $(-\infty,c])$, we call $(x(\cdot),u(\cdot))$ a \textbf{forward (resp. backward) action minimizer}; if $I=\R$, such a minimizer is called a \textbf{global action minimizer}.
\end{definition}

Combining Definition \ref{Implicit variational} and Proposition \ref{fundamental-prop}, the above definition implies
\begin{lemma}\label{prop-act-minimizer}
With the same notations as Definition \ref{global-mini}.
\begin{enumerate}[(1)]
  \item Assume $(x(\cdot),u(\cdot)):I\rightarrow M$ is an action minimizer over $I$, $J\subset I$ is an interval, then $(x(\cdot),u(\cdot))$ is also an action minimizer over $J$;

  \item Assume $x:[0,t]\rightarrow M$ is a Lipschitz minimizer of $h_{x_0,u_0}(x,t)$ and set $u(s)=h_{x_0,u_0}(x(s),s)$, then $(x(\cdot),u(\cdot)):[0,t]\rightarrow M$ is an action minimizer over $[0,t]$;

  \item Let $(x(\cdot),u(\cdot)):I\rightarrow J^0(M,\R)$ be an action minimizer over $I$, then for any $a<b\in I$,
      \begin{equation}\label{action-minimizer1}
      \hat{x}(t):=x(t+a),\quad t\in[0, b-a]
      \end{equation}
      is a minimizer of $h_{x(a),u(a)}(x(b), b-a)$. Moreover, if $a,b$ are not endpoints of $I$, then the minimizer of $h_{x(a),u(a)}(x(b), b-a)$ is \textbf{unique}.

  \item Assume for each $n\in\mathbb{N}, (x_n(\cdot),u_n(\cdot)):I_n\rightarrow J^0(M,\R)$ is an action minimizer over $I_n$ and $I_n\subset I_{n+1}$. Let $I=\cup_n I_n$ and $(x_n(\cdot),u_n(\cdot))$ converges to a locally Lipschitz curve $(x(\cdot),u(\cdot)):I\rightarrow J^0(M,\R)$ uniformly on compact intervals of $I$, then $(x(\cdot),u(\cdot))$ is also an action minimizer over $I$.
\end{enumerate}
\end{lemma}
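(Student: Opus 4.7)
The plan is to handle the four parts in turn, with (1) and (4) being essentially formal and (2)--(3) carrying the substantive content.

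Part (1) is immediate from Definition \ref{global-mini}: the identity $u(b) = h_{x(a),u(a)}(x(b), b-a)$ is required for every pair $a \leqslant b$ in $I$, hence \emph{a fortiori} for every pair in any subinterval $J \subset I$.

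For Part (2), I would invoke the Markov property of the backward action function, which in the Appendix appears as
\begin{equation*}
h_{x_0,u_0}(y,t) \;=\; \inf_{z\in M} h_{z,\,h_{x_0,u_0}(z,s)}(y, t-s), \qquad 0\leqslant s\leqslant t,
\end{equation*}
with the infimum attained at $z=x(s)$ whenever $x$ is a minimizer of $h_{x_0,u_0}(\cdot,t)$. Together with the Bellman-type observation (built into the implicit variational representation of Proposition \ref{Implicit variational}) that $x|_{[0,a]}$ is itself a minimizer of $h_{x_0,u_0}(x(a), a)$, this yields, for $0\leqslant a\leqslant b\leqslant t$,
\begin{equation*}
u(b)=h_{x_0,u_0}(x(b),b)=h_{x(a),\,h_{x_0,u_0}(x(a),a)}(x(b), b-a)=h_{x(a),u(a)}(x(b), b-a).
\end{equation*}

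Part (3) is the central technical step. Existence of a minimizer of $h_{x(a),u(a)}(x(b), b-a)$ is granted by Proposition \ref{Implicit variational}; what I need is that $\hat{x}$ is one such. First I would apply Part (1) to deduce $u(\tau)=h_{x(a),u(a)}(x(\tau),\tau-a)$ for every $\tau\in[a,b]$, i.e., $u(s+a)=h_{x(a),u(a)}(\hat{x}(s), s)$ on $[0, b-a]$. The key intermediate claim is that $x$ is $C^1$ on the interior of $I$ and lifts to a $\Phi^t_H$-orbit. To see this, pick any interior $\tau_0$ and a small $\delta>0$: Part (1) on $[\tau_0-\delta,\tau_0+\delta]$ gives $u(\tau_0+\delta)=h_{x(\tau_0-\delta),u(\tau_0-\delta)}(x(\tau_0+\delta), 2\delta)$, and for $\delta$ small enough the minimizer of this action function is unique and $C^1$, being the $\rho$-projection of a short-time characteristic (a Tonelli fact rooted in the strict convexity (H1), in the same spirit as Lemma \ref{local-mini}). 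Hence $x|_{[\tau_0-\delta,\tau_0+\delta]}$ coincides with this smooth minimizer, and setting $p(s)=\partial L/\partial \dot{x}(x(s),u(s),\dot{x}(s))$ produces the $\Phi^t_H$-orbit through $(x(\cdot),u(\cdot),p(\cdot))$. Corollary \ref{Minimality} then identifies $\hat{x}$ with a minimizer. The uniqueness claim for interior $a,b$ follows because any rival minimizer also lifts to a $\Phi^t_H$-orbit arriving at $(x(b),u(b))$ at time $b-a$; extending $(x,u)$ slightly past $b$ (still inside $I$) pins down the terminal momentum $p(b)$, and backward uniqueness of the contact ODE then forces the two lifted orbits to agree.

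Part (4) is a plain passage to the limit. Given $a\leqslant b$ in $I$, eventually $a,b\in I_n$ and $u_n(b)=h_{x_n(a),u_n(a)}(x_n(b), b-a)$; joint continuity of $(x_0,u_0,x)\mapsto h_{x_0,u_0}(x, b-a)$ (Proposition \ref{fundamental-prop}(3)) combined with the uniform convergence on the compact set $\{a,b\}$ passes to the limit. The main obstacle is Part (3): the inequality coming directly from the implicit definition only gives $u(b)\leqslant u(a)+\int L$, and to upgrade this to equality one genuinely needs to know that $x$ coincides with a $C^1$ contact-flow minimizer, which is why short-time uniqueness of minimizers -- and hence the strict convexity assumption (H1) -- enters in an essential way.
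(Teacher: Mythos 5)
Your parts (1), (2) and (4) are fine (the paper itself leaves the whole lemma as a consequence of Proposition \ref{Implicit variational} and Proposition \ref{fundamental-prop}); the ``Bellman-type observation'' you invoke in (2) is not literally stated in the paper, but it is harmless, since the same conclusion follows directly from Remark \ref{minimizer-orbit} together with Corollary \ref{Minimality} (which give $h_{x(a),u(a)}(x(b),b-a)\leqslant u(b)$) and the Markov inequality of Proposition \ref{fundamental-prop} (2) (which gives the reverse).

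The genuine gap is in part (3), at the decisive step. From Part (1) you only get the \emph{value} identity $u(\tau_0+\delta)=h_{x(\tau_0-\delta),u(\tau_0-\delta)}(x(\tau_0+\delta),2\delta)$; this says nothing about the \emph{curve} $x|_{[\tau_0-\delta,\tau_0+\delta]}$ attaining the infimum in \eqref{eq:Implicit variational}, so even granting that the short-time two-point problem has a unique $C^1$ minimizer, the inference ``hence $x|_{[\tau_0-\delta,\tau_0+\delta]}$ coincides with this smooth minimizer'' does not follow --- identifying $x$ with a minimizer is exactly what part (3) asserts, so the argument is circular at this point. (A curve can share endpoints and endpoint $u$-values with the minimizer without being minimizing; what rules this out is the action-minimizer identity at \emph{intermediate} times.) The fix is to use, for every $s$ in the interval, the identities $u(s)=h_{x(\tau_0-\delta),u(\tau_0-\delta)}(x(s),s-\tau_0+\delta)$ and $u(\tau_0+\delta)=h_{x(s),u(s)}(x(\tau_0+\delta),\tau_0+\delta-s)$ together with the attainment (``if and only if'') clause of Proposition \ref{fundamental-prop} (2): these show the infimum in the Markov decomposition is attained at $x(s)$, hence some minimizer of the two-point problem passes through $x(s)$ at time $s-\tau_0+\delta$; only then does uniqueness (or a no-corner plus uniqueness-of-\eqref{ch} argument) force all these minimizers to be one and the same curve, which must therefore be $x$ itself. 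Two further points need attention: the short-time uniqueness of minimizers of $h_{x_0,u_0}(\cdot,2\delta)$ in the contact setting is asserted but nowhere established in the paper (Theorem \ref{local-chm} concerns characteristics emanating from a $C^2$ graph, not the two-point boundary value problem), so it is an extra unproved ingredient; and in your uniqueness argument at the end of (3), extending $(x,u)$ past $b$ pins down the terminal momentum of the lift of $x$, but to transfer this to a rival minimizer you must first show that the rival concatenated with the extension is still minimizing, hence has no corner at $x(b)$ --- this concatenation/no-corner step is again the Markov--monotonicity argument and cannot be skipped.
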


It is direct to verify the asymptotic orbit found in (A2) is forward action minimizing by
\begin{proposition}\label{A2-act-mini}
Assume there is $\varphi\in C(M,\R)$ and $z:[0,+\infty)\rightarrow J^1(M,\R)$ is a $\Phi^t_H$-orbit such that
\[
z(t)=(x(t),u(t),p(t)),\quad u(t)=T^-_t\varphi(x(t)),\quad t\in[0,+\infty),
\]
then $(x(\cdot),u(\cdot))$ is a forward action minimizer.
\end{proposition}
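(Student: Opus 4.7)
The plan is to verify Definition \ref{global-mini} directly for the pair $(x(\cdot),u(\cdot))$: for arbitrary $0\leqslant a\leqslant b$ I need to show the equality
\[
u(b)=h_{x(a),u(a)}(x(b),b-a),
\]
which I will establish by two matching inequalities that use, respectively, the variational representation of the semigroup and the infimum characterization of the backward action function.

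For the inequality $u(b)\leqslant h_{x(a),u(a)}(x(b),b-a)$, I would combine the hypothesis $u(b)=T^-_b\varphi(x(b))$ with the semigroup property and Proposition \ref{semi-group1}, writing
\[
u(b)=T^-_{b-a}\bigl(T^-_a\varphi\bigr)(x(b))=\inf_{y\in M}h_{y,T^-_a\varphi(y)}(x(b),b-a).
\]
Picking the candidate $y=x(a)$ and using the hypothesis $T^-_a\varphi(x(a))=u(a)$ yields $u(b)\leqslant h_{x(a),u(a)}(x(b),b-a)$.

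For the reverse inequality, I would use Corollary \ref{Minimality}. The shifted segment $\tilde z(\tau):=\Phi^\tau_H(z(a))=z(a+\tau)$ for $\tau\in[0,b-a]$ is an orbit of \eqref{ch} with $\tilde x(0)=x(a)$, $\tilde x(b-a)=x(b)$, and $\tilde u(0)=u(a)$, hence $\tilde z\in S^{x(b),b-a}_{x(a),u(a)}$. Since $\tilde u(b-a)=u(b)$, formula \eqref{eq:inf} of Corollary \ref{Minimality} gives $h_{x(a),u(a)}(x(b),b-a)\leqslant u(b)$. The two inequalities together prove \eqref{minimizer} for every $0\leqslant a\leqslant b$, which is exactly the definition of a forward action minimizer.

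There is no real obstacle here: the statement is essentially a reformulation of the hypothesis $u(t)=T^-_t\varphi(x(t))$ once one has both the variational formula for the solution semigroup and the infimum representation of $h_{x_0,u_0}$; the only point that requires any care is making sure the pair $(x(\cdot),u(\cdot))$ produced from the orbit satisfies the identity \eqref{minimizer} on \emph{every} subinterval, which is precisely why the argument must be carried out for arbitrary $0\leqslant a\leqslant b$ rather than only for $a=0$.
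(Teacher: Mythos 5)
Your proposal is correct and follows essentially the same route as the paper: the paper's proof is exactly the chain $u(b)=T^-_{b-a}(T^-_a\varphi)(x(b))=\inf_{y\in M}h_{y,T^-_a\varphi(y)}(x(b),b-a)\leqslant h_{x(a),u(a)}(x(b),b-a)\leqslant u(b)$, with the first inequality coming from the semigroup property plus Proposition \ref{semi-group1} and the last from Corollary \ref{Minimality}, just as you argue. Your explicit verification that the shifted orbit segment lies in $S^{x(b),b-a}_{x(a),u(a)}$ merely spells out the step the paper leaves implicit.
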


\begin{proof}
By semi-group property of $\{T^-_t\}_{t\geqslant0}$, for any $0\leqslant a\leqslant b$,
\begin{align*}
u(b)=\,&T^-_{b}\varphi(x(b))=T^-_{b-a}(T^-_a\varphi)(x(b))=\inf_{x\in M}h_{x,T^-_a\varphi(x)}(x(b),b-a)\\
\leqslant\,&h_{x(a),T^-_a\varphi(x(a))}(x(b),b-a)=h_{x(a),u(a)}(x(b), b-a)\leqslant u(b),
\end{align*}
where last inequality is deduced from Corollary \ref{Minimality}.
\end{proof}

Notice that Lemma \ref{prop-act-minimizer} (3) helps us to lift action minimizers to $\Phi^t_H$-orbits in a unique way. From Definition \ref{global-mini} and \eqref{action-minimizer1}, it follows that for $t\in[0,b-a]$,
\[
\hat{u}(t):=h_{x(a),u(a)}(\hat{x}(t), t)=h_{x(a),u(a)}(x(a+t),t)=u(a+t).
\]
By defining the $p$-component as
\[
\hat{p}(t):=\frac{\partial L}{\partial\dot{x}}(\hat{x}(t),\hat{u}(t),\dot{\hat{x}}(t))=\frac{\partial L}{\partial\dot{x}}(x(t+a),u(t+a),\dot{x}(t+a)),
\]
we invoke Remark \ref{minimizer-orbit} to see that
\[
\hat{z}(t):=(\hat{x}(t),\hat{u}(t),\hat{p}(t)),\quad t\in[0,b-a]
\]
is $C^1$ and satisfies \eqref{ch}. Exhausting $I$ by increasing compact intervals and apply (1), one can show that

\begin{lemma}\label{act-mini-orbit}
For any action minimizer $(x(\cdot),u(\cdot)):I\rightarrow J^0(M,\R)$,
\[
z(t)=(x(t),u(t),p(t)),\quad p(t)=\frac{\partial L}{\partial\dot{x}}(x(t),u(t),\dot{x}(t)),\quad t\in I
\]
is the \textbf{unique} $C^1$ orbit of $\Phi^t_H$ such that $\pi\circ z(t)=(x(t),u(t))$. We shall call such $z:I\rightarrow J^1(M,\R)$ an \textbf{action minimizing orbit over $I$}.

\vspace{1em}
By adopting the convention in Definition \ref{global-mini}, we say $z$ is a \textbf{global (resp. forward, backward) action minimizing orbit} if $I=\R\,($resp. $[c,+\infty), (-\infty,c])$. We introduce the notations
\begin{equation*}
\widetilde{\mathcal{G}}=\{z_0\in J^1(M,\R):z(t)=\Phi^t_H(z_0)\text{ is a global action minimizing orbit}\},\quad \mathcal{G}=\pi\widetilde{\mathcal{G}}.
\end{equation*}
\end{lemma}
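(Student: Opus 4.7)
The plan is to work locally on compact sub-intervals, lift to a $C^1$ contact Hamiltonian orbit there, and then patch the local lifts together using uniqueness of minimizers.

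First, fix any compact interval $[a,b]\subset I$ with $a,b$ in the interior of $I$. By Lemma \ref{prop-act-minimizer} (3), the restriction $\hat{x}(t):=x(a+t)$, $t\in[0,b-a]$, is the \emph{unique} Lipschitz minimizer of $h_{x(a),u(a)}(x(b),b-a)$, and by the very definition of an action minimizer, the corresponding $\hat{u}(t):=h_{x(a),u(a)}(\hat{x}(t),t)$ coincides with $u(a+t)$. Applying Remark \ref{minimizer-orbit} to this minimizer yields a $C^1$ orbit segment $\hat{z}:[0,b-a]\to J^1(M,\R)$ of $\Phi^t_H$ whose front projection is precisely $(x(a+t),u(a+t))$ and whose $p$-component is given by the Legendre transform formula stated in the lemma, evaluated on $(x(a+t),u(a+t),\dot{x}(a+t))$.

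To globalize, exhaust $I$ by an increasing family of compact intervals $[a_n,b_n]$ contained in the interior of $I$, and build the lift $z_n$ on each. On overlaps, the uniqueness clause in Lemma \ref{prop-act-minimizer} (3) gives $\hat{x}_n\equiv\hat{x}_{n+1}$, hence $\dot{x}_n\equiv\dot{x}_{n+1}$ almost everywhere, and thus the $p$-components defined by the same Legendre transform formula automatically agree. The $z_n$ therefore patch into a single $C^1$ orbit $z:I\to J^1(M,\R)$ with the required front projection. If $I$ has an endpoint, continuity of $\partial L/\partial \dot x$ together with the $C^1$ information already obtained on the interior extends $z$ continuously (and as a $\Phi^t_H$-orbit) to the boundary.

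For uniqueness, suppose $\tilde{z}(t)=(x(t),u(t),\tilde{p}(t))$ is another $C^1$ orbit of $\Phi^t_H$ with the same front projection. Then the first equation of \eqref{ch} forces both
\[
\dot{x}(t)=\partial_p H(x(t),u(t),\tilde{p}(t))\quad\text{and}\quad\dot{x}(t)=\partial_p H(x(t),u(t),p(t)),
\]
so the strict fiberwise convexity assumption (H1) yields $\tilde{p}(t)=p(t)$ pointwise. The main obstacle is the patching step: Remark \ref{minimizer-orbit} is formulated on prescribed compact intervals with fixed endpoint data, so one must ensure that the lift is independent of which base-point $a$ and which window $[a,b]$ are chosen. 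The uniqueness of the minimizer supplied by Lemma \ref{prop-act-minimizer} (3) (and thus the uniqueness of $\dot{x}$ at every interior point) is exactly what is needed to make this verification routine; once it is in place the rest of the argument is bookkeeping.
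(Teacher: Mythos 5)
Your proposal is correct and follows essentially the same route as the paper: restrict to compact subintervals, invoke Lemma \ref{prop-act-minimizer} (3) together with Remark \ref{minimizer-orbit} to obtain the $C^1$ lift with $p=\frac{\partial L}{\partial\dot{x}}(x,u,\dot{x})$, and then exhaust $I$ by increasing compact intervals. Your explicit uniqueness step via the injectivity of $p\mapsto\partial_p H(x,u,p)$ under (H1) is a sound completion of what the paper leaves implicit.
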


Now we are ready to show
\begin{proposition}\label{A2-B2-act-minimizer}
Any orbit constructed in the proof of (B2) is a global action minimizer.
\end{proposition}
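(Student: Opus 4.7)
The plan is to trace the action-minimizing property through the limiting construction used in the proof of (B2) and to apply Lemma \ref{prop-act-minimizer} (4). Recall that the orbit $Z_0(t)$ was obtained as the locally uniform limit of the time-shifted orbits
\[
Z_{\eps_n}(t) = z_{\eps_n}(t+t_{\eps_n}),\qquad t\in[-t_{\eps_n},+\infty),
\]
where each $z_{\eps}(t)=(x_\eps(t),u_\eps(t),p_\eps(t))$ is the semi-infinite orbit produced by (A1), starting from $\mathrm{J}^1_{\varphi_\eps}$ and satisfying $u_\eps(t)=T_t^-\varphi_\eps(x_\eps(t))$ for all $t\geqslant 0$.

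First I would observe that Proposition \ref{A2-act-mini} applies directly to each $z_\eps$: the pair $(x_\eps(\cdot),u_\eps(\cdot))$ is a forward action minimizer over $[0,+\infty)$. Since the notion of action minimizer is invariant under time translation (this follows immediately from Definition \ref{global-mini} and the form of the relation \eqref{minimizer}), the shifted pair $(X_\eps(\cdot),U_\eps(\cdot))$ is a forward action minimizer over the interval $I_{\eps}:=[-t_\eps,+\infty)$. By Lemma \ref{act-mini-orbit}, the full orbit $Z_\eps(t)$ is then an action minimizing orbit over $I_\eps$.

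Next, I would collect the ingredients needed to invoke Lemma \ref{prop-act-minimizer} (4). Taking $\eps=\eps_n$, Lemma \ref{t-epsilon-infi} gives $t_{\eps_n}\to +\infty$, so the intervals $I_n:=[-t_{\eps_n},+\infty)$ can be arranged, after passing to a subsequence, to form an increasing family with $\bigcup_n I_n=\R$. By the proof of (B2), $Z_{\eps_n}(0)\to Z_0$ in $J^1(M,\R)$, and by continuous dependence of solutions of \eqref{ch} on initial conditions the orbits $Z_{\eps_n}(\cdot)$ converge to $Z_0(\cdot)=\Phi_H^t(Z_0)$ uniformly on every compact subinterval of $\R$. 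In particular the projections $(X_{\eps_n}(\cdot),U_{\eps_n}(\cdot))$ converge to $(X_0(\cdot),U_0(\cdot))$ uniformly on compact intervals.

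Finally, I would apply Lemma \ref{prop-act-minimizer} (4) to the sequence of action minimizers $(X_{\eps_n}(\cdot),U_{\eps_n}(\cdot))$ over the intervals $I_n$. The lemma concludes that the limit $(X_0(\cdot),U_0(\cdot)):\R\to J^0(M,\R)$ is an action minimizer over $\R$, i.e.\ a global action minimizer. By Lemma \ref{act-mini-orbit} the corresponding $C^1$ lift, which coincides with $Z_0(t)=(X_0(t),U_0(t),P_0(t))$, is the global action minimizing orbit associated to it, and hence $Z_0\in\widetilde{\mathcal{G}}$. The only place one has to be slightly careful is verifying that the action-minimizing relation \eqref{minimizer} passes to the limit on each fixed $[a,b]\subset\R$; this is exactly what Lemma \ref{prop-act-minimizer} (4) is designed for, using the continuity of $(x_0,u_0,x,t)\mapsto h_{x_0,u_0}(x,t)$ from Proposition \ref{fundamental-prop} together with the uniform convergence already established. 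I do not expect any genuine obstacle beyond checking that the index set $\{I_n\}$ can be ordered by inclusion, which is guaranteed by $t_{\eps_n}\to+\infty$.
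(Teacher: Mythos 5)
Your proposal is correct and follows essentially the same route as the paper: both reduce the claim to Proposition \ref{A2-act-mini} (each $(X_\eps,U_\eps)$ is an action minimizer over $[-t_\eps,+\infty)$), use $t_{\eps_n}\to+\infty$ and locally uniform convergence, and pass the relation \eqref{minimizer} to the limit via the continuity of $h$. The only difference is cosmetic: you invoke Lemma \ref{prop-act-minimizer} (4) as a black box, whereas the paper carries out that limiting argument explicitly before recovering the $p$-component with Lemma \ref{act-mini-orbit}.
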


\begin{proof}
We shall employ the notations in Section 4.2. Assume $\Phi^t_H(Z_0)=(X_0(t),U_0(t), P_0(t)), t\in\R$ is an orbit constructed in the proof of (B2). It is a $C^0$-limit of some subsequence of $\Phi^t_H$-orbits
\[
\{(X_\eps(t),U_\eps(t), P_\eps(t)), t\in[-t_\eps,+\infty)\}_{\eps>0}
\]
satisfying $U_\eps(t)=T^-_{t+t_\eps}\varphi_\eps(X_\eps(t))$ for some $\varphi_\eps\in C^\infty(M,\R)$. By Proposition \ref{A2-act-mini}, $(X_\eps(t),U_\eps(t))$ is an action minimizer over $[-t_\eps,+\infty)$ with a common Lipschitz constant with respect to $\eps$. Thus due to Lemma \ref{act-mini-orbit}, for any $a,b,t\in [-t_\eps,+\infty ), a\leqslant b$,
\begin{equation}\label{minimizer-1}
\begin{split}
U_\eps(b)=&\, h_{X_\eps (a),U_\eps (a)}(X_\eps (a), b-a).\\
P_\eps(t)=&\,   \frac{\partial L}{\partial\dot{x}}( X_\eps(t), U_\eps(t), \dot X_\eps(t)).
\end{split}
\end{equation}
For any given $a<b\in\R$, by applying Lemma \ref{t-epsilon-infi}, we can always assume $-t_\eps<a$ for sufficiently small $\eps$. We send $\eps$ to $0$, and use \eqref{minimizer-1} as well as the continuity of $(x_0,u_0,x)\mapsto h_{x_0,u_0}(x,b-a)$ for any $a<b$ to conclude that
\begin{equation}\label{minimizer-2}
U_0(b)=h_{X_0 (a),U_0 (a)}(X_0 (a), b-a).
\end{equation}
Thus $(X_0(t),U_0(t))$ is a global action minimizer. Since $(X_0(t),U_0(t), P_0(t))$ is a $\Phi^t_H$-orbit, the convergence of $(X_\eps(\cdot), U_\eps(\cdot), P_\eps(\cdot))$ and Lemma \ref{act-mini-orbit} imply that
\[
P_0(t)=\frac{\partial L}{\partial\dot{x}}( X_0(t), U_0(t), \dot X_0(t)).
\]
Now the arbitrariness of $a,b$ shows that $(X_0(t),U_0(t),P_0(t))$ is global minimizing.
\end{proof}

Now we include a general statement, originally formulated in \cite{XYZ}, giving a characterization of limit sets of forward/backward action minimizing orbits via action functions. Our version is slightly adapted from the original form and the proof is only sketched, the readers can also consult the \cite{XYZ} for more details.

\begin{proposition}\cite[Proposition 1.1]{XYZ}
Assume $z(t)=(x(t),u(t),p(t)), t\geqslant 0$ is a forward action minimizing orbit and there exists $u_-\in\mathcal{S}^-$ such that $\lim_{t\rightarrow+\infty}|u(t)-u_-(x(t))|=0$, then for any $\Phi^t_H$-orbit $\bar{z}(t)=(\bar{x}(t),\bar{u}(t),\bar{p}(t))\in\omega(z(0))$ and real numbers $a\leqslant b$,
\begin{equation}\label{semi-static-eq-1}
\bar{u}(b)= \inf_{s>0}h_{\bar{x}(a),\bar{u}(a)}(\bar{x}(b),s).
\end{equation}
\end{proposition}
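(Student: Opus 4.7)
The plan is to establish the equality by proving two inequalities, after a preparatory observation that ties the asymptotic hypothesis on $z$ to the behavior of $\bar z$ on all of $\R$. The preparatory fact is that the entire limit orbit is calibrated by $u_-$:
\[
\bar u(t)=u_-(\bar x(t))\quad\text{for every }t\in\R.
\]
To verify this, I would use that $\omega(z(0))$ is $\Phi^t_H$-invariant, hence $\bar z(t)\in\omega(z(0))$ for every $t\in\R$: picking a sequence $t_n\to+\infty$ with $\Phi^{t_n}_H(z(0))\to\bar z(0)$, continuous dependence on initial data gives $\Phi^{t_n+t}_H(z(0))\to\bar z(t)$; writing $s_n:=t_n+t\to+\infty$, the hypothesis $|u(s_n)-u_-(x(s_n))|\to 0$ combined with continuity of $u_-$ (Corollary \ref{weak-kam-reg}) yields $\bar u(t)=\lim u(s_n)=\lim u_-(x(s_n))=u_-(\bar x(t))$.

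Given this calibration, the upper bound $\bar u(b)\leqslant\inf_{s>0}h_{\bar x(a),\bar u(a)}(\bar x(b),s)$ follows at once from the fact that $u_-\in\mathcal{S}_-$ is a fixed point of $\{T^-_s\}_{s\geqslant 0}$ (Definition \ref{fp}): the Lax--Oleinik formula of Proposition \ref{semi-group1} gives, for every $s>0$,
\[
\bar u(b)=u_-(\bar x(b))=T^-_s u_-(\bar x(b))=\inf_{x_0\in M}h_{x_0,u_-(x_0)}(\bar x(b),s)\leqslant h_{\bar x(a),u_-(\bar x(a))}(\bar x(b),s)=h_{\bar x(a),\bar u(a)}(\bar x(b),s).
\]

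For the reverse inequality, I would transfer the forward action-minimizing property of $z$ to $\bar z$ by passing to limits. Assume first $a<b$: pick $t_n\to+\infty$ with $\Phi^{t_n+a}_H(z(0))\to\bar z(a)$, so by continuous dependence also $\Phi^{t_n+b}_H(z(0))\to\bar z(b)$; for $n$ large enough that $t_n+a\geqslant 0$, the forward minimizing assumption on $z$ gives
\[
u(t_n+b)=h_{x(t_n+a),u(t_n+a)}(x(t_n+b),b-a),
\]
and joint continuity of $(x_0,u_0,x)\mapsto h_{x_0,u_0}(x,b-a)$ (Proposition \ref{fundamental-prop}) lets us pass to the limit to conclude $\bar u(b)=h_{\bar x(a),\bar u(a)}(\bar x(b),b-a)$, so in particular $\bar u(b)\geqslant\inf_{s>0}h_{\bar x(a),\bar u(a)}(\bar x(b),s)$. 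The degenerate case $a=b$ follows from the standard fact $\lim_{s\to 0^+}h_{x_0,u_0}(x_0,s)=u_0$, obtained by testing against the constant curve $\gamma\equiv x_0$.

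The delicate point is the preparatory calibration $\bar u(t)=u_-(\bar x(t))$: the hypothesis only supplies asymptotic behavior as $t\to+\infty$, while the conclusion is needed along the entire orbit $\bar z:\R\to J^1(M,\R)$. The $\Phi^t_H$-invariance of $\omega(z(0))$ together with continuous dependence of the flow on initial data is precisely what allows us to translate the approximating times $t_n$ by an arbitrary finite shift, extending the calibration from $t=0$ to all of $\R$; once this is in place, the two inequalities above combine to give the stated equality.
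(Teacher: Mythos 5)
Your proposal is correct and follows essentially the same route as the paper's proof: establish the calibration $\bar{u}(t)=u_-(\bar{x}(t))$ from the asymptotic hypothesis via the flow-invariance of $\omega(z(0))$, obtain the inequality $\bar{u}(b)\leqslant h_{\bar{x}(a),\bar{u}(a)}(\bar{x}(b),s)$ from the fixed-point property $T^-_s u_-=u_-$, and obtain the reverse inequality by transferring the action-minimizing identity to the limit orbit (you do this by an explicit limit using the continuity of the action function, where the paper invokes Lemma \ref{prop-act-minimizer} (4), which encapsulates the same argument). Your explicit treatment of the degenerate case $a=b$ is a small additional care not spelled out in the paper.
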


\begin{proof}
Notice that the forward action minimizing $z(t)=(x(t),u(t),p(t)), t\geqslant0$ satisfying
\begin{equation}\label{u-convergence}
\lim_{t\rightarrow\infty}|u(t)-u_-(x(t))|=0
\end{equation}
is bounded, we conclude $\omega (z(0))$ is not empty. For $(\bar{x}(0),\bar{u}(0),\bar{p}(0))\in\omega(z(0))$, there is a sequence $t_n\in\R$ with $\lim_{n\rightarrow\infty}t_n\rightarrow\infty$ such that $(x(t_n),u(t_n),p(t_n))\to(\bar{x},\bar{u},\bar{p})$ as $n\rightarrow\infty$. Let $\bar{z}(t)=(\bar{x}(t),\bar{u}(t),\bar{p}(t)):=\Phi_H^t(\bar{x},\bar{u},\bar{p}),\ t\in\R$. By applying Proposition \ref{prop-act-minimizer} (4), one easily deduce that $(\bar{x}(\cdot),\bar{u}(\cdot))$ is a global action minimizer, thus
\begin{equation}\label{eq:pf-cor-2}
\bar{u}(b)=h_{\bar{x}(a),\bar{u}(a)}(\bar{x}(b),b-a).
\end{equation}
For any $t\in\R$, by the assumption \eqref{u-convergence},
\begin{equation}\label{eq:pf-cor-1}
\bar{u}(t)=\lim_{n\to+\infty}u(t_n+t)= \lim_{n\to+\infty}u_-(x(t_n+t))=u_-(\bar{x}(t)).
\end{equation}
Thus we know that for any $b\geqslant a$ and $s>0$,
\begin{equation*}
\begin{split}
\bar{u}(b)=&\,u_-(\bar{x}(b))=T^-_{s}u_-(\bar{x}(t))=\inf_{z\in M}h_{z,u_-(z)}(\bar{x}(b),s)\\
  \leqslant&\,h_{\bar{x}(a),u_-(\bar{x}(a))}(\bar{x}(b),s)=h_{\bar{x}(a),\bar{u}(a)}(\bar{x}(b),s).
\end{split}
\end{equation*}
As a consequence of the above inequality and \eqref{eq:pf-cor-2}, we obtain \eqref{semi-static-eq-1} and complete the proof.		
\end{proof}

\begin{remark}
One can characterize the $\alpha$-limit set of a backward action minimizing orbit by reversing the time direction and replacing $u_-\in\mathcal{S}_-$ with $u_+\in\cS_+$. Then for any $\Phi^t_H$-orbit $\bar{z}(t)=(\bar{x}(t),\bar{u}(t),\bar{p}(t))\in\alpha(z(0))$ and real numbers $a\leqslant b$,
\begin{equation}\label{semi-static-eq-2}
\bar{u}(a)= \inf_{s>0}h^{\bar{x}(b),\bar{u}(b)}(\bar{x}(a),s).
\end{equation}
However, one notice that by Proposition \ref{fundamental-prop} (3), \eqref{semi-static-eq-1} and \eqref{semi-static-eq-2} are equivalent.
\end{remark}

The above discussions lead us to the notion of time-free action minimizing orbits. As we shall see in the next part, this class of action minimizers is closely related to the weak KAM solutions of \eqref{HJs}.
\begin{definition}\label{semi-static}
A locally Lipschitz curve $(x(\cdot),u(\cdot)):\R\rightarrow J^0(M,\R)$ is called semi-static if it is a global action minimizer and for any $a<b$,
\begin{equation}\label{semi-static curve}
u(b)=\inf_{s>0}h_{x(a),u(a)}(x(b),s)\Leftrightarrow u(a)=\sup_{s>0}h^{x(b),u(b)}(x(a),s).
\end{equation}
We use $\mathcal{N}$ to denote the union of the image of semi-static curves. By Lemma \ref{act-mini-orbit}, semi-static curves can be lifted to $\Phi^t_H$-orbits. We call them \textbf{semi-static orbits} and introduce
\[
\widetilde{\mathcal{N}}=\{z_0\in J^1(M,\R):z(t)=\Phi^t_H(z_0)\text{ is a semi-static orbit}\}.
\]
The set $\widetilde{\mathcal{N}}$ is called the \textbf{Ma\~{n}\'{e} set} associated to \eqref{ch}. It is evident that $\pi\widetilde{\mathcal{N}}=\mathcal{N}$.
\end{definition}

To the end of this section, we relate the concept of semi-static orbits defined above to the existence of weak KAM solutions to (HJs), and give a decomposition of Ma\~{n}\'{e} set by the Ma\~{n}\'{e} slices introduced in Definition \ref{Mane-slice}. This is summarized into
\begin{theorem}\cite[Theorem 1.3]{WWY4}\label{mane-nonempty-decomposition}
$\tilde{\mathcal{N}}\neq\emptyset$ is equivalent to either $\mathcal{S}_-$ or $\mathcal{S}_+$ is not empty. Moreover, $\widetilde{\mathcal{N}}$ is closed with the decomposition
\begin{equation}\label{decom-mane}
\widetilde{\mathcal{N}}=\bigcup_{u_-\in\mathcal{S}_-}\widetilde{\mathcal{N}}_{u_-}=\bigcup_{u_+\in\mathcal{S}_+}\widetilde{\mathcal{N}}_{u_+}.
\end{equation}
\end{theorem}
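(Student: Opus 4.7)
The plan is to prove both the dichotomy and the decomposition in three steps, with the passage from a semi-static orbit to a weak KAM solution being the delicate one.

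Step one handles the implication $\mathcal{S}_-\neq\emptyset\Rightarrow\widetilde{\mathcal{N}}\supset\widetilde{\mathcal{N}}_{u_-}\neq\emptyset$ (and symmetrically for $\mathcal{S}_+$). Non-emptiness of $\widetilde{\mathcal{N}}_{u_-}$ is immediate from Corollary \ref{inv-graph}: the family $\{\Phi_H^{-t}(\mathrm{J}^1_{u_-})\}_{t\geqslant 0}$ is a decreasing chain of non-empty compact subsets of $\mathrm{J}^1_{u_-}$, so the intersection is non-empty. To show $\widetilde{\mathcal{N}}_{u_-}\subset\widetilde{\mathcal{N}}$, I pick $z_0\in\widetilde{\mathcal{N}}_{u_-}$ with full orbit $z(t)=(x(t),u(t),p(t))$; invariance forces $u(t)=u_-(x(t))$ for all $t\in\R$ and by Proposition \ref{cali-pro} each segment of $x(\cdot)$ is $(u_-,L)$-calibrated. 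Using $T_s^-u_-=u_-$ together with Proposition \ref{semi-group1}, for every $s>0$,
\[
u(b)=u_-(x(b))=T_s^-u_-(x(b))\leqslant h_{x(a),u_-(x(a))}(x(b),s)=h_{x(a),u(a)}(x(b),s),
\]
while Corollary \ref{Minimality} supplies $h_{x(a),u(a)}(x(b),b-a)\leqslant u(b)$. Taking $s=b-a$ gives the global-action-minimizer equality, and then taking the infimum over $s>0$ yields \eqref{semi-static curve}; hence $z_0\in\widetilde{\mathcal{N}}$.

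Step two produces, from any semi-static orbit $z_0\in\widetilde{\mathcal{N}}$ with $\Phi_H^t(z_0)=(x(t),u(t),p(t))$, a pair $u_\pm\in\mathcal{S}_\pm$ with $z_0\in\widetilde{\mathcal{N}}_{u_-}\cap\widetilde{\mathcal{N}}_{u_+}$. The natural candidate is
\[
u_-(x):=\inf_{s>0}h_{x(0),u(0)}(x,s),
\]
which by the semi-static property satisfies $u_-(x(t))=u(t)$ and hence $\pi\circ z(\R)\subset\mathrm{J}^0_{u_-}$. To upgrade $u_-$ to an element of $\mathcal{S}_-$, I would use Proposition \ref{semi-group1} and the Markov identity from Proposition \ref{fundamental-prop} (2) to exchange nested infima:
\[
T_t^-u_-(x)=\inf_{y\in M}h_{y,u_-(y)}(x,t)=\inf_{y\in M,\,s>0}h_{y,h_{x(0),u(0)}(y,s)}(x,t)=\inf_{s'>t}h_{x(0),u(0)}(x,s'),
\]
which equals $u_-(x)$ once one shows that dropping the restriction $s'>t$ does not change the infimum (for this, the semi-static orbit itself furnishes arbitrarily long minimizing trajectories). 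Once $u_-$ is fixed by $T_t^-$, applying Lemma \ref{proof-A2-p} to the orbit $z(t)$, whose projection lies in $\mathrm{J}^0_{u_-}$, gives $p(t)\in D^\ast u_-(x(t))$, hence $z(t)\in\mathrm{J}^1_{u_-}$ for every $t$ and $z_0\in\widetilde{\mathcal{N}}_{u_-}$ by Definition \ref{Mane-slice}. A parallel construction with $\sup_{s>0}h^{x(0),u(0)}(x,s)$ supplies $u_+\in\mathcal{S}_+$ and $z_0\in\widetilde{\mathcal{N}}_{u_+}$. Closedness of $\widetilde{\mathcal{N}}$ is then routine: if $z_0^n\to z_0$ with $z_0^n\in\widetilde{\mathcal{N}}$, Proposition \ref{fundamental-prop} (3) lets one pass to the limit in \eqref{semi-static curve} for each fixed $a<b$ (the upper bound $u(b)\leqslant h_{x(a),u(a)}(x(b),s)$ for every $s>0$ survives, and the matching lower bound comes from Corollary \ref{Minimality} applied to the limit orbit).

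The main obstacle is step two, specifically verifying that $u_-$ is finite, Lipschitz, and genuinely fixed by the solution semigroup. Finiteness from above is free (take $s=-t$ in the implicit formula and use the orbit itself), but a uniform lower bound as $s\to\infty$ requires controlling the implicit ODE underlying Proposition \ref{Implicit variational} via (H3) and a Gronwall-type estimate, together with compactness of $M$ to uniformize in $x$. Equi-Lipschitz control of $\{h_{x(0),u(0)}(\cdot,s)\}_{s\geqslant 1}$, analogous to Lemma \ref{B1-equi-Lip}, then transfers to $u_-$. Once boundedness is secured, the identification $T_t^-u_-=u_-$ becomes a clean manipulation of nested infima using Markov, and the remaining pieces fall into place.
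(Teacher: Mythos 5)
Your step one and your closedness argument are sound and essentially parallel the paper's (where you rederive directly from $T^-_su_-=u_-$ and Corollary \ref{Minimality} what the paper quotes as Lemma \ref{cali-semi-static}). The genuine gap is in step two, at exactly the point you flagged and then waved away. The candidate $u_-(x):=\inf_{s>0}h_{x(0),u(0)}(x,s)$, with the base point frozen at time $0$ of the orbit, is in general \emph{not} a fixed point of $T^-_t$. Your nested-infima computation correctly gives $T^-_tu_-(x)=\inf_{s'>t}h_{x(0),u(0)}(x,s')$ (this also follows from Proposition \ref{prop-sg} (5)), but the claim that the constraint $s'>t$ can be dropped because ``the semi-static orbit furnishes arbitrarily long minimizing trajectories'' only controls points $x$ lying on the orbit itself; for a general $x\in M$ the infimum over $s>0$ may be approached only at bounded $s$, and then $T^-_tu_-(x)>u_-(x)$. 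This failure is already visible in the classical case $H=H(x,p)$: there $u_-=u(0)+\Phi(x(0),\cdot)$ is the Ma\~n\'e potential based at $x(0)$, which is a backward weak KAM solution only when $x(0)$ lies in the projected Aubry set. For $x(0)$ on a semi-static orbit that is not static (a point on a separatrix of a pendulum-type system, say), $\Phi(x(0),\cdot)$ has a strict local minimum with a cusp at $x(0)$, fails the viscosity supersolution test there, and $T^-_tu_-(x(0))=\inf_{s>t}h_{x(0),u(0)}(x(0),s)$ increases towards $u(0)$ plus a strictly positive Peierls-barrier-type quantity. So the identity $T^-_tu_-=u_-$ cannot be established as stated, and the Gronwall/(H3) estimates you propose address only finiteness, not this fixed-point failure at the base point.

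The paper's proof avoids this by a Busemann-type construction: it sets $U_-(x,t):=\inf_{s>0}h_{x(t),u(t)}(x,s)$ with the base point moving along the orbit, uses the Markov property and the semi-static identity to show $U_-(x,\cdot)$ is nondecreasing in $t$, uses the boundedness of semi-static orbits (Lemma \ref{0-bound}) to get uniform bounds and equi-Lipschitz continuity, and proves the sandwich $U_-(x,t)\leqslant T^-_\tau U_-(x,t)\leqslant U_-(x,t+\tau)$. Only after letting $t\to-\infty$ does one obtain a genuine fixed point $\mathrm{u}_-\in\mathcal{S}_-$, and Lemma \ref{gra-line} then gives $\mathrm{u}_-(x(t))=u(t)$, so the orbit is calibrated by $\mathrm{u}_-$ and lies in $\widetilde{\mathcal{N}}_{\mathrm{u}_-}$; the forward solution is obtained symmetrically from $U_+$ as $t\to+\infty$. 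If you replace your frozen-base-point candidate by this limiting construction, the remainder of your outline goes through.
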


We believe that such a decomposition will be useful in future development of action minimizing method for contact Hamiltonian systems. We shall give a  complete proof of this theorem in the Appendix, see Section 7.2.

\section{Dynamics of global minimizing orbits in the model system}
In this last section, we shall apply our main results on a model system treated in our previous work \cite{JYZ}. This leads to an almost complete description of the dynamics of global minimizing orbits. In a concrete example, we discuss whether the heteroclinic orbits found in (B2) belong to the class of semi-static orbits. As a result, we show that, in autonomous contact Hamiltonian systems, global action minimizers are not always semi-static.

\subsection{Previous knowledge about the model systems}
Recall that the model Hamiltonian $H:J^1(M,\R)\rightarrow\mathbb{R}$ is written in the form
\begin{equation}\label{model}
H(x,u,p)=F(x,p)+\lambda(x)u,
\end{equation}
where $F\in C^\infty(T^\ast M,\R)$ is a Tonelli Hamiltonian in the classical sense and $\lambda\in C^\infty(M,\R)$.

\vspace{0.5em}
The study of the model systems began with the special case when $\lambda(x)$ is a positive constant, i.e., the so-called discounted Hamiltonian systems. Such systems appeared in the study of, among others, mechanical systems with a dissipation proportional to the velocity, the spin-orbit model celestial mechanics \cite{Ce}, deflation phenomena in economics \cite{K}, the optimal control problem \cite{Ben}, the transport model problem \cite{WL} and already attracted much research interests. Like the case when $H$ is independent of $u$, the first $2n$ equations of \eqref{ch} are independent of the last one. Thus one can forget the action variable $u$ to get a flow $\phi^t_H$ on $T^{\ast}M$, it follows that
\begin{equation}\label{con-sym}
(\phi^t_H)^\ast\Omega=e^{-\lambda t}\Omega,
\end{equation}
where $\Omega=dp\wedge dx$ is the canonical symplectic form on the cotangent bundle. Due to \eqref{con-sym}, $\phi^t_H$ is also called \textbf{conformal symplectic flow}.

\vspace{0.5em}
Early investigations from the dynamical viewpoint include the existence of periodic orbits \cite{Cas-periodic} and the \textbf{Birkhoff attractor} \cite{Le-Calvez1}-\cite{Le-Calvez2} for the dissipative twist maps, which is the time-$1$ map of flow $\phi^t_H$, on the two dimensional annulus. For the past two decades, the celebrated KAM theorem on the existence of quasi-periodic invariant tori was extended to the conformal symplectic flow and applied to the related model of celestial mechanics \cite{CCGL}-\cite{CCL2}. Simultaneously, action minimizing method, especially Aubry-Mather and weak KAM theory, was developed in \cite{MS} to detect the existence of the compact global attractor. We also want to mention that, the successful application of weak KAM theory for conformal symplectic flow allow \cite{DFIZ} to prove the existence of vanishing discount limit of the solution to Hamilton-Jacobi equations arising in \cite{LPV}.

\vspace{0.5em}
Recently in \cite{AF}, the uniqueness and relation between inner dynamics and symplectic aspects of the invariant submanifold of $\phi^t_H$, for instance what kind of dynamics on the invariant submanifold ensures they are isotropic, was explored and later, the dynamical study of conformal symplectic flow was extended to Hamiltonian flow on a \textbf{conformal symplectic manifolds} \cite{AA} following the geometric construction in \cite{Va1}-\cite{Va2}.

\vspace{0.5em}
Due to the analysis in Example \ref{toy-model}, the conformal symplectic flows can be regarded as a special case of the contact Hamiltonian flows with monotone Hamiltonian. More extensively, assume $\lambda(x)$ is no-where vanishing on $M$, then
\begin{enumerate}
  \item[$(+)$] either $\lambda(x)>0$ and $\Phi^t_H$ admits a maximal global attractor $\mathcal{K}_H$,
  \item[$(-)$] or $\lambda(x)<0$ and $\Phi^{-t}_H$ admits a maximal global attractor $\mathcal{K}_H$,
\end{enumerate}
with topological properties listed in Theorem \ref{mono}.

\begin{remark}
In an upcoming work \cite{AHV}, the authors propose a higher dimensional generalization of Birkhoff attractors for the time-$1$ of $\phi^t_H$ from view of symplectic geometry and continue the study of its properties. In particular, the authors showed that the Birkhoff attractor shares the same cohomology as the base manifold $M$, which is quite close to (3) in Theorem \ref{mono}. It would be interesting to compare the notion of Birkhoff attractor in \cite{AHV} and that of global attractor discussed here.
\end{remark}

One can also ask what happens in the degenerate case when $\lambda$ is non-negative on $M$ but vanishes on part of $M$. The extreme case is that $\lambda$ is identically zero on $M$ and the system becomes a suspended Tonelli Hamiltonian system, then the conservation of the volume of phase space forbids the existence of any compact global attractor. In this case, the dynamics of $\Phi^t_H$ becomes a classical and extensive topic and certainly can not be included in this short section. The existence of compact global attractor as well as other descriptions of the dynamics for the model system with non-negative $\lambda$ will be the theme of another work.

\vspace{1em}
In the following, we shall focus on the model Hamiltonian satisfying the fluctuation condition:
\begin{enumerate}
  \item[($\clubsuit$)] there exist $x_1,x_2\in M$ such that $\lambda(x_1)\lambda(x_2)<0$.
\end{enumerate}

\begin{remark}
The fluctuation condition means that the contact Hamiltonian is not monotone with respect to $u$, i.e. $dH(\mathcal{R})$ changes sign on the phase space. We believe that \eqref{model} is one of the most simple and typical models for the non-monotone contact Hamiltonian.
\end{remark}

We knew that to explore the dynamics of $\Phi^t_H$, an important step is to study the solution or the set of solutions to \eqref{hj}. In this aspect, the model systems satisfying ($\clubsuit$) differ significantly from monotone systems in losing uniqueness of the solution to \eqref{hj} as we have
\begin{proposition}\cite[Section 2.1]{JYZ}\label{cv}
For the equation \eqref{HJs} with Hamiltonian \eqref{model} satisfying ($\clubsuit$), there is $c(H)\in\R$, uniquely determined by $H$, such that \eqref{HJs} admits a solution if and only if
\[
c(H)\leqslant0.
\]
Furthermore, if $c(H)<0$, then \eqref{HJs} admits a strict subsolutions $\varphi\in C^\infty(M,\R)$.
\end{proposition}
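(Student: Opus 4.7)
The plan is to define $c(H)$ as the Ma\~n\'e-type critical value
\begin{equation*}
c(H) := \inf\left\{c \in \R : \exists\, \varphi\in C^\infty(M,\R) \text{ with } F(x, d_x\varphi(x)) + \lambda(x)\varphi(x) \leqslant c \text{ on } M\right\}
\end{equation*}
and to establish in order: (i) $c(H)$ is a finite real number, automatically unique; (ii) \eqref{HJs} admits a viscosity solution if and only if $c(H)\leqslant 0$; (iii) the strict subsolution statement for $c(H)<0$. Claim (iii) is immediate from the infimum definition: pick $c\in(c(H),0)$ and the corresponding smooth $\varphi$ satisfies $F(x,d_x\varphi)+\lambda(x)\varphi \leqslant c < 0$ pointwise, providing a strict $C^\infty$ subsolution of \eqref{HJs}.

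The main obstacle lies in the lower bound of (i), which is precisely where the fluctuation condition $(\clubsuit)$ is needed. The upper bound $c(H)\leqslant \max_x F(x,0)$ comes from testing with $\varphi \equiv 0$. For the lower bound, let $C_0 := -\inf_{(x,p)\in T^*M} F(x,p)$, which is finite by (H1), (H2), and compactness of $M$. For any smooth subsolution $\varphi$ at level $c$, evaluating at the points $x_1,x_2$ furnished by $(\clubsuit)$ yields
\begin{equation*}
\varphi(x_1) \leqslant \frac{c+C_0}{\lambda(x_1)}, \qquad \varphi(x_2) \geqslant \frac{c+C_0}{\lambda(x_2)},
\end{equation*}
so that for $c<-C_0$ the function $\varphi$ takes strictly positive and strictly negative values, giving $\|\varphi\|_\infty \leqslant \operatorname{osc}(\varphi)$. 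Combining this with the Lipschitz estimate $\|d_x\varphi\|_\infty \leqslant (c+\|\lambda\|_\infty\|\varphi\|_\infty + C(K))/K$ produced by (H2) for any superlinearity constant $K>0$, together with $\operatorname{osc}(\varphi)\leqslant \operatorname{diam}(M)\|d_x\varphi\|_\infty$, one obtains
\begin{equation*}
(K - \operatorname{diam}(M)\|\lambda\|_\infty)\|\varphi\|_\infty \leqslant \operatorname{diam}(M)(c + C(K)).
\end{equation*}
Choosing any $K>\operatorname{diam}(M)\|\lambda\|_\infty$ forces the right-hand side to be nonnegative, which yields the explicit lower bound $c(H)\geqslant -C(K)$ and completes (i).

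For (ii), the ``only if'' direction is standard: a viscosity solution $u$ of \eqref{HJs} is Lipschitz and locally semiconcave by Corollary \ref{weak-kam-reg}, and by Fathi--Siconolfi style mollification can be replaced by a smooth function satisfying $F(x,d\varphi)+\lambda(x)\varphi\leqslant c$ for any $c>0$, hence $c(H)\leqslant 0$. The converse is a fixed point argument for $\{T_t^-\}_{t\geqslant 0}$: the existence of a smooth subsolution at level $c\leqslant 0$ gives, through the domination principle of Proposition \ref{weak-kam}, sup-norm and equi-Lipschitz bounds on $\{T_t^-\varphi\}_{t\geqslant 0}$ uniform in $t$, confining the orbit of $\varphi$ to a compact convex subset of $C(M,\R)$. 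Schauder--Tychonoff then produces a fixed point of $\{T_t^-\}_{t\geqslant 0}$, which by Proposition \ref{weak-kam-vis} is a viscosity solution of \eqref{HJs}. The principal technical obstacle throughout is the finiteness argument in (i); once this is secured, the rest follows the standard weak KAM pattern adapted to the contact setting of \cite{WWY1}--\cite{WWY3}.
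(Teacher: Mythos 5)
Your overall architecture --- defining $c(H)$ as the infimum of levels at which smooth subsolutions exist, proving finiteness via $(\clubsuit)$, and reading off the strict subsolution in (iii) --- is exactly the route of the cited source \cite{JYZ}, and parts (i) and (iii) are carried out correctly: the sign change of $\varphi$ forced by evaluating at $x_1,x_2$, combined with the coercivity gradient bound and $\operatorname{osc}(\varphi)\leqslant\operatorname{diam}(M)\|d\varphi\|_\infty$, is precisely where the fluctuation condition does its work. The ``only if'' half of (ii), by convolution of the Lipschitz a.e.\ subsolution, is also standard and fine in outline.

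The gap is in the ``if'' half of (ii). You assert that a smooth subsolution $\varphi$ at level $c\leqslant0$ yields, ``through the domination principle,'' sup-norm bounds on $\{T^-_t\varphi\}_{t\geqslant0}$ uniform in $t$. Domination (Proposition \ref{prop-sg} (2) together with the semigroup property) gives only the lower bound $T^-_t\varphi\geqslant\varphi$ and the monotone increase of $t\mapsto T^-_t\varphi$; it gives no upper bound, and no upper bound holds for subsolutions of general contact Hamiltonians. Take $H=\frac12|p|^2-u$ (so $\lambda\equiv-1$, which fails only $(\clubsuit)$): here $\varphi\equiv1$ is a subsolution and $u\equiv0$ solves \eqref{HJs}, yet $L=\frac12|\dot\gamma|^2+u\geqslant u$ forces $T^-_t\varphi\geqslant e^{t}\to+\infty$. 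So the uniform upper bound is exactly the second place where $(\clubsuit)$ must be invoked: one needs a point $x_1$ with $\lambda(x_1)>0$, at which $L(x_1,u,0)=-\inf_pF(x_1,p)-\lambda(x_1)u$ is strictly decreasing in $u$, so that the constant test curve at $x_1$ caps $T^-_t\varphi(x_1)$ independently of $t$, and the pointwise bound is then spread over $M$ by an equi-Lipschitz estimate of the type in Lemma \ref{B1-equi-Lip}. Your proposal never uses $(\clubsuit)$ in part (ii), so this step fails as written. Two smaller points: once boundedness and monotonicity are secured, Dini plus continuity of each $T^-_s$ already yields a fixed point of the whole semigroup, whereas Schauder--Tychonoff applied to a single map $T^-_{t_0}$ gives only a fixed point of that map and would require a further argument to upgrade it; and when $c(H)=0$ the infimum need not be attained among smooth functions, so you must either prove attainment or run the argument with a Lipschitz almost-everywhere subsolution at level $0$ obtained as a limit of smooth subsolutions at levels $c_n\downarrow0$, using the equi-bounds from your part (i).
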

When $c(H)=0$, our information on the structure of weak KAM solutions is not enough for us to give a clear picture on the dynamics of global action minimizing orbits. Thus we restrict ourselves into the non-critical case, i.e.,
\begin{enumerate}
  \item[($\blacklozenge$)] $c(H)<0$.
\end{enumerate}

\vspace{0.3em}
Using the strict monotonicity of the actions of $T^{\pm}_t$ on subsolutions, i.e., Proposition \ref{A6}, we choose a strict subsolution $\varphi$ to \eqref{HJs} and define
\begin{equation}\label{principle-sol1}
\bar{u}_-:=\lim_{t\rightarrow\infty}T^-_t\varphi,\quad\underline{u}_+:=\lim_{t\rightarrow\infty}T^+_t\varphi.
\end{equation}
It follows that
\begin{equation}\label{order-toymodel}
\underline{u}_+(x)<\varphi(x)<\bar{u}_-(x)\quad\text{for any }\,\,\,x\in M.
\end{equation}
Then it is clear that $\widetilde{\mathcal{N}}_{ \bar{u}_- }\cap\widetilde{\mathcal{N}}_{\underline{u}_+}=\emptyset$.

\vspace{0.5em}
To proceed, we recall the main results on the structure of the set of weak KAM solutions and large-time behavior of \eqref{HJe} with Hamiltonian \eqref{model} obtained in \cite{NW}. It is noteworthy that for their validity, the assumptions \textbf{(H1)-(H3)} can be considerably weaken. To warm up, notice that $\mathcal{S}_\pm$ inherit a natural order structure $\preceq$ from the order on $\R$, that is, $u_\pm\preceq v_\pm$ if and only if $u_\pm(x)\leq v_\pm(x)$ for all $x\in M$.

\begin{proposition}\cite[Theorem 1-3]{NW}\label{prop:Large-time behavior1}
There is a maximal element (thus unique) $u^{\max}_-$ of $(\cS_-,\preceq)$ and a minimal element $u^{\min}_+$ of $(\cS_+,\preceq)$. For any initial data $\varphi\in C(M,\R)$,
\begin{itemize}
  \item[(1)] If the initial value $\varphi>u^{\min}_+$, then $T_t^- \varphi$ converges to $u^{\max}_-$ uniformly on $M$ as $t\to +\infty$;\\ If there is $x_0\in M$ such that $\varphi(x_0)<u^{\min}_+$, then $T_t^- \varphi$ converges to $-\infty$ uniformly on $M$ as $t\to +\infty$.

  \item[(2)] If the initial value $\varphi<u^{\max}_-$, then $T_t^+ \varphi $ converges to  $u^{\min}_+$ uniformly on $M$ as $t\to +\infty$;\\If there is $x_0\in M$ such that $\varphi(x_0)>u^{\max}_-$, then $T_t^+\varphi$ converges to $+\infty$ uniformly on $M$ as $t\to +\infty$.
\end{itemize}	
\end{proposition}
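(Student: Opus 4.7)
The plan is to prove the proposition in two stages: first the existence of the extremal solutions $u^{\max}_-$ and $u^{\min}_+$, then the convergence/divergence dichotomy for $T^-_t$; the forward-semigroup statements (2) follow by time-reversal through $\breve H$ and Proposition \ref{weak-kam-vis}.

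\textbf{Stage 1 (extremal solutions).} I would start from Proposition \ref{cv}: since $c(H)<0$ by ($\blacklozenge$), there exists a smooth strict subsolution $\varphi_0$. Proposition \ref{A6} then gives $T^-_t\varphi_0>\varphi_0$ for every $t>0$, and the semigroup property upgrades this to strict monotonicity of $t\mapsto T^-_t\varphi_0$. Equi-Lipschitz estimates in the style of Lemma \ref{B1-equi-Lip}, together with a uniform upper bound obtained by sandwiching $T^-_t\varphi_0$ under any existing solution of (HJs), let monotone convergence produce $\bar u_-:=\lim_{t\to+\infty}T^-_t\varphi_0\in\mathcal{S}_-$. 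To upgrade $\bar u_-$ to the actual maximum I would run a Perron-type step: for any $u_-\in\mathcal{S}_-$, the function $\max\{\varphi_0,u_-\}$ is again a viscosity subsolution, so $T^-_t\max\{\varphi_0,u_-\}$ is increasing with limit in $\mathcal{S}_-$ dominating $u_-$; directedness and uniform boundedness of $\mathcal{S}_-$ then identify $u^{\max}_-:=\sup\mathcal{S}_-$ as a genuine element of $\mathcal{S}_-$. The construction of $u^{\min}_+$ is parallel, using infima and $\breve H$.

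\textbf{Stage 2, convergence half of (1).} Assume $\varphi>u^{\min}_+$. Compactness of $M$ upgrades this to $\varphi\geqslant u^{\min}_++\delta$ for some $\delta>0$. The family $\{T^-_t\varphi\}_{t\geqslant 1}$ is equi-Lipschitz by Lemma \ref{B1-equi-Lip} and bounded in $L^\infty$: an upper bound comes from sandwiching $\varphi$ below a smooth $\psi\geqslant u^{\max}_-$ whose $T^-_t$-orbit stays near $u^{\max}_-$ by Stage 1, while a lower bound follows from observing that $u^{\min}_++\delta$ is a strict subsolution by (H3), so $T^-_t(u^{\min}_++\delta)\leqslant T^-_t\varphi$ converges to an element of $\mathcal{S}_-$ strictly above $u^{\min}_+$. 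Arzel\`a--Ascoli then extracts convergent subsequences whose limits all lie in $\mathcal{S}_-$. The critical step is to identify every such limit with $u^{\max}_-$: the forced lower bound strictly above $u^{\min}_+$ combined with rerunning the Stage 1 Perron argument starting from that strictly higher solution excludes any element of $\mathcal{S}_-$ below $u^{\max}_-$ as a possible limit.

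\textbf{Stage 2, divergence half of (1), and main obstacle.} Suppose $\varphi(x_0)<u^{\min}_+(x_0)$ but, contrary to the claim, $\sup_M T^-_{t_n}\varphi\not\to-\infty$ along some sequence $t_n\to+\infty$. Equi-Lipschitz bounds and Arzel\`a--Ascoli furnish a subsequential limit $\underline u_-\in\mathcal{S}_-$. I would track the action deficit $u^{\min}_+(x_0)-\varphi(x_0)>0$ along a $(u^{\min}_+,L)$-calibrated curve issuing from $x_0$, using Corollary \ref{Minimality} to propagate the gap: the pointwise bound $T^-_{t_n}\varphi(x)\leqslant h_{x_0,\varphi(x_0)}(x,t_n)$ evaluated along the calibrated trajectory forces $\underline u_-$ to undershoot every forward weak KAM solution compatible with $u^{\min}_+$, contradicting the minimality of $u^{\min}_+$ in $\mathcal{S}_+$. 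The main obstacle throughout is precisely this subsequential-limit identification: the fluctuation hypothesis ($\clubsuit$) removes the strong contraction available in the purely monotone settings (M$_\pm$), so identifying limits of $T^-_t\varphi$ either as $u^{\max}_-$ or as diverging to $-\infty$ must be driven by the strict separation gap and a careful accounting of the implicit action functions along calibrated curves, which is where most of the technical work lives.
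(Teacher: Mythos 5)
The paper itself does not prove item (1): it is imported verbatim from \cite[Theorem 1-3]{NW}, and the only argument the paper supplies is the deduction of item (2) from item (1) through the involution $H\mapsto\breve{H}$, using Propositions \ref{sol-HJe} and \ref{weak-kam-vis} to turn $T^+_t$ for $H$ into the backward semigroup for $\breve{H}$ and to identify the extremal solutions of the two problems. Your closing step --- obtaining (2) by time reversal through $\breve{H}$ --- is exactly that argument and matches the paper. Where you diverge is in attempting to prove (1) from scratch, and there the sketch has genuine gaps; the hard analysis you would be replacing is precisely the content of \cite{NW}.

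Concretely: (a) your uniform upper bound on $T^-_t\varphi$ rests on the claim that a smooth $\psi\geqslant u^{\max}_-$ has a $T^-_t$-orbit staying near $u^{\max}_-$ ``by Stage 1''; but Stage 1 only produces increasing limits of $T^-_t$ applied to strict subsolutions lying \emph{below} $u^{\max}_-$ and says nothing about initial data above it. Since (M$_\pm$) fails, the only general estimate is the expansive bound $\|T^-_t\varphi-T^-_t\psi\|_\infty\leqslant e^{\lambda t}\|\varphi-\psi\|_\infty$ of Proposition \ref{prop-sg} (4), so controlling orbits from above is essentially the statement being proved --- the appeal is circular; moreover Lemma \ref{B1-equi-Lip} presupposes a uniform $L^\infty$ bound, so it cannot be invoked before that bound is in hand. (b) The claim that $u^{\min}_++\delta$ is a strict subsolution ``by (H3)'' is false in this non-monotone setting: (H3) gives only $|H(x,u+\delta,p)-H(x,u,p)|\leqslant L\delta$ with no sign, and for the model \eqref{model} adding $\delta>0$ increases $H$ wherever $\lambda>0$. (The correct lower bound is simply $T^-_t\varphi\geqslant T^-_t u^{\min}_+\geqslant u^{\min}_+$ by Proposition \ref{prop-sg} (1)--(2).) (c) The decisive steps --- showing that \emph{every} subsequential limit equals $u^{\max}_-$ rather than some intermediate element of $\cS_-$ (Example \ref{non-zero energy} shows $\cS_-$ can contain a second solution $\underline{u}_-$), and that the divergence to $-\infty$ is uniform on $M$ starting from a deficit at a single point $x_0$ --- are only gestured at (``rerun the Perron argument'', ``propagate the gap''). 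Likewise the uniform boundedness of $\cS_-$ and the finiteness of the monotone limit in Stage 1 are asserted, not proved. As written, Stage 2 does not substitute for \cite[Theorem 1-3]{NW}; either cite that result, as the paper does, or supply these missing arguments.
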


\begin{proof}
Item (1) follows directly from \cite[Theorem 1-3]{NW}. Here we show how to deduce item (2) from (1): As is highlighted in Proposition \ref{sol-HJe} and \ref{weak-kam-vis}, there is a symmetric relationship for solutions of \eqref{HJe} and \eqref{HJs} with Hamiltonian $H$ and $\breve{H}$. Since the map $H\mapsto\breve{H}$ is an involution, we can replace $H$ by $\breve{H}$ in Proposition \ref{sol-HJe} and \ref{weak-kam-vis} to get \begin{enumerate}[(i)]
  \item $-T_t^+(-\varphi)/-T_t^-(-\varphi)$ are the backward/forward solution semigroup associated to \eqref{HJe} with $G=\breve{H}$.

  \item $u_-/u_+$ is a backward/forward weak KAM solution for \eqref{HJs} with $G=H$ if and only if $-u_-/-u_+$ is a forward/backward weak KAM solution for \eqref{HJs} with $G=\breve{H}$.
\end{enumerate}
Observe that if $H$ is of the form \ref{model} and satisfies ($\clubsuit$) and ($\blacklozenge$), so does $\breve{H}$. Thus the maximal (resp. minimal) backward (resp. forward) weak KAM solution for \eqref{HJs} with $G=\breve{H}$ is $-u^{\min}_+$ (resp. $-u^{\max}
_-$). We apply (1) to $\breve{H}$ to obtain that for any $-\varphi>-u^{\max}_-$,
$$
\lim_{t\to +\infty}-T_t^+\varphi=\lim_{t\to +\infty}-T_t^+(-(-\varphi))=-u^{\min}_+\Leftrightarrow\lim_{t\to +\infty}T_t^+ \varphi =u^{\min}_+.
$$
This finishes the proof.
\end{proof}

As a corollary of our construction and the above results, we have
\begin{proposition}\label{max/min-sol-ty}
$u_-^{\max}=\bar{u}_-, u^{\min}_+=\underline{u}_+$.
\end{proposition}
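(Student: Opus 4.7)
The plan is to derive this proposition directly from Proposition \ref{prop:Large-time behavior1} applied to the smooth strict subsolution $\varphi$ used in \eqref{principle-sol1} to define $\bar{u}_-$ and $\underline{u}_+$. The only thing that needs to be checked is that the hypotheses of that proposition are satisfied, i.e., that $\varphi$ lies strictly between $u^{\min}_+$ and $u^{\max}_-$.

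First I would invoke Proposition \ref{A6} (strict monotonicity of $\{T^\pm_t\}$ on strict subsolutions), which forces the maps $t\mapsto T^-_t\varphi$ and $t\mapsto -T^+_t\varphi$ to be strictly increasing. Passing to the limit in $t$ then gives the inequalities
\[
\underline{u}_+(x) < \varphi(x) < \bar{u}_-(x), \quad x\in M,
\]
which is exactly \eqref{order-toymodel}. Since $\bar{u}_-\in\mathcal{S}_-$ and $\underline{u}_+\in\mathcal{S}_+$, the defining extremality of $u^{\max}_-$ and $u^{\min}_+$ in $(\mathcal{S}_\pm,\preceq)$ yields $\bar{u}_-\preceq u^{\max}_-$ and $u^{\min}_+\preceq\underline{u}_+$, so that the chain
\[
u^{\min}_+\leqslant\underline{u}_+<\varphi<\bar{u}_-\leqslant u^{\max}_-
\]
holds pointwise on $M$. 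In particular, $\varphi>u^{\min}_+$ and $\varphi<u^{\max}_-$, so both hypotheses of Proposition \ref{prop:Large-time behavior1} apply.

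The conclusion now reads off immediately: item (1) of Proposition \ref{prop:Large-time behavior1} gives
\[
\bar{u}_-=\lim_{t\to+\infty}T^-_t\varphi=u^{\max}_-,
\]
while item (2) gives
\[
\underline{u}_+=\lim_{t\to+\infty}T^+_t\varphi=u^{\min}_+,
\]
which finishes the proof. I do not anticipate any real obstacle here — once the strict sandwiching of $\varphi$ between the extremal weak KAM solutions is set up via Proposition \ref{A6}, this proposition is essentially a one-line corollary of Proposition \ref{prop:Large-time behavior1}. The only delicate point worth double-checking is the direction of monotonicity of $T^+_t$ on subsolutions, which determines that $\underline{u}_+<\varphi$ rather than the reverse inequality; this is settled by recalling that, by Proposition \ref{weak-kam-vis}, forward weak KAM solutions for $H$ correspond to backward weak KAM solutions for $\breve{H}$, under which the roles of sub- and supersolutions are swapped.
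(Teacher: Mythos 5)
Your proof is correct and follows essentially the same route as the paper: both reduce the statement to Proposition \ref{prop:Large-time behavior1} after checking that the relevant initial data lies strictly above $u^{\min}_+$ (resp.\ strictly below $u^{\max}_-$). The only difference is cosmetic: the paper rules out $\varphi(x_0)<u^{\min}_+(x_0)$ via the divergence alternative in Proposition \ref{prop:Large-time behavior1} (1) and then applies that item to the fixed point $\bar{u}_-$, whereas you obtain the strict inequality $\varphi>u^{\min}_+$ from the minimality of $u^{\min}_+$ in $(\mathcal{S}_+,\preceq)$ together with $\underline{u}_+<\varphi$, and then apply both items of Proposition \ref{prop:Large-time behavior1} directly to $\varphi$.
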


\begin{proof}
From Proposition \ref{prop:Large-time behavior1} (1), $\varphi\geqslant u^{\min}_+$, otherwise it contradicts \eqref{principle-sol1}. By strict monotonicity of the action of $T^-_t$ on $\varphi$, one must have $\bar u_-> u^{\min}_+$. We apply Proposition \ref{prop:Large-time behavior1} (1) again to deduce that
\[
\bar{u}_-=\lim_{t\to \infty}T_t^-\bar{u}_-=u_-^{\max}.
\]
In a similar way, $\underline{u}_+= u^{\min}_+$.
\end{proof}

\subsection{Structure of Ma\~{n}\'{e} set and asymptotic behavior of action minimizing orbits}
Concerning the topology and the location of the Ma\~{n}\'{e} set for our model system, we obtain
\begin{theorem}\label{mane-loc}
The limits
\begin{equation}\label{principle-sol2}
\underline{u}_-:=\lim_{t\rightarrow\infty}T^-_t\underline{u}_+,\quad\bar{u}_+:=\lim_{t\rightarrow\infty}T^+_t\bar{u}_-.
\end{equation}
exists and the convergence is uniform. The Ma\~{n}\'{e} set $\widetilde{\mathcal{N}}$ associated to the system \eqref{ch} is compact and
\begin{equation}\label{eq:mane-loc}
\widetilde{\mathcal{N}}_{\bar{u}_-}\sqcup\widetilde{\mathcal{N}}_{\underline{u}_+}\subset\widetilde{\mathcal{N}}\subset\{(x,u,p)\in J^1(M,\R):\underline{u}_-(x)\leq u\leq\bar{u}_+(x)\}.
\end{equation}
\end{theorem}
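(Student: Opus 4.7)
The plan is to prove the statement in three steps: first, construct $\underline{u}_-$ and $\bar{u}_+$ as uniform limits and verify they lie in $\mathcal{S}_\pm$; second, establish a universal comparison $v_-\geq\underline{u}_-$ and $v_+\leq\bar{u}_+$ for arbitrary weak KAM solutions; finally, assemble the two inclusions and the compactness of $\widetilde{\mathcal{N}}$ via the decomposition theorem.

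For the existence of $\underline{u}_-:=\lim_{t\to\infty}T^-_t\underline{u}_+$, I would first invoke Proposition \ref{weak-kam}(1) to see that $\underline{u}_+\prec L$ is a viscosity subsolution of \eqref{hj}; then Proposition \ref{A6} (monotonicity of $T^\pm_t$ on subsolutions) gives $T^-_t\underline{u}_+\geq\underline{u}_+$, and iterating shows the orbit $t\mapsto T^-_t\underline{u}_+$ is non-decreasing. Applying the order-preserving semigroup $T^-_t$ to $\underline{u}_+\leq\bar{u}_-$ with $T^-_t\bar{u}_-=\bar{u}_-$ yields the sandwich
\[
\underline{u}_+\leq T^-_t\underline{u}_+\leq\bar{u}_-.
\]
The equi-Lipschitz bound on $\{T^-_t\varphi\}_{t\geq 1}$ from Lemma \ref{B1-equi-Lip} applied to $\varphi=\underline{u}_+$, combined with monotone pointwise convergence, gives a uniform limit $\underline{u}_-$ by Dini's theorem. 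Continuity of $T^-_s$ in its argument and the semigroup identity then give $T^-_s\underline{u}_-=\underline{u}_-$, so $\underline{u}_-\in\mathcal{S}_-$. The symmetric argument (exchanging forward and backward semigroups) produces $\bar{u}_+\in\mathcal{S}_+$ with $\underline{u}_+\leq\bar{u}_+\leq\bar{u}_-$.

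For the universal comparison, I would argue by contradiction using Proposition \ref{prop:Large-time behavior1} together with the identifications $\underline{u}_+=u^{\min}_+$ and $\bar{u}_-=u^{\max}_-$ from Proposition \ref{max/min-sol-ty}. If some $v_-\in\mathcal{S}_-$ had $v_-(x_0)<\underline{u}_+(x_0)$ at a point, Proposition \ref{prop:Large-time behavior1}(1) applied to the initial datum $v_-$ would force $T^-_t v_-\to-\infty$, contradicting $T^-_t v_-=v_-$. Hence $v_-\geq\underline{u}_+$; applying the order-preserving map $T^-_t$ and sending $t\to+\infty$ gives $v_-=T^-_t v_-\geq T^-_t\underline{u}_+\to\underline{u}_-$. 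The dual argument via Proposition \ref{prop:Large-time behavior1}(2) yields $v_+\leq\bar{u}_+$ for every $v_+\in\mathcal{S}_+$.

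Finally, Theorem \ref{mane-nonempty-decomposition} handles both inclusions at once: the lower inclusion $\widetilde{\mathcal{N}}_{\bar{u}_-}\sqcup\widetilde{\mathcal{N}}_{\underline{u}_+}\subset\widetilde{\mathcal{N}}$ is direct from the decomposition \eqref{decom-mane}, with disjointness coming from the strict inequality $\underline{u}_+<\bar{u}_-$ of \eqref{order-toymodel}; the upper inclusion follows since any $z=(x,u,p)\in\widetilde{\mathcal{N}}$ lies in $\widetilde{\mathcal{N}}_{v_-}\cap\widetilde{\mathcal{N}}_{v_+}\subset\mathrm{J}^1_{v_-}\cap\mathrm{J}^1_{v_+}$ for some $v_\pm\in\mathcal{S}_\pm$, giving $u=v_-(x)\geq\underline{u}_-(x)$ and $u=v_+(x)\leq\bar{u}_+(x)$ by Step 2. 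Closedness of $\widetilde{\mathcal{N}}$ is already in Theorem \ref{mane-nonempty-decomposition}; for compactness it remains to bound the $p$-coordinate, which follows from a uniform Lipschitz estimate on the relevant $v_\pm\in[\underline{u}_+,\bar{u}_-]$ (a bounded family of subsolutions of \eqref{hj}, so superlinearity (H2) gives a common Lipschitz constant and hence a uniform bound on $p\in D^\ast v_\pm(x)$). I expect the main technical obstacle to be Step 1, specifically verifying the equi-Lipschitz bound on $\{T^-_t\underline{u}_+\}_t$ needed for the Dini argument; the remaining steps are a clean application of the large-time behavior of \cite{NW} together with the decomposition theorem.
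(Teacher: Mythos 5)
Your proposal is correct, and its skeleton coincides with the paper's proof: monotonicity of $T^-_t\underline{u}_+$ and $T^+_t\bar{u}_-$ on subsolutions plus the sandwich $\underline{u}_+\leqslant\cdot\leqslant\bar{u}_-$ and Dini's theorem for \eqref{principle-sol2}, and Theorem \ref{mane-nonempty-decomposition} for both inclusions in \eqref{eq:mane-loc}. The two places where you diverge are minor but genuine. For the key comparison step, the paper proves that $\underline{u}_-$ is the minimal element of $(\mathcal{S}_-,\preceq)$ by noting that for any $u_-\in\mathcal{S}_-$ the limit $\lim_{t\to\infty}T^+_t u_-$ exists, lies in $\mathcal{S}_+$ and is squeezed between $\underline{u}_+$ and $u_-$ via Proposition \ref{A6}, then applying $T^-_t$; you instead rule out $v_-(x_0)<\underline{u}_+(x_0)$ by the blow-down alternative in Proposition \ref{prop:Large-time behavior1}(1) and then apply $T^-_t$ to $v_-\geqslant\underline{u}_+$. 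Both arguments rest on the same structure results of \cite{NW} and are equally valid; yours is perhaps slightly more direct, the paper's has the small virtue of not invoking the $-\infty$ divergence statement. For compactness, the paper argues directly from the second inclusion together with $\widetilde{\mathcal{N}}\subset H^{-1}(0)$ and (H2) (zero energy plus bounded $u$ bounds $p$), whereas you bound $p\in D^\ast v_\pm(x)$ through a common Lipschitz constant for the family of weak KAM solutions trapped in $[\underline{u}_+,\bar{u}_-]$; this also works (it is the argument of \cite[Lemma 4.1]{WWY3} with uniform data), but the energy-level argument is shorter and avoids having to make the uniformity of the Lipschitz constant explicit. Your extra care in Step 1 (equi-Lipschitz bound so that the pointwise limit is continuous before invoking Dini, and the verification that $\underline{u}_-\in\mathcal{S}_-$, $\bar{u}_+\in\mathcal{S}_+$ via continuity of the semigroup) fills in details the paper leaves implicit.
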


\begin{remark}
It should be pointed out that $\underline{u}_-(x)\leq\bar{u}_+(x)$ \textbf{does not hold} throughout $M$.
\end{remark}

\begin{proof}
By Proposition \ref{A6} and the fact that $\underline{u}_+,\bar{u}_-$ are subsolutions, $T^-_t\underline{u}_+, T^+_t\bar{u}_-$  are monotone in $t$. By Proposition \ref{prop-sg} (1), we have the two-sided bounds $\underline{u}_+\leqslant T^-_t\underline{u}_+\leqslant\bar{u}_-, \underline{u}_+\leqslant T^+_t\bar{u}_-\leqslant\bar{u}_-$. Now the limits in \eqref{principle-sol2} exist pointwise and the uniform convergence is guaranteed by Dini's theorem.

\vspace{0.5em}
Observe that the first inclusion in \eqref{eq:mane-loc} is a direct consequence of Theorem \ref{mane-nonempty-decomposition} and the compactness of $\widetilde{\cN}$ is implied by the second inclusion and the fact that $\widetilde{\cN}\subset H^{-1}(0)$.

\vspace{0.5em}
For any $u_-\in\cS_-$, the limit $\lim_{t\rightarrow\infty}T^+_tu_-$ exists and belongs to $\cS_+$. By the minimality of $\underline{u}_+$ and Proposition \ref{A6}, $\underline{u}_+\leqslant\lim_{t\rightarrow\infty}T^+_tu_-\leqslant u_-$. Then $\underline{u}_-:=\lim_{t\rightarrow\infty}T^-_t\underline{u}_+\leqslant\lim_{t\rightarrow\infty}T^-_tu_-=u_-$, meaning that $\underline{u}_-$ is \textbf{the minimal element} of $(\cS_-,\preceq)$. In a similar way, $\bar{u}_+$ is \textbf{the maximal element} of $(\cS_+,\preceq)$. We complete the proof by employing Theorem \ref{mane-nonempty-decomposition} again.
\end{proof}

The next theorem is a direct application of \eqref{order-toymodel}, Proposition \ref{prop:Large-time behavior1} and our main results to the dynamics of $\Phi^t_H$ for our model system.
\begin{theorem}\label{hor}
Assume  $\varphi\in C^2(M,\R)$.
\begin{enumerate}[(1)]
%

  \item If $\min_{x\in M}\{\varphi(x)-\underline{u}_+(x)\}>0$, then there exists $z\in J^1_\varphi$ such that $\omega(z)\subset \widetilde{\mathcal{N}}_{ \bar{u}_-}$.
  \item If $\max_{x\in M}\{\varphi(x)-\bar{u}_-(x)\}<0$, then there exists $z\in J^1_\varphi$ such that $\alpha(z)\subset\widetilde{\mathcal{N}}_{\underline{u}_+}$.
  \item There exists $z\in J^1(M,\R)$ such that
       \[
       \alpha (z) \subset \widetilde{\mathcal{N}}_{\underline{u}_+},\quad\omega(z)\subset \widetilde{\mathcal{N}}_{ \bar{u}_-}.
       \]
\end{enumerate}
\end{theorem}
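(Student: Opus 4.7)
The proof plan is to verify, for each of the three items, that the hypotheses of Theorem A, the time-reversed Remark \ref{time-reverse-thm1}, or Theorem B are satisfied in the context of the model system. The key translation is provided by Proposition \ref{prop:Large-time behavior1} together with Proposition \ref{max/min-sol-ty}, which identify the limits of the semigroups $T^\pm_t$ with $\bar{u}_-=u^{\max}_-$ and $\underline{u}_+=u^{\min}_+$.

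For item (1), note that the hypothesis $\min_{x\in M}\{\varphi(x)-\underline{u}_+(x)\}>0$ means exactly $\varphi>u^{\min}_+$ on $M$. Then Proposition \ref{prop:Large-time behavior1} (1) gives uniform convergence $T^-_t\varphi\to u^{\max}_-=\bar{u}_-$, and Theorem A (A1) delivers $z\in\mathrm{J}^1_\varphi$ with $\omega(z)\subset\widetilde{\mathcal{N}}_{\bar{u}_-}$. Item (2) is symmetric: the hypothesis says $\varphi<u^{\max}_-$, so Proposition \ref{prop:Large-time behavior1} (2) yields uniform convergence $T^+_t\varphi\to u^{\min}_+=\underline{u}_+$, and the time-reversed statement in Remark \ref{time-reverse-thm1} (A1') produces $z\in\mathrm{J}^1_\varphi$ with $\alpha(z)\subset\widetilde{\mathcal{N}}_{\underline{u}_+}$.

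For item (3), the strategy is to fabricate a single continuous initial datum that simultaneously runs toward $\bar{u}_-$ under $T^-_t$ and toward $\underline{u}_+$ under $T^+_t$, so that Theorem B (B2) becomes directly applicable. By Proposition \ref{cv} the non-criticality assumption $(\blacklozenge)$ furnishes a smooth strict subsolution $\varphi_0$ of \eqref{HJs}, and \eqref{order-toymodel} guarantees $\underline{u}_+<\varphi_0<\bar{u}_-$ on $M$. In particular $\varphi_0>u^{\min}_+$ and $\varphi_0<u^{\max}_-$, so Proposition \ref{prop:Large-time behavior1} yields
\[
\lim_{t\to+\infty}T^-_t\varphi_0=\bar{u}_-,\qquad \lim_{t\to+\infty}T^+_t\varphi_0=\underline{u}_+,
\]
both uniformly on $M$. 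The ordering condition $\underline{u}_+<\bar{u}_-$ of Theorem B follows from \eqref{order-toymodel} as well. Therefore Theorem B (B2), applied to $\varphi=\varphi_0$, produces the required $z\in J^1(M,\R)$ with $\alpha(z)\subset\widetilde{\mathcal{N}}_{\underline{u}_+}$ and $\omega(z)\subset\widetilde{\mathcal{N}}_{\bar{u}_-}$.

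There is essentially no obstacle beyond bookkeeping: the entire argument consists of verifying hypotheses and invoking the results already proven. The only subtle point worth flagging is that items (1) and (2) require the \emph{uniform} convergence of $T^\mp_t\varphi$ (as stated in Proposition \ref{prop:Large-time behavior1}) so that Theorem A and its time-reversed counterpart apply as stated, while item (3) relies on having the strict two-sided bound $\underline{u}_+<\varphi_0<\bar{u}_-$ supplied by Proposition \ref{cv} and \eqref{order-toymodel}; without $(\blacklozenge)$ we could not guarantee the existence of such a $\varphi_0$.
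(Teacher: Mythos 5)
Your proposal is correct and is exactly the argument the paper intends: the paper gives no separate proof of Theorem \ref{hor}, stating only that it is ``a direct application of \eqref{order-toymodel}, Proposition \ref{prop:Large-time behavior1} and our main results,'' and your verification of the hypotheses of (A1), (A1'), and (B2) via Proposition \ref{max/min-sol-ty} and the strict subsolution from Proposition \ref{cv} fills in precisely those details.
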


In fact, if we focus on the action minimizing orbits, then there is a complete classification of limit sets depending only on the front projection of the initial point.
\begin{TheoremX}\label{thm3:dy-minimizer}
For each $z_0=(x_0,u_0,p_0)\in\widetilde{\cG}$, set $\Phi^t_H(z_0)=(x(t),u(t),p(t))$, then
\begin{itemize}
  \item[(1)] If  $u_0\in(\bar{u}_-(x_0),+\infty)$, then $\omega (z_0)\subset\widetilde{\mathcal{N}}_{ \bar{u}_-}$ and $\lim_{t\rightarrow-\infty}u(t)=+\infty$;
  \item[(2)] If  $u_0=\bar{u}_-(x_0)$, then $\alpha(z_0)\cup\omega(z_0)\subset\widetilde{\mathcal{N}}_{\bar{u}_-}$;
  \item[(3)] If  $u_0\in(\underline{u}_+(x_0),\bar{u}_-(x_0))$, then $\alpha(z_0)\subset\widetilde{\mathcal{N}}_{\underline{u}_+}, \omega (z_0)\subset\widetilde{\mathcal{N}}_{ \bar{u}_-}$;
  \item[(4)] If  $u_0=\underline{u}_+(x_0)$, then $\alpha(z_0)\cup\omega(z_0)\subset\widetilde{\mathcal{N}}_{\underline{u}_+}$;
  \item[(5)] If  $u_0\in(-\infty,\underline{u}_+(x_0))$, then $\alpha(z_0)\in\widetilde{\mathcal{N}}_{\underline{u}_+}$ and $\lim_{t\rightarrow+\infty}u(t)=-\infty$.
\end{itemize}
\end{TheoremX}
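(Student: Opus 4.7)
The plan is to reduce each of the five cases to three common tools: a comparison principle between the $u$-coordinate of a global minimizer and any weak KAM solution; the irreversibility Proposition~\ref{prop-B2} specialised to $v_+ = \underline u_+$, $u_- = \bar u_-$ (whose hypotheses follow from \eqref{order-toymodel}); and the Ma\~n\'e-slice identification via Lemma~\ref{proof-A2-p} and Corollary~\ref{inv-graph}. The preliminary step is to record, for any $z_0 \in \widetilde{\cG}$ with orbit $(x(t), u(t), p(t))$, the forward identity $u(b) = h_{x(a), u(a)}(x(b), b-a)$ and the dual $u(a) = h^{x(b), u(b)}(x(a), b-a)$ (both from Corollary~\ref{Minimality}, since a global minimizer realises the infimum in $h$ and the supremum in $h^{\cdot, \cdot}$). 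Coupled with the strict monotonicity of $h_{x_0, \cdot}$ and $h^{x_0, \cdot}$ in the second variable (Gronwall, using the Lipschitz-in-$u$ condition on $L$) and the fixed-point relations $T^\pm_t u_\pm = u_\pm$, these yield the one-sided comparisons: if $u_- \in \cS_-$ and $u(t_0) > u_-(x(t_0))$, then $u(t) > u_-(x(t))$ for all $t \geq t_0$; dually, if $u_+ \in \cS_+$ and $u(t_0) < u_+(x(t_0))$, then $u(t) < u_+(x(t))$ for all $t \leq t_0$.

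The centerpiece is case~(3). Applied with $\bar u_-$ and $\underline u_+$, the comparisons give $\underline u_+(x(t)) < u(t) < \bar u_-(x(t))$ on the respective half-lines $t \geq 0$ and $t \leq 0$, and symmetric applications with the extreme solutions $\bar u_+$ and $\underline u_-$ from Theorem~\ref{mane-loc} keep the orbit in a compact subset of $J^1(M, \R)$, so $\alpha(z_0), \omega(z_0) \neq \emptyset$. Proposition~\ref{prop-B2} then gives the one-sided pinnings $\omega(z_0) \subset \{u \geq \bar u_-(x)\}$ and $\alpha(z_0) \subset \{u \leq \underline u_+(x)\}$. To upgrade these to equalities I would argue by contradiction: if some $\bar z \in \omega(z_0)$ had $\bar u > \bar u_-(\bar x)$, the orbit through $\bar z$ would be a bounded global minimizer (Lemma~\ref{prop-act-minimizer}~(4)) contained in $\omega(z_0)$ and satisfying $\bar u(t) > \bar u_-(\bar x(t))$ for all $t \geq 0$; tracing its $\alpha$- and $\omega$-limits with Proposition~\ref{prop-B2} and using $\underline u_+ < \bar u_-$ produces a chain of connecting orbits that violates the irreversibility statement~(B1) of Theorem~B. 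Hence $\bar u = \bar u_-(\bar x)$ throughout $\omega(z_0)$; Lemma~\ref{proof-A2-p} then promotes $\bar p \in D^\ast \bar u_-(\bar x)$, and $\Phi^t_H$-invariance yields $\omega(z_0) \subset \widetilde{\cN}_{\bar u_-}$. The symmetric argument gives $\alpha(z_0) \subset \widetilde{\cN}_{\underline u_+}$.

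Cases~(2) and~(4) follow by combining the one-sided comparison (now non-strict at $t=0$) with the upgrade argument above on one half-line, and with Corollary~\ref{inv-graph} on the other: $u_0 = \bar u_-(x_0)$ together with the global minimization property forces $p_0 \in D^\ast \bar u_-(x_0)$ (so $z_0 \in \mathrm J^1_{\bar u_-}$), and backward invariance of $\mathrm J^1_{\bar u_-}$ keeps the orbit on that pseudograph for all $t \leq 0$. The $\omega$- (resp.\ $\alpha$-)limit parts of case~(1) (resp.\ case~(5)) follow from the same argument applied to $u(t) > \bar u_-(x(t))$ (resp.\ $u(t) < \underline u_+(x(t))$) for $t \geq 0$ (resp.\ $t \leq 0$). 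The divergence $u(t) \to +\infty$ as $t \to -\infty$ in case~(1) (and symmetrically $u(t) \to -\infty$ as $t \to +\infty$ in case~(5)) is obtained by contradiction: a bounded past trajectory would produce $\alpha(z_0) \neq \emptyset$, whose points lie on bounded global minimizers contained in $\widetilde{\cN} \subset \{\underline u_- \leq u \leq \bar u_+\}$ by Theorems~\ref{mane-nonempty-decomposition} and~\ref{mane-loc}; propagating any such minimizer forward would yield a heteroclinic from some $\widetilde{\cN}_{u_-^\ast}$ into $\widetilde{\cN}_{\bar u_-}$ passing strictly above $\bar u_-$, contradicting the iterated form of~(B1).

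The principal obstacle is the upgrade from the one-sided inclusion $\omega(z_0) \subset \{u \geq \bar u_-(x)\}$ to the equality $\{u = \bar u_-(x)\}$: the comparison principle only propagates strict inequalities in one time direction for each sign of weak KAM solution, so the missing bound must come from the irreversibility~(B1) chained with the strict ordering $\underline u_+ < \bar u_-$. The same chaining mechanism drives the divergence proofs in cases~(1) and~(5), where ruling out bounded past/future trajectories amounts to excluding hypothetical action-minimizing chains that traverse the gap between $\widetilde{\cN}_{\underline u_+}$ and $\widetilde{\cN}_{\bar u_-}$ in the forbidden direction.
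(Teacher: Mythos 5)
Your comparison lemmas are correct (being strictly above a backward weak KAM solution propagates forward along a global minimizer, being strictly below a forward one propagates backward), and the reduction of all five cases to a trichotomy in $u_0$ relative to $\underline{u}_+(x_0)$ is in the spirit of the paper. But the three steps you lean on to close the argument all have genuine gaps. First, non-emptiness of $\omega(z_0)$ in cases (1)--(3) is not established: your comparisons only bound $u(t)$ from \emph{below} for $t\geqslant 0$ (by $\underline{u}_+$) and from \emph{above} for $t\leqslant 0$ (by $\bar{u}_-$), and the ``symmetric applications with $\bar{u}_+$ and $\underline{u}_-$'' do not produce a forward upper bound, because to propagate $u<\bar{u}_+(x)$ backward you would need to know it at arbitrarily large times, which is exactly what is in question. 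Second, the upgrade from $\omega(z_0)\subset\{u\geqslant\bar{u}_-(x)\}$ to equality via a chain violating (B1) does not work: a point $\bar{z}\in\omega(z_0)$ with $\bar{u}>\bar{u}_-(\bar{x})$ satisfies the hypothesis of the \emph{first} part of Proposition \ref{prop-B2} but not the second (it is above $\bar u_-$, not below), so you get no control on $\alpha(\bar z)$, and any chain you can assemble runs from $\widetilde{\cN}_{\underline{u}_+}$ upward --- the direction (B1) permits. Third, in the divergence arguments for (1) and (5) you place $\alpha(z_0)$ inside $\widetilde{\cN}\subset\{\underline{u}_-\leqslant u\leqslant\bar{u}_+\}$, but limit points of global minimizers are only global minimizers, not semi-static orbits; the paper's Section 6.3 is precisely devoted to showing $\widetilde{\cG}\supsetneq\widetilde{\cN}$ can occur, so Theorem \ref{mane-loc} gives no a priori localization of $\alpha(z_0)$.

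The ingredient you are missing, and which the paper uses as the engine of the whole proof, is the large-time behavior result for the model system (Proposition \ref{prop:Large-time behavior1}, from \cite{NW}) combined with the identity $u(t+s)=T^-_t\bigl[h_{x_0,u_0}(\cdot,s)\bigr](x(t+s))$ coming from Proposition \ref{prop-sg} (5). If $u_0>\underline{u}_+(x_0)$ one checks $h_{x_0,u_0}(\cdot,t)>\underline{u}_+$ for large $t$ (via the comparison function $\psi=\underline{u}_++u_0-\underline{u}_+(x_0)$), so $T^-_t h_{x_0,u_0}(\cdot,s)\to\bar{u}_-$ \emph{uniformly}; this simultaneously gives forward boundedness, the exact asymptotics $|u(t)-\bar{u}_-(x(t))|\to 0$, and then $\omega(z_0)\subset\widetilde{\cN}_{\bar{u}_-}$ by Proposition \ref{omega-mane}. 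If $u_0<\underline{u}_+(x_0)$ the same dichotomy forces $T^-_t h_{x_0,u_0}(\cdot,s_0)\to-\infty$, hence $u(t)\to-\infty$; and the remaining case $u(t)\equiv\underline{u}_+(x(t))$ is handled by Lemma \ref{proof-A2-p}. Your purely soft route through Proposition \ref{prop-B2} and (B1) cannot substitute for this convergence input: it yields only one-sided inequalities on limit sets whose non-emptiness it cannot guarantee. Note also that in cases (2) and (4) the inference ``$u_0=\bar{u}_-(x_0)$ forces $p_0\in D^\ast\bar{u}_-(x_0)$'' requires first knowing $u(t)=\bar{u}_-(x(t))$ on a whole time interval (the hypothesis of Lemma \ref{proof-A2-p}), which again only follows from excluding the other two cases at every time along the orbit.
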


\vspace{0.1em}
\begin{proof}
We shall only concern the asymptotic behavior of $\Phi^t_H(z_0)$ when $t$ goes to $+\infty$. The proof for the other time direction is completely similar.

\vspace{0.5em}
\textit{Case $(i)$: $u_0>\underline{u}_+(x_0)$}. Notice that since $z_0\in\widetilde{\cG}$, for any $s>0$,
\[
u(t+s)=h_{x_0,u_0}(x(t+s),t+s)=T_t^-h_{x_0,u_0}(x(t+s),s),\quad \forall t\in[0,+\infty),
\]
where the second equality follows from Proposition \ref{prop-sg} (5). By the above equation and Proposition \ref{omega-mane} with $\varphi=h_{x_0,u_0}(\cdot,s)$, we conclude that if
\begin{equation}\label{eq1:proof-C}
\lim_{t\rightarrow+\infty}h_{x_0,u_0}(x,t+s)=\lim_{t\rightarrow+\infty}T^-_t h_{x_0,u_0}(x,s)=\bar{u}_-,
\end{equation}
then $\omega(z_0)\in\widetilde{\cN}_{\bar{u}_-}$. Now we show that \eqref{eq1:proof-C} holds: set
$$
\psi(x):=\underline{u}_+(x)+u_0-\underline{u}_+(x_0) >\underline{u}_+(x), \quad \forall x\in M
$$
Applying Proposition \ref{prop:Large-time behavior1} and \ref{max/min-sol-ty}, $T_t^- \psi$ converges to $\bar u_-$ uniformly on $M$ as $t\to +\infty$. Thus, $T_t^-\psi>\underline{u}_+$ when $t$ is large enough and for such a $t>0$,
$$
\underline{u}_+(x)<T_t^-\psi(x)=\inf_{y\in M} h_{y,\psi(y)}(x,t)\leqslant h_{x_0,\psi(x_0)}(x,t)=h_{x_0,u_0}(x,t),
$$
implying \eqref{eq1:proof-C}. Since the initial point $z_0$ can be chosen arbitrarily on the orbit $\Phi^t_H(z_0)$, we also deduce that in this case, \begin{equation}\label{case1}
u(t)>\underline{u}_+(x(t))\quad\textbf{for any}\quad t\in\R.
\end{equation}
	
\vspace{0.5em}
\textit{Case $(ii): u_0<\underline{u}_+(x_0)$}. We show that there exists $s_0>0,x\in M$ such that
\[
h_{x_0,u_0}(x,s_0)<\underline{u}_+(x).
\]
Fixing $x_1\in M$, let $\gamma:[0,1]\to M$ be the minimizer of $h_{x_0,u_0}(x_1,1)$, then by Remark \ref{minimizer-orbit},
$$
\lim_{s\to 0^+}[h_{x_0,u_0}(\gamma(s),s)-\underline{u}_+(\gamma(s))]=u_0-\underline{u}_+(x_0)<0.
$$
Thus a sufficient small $s_0\in(0,1]$ and $x=\gamma(s_0)$ shall do the job. Choosing $\varphi=h_{x_0,u_0}(\cdot,s_0)$ and applying Proposition \ref{prop:Large-time behavior1} and \ref{max/min-sol-ty}, $T_t^-\varphi$ tends to $-\infty$ uniformly on $M$ as $t\to +\infty$. Thus
\[
\lim_{t\rightarrow+\infty}u(t)=\lim_{t\rightarrow+\infty}T^-_t\varphi(x(t))=-\infty.
\]
Since the initial point $z_0$ can be chosen arbitrarily on the orbit $\Phi^t_H(z_0)$, we also deduce that in this case,
\begin{equation}\label{case2}
u(t)<\underline{u}_+(x(t))\quad\textbf{for any}\quad t\in\R.
\end{equation}

\vspace{0.5em}
\textit{Case $(iii)$}: Excluding \eqref{case1} and \eqref{case2}, we have $u(t)=\underline{u}_+(x(t)), t\in\R$. Equivalently,
\[
\pi\,\{\Phi^t_H(z_0):t\in\R\}\subset J^0_{\underline{u}_+}.
\]
We invoke Lemma \ref{proof-A2-p} and Remark \ref{proof-A2-p'} to conclude that $z(t)\in J^1_{\underline{u}_+}$ for any $t\in\R$. Then $\omega(z_0)$ is a non-empty subset of $\cN_{\underline{u}_+}$. This completes the proof.
\end{proof}

\subsection{Heteroclinic orbits between two Ma\~{n}\'{e} slices with non-zero energy}
As the last part of this section, we show that heteroclinic orbits constructed in (B2) can lies outside $H^{-1}(0)$ by analysing the
\begin{example}\cite[Example 3.1]{JYZ}\label{non-zero energy}
Consider the contact Hamiltonian
\begin{equation}\label{eq:ex1}
H(x,u,p)=p^2+ \sin x\cdot u-\frac{1}{4} , \quad x\in\mathbb{S}=[-\pi,\pi].
\end{equation}
Then $H$ belongs to the category of model Hamiltonian \eqref{model} and satisfies ($\clubsuit$). According to \cite[Proposition 3.2]{JYZ}, $c(H)<0$. In \cite[Lemma 3.11, Theorem 3.14]{JYZ}, we showed that the corresponding Hamilton-Jacobi equation admits \textbf{exactly} two solutions $\underline{u}_-\leqslant\bar{u}_-$ and $\bar{u}_-$ is \textbf{everywhere positive} on $\mathbb{S}$.
\end{example}
Setting $\mathcal{Z}_0=H^{-1}(0)$ to be the two dimensional energy shell, it is easy to check that
\begin{itemize}
  \item $\cN_{\underline{u}_+}=\{\underline{z}=  (-\frac{\pi}{2},-\frac{1}{4},0) \} , \cN_{\bar{u}_-}=\{\bar{z}=    (\frac{\pi}{2},\frac{1}{4},0)\}$ are two fixed point of $\Phi^t_H$ on $\mathcal{Z}_0$. Moreover, linearization of $\Phi^t_H|_{\mathcal{Z}_0}$, the restriction of $\Phi^t_H$ onto $\mathcal{Z}_0$, at $\underline{z},\bar{z}$ shows that both fixed points are hyperbolic and admit one dimensional unstable (and stable) manifolds on $\mathcal{Z}_0$.

  \item the continuous function
        $$
        \varphi(x):= \begin{cases}
	    \frac{1}{2}\sin x+ \frac{1}{4} , \quad & x\in [-\pi,0],\\
	    \frac{1}{4}, \quad & x\in [0,\pi].
        \end{cases}
        $$
        is a subsolution of \eqref{eq:ex1} with $\varphi(\frac{\pi}{2})=\frac{1}{4} $ and $\varphi(-\frac{\pi}{2})=-\frac{1}{4}$. From Proposition \ref{prop-sg} (2), $T_t^-\varphi\geqslant\varphi$ and for any $t\geqslant1,
        T_t^- \varphi(x)\leqslant h_{-\frac{\pi}{2},-\frac{1}{4}}(x,t)\leqslant h_{-\frac{\pi}{2},h_{-\frac{\pi}{2},-\frac{1}{4}}(-\frac{\pi}{2},t-1)}(x,1)\leqslant h_{-\frac{\pi}{2},-\frac{1}{4}}(x,1)$. In particular, $T_t^- \varphi(-\frac{\pi}{2})\leqslant h_{-\frac{\pi}{2},-\frac{1}{4}}(-\frac{\pi}{2},1)\leqslant-\frac{1}{4}<0$. This implies that $\lim_{t\to +\infty} T_t^- \varphi =\underline{u}_-\in\mathcal{S}^-$.
\end{itemize}
By (B2) or Theorem \ref{hor}, there is $z_0\in J^1(M,\R)$ such that $\alpha(z_0)=\{\underline{z}\},\omega(z_0)=\{\bar{z}\}$. Thus the goal of this section is reduced to prove

\vspace{0.5em} 
\textit{Claim:} Any heteroclinic orbit connecting $\underline{z}$ to $\bar{z}$ can not lie on $\mathcal{Z}_0$.

\vspace{0.5em}
We argue by contradiction. Assume there is a heteroclinic orbit $z(t)=(x(t),u(t),p(t)):\R\rightarrow\mathcal{Z}_0$ with $\lim_{t\rightarrow-\infty}z(t)=\underline{z}, \lim_{t\rightarrow+\infty}z(t)=\bar{z}$. By definition, $z(t)$ is contained in $W^{u}(\underline{z})$, the unstable manifold of $\underline{z}$. Due to the the fact that, \textbf{near $\underline{z}$}, $W^u(\underline{z})$ is a graph over $\rho$, we use $J^0_{\phi_0}$ to denote $\pi(W^u(\underline{z}))$ within a neighborhood of $(-\frac{\pi}{2},-\frac{1}{4})$, where $\phi_0:[-\frac{\pi}{2}-\sigma_0, -\frac{\pi}{2}+\sigma_0]\rightarrow\R$ is a continuous function and $\sigma_0>0$. Without loss of generality, we assume $p(t)>0$ for $t$ small enough (choosing one branch of unstable manifold). Then \eqref{ch} implies
\begin{equation}\label{eq:zero-energy}
\dot{x}(t)=p(t)\quad \dot{u}(t)=2|p(t)|^2,\quad\quad t\in\R
\end{equation}
from which we deduce that $\frac{1}{4}-\sin x(t)\cdot u(t)>0$ if $x(t)\leqslant0$. The fact $z(t)\subset\mathcal{Z}_0$ translates into
\[
|p(t)|^2=\frac{1}{4}-\sin x(t)\cdot u(t),
\]
showing that $p(t)>0$ if $x(t)\leqslant0$. Combining the above discussions,
\begin{enumerate}[(i)]
  \item $x(t),u(t)$ is strictly increasing if $x(t)\leqslant0$. Equivalently, $\sigma_0>\frac{\pi}{2}$ and $\phi_0|_{[-\frac{\pi}{2},0]}$ is strictly increasing.

  \item there is $\mathcal{T}\in\R$ such that $x(\mathcal{T})=0$ and $x(t)\in(-\frac{\pi}{2},0)$ for $t\in(-\infty,\mathcal{T})$. As a result, \begin{equation}\label{non-zero-proof}
      u(\mathcal{T})=\phi_0(x(\mathcal{T})).
      \end{equation}
\end{enumerate}
We showed in \cite[Theorem 3.14]{JYZ} that $\underline{u}_-(0)\leqslant\phi_0(0)$ and it follows from \eqref{non-zero-proof} that
$$
u(\mathcal{T})=\phi_0(0)\geqslant u_-(0)\geqslant\varphi(0)=\frac{1}{4}.
$$
Due to \eqref{eq:zero-energy} and the fact that $p(\mathcal{T})=\frac{1}{2}>0$, we conclude that $u(t)>\frac{1}{4}$ and is increasing for $t\in(\mathcal{T},+\infty)$, contradicting the equation $\lim_{t\rightarrow+\infty}u(t)=\frac{1}{4}$.

\begin{remark}
	With the aid of numerical method, we depict two orbits  to \eqref{ch} below in Figure \ref{fig1}. The green surface denote  the  two dimensional null energy surface $\mathcal{Z}_0$.
	The red orbit denotes the  heteroclinic orbit between two hyperbolic fixed points  $\underline{z} $ , $ \bar{z}$.   
		The blue orbit denotes the one dimensional unstable   manifolds $W^u(\underline{z})$ on $\mathcal{Z}_0$.
	The numerical results fit well into our analysis, especially,  (B2) and Example \ref{non-zero energy}.
\end{remark}
\begin{figure}[h]
\begin{center}
\includegraphics[width=13cm]{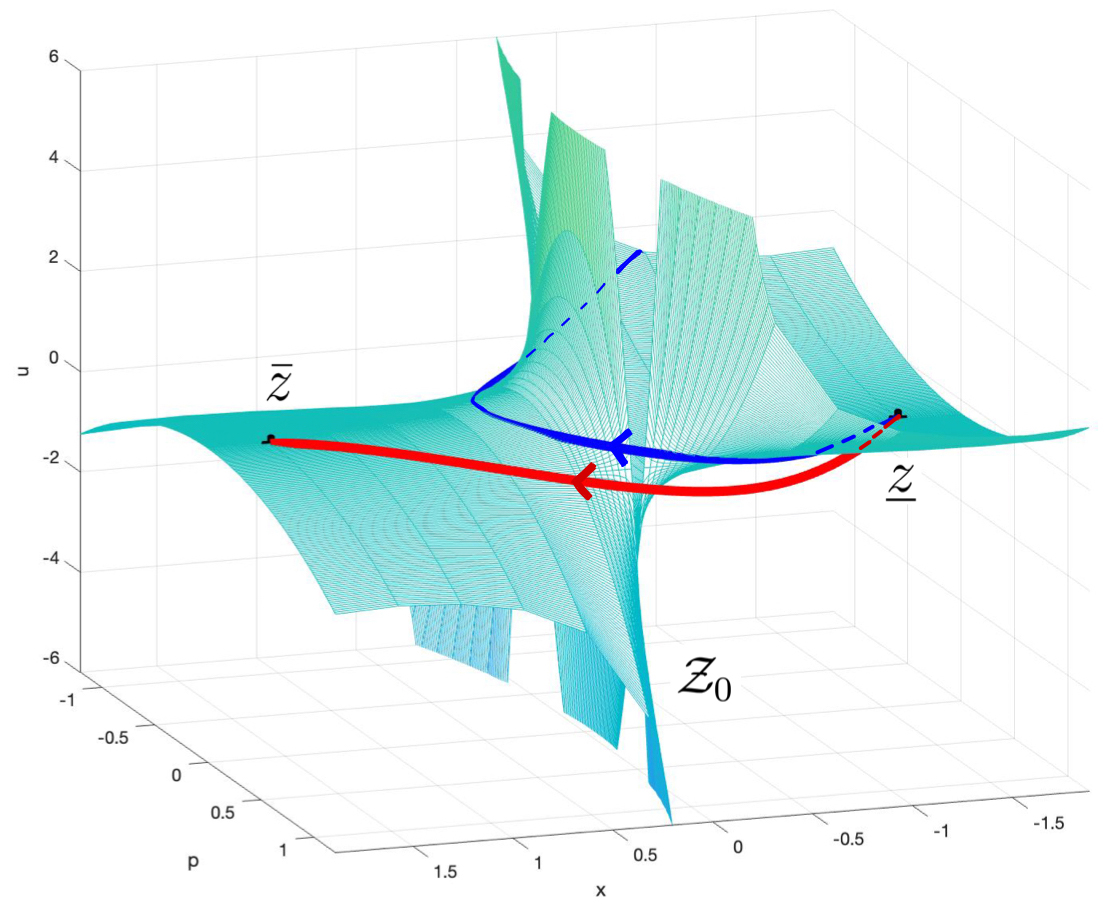}
\label{fig1}
\end{center}
\end{figure}
 
\begin{remark}
In Section 5, we have shown that the heteroclinic orbits found by (B2) is action minimizing. However, by the above discussion, these orbits can avoid the null energy level, and thus is not semi-static $($because due to Theorem \ref{mane-nonempty-decomposition}, any semi-static orbit is part of $\widetilde{\mathcal{N}}_{v_+}$ or $\widetilde{\mathcal{N}}_{u_-}$, thus lies on $H^{-1}(0))$.  This fact shows for autonomous contact Hamiltonian systems, the set of action minimizing orbits is in general strictly larger than the set of semi-static orbits. This is different from autonomous Hamiltonian systems, see \cite[Section 5]{Ber} for details.
\end{remark}

\section{Appendix}
The Appendix is divided into two parts: the first part is a completion of the properties of useful tools, namely action functions and solution semigroups, developed in \cite{WWY1}-\cite{WWY3} on the action minimizing method for contact Hamiltonian systems; the second includes a considerably self-contained proof of the decomposition theorem of semi-static curves with emphasis on the construction of certain weak KAM solutions that calibrating the specified semi-static curve.

\subsection{Properties of action functions and solution semigroups}
We collect fundamental properties of the action functions and solution semigroups that are frequently used but not presented in the main body of this article. For details and proofs of these properties, we encourage the reader to consult the original paper \cite{WWY2}.

\begin{proposition}{\cite{WWY2}}\label{fundamental-prop}
The backward and forward action functions satisfy
\begin{enumerate}
    \item[(1)]\textbf{($u_0$-monotonicity)} Given $x_0\in M, u_1<u_2\in\R$,\,\,for all $(x,t)\in M\times(0,\infty)$,
    \[
	h_{x_0,u_1}(x,t)< h_{x_0,u_2}(x,t),\quad h^{x_0,u_1}(x,t)< h^{x_0,u_2}(x,t).
    \]

	\item[(2)]\textbf{(Markov property)} Given $(x_0,u_0)\in M\times\R$,\,\,for all $t,\tau>0$ and $  x\in M$,
	\begin{equation}\label{markov}
    \begin{split}
	&h_{x_0,u_0}(x, t+\tau)=\inf_{x_1\in M}h_{x_1,h_{x_0,u_0}(x_1,t)}(  x,\tau),\\
    &h^{x_0,u_0}(x, t+\tau)=\sup_{x_1\in M}h^{x_1,h^{x_0,u_0}(x_1,t)}(  x,\tau).
    \end{split}
	\end{equation}
	Moreover, the infimum is attained at $x_1$ if and only if there exists a $C^1$ minimizer $\gamma$ of $h_{x_0,u_0}(x,t+\tau)$ with $ \gamma(t)=x_1$, the supremum is attained at $x_1$ if and only if there exists a $C^1$ minimizer $\gamma$ of $h^{x_0,u_0}(x,t+\tau)$ with $ \gamma(t)=x_1$.
	
    \item[(3)]\textbf{(Reversibility)} Given $x_0, x\in M$ and $t>0$,
        \[
        u=h_{x_0,u_0}(x,t)\quad\text{if and only if}\quad u_0=h^{x,u}(x_0,t).
        \]

	\item[(4)]\textbf{(Lipschitz continuity)} The functions
    \[
    (x_0,u_0,x,t)\mapsto h_{x_0,u_0}(x,t),\quad (x_0,u_0,x,t)\mapsto h^{x_0,u_0}(x,t)
    \]
    are locally Lipschitz continuous on the domain $M\times\R\times M\times(0,+\infty)$.
\end{enumerate}
\end{proposition}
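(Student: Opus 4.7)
The plan is to work directly from the variational definition given in Proposition~\ref{Implicit variational}, exploiting the existence of Lipschitz minimizers together with the Lipschitz dependence of $L$ in $u$ inherited from (H3). For each item I would treat only the backward case, since the forward one follows symmetrically or via the involution $H\mapsto\breve H(x,u,p)=H(x,-u,-p)$ combined with the reversibility (3).

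For the strict $u_0$-monotonicity in (1), let $u_1<u_2$ and let $\gamma_2:[0,t]\to M$ be a minimizer of $h_{x_0,u_2}(x,t)$, so that $u_2(s):=h_{x_0,u_2}(\gamma_2(s),s)$ satisfies the ODE $\dot u_2(s)=L(\gamma_2(s),u_2(s),\dot\gamma_2(s))$ with $u_2(0)=u_2$. Along the same curve $\gamma_2$, define $u_1(s)$ by the analogous ODE with initial value $u_1$; by (H3) and a Gr\"onwall-type estimate on $\Delta(s)=u_2(s)-u_1(s)$, one obtains $\Delta(s)\geqslant (u_2-u_1)e^{-Ls}>0$. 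Since $\gamma_2$ is only a competitor (not necessarily a minimizer) for $h_{x_0,u_1}(x,t)$, the infimum definition yields $h_{x_0,u_1}(x,t)\leqslant u_1(t)<u_2(t)=h_{x_0,u_2}(x,t)$, which is the desired strict inequality.

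The Markov property in (2) is the usual dynamic programming argument: splicing a curve from $x_0$ to $x_1$ on $[0,t]$ with one from $x_1$ to $x$ on $[t,t+\tau]$ provides a competitor for $h_{x_0,u_0}(x,t+\tau)$, yielding ``$\leqslant$'', while any minimizer of $h_{x_0,u_0}(x,t+\tau)$ restricts to minimizers on $[0,t]$ and $[t,t+\tau]$, giving the reverse inequality. The ``attained at $x_1$ iff there is a $C^1$ minimizer passing through $x_1$'' characterization is handled by the fact (Remark~\ref{minimizer-orbit}) that any minimizer lifts to a $\Phi^t_H$-orbit and is thus smooth in the interior; a concatenation with a corner at $x_1$ could be strictly improved by rounding the corner, contradicting minimality. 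For reversibility (3), one reads the orbit $(x(s),u(s),p(s))$ associated with the minimizer backwards: the time-reversed curve becomes a maximizer realizing $h^{x,u}(x_0,t)$, and the equivalence follows from the very definition of the forward action function.

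For the Lipschitz continuity (4), I would first derive uniform a priori bounds on minimizers when $(x_0,u_0,x,t)$ lies in a compact subset of $M\times\R\times M\times(0,+\infty)$: fiberwise superlinearity (H2) of $H$ translates into an upper bound on $\|\dot\gamma\|$ along minimizers, while (H3) and Gr\"onwall control the $u$-component of the lifted orbit. This confines all relevant minimizers to a fixed compact region of the phase space on which $L$ is smooth, after which a standard perturbation argument (comparing a minimizer for one set of parameters against a slightly reparameterized competitor for nearby parameters) produces the Lipschitz bound. The main obstacle throughout is the circularity built into the implicit definition of $h_{x_0,u_0}$, but at each step this is bypassed by passing to the genuine (non-implicit) ODE $\dot u(s)=L(\gamma(s),u(s),\dot\gamma(s))$ along a fixed curve $\gamma$, whose well-posedness is guaranteed by (H3).
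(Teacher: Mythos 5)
This proposition is quoted verbatim from \cite{WWY2}; the paper you are working from gives no proof and explicitly refers the reader to that reference, so there is no in-paper argument to compare against. Measured against the original proofs in \cite{WWY1}--\cite{WWY2}, your overall strategy (compare along a minimizer of the larger datum, Gr\"onwall via (H3), dynamic programming for the Markov property, a priori compactness of minimizers for the Lipschitz estimates) is the right one, but item (1) contains a genuine gap. You claim that the circularity of the implicit definition is ``bypassed by passing to the genuine ODE $\dot u(s)=L(\gamma(s),u(s),\dot\gamma(s))$ along a fixed curve $\gamma$,'' and you then assert $h_{x_0,u_1}(x,t)\leqslant u_1(t)$, where $u_1(\cdot)$ is the Carath\'eodory solution of that ODE along $\gamma_2$ with $u_1(0)=u_1$. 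That inequality does not follow from the implicit definition \eqref{eq:Implicit variational}: there the ``action'' of the competitor $\gamma_2$ with datum $u_1$ is $u_1+\int_0^t L(\gamma_2(s),h_{x_0,u_1}(\gamma_2(s),s),\dot\gamma_2(s))\,ds$, whose integrand involves the function $h_{x_0,u_1}$ itself along $\gamma_2$, not your ODE solution $u_1(s)$; the two agree only when $\gamma_2$ is a minimizer for the datum $u_1$, which is exactly what you cannot assume. The identity between the implicit infimum and the infimum of the Herglotz (Carath\'eodory) functional over all Lipschitz curves is a nontrivial theorem (this is the content of the Herglotz reformulation in \cite{CCWY},\cite{CCJWY}); you must either invoke it explicitly or instead run the Gr\"onwall argument on $G(\sigma)=h_{x_0,u_2}(\gamma_2(\sigma),\sigma)-h_{x_0,u_1}(\gamma_2(\sigma),\sigma)$, using the equality for $u_2$ and the inequality for $u_1$ coming directly from the implicit definition (which in turn requires first establishing the non-strict monotonicity $G\geqslant 0$ so the absolute value in the Gr\"onwall inequality can be dropped).

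Two smaller points. In (3), ``reading the orbit backwards'' only shows that the reversed minimizer is a competitor, hence $h^{x,u}(x_0,t)\geqslant u_0$; to get equality (and the ``only if'' direction) you need to combine Corollary \ref{Minimality} with the strict $u_0$-monotonicity of (1): if $h^{x,u}(x_0,t)=u_0'>u_0$ there would be an orbit from $(x_0,u_0')$ reaching $(x,u)$ at time $t$, forcing $h_{x_0,u_0'}(x,t)\leqslant u=h_{x_0,u_0}(x,t)$ and contradicting strict monotonicity. In (2), the splicing argument for ``$\leqslant$'' again requires relating the datum $h_{x_0,u_0}(x_1,t)$ at the splice point to the implicit integrand on $[0,t]$, which is Lemma \ref{sol-action}-type reasoning rather than the unmodified classical dynamic programming identity; your corner-rounding remark is fine once minimizers are known to lift to $C^1$ characteristics. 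Item (4) as sketched is standard and unproblematic.
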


\begin{proposition}\label{prop-sg}\cite[Proposition 4.3]{WWY2}
Two families of operator $\{T^{\pm}_t\}_{t\geqslant0}$ defined above satisfy
\begin{enumerate}
    \item[(1)]\textbf{($\varphi$-monotonicity)} For initial data $\varphi,\psi\in C(M,\R)$ with $\varphi<\psi$ (resp. $\varphi\leqslant\psi$) on $M$, then for all $x\in M$,
        \begin{equation}\label{mono-sg}
	    T^{\pm}_t\varphi(x)<T^{\pm}_t\psi(x),\quad\text{resp.}\,\,(T^{\pm}_t\varphi(x)\leqslant T^{\pm}_t\psi(x)).
        \end{equation}

	\item[(2)]\textbf{($t$-monotonicity)} Assume $v\in C(M,\R)$ is a subsolution (resp. strict subsolution)  to \eqref{hj}. Then for any $x\in M, t\geqslant0$ (resp. $t>0$),
        \begin{equation}\label{mono-t}
        v(x)\leqslant\,\,(\text{resp}. <)\,\,T^{-}_t v(x),\quad v(x)\geqslant\,\,(\text{resp}. >)\,\,T^{+}_t v(x).
	    \end{equation}

	\item[(3)]\textbf{(Continuity 1)} For any $(x,t)\in M\times(0,+\infty)$, the functions
     \[
     (x,t)\mapsto T^{\pm}_t\varphi(x),
     \]
     are locally Lipschitz continuous and $\lim_{t\rightarrow0^+}T^{\pm}_t\varphi(x)=\varphi(x)$ for all $x\in M$.

    \item[(4)]\textbf{(Continuity 2)} For any $\varphi,\psi\in C(M,\R)$ and $t\geqslant0$,
     \[
     \|T^{\pm}_t\varphi-T^{\pm}_t\psi\|_\infty\leqslant e^{\lambda t}\|\varphi-\psi\|_\infty.
     \]
     As a result, the map $v\mapsto T^{\pm}_t v$ are continuous with respect to $\|\cdot\|_\infty$.

    \item[(5)]\textbf{(Relation with action functions)} Given $(x_0,u_0)\in J^0(M,\R)$, for any $s,t>0$ and $x\in M$,
     \[
     T^{-}_t h_{x_0,u_0}(x,s)=h_{x_0,u_0}(x,s+t),\quad T^{+}_t h^{x_0,u_0}(x,s)=h^{x_0,u_0}(x,s+t).
     \]
\end{enumerate}
\end{proposition}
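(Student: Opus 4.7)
The plan is to reduce each of the five claims to a corresponding property of the action functions already recorded in Proposition \ref{fundamental-prop}, via the variational representations
\[
T_t^-\varphi(x)=\inf_{x_0\in M}h_{x_0,\varphi(x_0)}(x,t),\qquad T_t^+\varphi(x)=\sup_{x_0\in M}h^{x_0,\varphi(x_0)}(x,t)
\]
supplied by Proposition \ref{semi-group1}. Since the ``$+$''-case is symmetric, I will write the arguments only for $T_t^-$.

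For (1), the map $x_0\mapsto h_{x_0,\psi(x_0)}(x,t)$ is continuous on the compact $M$ by Proposition \ref{fundamental-prop}(4) and therefore attains its minimum at some $x_1^\ast$. Then
\[
T_t^-\varphi(x)\leqslant h_{x_1^\ast,\varphi(x_1^\ast)}(x,t)\leqslant h_{x_1^\ast,\psi(x_1^\ast)}(x,t)=T_t^-\psi(x),
\]
with strict inequality in the middle once $\varphi<\psi$ by Proposition \ref{fundamental-prop}(1). The semigroup identity (5) is immediate from the Markov property of Proposition \ref{fundamental-prop}(2):
\[
T_t^-\bigl(h_{x_0,u_0}(\cdot,s)\bigr)(x)=\inf_{x_1\in M}h_{x_1,h_{x_0,u_0}(x_1,s)}(x,t)=h_{x_0,u_0}(x,s+t).
\]

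For (4), I would first establish the pointwise estimate $|h_{x_0,u_0}(x,t)-h_{x_0,u_0'}(x,t)|\leqslant e^{\lambda t}|u_0-u_0'|$ with $\lambda$ the Lipschitz constant furnished by (H3) for $L$ in its $u$-variable. Feeding a common minimizer into the implicit variational definitions of $h_{x_0,u_0}$ and $h_{x_0,u_0'}$ and applying Grönwall to the difference of the scalar identities $h_{x_0,u_0}(\gamma(s),s)=u_0+\int_0^s L(\gamma,h_{x_0,u_0}(\gamma,\cdot),\dot\gamma)\,d\tau$ (and its primed version) yields the claimed exponential estimate. Combining with the common-competitor argument of (1) produces $\|T_t^\pm\varphi-T_t^\pm\psi\|_\infty\leqslant e^{\lambda t}\|\varphi-\psi\|_\infty$. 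Property (3) is then a corollary: joint continuity in $(x,t)$ for $t>0$ follows from the joint Lipschitz continuity of $h$ and compactness of $M$, while the initial limit $\lim_{t\to 0^+}T_t^-\varphi(x)=\varphi(x)$ is obtained by testing with the competitor $x_0=x$ and using superlinearity (H2) to discard competitors far from $x$ as $t\to 0^+$.

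The main obstacle is (2), the $t$-monotonicity along subsolutions. My preferred route is via viscosity comparison: since $v$ is a viscosity subsolution of $H(x,u,d_xu)=0$, the time-independent function $V(x,t):=v(x)$ is a viscosity subsolution of $\partial_tV+H(x,V,\partial_xV)=0$ on $M\times(0,+\infty)$ (at any interior touch point from above, the $t$-derivative of the test function vanishes, reducing the inequality to the stationary subsolution condition). Under (H3), the comparison principle for this Cauchy problem applies, and Proposition \ref{sol-HJe} identifies $T_t^-v$ as the unique viscosity solution with initial data $v$; comparison yields $v\leqslant T_t^-v$. For a strict subsolution with $H(x,v,d_xv)\leqslant-\delta<0$, the perturbation $(x,t)\mapsto v(x)+\eta t$ remains a subsolution of the evolution equation for small $\eta>0$ (using (H3) to absorb the $u$-perturbation in $H$), giving $v(x)+\eta t\leqslant T_t^-v(x)$ and hence strict inequality for $t>0$. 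An alternative route, internally consistent with the variational framework developed earlier, is to use the dominated condition $v\prec L$ from Proposition \ref{weak-kam}(1): comparing it with the scalar identity $u(\tau)=v(x_0)+\int_0^\tau L(\gamma,u,\dot\gamma)\,ds$ satisfied by $u(\tau):=h_{x_0,v(x_0)}(\gamma(\tau),\tau)$ along a minimizer $\gamma$, and using (H3), produces the differential inequality $w(\tau)\geqslant -\lambda\int_0^\tau|w|\,ds$ with $w(0)=0$ for $w:=u-v\circ\gamma$, from which a bootstrapped Grönwall argument forces $w\geqslant 0$ and hence $T_t^-v\geqslant v$.
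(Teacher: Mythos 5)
The paper does not prove this proposition itself — it is imported verbatim from the cited reference \cite{WWY2} — and your reduction of all five items to the variational representation of $T^\pm_t$ together with the monotonicity, Markov, and Lipschitz properties of the implicit action functions is exactly the route taken there, so the proposal is correct and in line with the source. The only step that deserves one extra word of care is the Gr\"onwall argument for (2): from $w(\tau)\geqslant-\lambda\int_0^\tau|w|\,ds$ alone one cannot conclude $w\geqslant0$ (large positive excursions of $w$ feed the right-hand side), so one must localize to the interval $[\tau_0,\tau]$ following the last zero $\tau_0$ of $w$ before a putative negative value, where $|w|=-w$ and the standard Gr\"onwall inequality then yields the contradiction — which is presumably what you intend by ``bootstrapped.''
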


\begin{proposition} \label{A6}
\cite[Proposition 10]{W-Y} Let $\varphi \in C(M,\R)$, then
$$T_t^- \circ T_t^+ \varphi \geqslant \varphi ,\quad T_t^+ \circ T_t^- \varphi \leqslant \varphi,  \quad  \forall \ t >0.$$
\end{proposition}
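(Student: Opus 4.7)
The plan is to combine the variational representations of the solution semigroups from Proposition \ref{semi-group1} with the reversibility and $u_0$-monotonicity properties of the action functions given in Proposition \ref{fundamental-prop}(1) and (3). Neither characteristics nor weak KAM theory should be needed; the result is purely a semigroup/action-function identity.

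First I would fix $x\in M$ and $t>0$ and prove $T_t^-\circ T_t^+\varphi(x)\geqslant\varphi(x)$. By the sup representation of $T_t^+$, for any $x_1\in M$, testing with the distinguished point $x_0=x$ gives
\[
T_t^+\varphi(x_1)\geqslant h^{x,\varphi(x)}(x_1,t).
\]
Set $u^\ast:=h^{x,\varphi(x)}(x_1,t)$. By reversibility this rewrites as $\varphi(x)=h_{x_1,u^\ast}(x,t)$. The $u_0$-monotonicity of the backward action function then yields
\[
h_{x_1,T_t^+\varphi(x_1)}(x,t)\;\geqslant\;h_{x_1,u^\ast}(x,t)\;=\;\varphi(x).
\]
Taking the infimum over $x_1\in M$ and invoking the inf representation of $T_t^-$ finishes this half.

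Second, I would run the dual argument for $T_t^+\circ T_t^-\varphi\leqslant\varphi$: using the inf representation of $T_t^-\varphi(x_1)$ with the test point $x_0=x$ gives $T_t^-\varphi(x_1)\leqslant h_{x,\varphi(x)}(x_1,t)$; calling the right-hand side $u^\ast$ and applying reversibility produces $\varphi(x)=h^{x_1,u^\ast}(x,t)$; $u_0$-monotonicity of the forward action then yields $h^{x_1,T_t^-\varphi(x_1)}(x,t)\leqslant\varphi(x)$, and taking the supremum over $x_1$ concludes.

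The only point requiring a bit of care is checking that reversibility exactly pairs the endpoint data of the two representations in the correct way, and that the monotonicity direction is the same for both $h_{x_0,\cdot}$ and $h^{x_0,\cdot}$ (both strictly increasing in the $u_0$ slot, per Proposition \ref{fundamental-prop}(1)), so that the same scheme works in both directions with opposite inequalities. I do not expect any genuine obstacle beyond this bookkeeping, since the proof ultimately reduces to a one-line comparison inside each single fixed-$x_1$ term before one takes the outer $\inf$ or $\sup$.
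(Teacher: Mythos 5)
Your proposal is correct and is essentially identical to the paper's own proof: the paper also combines the $\inf$/$\sup$ representations of $T_t^\pm$ from Proposition \ref{semi-group1} with $u_0$-monotonicity and reversibility from Proposition \ref{fundamental-prop}(1),(3), testing the inner semigroup at the point $x$ and comparing inside each fixed-$y$ term before taking the outer $\inf$ or $\sup$. No gaps.
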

\begin{proof}
	Applying \eqref{eq:Tt-+ rep} and Proposition \ref{fundamental-prop} (1)(3), we have for any $t>0$,
$$
T_t^- \circ T_t^+ \varphi(x)= \inf_{y\in M} h_{y, T_t^+ \varphi(y)} (x,t) \geqslant \inf_{y\in M}  h_{y, h^{x,\varphi(x)}(y,t)}(x,t)=\varphi (x),
$$
$$
T_t^+ \circ T_t^- \varphi(x)= \sup_{y\in M} h^{y, T_t^- \varphi(y)} (x,t) \leqslant \sup_{y\in M}  h^{y, h_{x,\varphi(x)}(y,t)}(x,t)=\varphi (x).
$$
\end{proof}

\subsection{Decomposition of semi-static orbits}
For the readers' convenience, this part is devoted to a proof of Theorem \ref{mane-nonempty-decomposition}, which states that the set of semi-static orbits is non-empty if and only if the set of weak KAM solutions are non-empty and when semi-static orbits exist, they form a closed set and calibrated by some weak KAM solutions. Despite its self-contained elaboration, the constructions in the proof, especially Lemma \ref{Busemann}-\ref{gra-line} and Remark \ref{max-min-cali}, are noteworthy. To begin with, we recall the basic fact
\begin{lemma}\cite[Proposition 3.2]{RWY}\label{0-bound}
Each semi-static orbit $\Phi^t_H(z_0)=(x(t),u(t),p(t)), t\in\R$ is bounded, i.e., there is a constant $B(z_0)>0$, continuously depending on $z_0$, such that $\sup_{t\in\R}(|u(t)|+\|p(t)\|_{x(t)})\leqslant B$.
\end{lemma}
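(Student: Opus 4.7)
The plan is to bound the three components of $z(t)=(x(t),u(t),p(t))$ in turn. Since $M$ is compact the $x$-component is bounded automatically, and once $|u(t)|$ is controlled, the energy identity $H(z(t))=0$ together with the fiberwise superlinearity (H2) applied on the compact slab $M\times\{|u|\leq B_u\}$ will yield a uniform bound on $\|p(t)\|_{x(t)}$. So the two genuine tasks are to establish energy conservation and to bound $u(t)$ uniformly.

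For energy conservation, I would exploit the fact that the infimum in the semi-static identity $u(b)=\inf_{s>0}h_{x(a),u(a)}(x(b),s)$ is attained at the interior point $s=b-a$, making $s=b-a$ a critical point of $s\mapsto h_{x(a),u(a)}(x(b),s)$. Combining this with the Hamilton--Jacobi equation $\partial_sh+H(x,h,\partial_xh)=0$ satisfied by the backward action function, and with the identification $\partial_xh_{x(a),u(a)}(x(b),b-a)=p(b)$ provided by the characteristics method (Lemma \ref{global-chm2} and Remark \ref{minimizer-orbit}), one obtains $H(z(b))=0$ at a dense set of $b\in\R$ and hence everywhere along the orbit by continuity of $H$.

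For the bound on $u(t)$, the easy step is to specialize the two semi-static inequalities to $s=1$: for $t\geq 1$ this gives $u(t)\leq h_{x_0,u_0}(x(t),1)\leq\max_{x\in M}h_{x_0,u_0}(x,1)=:C_+(z_0)$, while dually for $t\leq -1$ it gives $u(t)\geq\min_{x\in M}h^{x_0,u_0}(x,1)=:C_-(z_0)$, both continuous in $z_0$ by Proposition \ref{fundamental-prop} (4). The main obstacle, which I expect to be the most delicate part, is the complementary direction: ruling out $u(t)\to-\infty$ as $t\to+\infty$ and $u(t)\to+\infty$ as $t\to-\infty$. My approach here is to argue by contradiction, assuming $|u(t_n)|\to\infty$ along some one-sided sequence and then using energy conservation from the previous step together with the Lipschitz bound (H3) on $L$ in $u$ to convert this blow-up, via the Markov property (Proposition \ref{fundamental-prop} (2)) and a concatenation of the short-time characteristic from $(x_0,u_0)$ with the orbit, into a test curve whose action is strictly smaller than $u(t_n)$ in the semi-static infimum, contradicting the semi-static identity and the already-established one-sided bounds $C_\pm(z_0)$. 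Continuous dependence of the resulting bound $B(z_0)$ on $z_0$ is then inherited from the continuous dependence of the action functions on their arguments and of ODE solutions on initial data.
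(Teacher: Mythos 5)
First, a framing point: the paper does not prove this lemma at all --- it is quoted from \cite[Proposition 3.2]{RWY} --- so your proposal has to be judged on its own merits rather than against an in-paper argument. Your skeleton (bound $u(t)$ first, then deduce the bound on $p(t)$) is sensible, and the easy one-sided estimates are correct: for every $t>0$ the semi-static identity with $a=0$, $b=t$ gives $u(t)=\inf_{s>0}h_{x_0,u_0}(x(t),s)\leqslant h_{x_0,u_0}(x(t),1)\leqslant\max_{y\in M}h_{x_0,u_0}(y,1)$, and dually $u(t)\geqslant\min_{y\in M}h^{x_0,u_0}(y,1)$ for $t<0$, both continuous in $z_0$ by Proposition \ref{fundamental-prop} (4).

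The genuine gap is the complementary direction, and your sketch of it does not work as written. Exhibiting ``a test curve whose action is strictly smaller than $u(t_n)$'' cannot contradict $u(t_n)=\inf_{s>0}h_{x_0,u_0}(x(t_n),s)$ when $u(t_n)\to-\infty$: the infimum is permitted to be very negative, and what must be excluded is precisely that it is unbounded below; neither energy conservation nor the Lipschitz property of $L$ in $u$ is the lever that does this. The mechanism that does work is: fix $\tau>0$; by the Markov property (Proposition \ref{fundamental-prop} (2)) and the minimizing property of the orbit, $u(\tau)\leqslant h_{x_0,u_0}(x(\tau),t+1)\leqslant h_{x(t),u(t)}(x(\tau),1)$ for every $t>0$, and the \emph{lower} Gronwall bound $e^{-Lt}|u_1-u_2|\leqslant|h_{x_0,u_1}(x,t)-h_{x_0,u_2}(x,t)|$ from \cite{WWY1} (which you never invoke) forces the right-hand side to $-\infty$ if $u(t_n)\to-\infty$, contradicting the finiteness of $u(\tau)$; the dual argument with $h^{x_0,u_0}$ handles $t\to-\infty$. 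Written quantitatively, $u(t)\geqslant\min\{0,\,e^{L}(u(\tau)-\max_{x',y}h_{x',0}(y,1))\}$, which also repairs your last claim: a pure contradiction argument does not by itself yield a constant $B(z_0)$ depending continuously on $z_0$, whereas this explicit bound does, via continuous dependence of $u(\tau)$ on $z_0$. Finally, your energy-conservation step is both unnecessary for the $u$-bound and itself only a sketch (the passage from ``$s=b-a$ is an interior minimum'' to ``$\partial_s h=0$ and hence $H=0$'' needs the semiconcavity of $h$ in $(x,s)$, and within this paper the inclusion $\widetilde{\cN}\subset H^{-1}(0)$ is only obtained downstream of the present lemma); once $u$ is bounded it is cleaner to bound $p(t)=\frac{\partial L}{\partial\dot{x}}(x(t),u(t),\dot{x}(t))$ by the a priori compactness of Tonelli minimizers on unit time intervals, as in the estimate \eqref{lip-est}.
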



As a direct consequence of Proposition \ref{weak-kam}, we have
\begin{lemma}\cite[Proposition 3.6]{RWY}\label{cali-semi-static}
Assume $u_\pm\in\mathcal{S}_\pm$. Then for each $(u_\pm,L)$-calibrated curve $\gm:\R\rightarrow M$,
\[
(x(t),u(t),p(t)):=(\gm(t),\,\,u_\pm\circ\gm(t),\,\,\frac{\partial L}{\partial\dot{x}}(\gm(t),u_\pm\circ\gm(t)),\dot{\gm}(t)),\,\,t\in\R
\]
is a semi-static orbit.
\end{lemma}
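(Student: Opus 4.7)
The plan is to prove the lemma by verifying the two defining conditions of a semi-static orbit separately, leveraging the facts that $u_\pm$ is a fixed point of $T^\pm_t$ and that $u(\tau) = u_\pm(\gamma(\tau))$ along the curve. I will concentrate on the case $u_- \in \mathcal{S}_-$; the statement for $u_+ \in \mathcal{S}_+$ is handled by a completely symmetric argument, using the forward semigroup in place of the backward one, and the equivalence of the two conditions in \eqref{semi-static curve} via Proposition \ref{fundamental-prop}(3).

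First, by Proposition \ref{cali-pro}(3), the lift $(x(t),u(t),p(t))$ is a genuine $\Phi^t_H$-orbit defined for every $t\in\R$, so it suffices to check that the projected curve $(x(\cdot),u(\cdot))$ is a global action minimizer and satisfies the time-free minimization identity of Definition \ref{semi-static}. For the global minimization property, fix $a \leq b$. Since $(x(t),u(t),p(t))|_{[a,b]}$ is an orbit segment connecting $(x(a),u(a))$ to $(x(b),u(b))$ in time $b-a$, Corollary \ref{Minimality} immediately yields
\begin{equation*}
h_{x(a),u(a)}(x(b),b-a) \leq u(b).
\end{equation*}
For the reverse inequality, the weak KAM fixed point relation $u_- = T^-_{b-a} u_-$, Proposition \ref{semi-group1}, and the identities $u(a) = u_-(x(a))$, $u(b) = u_-(x(b))$ give
\begin{equation*}
u(b) = u_-(x(b)) = T^-_{b-a}u_-(x(b)) = \inf_{y\in M} h_{y,u_-(y)}(x(b),b-a) \leq h_{x(a),u(a)}(x(b),b-a).
\end{equation*}
Combining the two inequalities establishes $u(b) = h_{x(a),u(a)}(x(b),b-a)$, i.e., the global action minimizer property.

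Second, to establish the time-free identity $u(b) = \inf_{s>0} h_{x(a),u(a)}(x(b),s)$, observe that the chain of inequalities displayed above remains valid with $b-a$ replaced by any $s > 0$, because $u_- = T^-_s u_-$ for every $s \geq 0$. This yields
\begin{equation*}
u(b) = u_-(x(b)) = T^-_s u_-(x(b)) \leq h_{x(a),u(a)}(x(b),s), \qquad s>0.
\end{equation*}
Taking the infimum over $s > 0$ and using the equality attained at $s = b-a$ established in the preceding step gives the desired time-free minimization; the equivalent supremum form in Definition \ref{semi-static} then follows via Proposition \ref{fundamental-prop}(3). I do not anticipate a serious obstacle here; the only minor subtlety is that the calibration identity for $\gamma:\R\to M$ must be used on arbitrary finite subintervals $[a,b]$, which is immediate from telescoping the two calibrations at $0$ with the global domination $u_\pm \prec L$ from Proposition \ref{weak-kam}.
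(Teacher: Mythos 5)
Your argument is correct. The paper itself gives no proof of this lemma (it is quoted from \cite[Proposition 3.6]{RWY}), but your two-step verification --- the lower bound $h_{x(a),u(a)}(x(b),s)\geqslant u_\pm(x(b))$ from the fixed-point identity $T^\pm_s u_\pm=u_\pm$ together with Proposition \ref{semi-group1}, and the matching upper bound at $s=b-a$ from Corollary \ref{Minimality} applied to the lifted orbit segment --- is exactly the computation the paper itself uses in the analogous situations (see Remark \ref{max-min-cali} and the proof of Proposition \ref{A2-act-mini}), so it is the intended route.
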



In the following, we show that each semi-static curve serves as a calibrated curve for some weak KAM solution, which reveals a duality between solutions to \eqref{HJs} and global minimizers. This fact is already observed in \cite{Fathi_book}. We start with the construction of Busemann-type weak KAM solutions from semi-staic curves. Let $(x(\cdot), u(\cdot)):\R\rightarrow M\times\R$ be a semi-static curve. Define for any $(x,t)\in M\times\R$,
\begin{equation}\label{def-Busemann}
\begin{split}
U_-(x,t)&:=\inf_{s>0}h_{x(t),u(t)}(x,s),\\
(\text{resp.}\quad U_+(x,t)&:=\sup_{s>0}h^{x(t),u(t)}(x,s)).
\end{split}
\end{equation}

\begin{lemma}\label{Busemann1}
Assume $U_\pm(\cdot,t):M\rightarrow\R$ are defined as in \eqref{def-Busemann}. Then
\begin{enumerate}[(1)]
  \item $U_-$ is monotone increasing and bounded in $t$,
  \item $U_+$ is monotone decreasing and bounded in $t$.
\end{enumerate}
\end{lemma}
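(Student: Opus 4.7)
The proof combines the Markov property for action functions in Proposition \ref{fundamental-prop}(2) with the global action-minimizing identities satisfied by the semi-static curve $(x(\cdot),u(\cdot))$ from Definition \ref{semi-static}.

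For the monotonicity of $U_-$, fix $t_1<t_2$ and set $s_1=t_2-t_1$. Since $(x(\cdot),u(\cdot))$ is in particular a global action minimizer, $u(t_2)=h_{x(t_1),u(t_1)}(x(t_2),s_1)$. Applying the Markov identity at the intermediate point $y=x(t_2)$ gives, for every $s>0$,
\[
h_{x(t_1),u(t_1)}(x,s+s_1)=\inf_{y\in M}h_{y,h_{x(t_1),u(t_1)}(y,s_1)}(x,s)\leqslant h_{x(t_2),u(t_2)}(x,s).
\]
Taking the infimum over $s>0$ and noting $\inf_{s>0}h_{x(t_1),u(t_1)}(x,s)\leqslant\inf_{s>s_1}h_{x(t_1),u(t_1)}(x,s)$ yields $U_-(x,t_1)\leqslant U_-(x,t_2)$. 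For $U_+$ the argument is dual: the reversibility property in Proposition \ref{fundamental-prop}(3) applied to the global minimizer identity yields $u(t_1)=h^{x(t_2),u(t_2)}(x(t_1),s_1)$, and the supremum version of Markov at $y=x(t_1)$, combined with taking suprema in $s$, delivers the corresponding monotonicity of $U_+$.

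The upper bound for $U_-$ is immediate upon specializing the infimum in its definition to $s=1$,
\[
U_-(x,t)\leqslant h_{x(t),u(t)}(x,1),
\]
and the right-hand side is uniformly bounded in $t$ because $\{(x(t),u(t))\}_{t\in\R}$ is relatively compact by Lemma \ref{0-bound} and $(x_0,u_0,x)\mapsto h_{x_0,u_0}(x,1)$ is continuous by Proposition \ref{fundamental-prop}(4); a symmetric inequality gives the bound needed for $U_+$ in the opposite direction. The nontrivial direction is the lower bound for $U_-$ (dually the upper bound for $U_+$), for which I plan the following two-step approach. First, the semi-static identity evaluated at $b=t+\tau$ for any $\tau>0$ yields $U_-(x(t+\tau),t)=u(t+\tau)$, which is uniformly bounded by Lemma \ref{0-bound}, so $U_-(\cdot,t)$ is controlled on the orbit. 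Second, I propagate this control to the whole of $M$ via an equi-Lipschitz estimate on the family $\{U_-(\cdot,t)\}_{t\in\R}$, inherited from the local Lipschitz continuity of the action function stated in Proposition \ref{fundamental-prop}(4).

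The principal obstacle is the equi-Lipschitz control: a priori, the infimum defining $U_-(x,t)$ could be attained at arbitrarily small times $s$, where the Lipschitz constant of $h_{x_0,u_0}(\cdot,s)$ in $x$ degenerates. To overcome this, I will either invoke the fiberwise superlinearity (H2) together with the boundedness of $(x(t),u(t))$ to show that the minimizing times stay uniformly bounded away from zero for $x$ in any fixed compact region, or observe directly from the Markov property (which shows $T_\sigma^-U_-(\cdot,t)\geqslant U_-(\cdot,t)$ for all $\sigma>0$) that $U_-(\cdot,t)$ is a subsolution of the stationary Hamilton-Jacobi equation, hence enjoys a uniform Lipschitz bound on the compact manifold $M$ independent of $t$.
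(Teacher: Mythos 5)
Your monotonicity argument and the easy half of the boundedness (the upper bound for $U_-$ and lower bound for $U_+$, obtained by specializing to $s=1$ and using Lemma \ref{0-bound} with the continuity of the action functions) are essentially the paper's proof. One small remark: the argument you sketch for $U_+$ (reversibility plus the sup-version of the Markov property) actually yields that $U_+$ is monotone \emph{increasing} in $t$; this is in fact the monotonicity the paper uses later, when it writes $\mathrm{u}_+=\lim_{t\rightarrow+\infty}U_+(\cdot,t)=\sup_{t}U_+(\cdot,t)$, the word ``decreasing'' in the statement notwithstanding.

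The genuine gap is in what you correctly identify as the nontrivial direction, the lower bound for $U_-$ (dually the upper bound for $U_+$). Your plan --- control $U_-(\cdot,t)$ on the orbit via the semi-static identity and then propagate to all of $M$ by an equi-Lipschitz estimate --- is circular as it stands: the equi-Lipschitz property of $\{U_-(\cdot,t)\}_{t\in\R}$ is exactly Lemma \ref{Busemann2} of the paper, and its proof \emph{uses} the uniform bound \eqref{Buse-bdd} established in the present lemma, because the comparison $\inf_{s>0}h_{x',u}(x,s)-u\leqslant\kappa(B)\,d(x,x')$ has a constant depending on a bound $|u|\leqslant B$ (by (H3) the Lagrangian is only Lipschitz in $u$, so the cost of a short connecting arc grows with $|u|$). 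Your subsolution alternative has the same defect: the Lipschitz bound for a subsolution of $H(x,u,d_xu)=0$ coming from (H2)--(H3) reads $\|d_xU_-\|\leqslant\big(C^{\ast}(K)+L|U_-|\big)/K$, hence is not independent of $t$ before the $C^0$ bound is known; moreover, to apply $T^-_\sigma$ (defined on $C(M,\R)$) or even to speak of the subsolution property you must first know that $U_-(\cdot,t)$ is finite and continuous, while a priori the infimum over $s>0$ could be $-\infty$, since in the non-monotone setting nothing prevents $h_{x(t),u(t)}(x,s)\rightarrow-\infty$ as $s\rightarrow+\infty$ (compare Theorem \ref{thm3:dy-minimizer} (5)). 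Your first fix (minimizing times bounded away from $0$) addresses the wrong end of the time axis: the danger is at large $s$, not small $s$, and the infimum need not be attained at all. The paper avoids all of this with a short direct argument: for $t<t'$ and any $s>0$, the semi-static property and the Markov property give $u(t')\leqslant h_{x(t),u(t)}(x(t'),s+1)\leqslant h_{x,\,h_{x(t),u(t)}(x,s)}(x(t'),1)$, and then reversibility together with $u_0$-monotonicity (Proposition \ref{fundamental-prop} (1),(3)) yields $h_{x(t),u(t)}(x,s)\geqslant h^{x(t'),u(t')}(x,1)\geqslant-B$ uniformly in $s$ and $t$, which is precisely the missing lower bound.
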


\begin{proof}
We shall focus on (1), since (2) can be proved in a similar way. Notice that for each $x\in M$ and $t<t'$,
\begin{align*}
U_-(x,t)=&\,\inf_{s>0}h_{x(t),u(t)}(x,s)=\inf_{s',s''>0}\inf_{y\in M}h_{y,h_{x(t),u(t)}(y,s')}(x,s'')\\
\leqslant&\,\inf_{s',s''>0}h_{x(t'), h_{x(t),u(t)}(x(t'),s')}(x,s'')=\inf_{s''>0}h_{x(t'),u(t')}(x,s'')=U_-(x,t'),
\end{align*}
where the second equality follows from Proposition \ref{fundamental-prop} (2), the third equality follows from Definition \ref{semi-static}. This shows that $U_-(x,t)$ is increasing in $t$.

\vspace{1em}
Applying Lemma \ref{0-bound}, $B:=\sup_{(x,t)\in M\times\R}\,\,\{|h_{x(t),u(t)}(x,1)|, |h^{x(t),u(t)}(x,1)|\}<+\infty$. It follows that
\[
U_-(x,t)\leqslant h_{x(t),u(t)}(x,1)\leqslant B.
\]
By Definition \ref{semi-static} and Proposition \ref{fundamental-prop} (2), for any $t<t'$ and $s>0$ ,
\[
u(t')\leqslant h_{x(t),u(t)}(x(t'),s+1)\leqslant h_{x,h_{x(t),u(t)}(x,s)}(x(t'),1).
\]
Combining Proposition \ref{fundamental-prop} (1) and (3), the above inequality implies
\[
U_-(x,t)=\inf_{s>0}h_{x(t),u(t)}(x,s)\geqslant h^{x(t'),u(t')}(x,1)\geqslant-B.
\]
Thus for any $(x,t)\in M\times\R$,
\begin{equation}\label{Buse-bdd}
|U_-(x,t)|\leqslant B.
\end{equation}
\end{proof}

Next we show that
\begin{lemma}\label{Busemann2}
$\{U_\pm(\cdot,t)\}_{t\in\R}$ is equi-Lipschitz.
\end{lemma}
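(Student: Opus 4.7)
\noindent My plan is to prove that each $U_-(\cdot,t)$ is Lipschitz on $M$ with a Lipschitz constant depending only on the $L^\infty$ bound $B$ from Lemma \ref{Busemann1} and on $H$; equi-Lipschitz continuity of the family $\{U_-(\cdot,t)\}_{t\in\R}$ then follows, and the case of $U_+$ is symmetric (time reversed). The starting point is the subsolution property $T^-_{t'}U_-(\cdot,t)\geqslant U_-(\cdot,t)$ for every $t'>0$. To prove it, I would exchange the infimum over $s$ hidden in $U_-(z,t)$ with the action $h_{z,\cdot}(x,t')$ using $u_0$-monotonicity and $u_0$-Lipschitz continuity of the action function (Proposition \ref{fundamental-prop}(1),(4)), and then apply the Markov property \eqref{markov}:
\[
T^-_{t'}U_-(\cdot,t)(x)=\inf_{s>0}\inf_{z\in M}h_{z,\,h_{x(t),u(t)}(z,s)}(x,t')=\inf_{s>0}h_{x(t),u(t)}(x,s+t')\geqslant U_-(x,t).
\]
In particular, specializing to $z=y$ and $t'=r:=d(x,y)$ gives the one-sided inequality $U_-(x,t)-U_-(y,t)\leqslant h_{y,U_-(y,t)}(x,r)-U_-(y,t)$.

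\noindent Second, assuming $r\leqslant 1$, I choose a unit-speed geodesic $\gamma:[0,r]\to M$ from $y$ to $x$ and feed it into the variational formula \eqref{eq:Implicit variational}:
\[
h_{y,U_-(y,t)}(x,r)\ \leqslant\ U_-(y,t)+\int_0^r L\bigl(\gamma(s),\,v(s),\,\dot\gamma(s)\bigr)\,ds,\qquad v(s):=h_{y,U_-(y,t)}(\gamma(s),s).
\]
Combined with the first step, this reduces the Lipschitz bound to a uniform estimate on the implicitly defined function $v$.

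\noindent Third and most delicately, I bound $|v(s)|$ uniformly in $(y,t,s)$ for $s\in[0,1]$. Applying the same variational inequality to $v$ itself yields $v(s)\leqslant U_-(y,t)+\int_0^s L(\gamma(\tau),v(\tau),\dot\gamma(\tau))\,d\tau$; using (H3) and compactness of $M$, the integrand is controlled by $C_0+L_0|v(\tau)|$ where $C_0:=\sup_{x\in M,\,\|\dot x\|_x\leqslant 1}L(x,0,\dot x)<\infty$. A Gronwall argument on $[0,1]$ together with $|U_-(y,t)|\leqslant B$ gives an upper bound $v(s)\leqslant(B+C_0)e^{L_0}=:B'$, and the matching lower bound comes from the Legendre-Fenchel inequality $L(x,u,\dot x)\geqslant -H(x,u,0)$, which is bounded below on $M\times[-B',B']$. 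With $|v(s)|\leqslant B'$, the integrand in the second step is uniformly controlled by some constant $K=K(B,H)$, so $U_-(x,t)-U_-(y,t)\leqslant K\,d(x,y)$; swapping $x$ and $y$ yields the desired Lipschitz estimate with a constant independent of $t$.

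\noindent The main obstacle is the third step: although $U_-(y,t)$ is uniformly bounded, the implicitly defined action function $v(s)$ is not a priori tame as $s\to 0^+$, and the Gronwall bootstrap relies essentially on (H3) to linearise the $u$-dependence of $L$ along with the Tonelli-type boundedness of $L$ on $\{\|\dot x\|_x\leqslant 1\}$.
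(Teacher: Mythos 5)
Your first two steps follow the paper's own route almost verbatim: via $u_0$-monotonicity/continuity of the action function and the Markov property you arrive at $U_-(x,t)-U_-(y,t)\leqslant h_{y,U_-(y,t)}(x,r)-U_-(y,t)$ with $r=d(x,y)$, and the uniform bound $|U_-(\cdot,t)|\leqslant B$ from Lemma \ref{Busemann1} reduces the lemma to the short-time estimate $h_{y,u}(x,d(x,y))-u\leqslant\kappa(B)\,d(x,y)$ for $|u|\leqslant B$ (the harmless restriction $r\leqslant 1$ is fine, since for $d(x,y)\geqslant 1$ the crude bound $2B\,d(x,y)$ already suffices). At this point the paper simply invokes \cite[Lemma 3.1-3.2]{WWY1}; you instead attempt a self-contained proof of this estimate, and that is where the argument breaks.

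The gap is a circularity in your third step. The upper inequality $v(s)\leqslant u_0+\int_0^s\bigl(C_0+L_0|v(\tau)|\bigr)\,d\tau$ involves $|v|$, so Gronwall yields $v\leqslant B'$ only if you already control the negative part of $v$; but your lower bound is produced afterwards, and only on the strip $M\times[-B',B']$ whose very definition uses $B'$. Worse, the Legendre--Fenchel bound $L\geqslant -H(\cdot,\cdot,0)$ cannot be inserted along the geodesic test curve at all: the implicit variational principle \eqref{eq:Implicit variational} gives only an \emph{upper} bound for $h_{y,u_0}$ along arbitrary test curves, so a lower bound for $v(s)=h_{y,u_0}(\gamma(s),s)$ must come from the exact integral identity along the \emph{minimizer} of $h_{y,u_0}(\gamma(s),s)$, whose implicit $u$-argument is a different function from your $v$. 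The scheme can be repaired by reversing the order: first prove, by comparison along minimizers using $L\geqslant-H(x,u,0)\geqslant-C_1-L_0|u|$ and the ODE $\dot y=-C_1+L_0 y$, a uniform lower bound $h_{y,u_0}(z,\tau)\geqslant-B_1(B)$ for all $z\in M$, $\tau\in(0,1]$, $|u_0|\leqslant B$ (this closes because a large positive value of the implicit function only strengthens the lower bound), and only then run the upper Gronwall along the geodesic with $|v|\leqslant\max(v,0)+B_1$. As written, however, the two bounds presuppose each other, so the key estimate is not established; the economical alternative is to do as the paper does and quote \cite[Lemma 3.1-3.2]{WWY1} directly.
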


\begin{proof}
For any $x,x'\in M$ and $t\in\R$,
\begin{align*}
&U_-(x,t)-U_-(x',t)=\inf_{s,s'>0}h_{x(t),u(t)}(x,s+s')-U(x',t)\\
\leqslant&\,\inf_{s,s'>0}h_{x',h_{x(t),u(t)}(x',s')}(x,s)-U_-(x',t)\\
=&\,\inf_{s>0}h_{x',\inf_{s'>0}h_{x(t),u(t)}(x',s')}(x,s)-U_-(x',t)\\
=&\,\inf_{s>0}h_{x',U_-(x',t)}(x,s)-U_-(x',t)\\
\leqslant\,&\kappa(B)\cdot d(x,x').
\end{align*}
Here, the first inequality and second equality follows from Proposition \ref{fundamental-prop} (1) and (2). The last inequality is a consequence of \eqref{Buse-bdd} and the general fact that when $|u|\leqslant B$,
\begin{equation}\label{lip-est}
\inf_{s>0}h_{x',u}(x,s)-u\leqslant h_{x',u}(x,d(x,x'))-u\leqslant\kappa(B)\cdot d(x,x').
\end{equation}
In fact, \eqref{lip-est} holds in the sense that the left hand side maybe $-\infty$, and the second inequality above is deduced by applying in \cite[Lemma 3.1-3.2]{WWY1}. Observing that exchanging the role of $x$ and $x'$ in the computation, we conclude that for any $t\in\R, U_-(\cdot,t)$ is $\kappa(B)$-Lipschitz.
\end{proof}

\begin{lemma}\label{Busemann3}
For any $t<t', U_+(\cdot,t')\leqslant U_-(\cdot,t)$.
\end{lemma}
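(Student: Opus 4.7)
The plan is to establish the inequality pointwise: for each fixed $x \in M$ I will show that
\[
h^{x(t'),u(t')}(x, s_2) \leqslant h_{x(t), u(t)}(x, s_1)
\]
for every pair $s_1, s_2 > 0$. Taking the supremum over $s_2$ on the left and the infimum over $s_1$ on the right then immediately yields $U_+(x,t') \leqslant U_-(x,t)$.

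To establish this pointwise inequality I would argue by contradiction. Set $u_1 := h_{x(t),u(t)}(x, s_1)$ and $u_2 := h^{x(t'),u(t')}(x, s_2)$ and suppose $u_1 < u_2$. The Markov property (Proposition \ref{fundamental-prop} (2)), specialized by taking the intermediate point to be $x$, gives
\[
h_{x(t),u(t)}(x(t'), s_1 + s_2) \leqslant h_{x, u_1}(x(t'), s_2).
\]
The $u_0$-monotonicity of Proposition \ref{fundamental-prop} (1) then yields $h_{x, u_1}(x(t'), s_2) < h_{x, u_2}(x(t'), s_2)$, while reversibility (Proposition \ref{fundamental-prop} (3)) applied to the identity $u_2 = h^{x(t'),u(t')}(x, s_2)$ shows that $h_{x, u_2}(x(t'), s_2) = u(t')$. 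Chaining these produces
\[
h_{x(t),u(t)}(x(t'), s_1 + s_2) < u(t'),
\]
which contradicts the semi-static identity
\[
u(t') = \inf_{s > 0} h_{x(t), u(t)}(x(t'), s) \leqslant h_{x(t), u(t)}(x(t'), s_1 + s_2).
\]

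I do not anticipate any real obstacle here; the argument is essentially a clean interplay between the Markov, monotonicity, and reversibility properties of the action functions and the defining identity of a semi-static curve. The only subtlety worth double-checking is that every action function invoked has strictly positive time argument, which holds automatically because $s_1, s_2 > 0$ forces $s_1 + s_2 > 0$ so that Proposition \ref{fundamental-prop} applies without boundary issues.
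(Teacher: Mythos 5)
Your argument is correct and coincides with the paper's own proof: both reduce the claim to the pointwise inequality $h^{x(t'),u(t')}(x,s_2)\leqslant h_{x(t),u(t)}(x,s_1)$ and derive a contradiction from the chain combining the Markov property, $u_0$-monotonicity, reversibility, and the semi-static identity $u(t')=\inf_{s>0}h_{x(t),u(t)}(x(t'),s)$. No gaps.
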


\begin{proof}
We shall show that for any $s,s'>0$,
\begin{equation}\label{order}
h_{x(t),u(t)}(x,s)\geqslant h^{x(t'),u(t')}(x,s').
\end{equation}
We argue by contradiction, assume $h_{x(t),u(t)}(x,s)<h^{x(t'),u(t')}(x,s')$. Then by Definition \ref{semi-static},
\begin{align*}
u(t')\,\leqslant\,\,&h_{x(t),u(t)}(x(t'),s+s')\\
\leqslant\,\,&h_{x,h_{x(t),u(t)}(x,s)}(x(t'),s')\\
<\,\,&h_{x,h^{x(t'),u(t')}(x,s')}(x(t'),s')=u(t'),
\end{align*}
where the third inequality and the equality follow from Proposition \ref{fundamental-prop} (1) and (3). Thus
\[
U_-(x,t)=\inf_{s>0}h_{x(t),u(t)}(x,s)\geqslant\sup_{s'>0}h^{x(t'),u(t')}(x,s')=U_+(x,t').
\]
\end{proof}


\begin{lemma}\label{Busemann}
The uniform limits
\[
\mathrm{u}_-(x)=\lim_{t\rightarrow-\infty}U_{-}(x,t)=\inf_{t\in\R}U_-(x,t),\quad \mathrm{u}_+(x)=\lim_{t\rightarrow+\infty}U_{+}(x,t)=\sup_{t\in\R}U_+(x,t)
\]
exist. Moreover, $\mathrm{u}_-\in\mathcal{S}_-, \mathrm{u}_+\in\mathcal{S}_+$ and $\mathrm{u}_-\geqslant\mathrm{u}_+$.
\end{lemma}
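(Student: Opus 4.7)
The plan divides naturally into three pieces: uniform convergence of the families $U_\pm(\cdot, t)$, the fixed-point identities $T^\pm_t \mathrm{u}_\pm = \mathrm{u}_\pm$, and the order $\mathrm{u}_+ \leqslant \mathrm{u}_-$.

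For the first step, by Lemma \ref{Busemann1} each map $t \mapsto U_\pm(x,t)$ is monotone in $t$ and uniformly bounded, so $\mathrm{u}_-(x) := \lim_{t\to -\infty} U_-(x,t)$ and $\mathrm{u}_+(x) := \lim_{t \to +\infty} U_+(x,t)$ exist pointwise as the appropriate infima or suprema over $t \in \R$. Lemma \ref{Busemann2} supplies a Lipschitz constant for the entire family $\{U_\pm(\cdot,t)\}_{t \in \R}$ independent of $t$, so these families are equicontinuous on the compact manifold $M$. Dini's theorem (or Arzel\`a--Ascoli together with pointwise monotone convergence) then upgrades the pointwise convergence to uniform convergence, and $\mathrm{u}_\pm$ inherit the same Lipschitz constant.

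The substantial step is to verify the fixed-point identity $T^-_t \mathrm{u}_- = \mathrm{u}_-$ for every $t \geqslant 0$; the identity $T^+_t \mathrm{u}_+ = \mathrm{u}_+$ is the symmetric statement with $h^{x_0,u_0}$ in place of $h_{x_0,u_0}$ and $s \to +\infty$ in place of $s \to -\infty$. The Lipschitz continuity of $u_0 \mapsto h_{y,u_0}(x,t)$ from Proposition \ref{fundamental-prop} (4) lets me pull the inner infimum $\inf_{r>0}$ inside the outer action function, and the Markov identity (Proposition \ref{fundamental-prop} (2)) then allows me to commute the two infima:
\[
T^-_t U_-(\cdot,s)(x) = \inf_{y \in M} h_{y,\, \inf_{r>0} h_{x(s),u(s)}(y,r)}(x,t) = \inf_{r>0} h_{x(s),u(s)}(x, r+t) = \inf_{r>t} h_{x(s),u(s)}(x, r).
\]
Next, the shift identity $h_{x(s-t),u(s-t)}(x, r+t) \leqslant h_{x(s),u(s)}(x,r)$ for $r>0$---which follows from Markov at the intermediate point $y = x(s)$ combined with the semi-static identity $h_{x(s-t),u(s-t)}(x(s),t) = u(s)$, exactly as in the proof of Lemma \ref{Busemann1}---implies that the change of variables $(s,r) \mapsto (s-t, r+t)$ yields
\[
\inf_{s \in \R,\, r > t} h_{x(s), u(s)}(x, r) = \inf_{s \in \R,\, r > 0} h_{x(s), u(s)}(x, r) = \mathrm{u}_-(x).
\]
Combining the last two displayed formulas and letting $s \to -\infty$, the $\|\cdot\|_\infty$-continuity of $T^-_t$ (Proposition \ref{prop-sg} (4)) together with the uniform convergence $U_-(\cdot,s) \to \mathrm{u}_-$ forces $T^-_t \mathrm{u}_- = \mathrm{u}_-$, so $\mathrm{u}_- \in \mathcal{S}_-$ by Definition \ref{fp}.

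The remaining claim $\mathrm{u}_+ \leqslant \mathrm{u}_-$ is essentially a corollary of Lemma \ref{Busemann3}: for any $t < t'$ one has $U_+(\cdot, t') \leqslant U_-(\cdot, t)$ pointwise, and letting $t' \to +\infty$ and then $t \to -\infty$ (using the uniform convergence from the first step) gives $\mathrm{u}_+ \leqslant \mathrm{u}_-$ on $M$. The main technical obstacle I anticipate is making the two equality chains in the semigroup computation rigorous, in particular the interchange of $\inf_{r>0}$ with the $u_0$-slot of $h_{y,\cdot}$; once that commutation and the Markov identity are properly invoked, the rest is a routine monotone-convergence and equi-Lipschitz argument.
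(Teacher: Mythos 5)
Your proposal is correct and follows essentially the same route as the paper: monotonicity, boundedness and equi-Lipschitz continuity plus Dini for the uniform limits; the commutation of $T^-_t$ with the infimum defining $U_-$ (via the Markov property and the identity $T^-_t h_{x_0,u_0}(x,s)=h_{x_0,u_0}(x,s+t)$) together with the shift inequality $h_{x(s-t),u(s-t)}(x,r+t)\leqslant h_{x(s),u(s)}(x,r)$ along the semi-static curve for the fixed-point property; and Lemma \ref{Busemann3} for the order. The only cosmetic difference is that the paper sandwiches $U_-(x,t)\leqslant T^-_\tau U_-(x,t)\leqslant U_-(x,t+\tau)$ and passes to the limit, whereas you compute the double infimum exactly via the change of variables $(s,r)\mapsto(s-t,r+t)$; both rest on the same ingredients.
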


\begin{proof}
Lemma \ref{Busemann1} and \ref{Busemann2} imply the limits above exist. Dini's theorem on monotone convergence implies that the limits are uniform. We shall show $\mathrm{u}_-\in\mathcal{S}_-$, the same proof apply to $\mathrm{u}_+$ with few changes.

\vspace{0.5em}
Notice that for $\tau>0$, we have
\begin{equation}\label{eq:1}
T^-_{\tau}U_-(x,t)=T^-_\tau\big[\inf_{s>0} h_{x(t),u(t)}(x,s)\big]=\inf_{s>0}T^-_\tau h_{x(t),u(t)}(x,s)=\inf_{s>0}h_{x(t),u(t)}(x,s+\tau),
\end{equation}
where the third equality follows from Proposition \ref{prop-sg} (5). It follows from Proposition \ref{fundamental-prop} (2) that
\[
U_-(x,t)\leqslant\inf_{s>\tau}h_{x(t),u(t)}(x,s)=\inf_{s>0}h_{x(t),u(t)}(x,s+\tau)\leqslant\inf_{s>0}h_{x(t+\tau),u(t+\tau)}(x,s)=U_-(x,t+\tau).
\]
Combining the above inequality and \eqref{eq:1}, for any $\tau>0$,
\begin{equation}\label{ieq:1}
U_-(x,t)\leqslant T^-_{\tau}U(x,t)\leqslant U_-(x,t+\tau)
\end{equation}
Now sending $t$ to $-\infty$ on both sides of \eqref{ieq:1} and applying Proposition \ref{prop-sg} (4), we obtain that $\bar{u}_-\in\mathcal{S}_-$.

\vspace{0.5em}
Finally, using Lemma \ref{Busemann1} and \ref{Busemann3},
$$
\mathrm{u}_-(x)=\lim_{t\rightarrow-\infty}U_-(x,t)=\inf_{t<0}U_-(x,t)\geqslant\sup_{t'>0}U_+(x,t')=\lim_{t'\rightarrow\infty}U_+(x,t')=\mathrm{u}_+(x).
$$
\end{proof}

Now it is easy to see that each semi-static curve are calibrated curve of a pair of weak KAM solutions.
\begin{lemma}\label{gra-line}
For $v=\mathrm{u}_-$ or $\mathrm{u}_+, v(x(t))=u(t)$ for each $t\in\R$. In particular, for any $t<t'$, for
\begin{equation}\label{semi-static-cali}
v(x(t'))-v(x(t))=\int^{t'}_{t}L(x(s),v(x(s)),\dot{x}(s))\ ds.
\end{equation}
\end{lemma}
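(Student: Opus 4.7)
The plan is to observe that the semi-static identity, combined with the definitions of $U_\pm$ and $\mathrm{u}_\pm$, almost immediately pins down the values of $\mathrm{u}_\pm$ along the curve $x(\cdot)$. The calibration identity then follows from the two-sided squeeze provided by domination and the action-functional representation of $u(\cdot)$.

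First I would fix $\tau \in \mathbb{R}$ and any $t < \tau$. The defining property of a semi-static curve (Definition \ref{semi-static}) applied to the pair $a = t < b = \tau$ gives
\[
u(\tau) = \inf_{s > 0} h_{x(t), u(t)}(x(\tau), s),
\]
which is exactly $U_-(x(\tau), t)$. Hence $U_-(x(\tau), t) = u(\tau)$ for every $t < \tau$, and taking $t \to -\infty$ in the uniform limit of Lemma \ref{Busemann} yields $\mathrm{u}_-(x(\tau)) = u(\tau)$. An entirely symmetric argument, using $u(\tau) = \sup_{s>0} h^{x(t'), u(t')}(x(\tau), s) = U_+(x(\tau), t')$ for $t' > \tau$, gives $\mathrm{u}_+(x(\tau)) = u(\tau)$. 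This establishes the first assertion.

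For \eqref{semi-static-cali}, I would set $v = \mathrm{u}_\pm$ and combine two facts. On the one hand, $v \prec L$ (since $v$ is a weak KAM solution, Proposition \ref{weak-kam}), so for any $t < t'$,
\[
v(x(t')) - v(x(t)) \leqslant \int_t^{t'} L(x(s), v(x(s)), \dot{x}(s))\, ds.
\]
On the other hand, the semi-static property says $u(t') = h_{x(t), u(t)}(x(t'), t' - t)$, so by Proposition \ref{Implicit variational} the restriction of $x(\cdot)$ to $[t, t']$ is a minimizer and, by Remark \ref{minimizer-orbit}, $u(s) = h_{x(t), u(t)}(x(s), s - t)$ for all $s \in [t, t']$ with
\[
u(t') - u(t) = \int_t^{t'} L(x(s), u(s), \dot{x}(s))\, ds.
\]
Using the first assertion $v(x(s)) = u(s)$ on $[t, t']$, the right-hand side equals $\int_t^{t'} L(x(s), v(x(s)), \dot{x}(s))\, ds$, while the left-hand side equals $v(x(t')) - v(x(t))$. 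This converts the preceding inequality into an equality and proves \eqref{semi-static-cali}.

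The only place requiring any care is the direction of the semi-static identity matched with the direction of convergence in the definition of $\mathrm{u}_\pm$: for $\mathrm{u}_-$ we need $t < \tau$ so that $U_-(x(\tau), t)$ coincides with $u(\tau)$, and the infimum over $t$ in Lemma \ref{Busemann} is attained as $t \to -\infty$; for $\mathrm{u}_+$ the roles of past and future are reversed. No further obstacle arises — the result is essentially a bookkeeping consequence of how the Busemann-type weak KAM solutions were constructed in Lemma \ref{Busemann}.
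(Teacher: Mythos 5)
Your proof is correct and follows essentially the same route as the paper: the first assertion is read off from the semi-static identity $u(\tau)=U_-(x(\tau),t)$ for $t<\tau$ (resp. $u(\tau)=U_+(x(\tau),t')$ for $t'>\tau$) and the monotone limits defining $\mathrm{u}_\pm$, and the calibration identity comes from the global-minimizer property $u(t')=h_{x(t),u(t)}(x(t'),t'-t)$ together with the integral representation along the minimizer. The only difference is that your appeal to the domination inequality $v\prec L$ is superfluous — the action-function identity already gives the equality outright once you substitute $v(x(s))=u(s)$ — but this is harmless.
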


\begin{proof}
As before, we shall only show $\mathrm{u}_-(x(t))=u(t)$ for each $t\in\R$. For each $t\in\R$,
\begin{align*}
&\mathrm{u}_-(x(t))=\lim_{\tau\rightarrow-\infty}U_-(x(t),\tau)\\
=&\,\lim_{\tau\rightarrow-\infty}\inf_{s>0}h_{x(\tau),u(\tau)}(x(t),s)\\
=&\,\lim_{\tau<t,\tau\rightarrow-\infty}\inf_{s>0}h_{x(\tau),u(\tau)}(x(t),s)\\
=&\,\lim_{\tau<t,\tau\rightarrow-\infty}u(t)=u(t),
\end{align*}
where the fourth equality uses Definition \ref{semi-static}. Since $(x(t), u(t))$ is a global action minimizer, for $t<t'$,
\begin{align*}
&\mathrm{u}_-(x(t'))=u(t')=h_{x(t),u(t)}(x(t'),t'-t)\\
=\,&u(t)+\int^{t'-t}_0 L(x(t+s),h_{x(t),u(t)}(x(t+s),s),\dot{x}(t+s)) ds\\
=\,&u(t)+\int^{t'}_t L(x(s),u(s),\dot{x}(s)) ds\\
=\,&\mathrm{u}_-(x(t))+\int^{t'}_t L(x(s),\mathrm{u}_-(x(s)),\dot{x}(s)) ds.
\end{align*}
Here, the second equality follows from Proposition \ref{prop-act-minimizer} (3), which implies that $x(t+\cdot)|_{[0,t'-t]}$ is the unique minimizer of $h_{x(t),u(t)}(x(t'),t'-t)$.
\end{proof}

\begin{remark}\label{max-min-cali}
It is worth mentioning that
\begin{equation}\label{extremal}
\begin{split}
\mathrm{u}_-(x)=\sup_{u_-\in\mathcal{S}_-}\{u_-(x)\,:\,u_-(x(t))=u(t),\quad t\in\R\},\\
\mathrm{u}_+(x)=\inf_{u_+\in\mathcal{S}_+}\{u_-(x)\,:\,u_+(x(t))=u(t),\quad t\in\R\}.
\end{split}
\end{equation}
To show this, it is necessary to see that, if $u_-\in\mathcal{S}_-$ and $u_-(x(t))=u(t)$, then for any $t\in\R, s>0$,
\begin{equation*}
\begin{split}
u_-(x)=T^-_s u_-(x)=\inf_{y\in M}h_{y,u_-(y)}(x,s)\leqslant h_{x(t),u_-(x(t))}(x,s)=h_{x(t),u(t)}(x,s),\\
u_+(x)=T^+_s u_+(x)=\sup_{y\in M}h^{y,u_+(y)}(x,s)\geqslant h^{x(t),u_+(x(t))}(x,s)=h^{x(t),u(t)}(x,s).
\end{split}
\end{equation*}
\end{remark}

\textit{Proof of Theorem \ref{mane-nonempty-decomposition}:}

\vspace{0.5em}
By Lemma \ref{Busemann} and \ref{gra-line}, for any $z_0\in\tilde{\mathcal{N}}\neq\emptyset$, we can construct $\mathrm{u}_-\in\mathcal{S}_-$ such that $z_0\in\widetilde{\mathcal{N}}_{u_-}$. This implies $\mathcal{S}_-\neq\emptyset$ and $\widetilde{\mathcal{N}}\subset\bigcup_{u_-\in\mathcal{S}_-}\widetilde{\mathcal{N}}_{u_-}$. Conversely, if $\mathcal{S}_-\neq\emptyset$, we choose $u_-\in S_-$ so that $\widetilde{\mathcal{N}}_{u_-}\neq\emptyset$. For any $z_0\in\widetilde{\mathcal{N}}_{u_-}, x(t)=\pi_M\circ\Phi^t_H z_0$ is $(u_-,L)$-calibrated. By Lemma \ref{cali-semi-static}, $\Phi^t_H z_0:\R\rightarrow J^1(M,\R)$ is semi-static and $\widetilde{\mathcal{N}}_{u_-}\subset\widetilde{\mathcal{N}}\neq\emptyset$. The first equality in \eqref{decom-mane} is proved. In the same way, one can show the equivalence between $\tilde{\mathcal{N}}\neq\emptyset$ and $\mathcal{S}_+\neq\emptyset$ as well as the second equality in \eqref{decom-mane}.

\vspace{0.5em}

To show the closedness of $\widetilde{\mathcal{N}}$, let $(x_n(\cdot),u_n(\cdot),p_n(\cdot)):\R\rightarrow J^1(M,\R), n\geqslant1$ be a sequence of semi-static orbits with $(x_n(0),u_n(0),p_n(0))$ converging to $(x_0,u_0,p_0)$. By Lemma \ref{0-bound}, we can assume that, up to a subsequence, $(x_n(\cdot),u_n(\cdot))$ converges uniformly on compact intervals of $\R$ to some $(x(\cdot),u(\cdot)):\R\rightarrow J^0(M,\R)$ with $(x(0),u(0))=(x_0,u_0)$. By Definition \ref{semi-static} and Proposition \ref{fundamental-prop} (4), for any $a<b$ and $s>0$,
\begin{align*}
&\,h_{x(a),u(a)}(x(b),b-a)=\lim_{n\rightarrow\infty}h_{x_n(a),u_n(a)}(x_n(b),b-a)\\
\leqslant&\,\lim_{n\rightarrow\infty}h_{x_n(a),u_n(a)}(x_n(b),s)=h_{x(a),u(a)}(x(b),s),
\end{align*}
or equivalently, \eqref{semi-static curve} holds for any $a<b$. Thus $(x(\cdot),u(\cdot))$ is semi-static and $(\bar{x}_0,\bar{u}_0)\in\mathcal{N}$.
\qed

\section*{Acknowledgments}

L.Jin is partly supported by by the National Key R\&D Program of China 2021YFA1001600. All of the author are supported in part by the National Natural Science Foundation of China (Grant No. 12171096). L.Jin is also supported in part by the NSFC (Grant No. 12371186). J.Yan is also supported in part by the NSFC (Grant No. 12231010). K.Zhao is also supported by   National Natural Science Foundation of China (Grant No. 12301233). The first author would like to thank Dr. S.Suhr for his kind invitation and RUB (Ruhr-Universit\"{a}t Bochum) for its hospitality, where part of this work is done.

\section*{Declaration of competing interest}
There are no competing interests.

\medskip

\end{document}